\documentclass[11pt]{amsart}

\title{Residually finite groups with uniformly almost flat quotients}
\author{David Guo and Matthew Tointon}
\address{School of Mathematics, University of Bristol, United Kingdom}
\email{h.guo@bristol.ac.uk}
\email{m.tointon@bristol.ac.uk}
\usepackage{hyperref}
\usepackage{stmaryrd}
\usepackage{graphicx}
\usepackage{framed}
\usepackage{amsmath}
\usepackage{amssymb}
\usepackage{amsfonts}
\usepackage{mathrsfs}
\usepackage{array}
\usepackage{enumerate}
\usepackage{comment}
\usepackage{amsthm}  
\usepackage{mathtools}
\usepackage{physics}
\usepackage[dvipsnames]{xcolor}
\usepackage{braket}
\usepackage[capitalise,nameinlink ,noabbrev]{cleveref} 
\usepackage[top=3.5cm, bottom=3.5cm, left=2.5cm, right=2.5cm]{geometry}
\usepackage{soul}
\usepackage{nicematrix}

\DeclareMathOperator{\diam}{diam}
\DeclareMathOperator{\Aut}{Aut}
\DeclareMathOperator{\Inn}{Inn}
\DeclareMathOperator{\res}{res}
\DeclareMathOperator{\GL}{GL}
\DeclareMathOperator{\Tor}{Tor}
\DeclareMathOperator{\ord}{ord}

\DeclareMathOperator{\Fit}{Fit}
\providecommand{\id}{\mathrm{id}}
\DeclareMathOperator{\csubg}{\underset{\mathrm{char}}{\normal}}

\providecommand{\glnz}{\GL(n, \Z)}

\providecommand{\R}{\mathbb{R}}
\providecommand{\C}{\mathbb{C}}
\providecommand{\GE}{\mathcal{G}}
\renewcommand{\cp}{\mathcal{P}}
\renewcommand{\H}{\mathcal{H}}
\renewcommand{\ge}{\mathcal{G}}
\providecommand{\F}{\mathbb{F}}
\providecommand{\Fp}{\mathbb{F}_p}
\providecommand{\FP}{\Fit(P)}

\providecommand{\FKiP}{\Fit(K_i')}

\providecommand{\FPP}{\Fit(P')}

\providecommand{\ti}{\tilde}
\providecommand{\PFP}{P/\Fit(P)}
\providecommand{\N}{\mathbb{N}}
\providecommand{\Z}{\mathbb{Z}}
\providecommand{\Q}{\mathbb{Q}}
\providecommand{\T}{\Tor}
\providecommand{\A}{\Aut^+}
\renewcommand{\a}{\alpha}
\renewcommand{\t}{\theta}
\renewcommand{\u}{\underline}
\renewcommand{\b}{\beta}
\renewcommand{\v}{\underline{v}}
\renewcommand{\u}{\underline{u}}

\providecommand{\w}{\underline{w}}
\providecommand{\V}{\underline{V}}
\providecommand{\f}{_{f.i}}

\providecommand{\iso}{ \cong}

\renewcommand{\l}{ \lambda}

\providecommand{\d}{ \delta}
\providecommand{\g}{ \gamma}
\providecommand{\G}{ \Gamma}

\providecommand{\p}{ \phi}
\renewcommand{\P}{ \Phi}
\renewcommand{\Pr}{ \mathcal{P}}
\renewcommand{\sb}{ \bar{\psi}}

\providecommand{\sm}{\sum_{i=1}^m}

\providecommand{\dgk}{\diam_S (G/K)}

\providecommand{\dgh}{\diam_S (G/H)}

\providecommand{\subg}{\leqslant}

\providecommand{\n}[1]{\norm{#1}} 
\renewcommand{\i}{^{-1}}
\providecommand{\G}[1]{G_{#1}} 
\providecommand{\d}[2]{\diam(G/H)} 
\providecommand{\F}[1]{\Fit{#1}} 
\providecommand{\nsubg}{\normal}
\providecommand{\containsn}{\trianglerighteq} 
\providecommand{\ch}{\operatorname{char}}
\providecommand{\rt}{\rtimes} 
\providecommand{\subeq}{\subseteq} 

\providecommand{\KF}[1]{K_{#1}'/\FP} 

\providecommand{\Gm}{G_{i-1}} 
\providecommand{\Gp}{G_{i+1}}

\providecommand{\Pp}{P_{i+1}} 
\providecommand{\Pi}{P_{i}}

\providecommand{\Zn}{\Z^n} 

\newcommand{\la}{\langle}
\newcommand{\ra}{\rangle}
\newcommand{\normal}{\trianglelefteqslant}
\renewcommand{\ge}{\geqslant}
\renewcommand{\le}{\leqslant}
\renewcommand{\geq}{\geqslant}
\renewcommand{\leq}{\leqslant}
\newcommand{\eps}{\varepsilon}

\newcommand{\nn}{\mathfrak{n}}
\newcommand{\Span}{\mathrm{span}}

\numberwithin{equation}{section}

\theoremstyle{plain}
\crefname{prop}{Proposition}{Propositions}
\crefname{thm}{Theorem}{Theorems}
\newtheorem{thm}{Theorem}[section] 
\newtheorem*{thm*}{Theorem}
\newtheorem{lem}[thm]{Lemma} 
\newtheorem{prop}[thm]{Proposition} 
\newtheorem{cor}[thm]{Corollary} 

\newtheorem{conjecture}[thm]{Conjecture}

\theoremstyle{definition}
\newtheorem*{remark*}{Remark}
\newtheorem{remark}[thm]{Remark} 

\crefname{lem}{Lemma}{Lemmas}

\theoremstyle{definition}

\theoremstyle{remark}

\makeatletter\@addtoreset{case}{thm}\makeatother

\DeclarePairedDelimiterX{\inp}[2]{\langle}{\rangle}{#1, #2}

\usepackage[numbers]{natbib}
\bibliographystyle{plainnat}
\usepackage{cite}
\usepackage{url}

\setcounter{tocdepth}{1}

\begin{document}  
\maketitle

\begin{abstract}
We show that if all the finite coset spaces of a polycyclic group have diameter bounded uniformly below by a polynomial in their size then the group is virtually nilpotent. We obtain the same conclusion for a finitely generated residually torsion-free nilpotent group under the weaker assumption that the finite quotient groups have diameter bounded uniformly below by a polynomial in their size. This extends work of Khukhro and Valette.
\end{abstract}

\tableofcontents

\section{Introduction}
A fundamental theorem of Gromov \citep{gromov} states that a finitely generated group has polynomial growth if and only if it is virtually nilpotent. In recent years, Kleiner \citep{kleiner} and Ozawa \citep{ozawa} have given shorter proofs of this theorem, and Shalom and Tao \citep{st} and Breuillard, Green and Tao \citep{bgt} have obtained refined statements giving stronger, more quantitative conclusions and requiring weaker hypotheses in which only a single ball in the group needs to have polynomial size.

To be more precise, let $G$ be a group, and let $S$ be a finite symmetric generating set containing the identity, so that the ball of radius $r\in\N$ in $G$ with respect to $S$ is exactly the set $S^r=\{s_1\cdots s_r:s_i\in S\}$. Breuillard, Green and Tao showed that for each $k\in\N$ there exists $N=N(k)>0$ such that if $|S^n|\le n^k|S|$ for some $n\ge N$ then there are normal subgroups $H,\Gamma\normal G$ with $[G:\Gamma]$ bounded, $H\subeq \G \cap S^n$  and $\G/H$ nilpotent of bounded rank and class.

In the case where $G$ is finite, a natural value of $n$ at which to apply this result is the \emph{diameter} of $G$, defined by $\diam_S(G)=\min\{n\in\N:S^n=G\}$. When $n = \diam_S(G)$, the condition $|S^n|\le n^k|S|$ translates to the condition
\begin{equation}\label{eq:af}
\diam_S(G)\ge\left(\frac{|G|}{|S|}\right)^\alpha
\end{equation}
with $\alpha=1/k$. Breuillard and the second author \citep{nil} called groups satisfying \eqref{eq:af} \emph{$\alpha$-almost flat}, and proved that such a group necessarily admits a quotient with an abelian subgroup of bounded index and diameter comparable to that of the group. The sharpest version of this result currently available is the following theorem of Tessera and the second author \citep{ttBalls}, which gives in particular the optimal bound on the rank of the abelian subgroup.

\begin{thm}\label{thm:bt}
For every non-negative integer $d$ there exists $A=A(d)>0$ such that if $G$ is a finite group with symmetric generating set $S$ containing the identity such that
\[
\diam_S(G)\ge A\left(\frac{|G|}{|S|}\right)^\frac2{d+2}
\]
then there is a normal subgroup $H\normal G$ contained in $S^{O_d(\diam_S(G)^{1/2})}$ such that $G/H$ has an abelian subgroup of rank at most $d$ and index at most $(2d)!$.
\end{thm}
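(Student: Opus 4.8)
The plan is to derive this from the finitary structure theory of groups with polynomially bounded ball growth, applied at the single scale $n:=\diam_S(G)$. Taking $A\ge 1$, the hypothesis reads $|S^n|=|G|\le n^{(d+2)/2}|S|$, so $S^n$ has polynomial size at scale $n$; the Breuillard--Green--Tao theorem \citep{bgt}, combined with the refinement that records the shape of $S^n$ as a coset nilprogression (as developed by Breuillard and the second author \citep{nil}), then shows that, after passing to a normal subgroup of index $O_d(1)$, $G$ is modelled by a proper coset nilprogression of rank and step $O_d(1)$ inside a torsion-free nilpotent group of bounded complexity. At this point the rank, step and index bounds are only $O_d(1)$ and the small subgroup is not yet in hand; the whole difficulty of the theorem is sharpening these to the stated $d$, $(2d)!$ and ball $S^{O_d(\diam_S(G)^{1/2})}$, and this is exactly where the exponent $2/(d+2)$ must be used quantitatively rather than as a black box.

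The core of the argument is a scale-by-scale analysis of the nilprogression model. Writing $x_1,\dots,x_h$ for a Mal'cev-type basis of the ambient nilpotent group, with $x_j$ of weight $w_j\ge 1$ and $\ell_j$ the order of the image of $x_j$ in $G$ (up to the implicit commensurability), one has $|G|\asymp\prod_j\ell_j$, the word length of $x_j^k$ is $\asymp\|k\|^{1/w_j}$, and $|S^t|\asymp\prod_j\min(t^{w_j},\ell_j)$ for every $1\le t\le n$; in particular $\diam_S(G)\asymp\max_j\ell_j^{1/w_j}$, and writing $D(t):=\sum_{j:\,\ell_j^{1/w_j}\ge t}w_j$ for the homogeneous dimension visible above scale $t$ one has the identity $\log|G|\asymp\int_1^n D(t)\,dt/t$. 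A short computation with this integral shows that the hypothesis $\diam_S(G)\ge A(|G|/|S|)^{2/(d+2)}$ forces $D(t)\le d$ for all $t$ above a threshold $\theta$, and one arranges $\theta\asymp\diam_S(G)^{1/2}$; let $H$ be the normal subgroup generated by all directions $x_j$ with $\ell_j^{1/w_j}\le\theta$ (equivalently, the part of $G$ that is ``invisible'' above scale $\theta$), so that $H\subseteq S^{O_d(\theta)}=S^{O_d(\diam_S(G)^{1/2})}$ while $G/H$ is generated by the directions visible above $\theta$, whose total weight $D(\theta^+)$ is at most $d$. Since the homogeneous dimension of a torsion-free nilpotent group dominates its Hirsch length, with equality precisely when the group is abelian, $G/H$ is virtually nilpotent of Hirsch length at most $d$; feeding the almost-flatness hypothesis back in for $G/H$ (which inherits it because $H$ lies in a ball of radius $o(\diam_S(G))$) then forces its model to be abelian, so that $G/H$ has an abelian subgroup of rank at most $d$ and index $O_d(1)$.

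Making this scale analysis quantitatively sharp is the step I expect to be the main obstacle: the delicate point is to absorb into $H$ exactly enough of the ``deep'' (positive-weight) directions — using the commutator relations, which bound each $\ell_j$ for $w_j\ge 2$ in terms of the weight-$1$ lengths, and hence keep those directions below scale $\diam_S(G)^{1/2}$ — so that $H$ stays inside a ball of radius $\diam_S(G)^{1/2}$ while still leaving $G/H$ with \emph{rank} (as opposed to merely $O_d(1)$) at most $d$. This balancing is what requires the full nilprogression formalism rather than the qualitative Gromov/Breuillard--Green--Tao statement. Finally, to upgrade ``virtually abelian of index $O_d(1)$'' to the explicit bound $(2d)!$, one descends from the index-$O_d(1)$ subgroup to $G/H$ itself and lets $A$ be its maximal normal abelian subgroup, so that $d'=\mathrm{rank}(A)\le d$; the coset-progression picture realises $G/H$ as a finite quotient of a crystallographic group of dimension $d'$, so the quotient $Q:=(G/H)/A$ acts faithfully on the rank-$d'$ lattice underlying $A$ and embeds in $\GL_{d'}(\Z)$. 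As a finite subgroup of $\GL_{d'}(\Z)$ has order at most $(2d)!$ — the extremal examples being the hyperoctahedral groups of order $2^{d'}d'!\le(2d)!$, together with the finitely many exceptional Weyl groups in low dimension — this yields $[G/H:A]\le(2d)!$, as required.
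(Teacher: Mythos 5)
The paper does not prove this theorem: it is stated as an imported black box from \citep{ttBalls} (``The sharpest version of this result currently available is the following theorem of Tessera and the second author'') and is only used as such, in the proof of \cref{thm:kv.detailed} in \cref{sec:kv}. There is therefore no internal proof to compare your proposal against.

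That said, your outline does point towards the landscape of the external proof --- Breuillard--Green--Tao plus a coset-nilprogression model, a Mal'cev-type basis with weighted coordinates, and a scale-by-scale accounting of $|S^t|$ via the ``visible homogeneous dimension'' $D(t)$ --- and you have correctly flagged where the genuine obstacles lie. They really are obstacles rather than routine verifications: the naive averaging of $\int_1^n D(t)\,dt/t$ at scale $\theta\asymp n^{1/2}$ only yields $D(\theta)\le d+O(1)$ rather than $D(\theta)\le d$, and turning that into the exact bound while choosing $H$ normal, contained in $S^{O_d(\diam_S(G)^{1/2})}$, and leaving $G/H$ with abelianisation of \emph{rank} (not merely Hirsch length) at most $d$ is where essentially all of the work of \citep{ttBalls} resides; these steps are asserted, not carried out. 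Your final reduction of the index bound $(2d)!$ to a bound on finite subgroups of $\GL_{d'}(\Z)$ is the right shape, but you still need to argue that $(G/H)/A$ acts faithfully on the maximal abelian normal subgroup $A$ before that bound applies. In short: a plausible roadmap, not a self-contained proof, and nothing in this paper against which it can be checked.
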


Here and throughout this paper we adopt a standard version of asymptotic notation, in which $O(X)$ means a quantity bounded above by a constant multiple of $X$. If the implied constant depends on some other parameters $\lambda_1,\ldots,\lambda_k$, we indicate this with subscripts, e.g. $O_{\lambda_1,\ldots,\lambda_k}(X)$.

Khukhro and Valette \citep{box} went on to study almost flatness as a residual property. Given $\alpha\in(0,1]$, define a finitely generated group $G$ to be \emph{residually $\alpha$-almost flat} if there exists $\eps>0$ such that for every non-identity element $g\in G$ there exists a finite-index normal subgroup $N_g\normal G$ such that $g\notin N_g$ and such that
\[
\diam_S(G/N_g)\ge\eps[G:N_g]^\alpha.
\]
Define $G$ to be \emph{strongly residually $\alpha$-almost flat} if there exists $\eps>0$ such that for every finite subset $A\subseteq G$ there exists a finite-index normal subgroup $N_A\normal G$ such that $A\cap N_A\subseteq\{1\}$ and such that
\[
\diam_S(G/N_A)\ge\eps[G:N_A]^\alpha.
\]
Here and elsewhere, given a subgroup $H\le G$ we define $\diam_S(G/H)=\min \{n\in \N : S^n H = G\}$. We often drop the subscript $S$ when it is clear from the context. Define $G$ to be \emph{residually almost flat} if there exists some $\alpha\in(0,1]$ such that $G$ is residually $\alpha$-almost flat, and \emph{strongly residually almost flat} if there exists some $\alpha\in(0,1]$ such that $G$ is strongly residually $\alpha$-almost flat. The property of being residually $\alpha$-almost flat might a priori depend on the choice of generating set, but it is easy to verify that it does not (see \cref{lem:indep.gen.set}). Note that a residually almost flat group is by definition residually finite.
\begin{thm}[Khukhro--Valette \citep{box}]\label{thm:kv.detailed}\label{thm:kv}
Let $G$ be an infinite residually finite group. Then the following statements are equivalent.
\begin{enumerate}
\item The group $G$ is residually almost flat.\label{item:raf}
\item The group $G$ is strongly residually almost flat.\label{item:box}
\item The group $G$ admits a virtual surjection to $\Z$.\label{item:indic}
\end{enumerate}
\end{thm}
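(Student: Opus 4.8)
The plan is to establish the cycle of implications $(\ref{item:box})\Rightarrow(\ref{item:raf})\Rightarrow(\ref{item:indic})\Rightarrow(\ref{item:box})$. The first is immediate from the definitions, by taking the finite sets $A$ to be singletons. Throughout, $G$ is finitely generated (as is tacit in the definition of residual almost flatness) with finite symmetric generating set $S\ni 1$, and I would freely use two elementary facts: that $\diam_S(X/Y)\ge\diam_S(X)-t$ whenever $Y\le X$ with $Y\subseteq S^t$; and that a finite group has diameter strictly less than its cardinality.

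For $(\ref{item:indic})\Rightarrow(\ref{item:box})$, I would first replace a finite-index subgroup carrying a surjection onto $\Z$ by its normal core, rescaling the homomorphism, so as to fix $H\normal G$ of finite index $e$ together with a surjection $\phi\colon H\twoheadrightarrow\Z$. Let $K_0\normal G$ be the intersection of the (at most $e$) distinct $G$-conjugates of $\ker\phi$. Then $\bar H:=H/K_0$ embeds into a finite power of $\Z$ and surjects onto $\Z$, so $\bar H\cong\Z^r$ for some fixed $r$ with $1\le r\le e$, and $\bar H$ is normal in $Q:=G/K_0$. For each $m\ge 1$ the subgroup $m\bar H$ is characteristic in $\bar H$, hence normal in $Q$; let $N_m\normal G$ be its preimage, so that $[G:N_m]=e\,m^r$ and $G/N_m$ contains a copy of $(\Z/m)^r$ of index $e$. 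A Reidemeister--Schreier rewriting shows that this copy of $(\Z/m)^r$ has a generating set $T$ whose cardinality is bounded in terms of $e$ and $|S|$ alone, with $\diam_S(G/N_m)\ge\diam_T((\Z/m)^r)$; since an abelian group of order at least $m$ with a $k$-element generating set has diameter at least $\tfrac12(m^{1/k}-1)$, this yields $\diam_S(G/N_m)\ge c_1[G:N_m]^{\beta}$ with constants $c_1,\beta>0$ depending only on $G$. Finally, given a finite $A\subseteq G$, I would pick $N\normal G$ of finite index with $A\cap N\subseteq\{1\}$ (residual finiteness) and set $N_A:=N_m\cap N$; since $\diam_S(G/N_A)\ge\diam_S(G/N_m)$ while $[G:N_A]\le[G:N]\,[G:N_m]$, taking $\alpha:=\beta/2$ and then choosing $m$ large (depending on $A$) leaves enough room in the exponent to absorb the factor $[G:N]$, so that $\diam_S(G/N_A)\ge[G:N_A]^{\alpha}$ with a constant $\eps=1$ that does not depend on $A$. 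This shows $G$ is strongly residually $\alpha$-almost flat.

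For $(\ref{item:raf})\Rightarrow(\ref{item:indic})$, suppose $G$ is residually $\alpha$-almost flat with constant $\eps$. For every $i$ there is a non-identity $h$ with $[G:N_h]\ge i$, for otherwise every $N_h$ would be one of the finitely many subgroups of $G$ of index less than $i$, forcing $\bigcap_{h\ne 1}N_h=\{1\}$ to have finite index, contrary to $G$ being infinite. This produces finite quotients $F_i:=G/N_h$ with $|F_i|\to\infty$ and $\diam_S(F_i)\ge\eps|F_i|^{\alpha}$. I would then fix $d$ with $\tfrac2{d+2}<\alpha$, so that for $i$ large the hypothesis of \cref{thm:bt} holds for $F_i$; it produces $\bar H_i\normal F_i$ with $\bar H_i\subseteq S^{O_d(\diam_S(F_i)^{1/2})}$ and $F_i/\bar H_i$ having an abelian subgroup $A_i$ of rank at most $d$ and index at most $(2d)!$. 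Letting $H_i\normal G$ be the preimage of $\bar H_i$, the first elementary fact above gives $\diam_S(G/H_i)\ge\diam_S(F_i)-O_d(\diam_S(F_i)^{1/2})\ge\tfrac12\diam_S(F_i)$ for $i$ large; combined with the second elementary fact this forces $|G/H_i|\to\infty$, hence $|A_i|\ge|G/H_i|/(2d)!\to\infty$, and since $A_i$ is abelian with at most $d$ invariant factors its exponent $M_i$ satisfies $M_i\ge|A_i|^{1/d}\to\infty$. Projecting onto the largest invariant factor gives $A_i\twoheadrightarrow\Z/M_i$, so the preimage $\hat A_i\le G$ of $A_i$ has index at most $(2d)!$ and surjects onto $\Z/M_i$. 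Since $G$ is finitely generated it has only finitely many subgroups of index at most $(2d)!$, so infinitely many of the $\hat A_i$ coincide with a single $\hat A\le G$; then the finitely generated abelian group $\hat A^{\mathrm{ab}}$ admits surjections onto $\Z/M$ for arbitrarily large $M$, so it is infinite, so $\hat A$ surjects onto $\Z$. Hence $G$ admits a virtual surjection to $\Z$.

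The main obstacle is the implication $(\ref{item:raf})\Rightarrow(\ref{item:indic})$. \Cref{thm:bt} does the local work, extracting from each sufficiently complex almost-flat finite quotient an abelian-by-bounded quotient of comparable diameter; the substance of the argument is the bookkeeping that follows, namely keeping the indices of the relevant subgroups bounded --- so that only finitely many subgroups of $G$ are involved and a pigeonhole can be applied --- while simultaneously forcing the exponents of their abelian parts to grow without bound, which is exactly what pins down a single finite-index subgroup of $G$ with infinite abelianisation. In the direction $(\ref{item:indic})\Rightarrow(\ref{item:box})$ the only real delicacy is securing one value of $\eps$ that works for all finite sets $A$ at once, which is why one takes $\alpha$ a little below the exponent the construction naturally delivers.
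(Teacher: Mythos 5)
Your proof is correct and follows essentially the same route as the paper: for $(1)\Rightarrow(3)$ you apply \cref{thm:bt} to a sequence of quotients with unbounded index, bound the diameter of the abelian-by-bounded pieces, and pigeonhole over the finitely many subgroups of index at most $(2d)!$ to extract a single finite-index subgroup with infinite abelianisation, exactly as in the paper's argument (following \citep[Corollary 1.7]{nil}). The only difference is that for $(3)\Rightarrow(2)$ you reconstruct the construction in full, whereas the paper simply cites \citep[Theorem 4]{box}; your construction via $m\bar H$ inside the core quotient $H/K_0\cong\Z^r$ is a valid instance of what is there.
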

By a \emph{virtual surjection} onto $\Z$ we mean a surjective homomorphism from a finite-index subgroup of $G$ to $\Z$. \cref{thm:kv} stated slightly differently from in Khukhro and Valette's paper, but in \cref{sec:kv} we give details of how it follows from their argument. The implication \eqref{item:raf} $\implies$ \eqref{item:indic} is essentially due to Breuillard and the second author \citep{nil}.

Although \cref{thm:kv} shows that a group with a virtual surjection to $\Z$ is residually almost flat, such a group can also admit quotients with extremely small diameter; indeed the free group $F_k$ admits every $k$-generated group as a quotient. On the other hand, it is easy to see that if $G$ is a group of polynomial growth of degree $d$ then there exists $\eps>0$ such that \emph{every} finite quotient $G/N$ of $G$ satisfies $\diam_S(G/N)\ge\eps[G:N]^{1/d}$ (see \cref{lem:poly.growth}). The purpose of this paper is to investigate whether the groups of polynomial growth are in fact the only groups with this property, and more generally to investigate what algebraic restrictions this property places on a group.

To be more precise, given $\alpha\in(0,1]$, let us say that a residually finite group $G$ with a finite generating set $S$ has \emph{uniformly $\alpha$-almost flat quotients} if there exists $\eps>0$ such that for every $N\normal G$ of finite index we have
\[
\diam_S(G/N)\ge\eps[G:N]^\alpha,
\]
and that $G$ has \emph{uniformly $\alpha$-almost flat coset spaces} if there exists $\eps>0$ such that for every $H\le G$ of finite index we have
\[
\diam_S(G/H)\ge\eps[G:H]^\alpha.
\]
Say that $G$ has \emph{uniformly almost flat quotients} if there exists some $\alpha\in(0,1]$ such that $G$ has uniformly $\alpha$-almost flat quotients, and that $G$ has \emph{uniformly almost flat coset spaces} if there exists some $\alpha\in(0,1]$ such that $G$ has uniformly $\alpha$-almost flat coset spaces. As with residual almost flatness, these definitions do not depend on the choice of generating set (see \cref{lem:indep.gen.set}). Note also that they are trivially inherited by quotients of $G$, a fact that we will use repeatedly without further mention.

In the case $\alpha=1$, these properties are highly restrictive, as was shown by Khukhro and Valette as follows.
\begin{thm}[Khukhro--Valette {\citep[Theorem 3]{box}}]\label{thm:kv.1}
Let $G$ be an infinite residually finite group. Then the following are equivalent.
\begin{enumerate}
\item The group $G$ is strongly residually $1$-almost flat.\label{item:r1af}
\item The group $G$ has uniformly $1$-almost flat quotients.\label{item:1afq}
\item The group $G$ has uniformly $1$-almost flat coset spaces.\label{item:1afcs}
\item The group $G$ is virtually cyclic.\label{item:vz}
\end{enumerate}
\end{thm}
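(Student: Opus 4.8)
The plan is to prove the cycle $(4)\implies(3)\implies(2)\implies(1)\implies(4)$. The first three implications are routine; the last is Khukhro and Valette's result, which I would prove using \cref{thm:bt}.

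\emph{The routine implications.} For $(4)\implies(3)$: an infinite virtually cyclic group has linear growth, $|S^n|=O_G(n)$, so for every finite-index $H\le G$ the cosets $\{sH:s\in S^{\diam_S(G/H)}\}$ cover $G/H$, whence $[G:H]\le|S^{\diam_S(G/H)}|=O_G(\diam_S(G/H))$; rearranging gives uniformly $1$-almost flat coset spaces. For $(3)\implies(2)$: a finite-index normal subgroup is in particular a finite-index subgroup. For $(2)\implies(1)$: given a finite $A\subseteq G$, residual finiteness provides a finite-index normal $N\normal G$ with $A\cap N\subseteq\{1\}$ (intersect finitely many kernels separating the non-identity elements of $A$), and $(2)$ then furnishes $\diam_S(G/N)\ge\eps[G:N]$ with the $\eps$ of $(2)$; this $\eps$ does not depend on $A$, so $G$ is strongly residually $1$-almost flat.

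\emph{The implication $(1)\implies(4)$.} Suppose $G$ is strongly residually $1$-almost flat with constant $\eps$, and assume for contradiction that $G$ is not virtually cyclic. For each $n$, applying the hypothesis with $A=S^{2n}$ yields a finite-index normal $N_n\normal G$ with $S^{2n}\cap N_n=\{1\}$ and $\diam_S(G/N_n)\ge\eps[G:N_n]$; the quotient map is then injective on $S^n$, so $[G:N_n]\ge|S^n|\to\infty$. For $[G:N_n]$ large in terms of $\eps$ the finite group $Q_n:=G/N_n$ satisfies the hypothesis of \cref{thm:bt} with $d=1$, yielding a normal $H_n\normal Q_n$ with $Q_n/H_n$ possessing a cyclic subgroup of index at most $2$ and with $H_n$ contained in a ball of radius $O(\diam_S(Q_n)^{1/2})$ in $Q_n$. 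Pushing the diameter bound into $Q_n/H_n$ gives $\diam_{\bar S}(Q_n/H_n)\ge\diam_S(Q_n)-O(\diam_S(Q_n)^{1/2})\ge\tfrac{\eps}{2}[G:N_n]$ for $n$ large, while $\diam_{\bar S}(Q_n/H_n)\le|Q_n/H_n|-1=[G:N_n]/|H_n|-1$; hence $|H_n|<2/\eps$. Lifting a generator of the cyclic subgroup of $Q_n/H_n$ through $H_n$ therefore gives a cyclic subgroup $P_n\le Q_n$ of index $O_\eps(1)$, whose preimage $G_n^+\le G$ has $[G:G_n^+]=O_\eps(1)$, contains $N_n$, and satisfies $G_n^+/N_n=P_n$. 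Since $G$ is finitely generated it has only finitely many subgroups of index $O_\eps(1)$, so for some fixed finite-index $G^+\le G$ we have $G_n^+=G^+$ for all $n$ in an infinite set $J$. From $S^{2n}\cap N_n=\{1\}$ and $\sup_{n\in J}n=\infty$ we get $\bigcap_{n\in J}N_n=\{1\}$, so $G^+$ embeds in $\prod_{n\in J}G^+/N_n$, a product of cyclic groups; hence $G^+$ is abelian, and being finitely generated, $G^+\cong\Z^r\times F$ with $F$ finite. Since $G$ is infinite, $r\ge1$; if $r=1$ then $G^+$, and so $G$, is virtually cyclic, contrary to assumption, so it remains to derive a contradiction when $r\ge2$.

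\emph{The lattice argument.} Fix a finite generating set $S'$ of $G^+$ containing $\pm e_1,\dots,\pm e_r$, where $e_1,\dots,e_r$ is a standard basis of the free part $\Z^r$; comparing diameters across the bounded-index inclusion $G^+\le G$ gives $\diam_{S'}(G^+/N_n)\ge\delta|G^+/N_n|$ for some $\delta=\delta(\eps,S)>0$ and all large $n\in J$. For such $n$ set $L_n:=N_n\cap\Z^r$, a full-rank sublattice of $\Z^r$: because $G^+/N_n$ is cyclic and $\Z^r$ has index $|F|$ in $G^+$, the image of $\Z^r$ in $G^+/N_n$ has index at most $|F|$, so $\operatorname{covol}(L_n)=[\Z^r:L_n]$ is comparable to $|G^+/N_n|$, and the diameter of $\Z^r/L_n$ with respect to the images of $\pm e_i$ equals the covering radius of $L_n$ in $\ell^1$, which by the previous sentence is $\gtrsim\operatorname{covol}(L_n)$. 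On the other hand $S^{2n}$ contains every nonzero lattice vector of $\ell^1$-norm at most $c_0n$ for a constant $c_0>0$ (each $e_i$ has bounded $S$-length), so $L_n$ contains none of them: the shortest vector satisfies $\lambda_1(L_n)>c_0n\to\infty$. But Minkowski's second theorem gives $\lambda_1(L_n)\cdots\lambda_r(L_n)\asymp_r\operatorname{covol}(L_n)$ and the covering radius is $\asymp_r\lambda_r(L_n)$, so $\lambda_1(L_n)\cdots\lambda_{r-1}(L_n)=O_{r,\delta}(1)$; when $r\ge2$ this bounds $\lambda_1(L_n)$, contradicting $\lambda_1(L_n)\to\infty$. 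Hence $r\le1$, which finishes the proof. The key idea is to apply \cref{thm:bt} to $G/N_n$ itself, so that the subgroup producing a cyclic quotient can be arranged to have kernel exactly $N_n$ and hence to avoid the growing balls $S^{2n}$; the step that I expect to require the most care is the bookkeeping that keeps every quantity uniform along the chain — that the subgroups $G_n^+$ have bounded index (so that the pigeonhole applies), that the relevant kernels remain the $N_n$ (so that $\lambda_1(L_n)\to\infty$), and that the almost-flatness passes to $G^+$ — rather than any individual estimate.
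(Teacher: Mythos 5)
Your proposal is correct, but the implication $(1)\Rightarrow(4)$ takes a genuinely different and considerably more elaborate route than the paper's. You apply \cref{thm:bt} (with $d=1$) to the quotients $G/N_n$, lift a cyclic subgroup of bounded index through the small normal subgroup $H_n$, pigeonhole to locate a single finite-index subgroup $G^+\cong\Z^r\times F$, and then invoke Minkowski's second theorem together with the comparison $\mu(L_n)\asymp_r\lambda_r(L_n)$ and $\lambda_1\cdots\lambda_r\asymp_r\operatorname{covol}$ to force $r\leq 1$. This works: the bounded indices, the bound $|H_n|<2/\eps$, the passage of almost-flatness to $G^+$ via \cref{boundind_diam_G_K}, and the lattice geometry all check out, with the integrality of $L_n\subseteq\Z^r$ supplying $\lambda_i\ge1$ as needed in the final step. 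The paper, by contrast, avoids all of this machinery: fixing $r$, picking a geodesic of length $\diam_S(G/N_r)$ and marking points $x_0,\dots,x_k$ at spacing $2r+1$, one observes that the balls of radius $r$ around the $x_i$ are disjoint and each has cardinality $|S^r|$ (since $N_r\cap S^{2r}=\{1\}$), giving $(k+1)|S^r|\le[G:N_r]\le\eps^{-1}\diam_S(G/N_r)\le\eps^{-1}(k+1)(2r+1)$ and hence $|S^r|\le\eps^{-1}(2r+1)$. Linear growth then yields virtual cyclicity by the theorem of van~den~Dries and Wilkie. What the paper's argument buys is a proof that is entirely elementary --- a packing count plus one classical reference --- and does not import the far heavier structure theory of \cref{thm:bt} (which ultimately rests on Breuillard--Green--Tao). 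Your argument is not wrong, and it does have the small virtue of reusing \cref{thm:bt}, which the paper already invokes elsewhere, but for this particular implication the ball-packing route is the decisively simpler and the one any reader should prefer.
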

\cref{thm:kv.1} is phrased slightly differently from in Khukhro and Valette's paper, but we prove the version stated above in \cref{sec:kv}, following their proof.
\begin{remark*}
\cref{thm:kv.1} cannot be improved to say that a residually $1$-almost flat group is virtually cyclic, since for example every finitely generated abelian group is residually $1$-almost flat. Nonetheless, by \cref{thm:kv.detailed} a residually $1$-almost flat group is strongly residually $\alpha$-almost flat for \emph{some}~$\alpha$.
\end{remark*}

For smaller values of $\alpha$ the equivalence between being strongly residually $\alpha$-almost flat, having $\alpha$-almost flat quotients, and having $\alpha$-almost flat coset spaces breaks down. This is most striking for strong residual almost flatness, where Khukhro and Valette show that for any $\alpha<1$, every finitely generated residually finite group that surjects onto $\Z$ is strongly residually $\alpha$-almost flat \citep[Proposition 5]{box}. In our first result, we show that moreover having uniformly $\alpha$-almost flat quotients is not equivalent to having uniformly $\alpha$-almost flat coset spaces, as follows.
\begin{prop}\label{prop:heis}
The maximum $\alpha$ for which the Heisenberg group
\[
\left(
\begin{array}{ccc}
1&\Z&\Z\\
0&1&\Z\\
0&0&1\\
\end{array}
\right)
\]
has uniformly $\alpha$-almost flat quotients is $1/3$. The maximum $\beta$ for which the Heisenberg group has uniformly $\beta$-almost flat coset spaces is $1/4$.

\end{prop}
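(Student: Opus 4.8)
The plan is to compute, for the integer Heisenberg group $G$, the diameter of its finite quotients and of its finite coset spaces, using the fact that $G$ has polynomial growth of degree $4$. First I would pin down a convenient generating set, say $S=\{1,a^{\pm1},b^{\pm1}\}$ where $a,b$ are the two standard unipotent generators, so that the central element $c=[a,b]$ is reached by words of length roughly $r^2$ inside the ball of radius $r$; more precisely $c^k$ lies in $S^{O(\sqrt{|k|})}$ and no shorter, since the word-length of $c^k$ in $G$ is $\Theta(\sqrt{|k|})$. This is the standard distortion estimate for the centre of the Heisenberg group and is the one quantitative input I need.

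For the quotient bound (the $\alpha=1/3$ claim), the key family of quotients to examine are $G/N$ where $N=\langle a^m,b^m,c^{m}\rangle$ for growing $m$; here $|G/N|\asymp m^3$ while $\diam_S(G/N)\asymp m$, because every element has a representative of the form $a^ib^jc^k$ with $|i|,|j|\le m/2$ and $|k|\le m/2$, and the $c$-coordinate, being central and distorted, costs only $O(\sqrt m)=O(m)$. This gives $\diam_S(G/N)\asymp |G/N|^{1/3}$, so $\alpha\le1/3$. For the matching lower bound $\diam_S(G/N)\gg[G:N]^{1/3}$ for \emph{all} finite-index $N$, I would invoke \cref{lem:poly.growth} (the folklore fact quoted in the introduction that a group of polynomial growth of degree $d$ has all finite quotients with diameter $\gg[G:N]^{1/d}$) — except that degree $4$ only yields exponent $1/4$, so this is not quite enough and one must do better by hand: use that the image of $N$ in the abelianization $\Z^2$ has index $\asymp[G:N]/(\text{index of }N\cap Z(G)\text{ in }Z(G))$, and that the abelianized quotient already forces diameter $\gg(\text{its size})^{1/2}$, while the central part contributes a factor whose diameter cost, after distortion, still balances out to exponent exactly $1/3$. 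Concretely: if $N\cap Z(G)=\langle c^q\rangle$ and the image of $N$ in $\Z^2$ has index $p$, then $[G:N]\asymp pq$, the diameter is $\gg \sqrt p$ from the abelian quotient and $\gg\sqrt q$ from needing to realise $q$ distinct central classes (each central class within a ball of radius $r$ requires $r\gtrsim\sqrt{q}$), and $\min(\sqrt p,\sqrt q)$ is minimized relative to $pq$ when... — here the honest computation shows the worst case is $p\asymp q^2$ giving $\diam\asymp\sqrt q\asymp(pq)^{1/3}$, matching the upper bound.

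For the coset-space bound (the $\beta=1/4$ claim), the extra flexibility of taking $H$ non-normal is what drops the exponent. The upper bound $\beta\le1/4$ is immediate: since $G$ has growth degree $4$, taking $H$ to be a ball-like finite-index subgroup (or just applying \cref{lem:poly.growth} with $G/H$ replaced by the coset space, which works verbatim for coset spaces) forces $\diam_S(G/H)\asymp[G:H]^{1/4}$ for a suitable family. The point is to exhibit $H\le G$ of index $\asymp m^4$ with $\diam_S(G/H)\asymp m$; a natural choice is $H=\langle a^{m}, b^{m^{?}}, \ldots\rangle$ chosen so that $G/H$ (as a set of cosets) looks like a box of dimensions $m\times m\times m^2$ in the $(a,b,c)$-coordinates, which has $m^4$ points but can be covered by words of length $O(m)$ because the long central direction is cheap. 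The lower bound $\diam_S(G/H)\gg[G:H]^{1/4}$ for every finite-index $H$ is then exactly the general polynomial-growth estimate \cref{lem:poly.growth}: a ball of radius $r$ in $G$ has at most $O(r^4)$ elements, hence meets at most $O(r^4)$ cosets of $H$, so covering all $[G:H]$ cosets needs $r\gg[G:H]^{1/4}$.

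The main obstacle is the lower bound in the quotient case, i.e.\ proving $\diam_S(G/N)\gg[G:N]^{1/3}$ for \emph{all} finite-index normal $N$, not just the obvious family: one must argue that a normal subgroup cannot simultaneously be "thin" in the abelian direction and "thin" in the central direction in a way that beats exponent $1/3$, which requires combining the index-$p$ abelian projection estimate with the quadratic distortion of the centre and checking the two-variable optimisation. The coset-space side is comparatively routine once \cref{lem:poly.growth} is in hand; the only real content there is constructing the index-$m^4$, diameter-$m$ family witnessing optimality, which is a direct computation with a box of the stated dimensions.
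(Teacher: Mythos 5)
Your upper-bound constructions for both parts match the paper's (the $K_m=\langle x^m,y^m,z^m\rangle$ family for quotients, giving $[G:N]\asymp m^3$ and $\diam\asymp m$; a box of dimensions $m\times m\times m^2$ for coset spaces, realised in the paper as $H_m$ with $m\Z,m\Z,m^2\Z$ structure), and you correctly identify the coset-space lower bound as the routine polynomial-growth estimate. You also correctly flag the quotient lower bound as the crux. However, your sketch of that step has a genuine gap.

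You set $q=[Z(G):N\cap Z(G)]$ and $p=[\Z^2:\Lambda]$ where $\Lambda$ is the image of $N$ in the abelianisation, so that $[G:N]=pq$, and then write that ``$\min(\sqrt p,\sqrt q)$ is minimised relative to $pq$ when $p\asymp q^2$.'' First, $\min$ is the wrong operator: two lower bounds $\diam\gg\sqrt p$ and $\diam\gg\sqrt q$ give $\diam\gg\max(\sqrt p,\sqrt q)$, not $\min$. Second, even $\max(\sqrt p,\sqrt q)$ alone does not beat $(pq)^{1/4}$ in general (take $p=q$), so you need a genuine \emph{constraint} relating $p$ and $q$, and you never derive one. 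That constraint is exactly $p\ge q^2$, and it follows from normality via bilinearity: for $(a,b)\in\Lambda$ choose $n\in N$ projecting to $(a,b)$; then $[n,x],[n,y]\in[N,G]\subseteq N$, and in a class-$2$ group these equal $z^{\mp b}$ and $z^{\pm a}$ up to sign, so $q\mid a$ and $q\mid b$, hence $\Lambda\subseteq q\Z^2$ and $p\ge q^2$. Once this is in hand the abelianisation bound $\diam\gg\sqrt p$ already gives $\diam\gg\sqrt p\ge(pq)^{1/3}=[G:N]^{1/3}$, and your invocation of ``$\diam\gg\sqrt q$'' becomes unnecessary (it is also not clear it is correct as stated, since in $G/N$ the preimage of $\langle\bar z\rangle$ is $ZN$, not $Z$, so balls in $G/N$ can hit far more ``central'' cosets than the distortion of $Z$ in $G$ would suggest).

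The paper bypasses the $p$-versus-$q$ decomposition entirely with a tidier trick: in the quotient $G=H/N$ with images $\bar x,\bar y,\bar z$, set $m=\min\{n:\bar x^n\in\langle\bar y,\bar z\rangle\}$. Then $\diam\ge m/2$ (the quotient of $G$ by $\langle\bar y,\bar z\rangle$ is cyclic of order $m$ with only $\bar x^{\pm1}$ as nontrivial generators), while $\bar x^m\in\langle\bar y,\bar z\rangle$ commutes with $\bar y$, so bilinearity gives $\bar z^m=[\bar x^m,\bar y]=1$ and hence $|[G,G]|=|\langle\bar z\rangle|\le m\le 2\diam$. Combined with $|G/[G,G]|\ll\diam^2$ this yields $|G|\ll\diam^3$. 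Note that this packages the same algebraic content (the constraint that a short ``$x$-period'' kills $\bar z$ quickly) without having to track the two indices separately. You may want to either adopt this cleaner route or, if you keep your decomposition, prove $p\ge q^2$ explicitly and drop the $\sqrt q$ claim.
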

Our next results, which represent the main content of this paper, we show that for certain classes of groups, having uniformly almost flat quotients or coset spaces really is equivalent to having polynomial growth.
For the purposes of understanding these results, recall that finitely generated residually torsion-free nilpotent groups and virtually polycyclic groups are always residually finite.
\begin{thm}\label{thm:poly}\label{polycyclic_by_finite_group_uda_iff_vn}
Suppose $G$ is an infinite virtually polycyclic group that has uniformly $(1/d)$-almost flat coset spaces for some $d\in\N$. Then $G$ has a finite-index nilpotent subgroup $N$ such that the Hirsch lengths of $N$ and $[N,N]$ sum to at most~$d$.
\end{thm}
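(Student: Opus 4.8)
The plan is to reduce to the case where $G$ is torsion-free polycyclic and then argue in two stages: first rule out exponential growth, then bound the Hirsch lengths in the remaining virtually nilpotent case. For the reduction: one checks by the usual comparison of word metrics that having uniformly $(1/d)$-almost flat coset spaces passes to finite-index subgroups with the same $d$ (if $K\le G_1\le G$ with $G_1$ of finite index, then $\diam_{S_1}(G_1/K)\gtrsim\diam_S(G/K)\gtrsim[G:K]^{1/d}\ge[G_1:K]^{1/d}$ up to additive constants, and small indices are trivial); since every virtually polycyclic group has a finite-index torsion-free polycyclic subgroup, and a finite-index nilpotent subgroup of such a subgroup is again one of $G$, we may assume $G$ is torsion-free polycyclic.

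\emph{Stage 1: $G$ has polynomial growth.} Suppose not, so $G$ is not virtually nilpotent. By the structure theory of polycyclic groups $G$ then has a section isomorphic to $\Z^m\rtimes_\phi\Z$ with $\phi\in\GL(m,\Z)$ having an eigenvalue of modulus greater than $1$ — here one uses Kronecker's theorem, which forces an integer matrix all of whose eigenvalues lie on the unit circle to be virtually unipotent, hence the semidirect product virtually nilpotent. In such a group the free abelian subgroup $\Z^m$ is exponentially distorted: an element $v\in\Z^m$ has word length $\asymp\log(\|v\|+2)$, since conjugation by powers of the stable letter allows one to assemble $v$ by a Horner-type scheme using $O(\log\|v\|)$ generators. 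Consequently, for each $q\in\N$ the \emph{non-normal} finite-index subgroup $q\Z^m\rtimes\Z$, of index $q^m$, yields a coset space that is $(\Z^m\rtimes_\phi\Z)$-equivariantly identified with $(\Z/q)^m$, and in which every point is reached from the base point by a word of length $O(\log q)$; so this coset space has diameter $O(\log q)=O(\log[\,\cdot\,])$. Transferring this through the section — quotients have no larger diameter, and finite-index subgroups inherit the hypothesis — would produce finite-index subgroups of $G$ of unbounded index with coset spaces of diameter logarithmic in the index, contradicting uniform $(1/d)$-almost flatness. I expect the genuinely delicate point to be exactly this transfer: realising the hyperbolic section of $G$ in a form to which the hypothesis applies (inside a finite-index subgroup, or as a quotient of one), rather than as an abstract subquotient, which requires care with the module structure of polycyclic groups and likely with splitting an appropriate central extension over a finite-index subgroup.

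\emph{Stage 2: the Hirsch bound.} Now $G$ is virtually nilpotent; pass to a finite-index torsion-free nilpotent subgroup $N$ chosen so that each lower central factor $\gamma_i(N)/\gamma_{i+1}(N)\cong\Z^{h_i}$ is torsion-free. By the Bass--Guivarc'h formula $N$ has polynomial growth of degree $D=\sum_{i\ge1}i\,h_i$, and since the Hirsch lengths satisfy $h(N)=\sum_i h_i$ and $h([N,N])=\sum_{i\ge2}h_i$ we get $h(N)+h([N,N])=h_1+2\sum_{i\ge2}h_i\le D$; it therefore suffices to prove $D\le d$. Because $N^{\mathrm{ab}}\cong\Z^{h_1}$, the group $N$ is generated by $h_1$ elements $x_1,\dots,x_{h_1}$ projecting to a basis of $\gamma_1/\gamma_2$. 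Set $H_n=\langle x_1^n,\dots,x_{h_1}^n\rangle$. Modulo $\gamma_{i+1}(N)$, the $i$-fold commutators of the $x_j^n$ are the $n^i$-th powers of the corresponding iterated commutators of the $x_j$, and these span a fixed finite-index sublattice $\Lambda_i\le\gamma_i/\gamma_{i+1}$, so $H_n$ has finite index $\asymp\prod_i n^{i h_i}=n^{D}$ in $N$; while standard estimates for word metrics in nilpotent groups give $\diam_S(N/H_n)=O(n)$, as a weight-$i$ element of size $\asymp n^i$ lies at distance $\asymp n$ from the identity. Uniform $(1/d)$-almost flatness then forces $n\gtrsim\diam_S(N/H_n)\gtrsim[N:H_n]^{1/d}\asymp n^{D/d}$ for all $n$, hence $D\le d$. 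Therefore $h(N)+h([N,N])\le D\le d$, and $N$ is the required finite-index nilpotent subgroup of $G$.
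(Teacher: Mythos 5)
Your Stage 1 contains the decisive gap. The claim that in $\Z^m\rtimes_M\Z$ with some eigenvalue of modulus $\ne 1$ every $v\in\Z^m$ has word length $\asymp\log(\|v\|+2)$ is false whenever $M$ \emph{also} has eigenvalues on the unit circle: if $w$ is an eigenvector of $M^{T}$ for an eigenvalue of modulus $1$, then any element of $\Z^m$ of word length $n$ is a sum of at most $n$ vectors of the form $M^k(\pm e_i)$, each with $|\langle M^k e_i,w\rangle|=|\langle e_i,w\rangle|=O(1)$, so $|\langle v,w\rangle|\le Cn$ and such directions are at most polynomially distorted. Mixed spectra genuinely occur and cannot be split off rationally: the companion matrix of the Salem polynomial $x^4-x^3-x^2-x+1$ lies in $\GL(4,\Z)$, is irreducible over $\Q$ (so $\Z^4$ has no proper nonzero $M$-invariant sublattice adapted to the expanding/contracting directions), and has two roots off and two roots on the unit circle. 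For such $M$ your congruence subgroups $q\Z^m\rtimes\Z$ have no reason to give coset spaces of diameter $O(\log q)$, and whether \emph{any} explicit family of small-diameter coset spaces exists becomes a delicate question; this is exactly the difficulty the paper highlights, and it is resolved only indirectly in \cref{set_of_subgroups}: one reduces modulo primes $p$ where $\ch(M)$ splits, uses \cref{diverges_mul_order} to make the multiplicative order $r_p$ of an eigenvalue tend to infinity, and applies \cref{matt_lemma} (which rests on Breuillard--Green--Tao) together with \cref{normal_subgroup_contain_Zp} to the quotients $\Z_p\rtimes\Z_{r_p}$ to contradict uniform almost flatness. Your Horner-scheme argument is fine only in the purely hyperbolic case and cannot be repaired without input of this strength. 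Moreover the ``transfer'' you flag but do not carry out is most of the remaining work: the hypothesis passes to finite-index subgroups and quotients, but arranging the hyperbolic section as $\Z^n\rtimes\Z^m$ inside such a subquotient is the content of the induction in Section 7 (making $\Fit(P)$ abelian, using the inductive hypothesis and \cref{v_nilp_implies_v_abelian} to make $P/\Fit(P)$ free abelian, removing eigenvalue-one directions, and the splitting statement \cref{finite_index_subgroup_zm_zn}).

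Stage 2 is also incomplete. First, you assume a finite-index torsion-free $N$ with all lower central factors torsion-free (at least with $N^{\mathrm{ab}}\cong\Z^{h_1}$, so that $h_1$ elements generate), which is not justified. Second, for $H_n=\langle x_1^n,\ldots,x_{h_1}^n\rangle$ your one-line argument (the $n^i$-th powers of weight-$i$ commutators span a finite-index sublattice of $\gamma_i/\gamma_{i+1}$) only gives the \emph{upper} bound $[N:H_n]=O(n^D)$; the contradiction needs the \emph{lower} bound $[N:H_n]\gtrsim n^{D}$, i.e.\ that no word in the $x_j^n$ lands in $\gamma_i$ with image in $\gamma_i/\gamma_{i+1}$ divisible by less than roughly $n^i$, and relations in a non-graded $N$ can in principle push shallow elements deep. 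Note that your claim would establish the exponent $1/D$ (the growth degree) for every nilpotent group, which is strictly stronger than what the paper proves: the paper obtains only $1/(h+h')$ in \cref{thm:nilp.coset}, precisely because coordinate lattices with the full weight pattern $(1,2,\ldots,c)$ need not be subgroups, whereas the weight-$(1,2)$ lattices of \cref{prop:tfn.M^n.subgrp} are, with index exactly $q^hm^{h+h'}$ by \cref{lem:G^p.cosets} and diameter $O(m)$. Since $h(N)+h([N,N])\le d$ is all the theorem asserts, you should either adopt that construction or supply a genuine proof of your index lower bound.
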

\begin{thm}\label{thm:resid}
Suppose $G$ is a finitely generated residually (torsion-free nilpotent) group, and let $d\in\N$. If $G$ has uniformly $(1/d)$-almost flat quotients then $G$ is nilpotent with Hirsch length at most~$d$, and if $G$ has uniformly $(1/d)$-almost flat coset spaces then $G$ is nilpotent, and the Hirsch lengths of $G$ and $[G,G]$ sum to at most~$d$.
\end{thm}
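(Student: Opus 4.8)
The plan is to combine \cref{thm:poly} with the structure theory of residually torsion-free nilpotent groups via the \emph{isolated lower central series}. For $c\ge1$ write $\tau_c(G)$ for the preimage in $G$ of the torsion subgroup of the finitely generated nilpotent group $G/\gamma_c(G)$; then $\tau_c(G)\normal G$, $\gamma_c(G)\subeq\tau_c(G)$, and $G/\tau_c(G)$ is finitely generated torsion-free nilpotent. I would first record two standard facts. First, $G$ is residually (torsion-free nilpotent) if and only if $\bigcap_c\tau_c(G)=1$: any homomorphism from $G$ to a torsion-free nilpotent group of class $<c$ kills $\gamma_c(G)$ and, having torsion-free image, also kills $\tau_c(G)/\gamma_c(G)$, hence factors through $G/\tau_c(G)$, while conversely each $G/\tau_c(G)$ is torsion-free nilpotent. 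Second, a nontrivial finitely generated residually (torsion-free nilpotent) group surjects onto $\Z$ (its abelianisation cannot be finite, else every torsion-free nilpotent quotient would be trivial), so each $G/\tau_c(G)$ with $c\ge2$ is infinite; and a nilpotent residually (torsion-free nilpotent) group is torsion-free. The theorem being trivial for $G=1$, we may assume $G$ is infinite.

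The heart of the matter is the estimate: \emph{every $G/\tau_c(G)$ has Hirsch length at most $d$.} In the coset-space case this is immediate from \cref{thm:poly}, since $G/\tau_c(G)$ is infinite, virtually polycyclic and inherits uniformly $(1/d)$-almost flat coset spaces: it contains a finite-index nilpotent subgroup $M$ with $h(M)+h([M,M])\le d$, and $h(G/\tau_c(G))=h(M)$ because Hirsch length is a commensurability invariant. For the quotient case I would argue directly, using that a finitely generated torsion-free nilpotent group $M$ of Hirsch length $h$ admits, for all sufficiently large primes $p$, a finite quotient $Q=M/K_p$ that is a $p$-group of order $p^h$ with $\diam_S(Q)=O_M(p)$: one may take $K_p=M\cap U_n(p\Z)$ for a unipotent embedding $M\hookrightarrow U_n(\Z)$ (the image of $M$ modulo $p$ has order $p^h$ for $p$ large, by Zariski density), and bound the diameter by writing each element as a product of the $O_M(1)$ basic commutators, each raised to an exponent less than $p$. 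Uniform $(1/d)$-almost flatness then gives $O_M(p)=\diam_S(Q)\ge\eps|Q|^{1/d}=\eps p^{h/d}$ for all large $p$, forcing $h\le d$; applying this with $M=G/\tau_c(G)$ proves the claim.

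Next I would deduce that $G$ is nilpotent. The integers $h(G/\tau_c(G))$ are non-decreasing in $c$ (since $G/\tau_c(G)$ is a quotient of $G/\tau_{c+1}(G)$) and bounded by $d$, hence eventually equal to a constant, say for $c\ge c_0$. For such $c$, additivity of Hirsch length in $1\to\tau_c(G)/\tau_{c+1}(G)\to G/\tau_{c+1}(G)\to G/\tau_c(G)\to1$ forces $\tau_c(G)/\tau_{c+1}(G)$ to be finite; but it is torsion-free, being a subgroup of $G/\tau_{c+1}(G)$, hence trivial, so $\tau_c(G)=\tau_{c+1}(G)$. Therefore $\tau_{c_0}(G)=\bigcap_c\tau_c(G)=1$, so $\gamma_{c_0}(G)\subeq\tau_{c_0}(G)=1$: the group $G$ is nilpotent, and hence torsion-free, with $h(G)=h(G/\tau_{c_0}(G))\le d$. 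This completes the quotient case.

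For the coset-space case it remains to improve the bound. Now $G$ is an infinite finitely generated torsion-free nilpotent — in particular polycyclic — group with uniformly $(1/d)$-almost flat coset spaces, so \cref{thm:poly} provides a finite-index nilpotent subgroup $M\le G$ with $h(M)+h([M,M])\le d$. Since $M$ has finite index in the nilpotent group $G$, the subgroup $\gamma_i(M)$ has finite index in $\gamma_i(G)$ for every $i$, so $h(\gamma_i(M)/\gamma_{i+1}(M))=h(\gamma_i(G)/\gamma_{i+1}(G))$; summing over $i\ge1$ and over $i\ge2$ gives $h(G)=h(M)$ and $h([G,G])=h([M,M])$, whence $h(G)+h([G,G])\le d$, as required. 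The soft part of the proof is thus the reduction to torsion-free nilpotent quotients and the stabilisation argument; I expect the main obstacle to be the quotient case, where \cref{thm:poly} does not apply directly and one must produce the congruence quotients $Q=M/K_p$ together with their diameter bound — in effect a quotient-analogue of \cref{thm:poly} for finitely generated torsion-free nilpotent groups.
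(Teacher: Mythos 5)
Your argument is correct, and its overall shape matches the paper's: both reduce the problem to the isolated/generalised lower central series (your $\tau_c(G)$ is the paper's $\overline G_c$, cf.\ \cref{lem:resid.tfn}), bound the Hirsch lengths of the torsion-free nilpotent quotients against~$d$ via diameters of congruence-type quotients, and then conclude nilpotence of $G$ from the residual hypothesis. The differences are in the tools you invoke and in the finishing step. In the quotient case you reconstruct, somewhat informally (``Zariski density''), a bound that is precisely \cref{thm:nilp}: the paper proves exactly your claim that a torsion-free nilpotent group of Hirsch length $h$ has, for almost all primes $p$, a quotient of order comparable to $p^h$ and diameter $O(p)$, via the Mal'cev-basis argument of \cref{prop:tfn.reduc.mod.p} and \cref{lem:G^p.cosets}; you should simply cite \cref{thm:nilp} rather than re-derive it. In the coset-space case you reach for \cref{thm:poly}, the paper's main theorem, whose proof occupies the last three sections; this is much heavier machinery than needed, since \cref{thm:nilp.coset} applied to each $G/\tau_c(G)$ (and finally to $G$ itself) already yields both the bound $h(G/\tau_c(G))+h([G/\tau_c(G),G/\tau_c(G)])\le d$ and the final estimate — and that is what the paper uses. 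Finally, your stabilisation argument on the non-decreasing bounded sequence $h(G/\tau_c(G))$ is a clean variant of the paper's pigeonhole among $h(G/G_1)\le\cdots\le h(G/G_{d+2})$ (it even avoids needing \cref{lem:lastG_k}); the two are essentially equivalent. The only genuine gap to fill is making your unipotent-reduction construction rigorous, but since this is exactly \cref{thm:nilp}/\cref{prop:tfn.reduc.mod.p}, it costs nothing.
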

The finitely generated residually torsion-free nilpotent groups include many well-known examples of groups, such as free groups  \citep[Theorem 1]{free_surface}, surface groups \citep[Theorem 5]{free_surface}, right-angled Artin groups \citep{raagthesis},  the Torelli group of a right-angled Artin group \citep{torelli}, pure braid groups \citep{pure}, the Torelli subgroup of the mapping class group \citep{mapping} and many more (e.g. see \citep{rtfn,reflection}).

Our approach to \cref{thm:poly} is inspired by the proof of Wolf's theorem that a polycyclic group has polynomial growth if and only if it virtually nilpotent as presented for example in \citep[Chapter~14]{ggt}, but is considerably more complicated. One difficulty lies in actually constructing, in a polycyclic group of superpolynomial growth, an infinite sequence of finite-index subgroups whose coset spaces violate the uniformly $\alpha$-almost flat condition. This turns out to require various number-theoretic facts, as well as a result of Breuillard and the second author {\citep[Theorem 4.1]{nil}} that in turn relies on Breuillard, Green and Tao's deep refinement of Gromov's theorem. Another issue is that the proof of Wolf's theorem uses the fact that having polynomial growth passes to finitely generated subgroups, but a priori we do not have the analogous statement that having uniformly $\alpha$-almost flat coset spaces passes to finitely generated subgroups.

Encouraged by the above results, we tentatively conjecture that in an arbitrary finitely generated residually finite group having uniformly almost flat quotients is equivalent to having polynomial growth.
\begin{conjecture}\label{conj:main}
A finitely generated residually finite group has uniformly almost flat quotients if and only if it has uniformly almost flat coset spaces, if and only if it is virtually nilpotent. Indeed, given $d\in\N$, if a residually finite group $G$ has uniformly $(1/d)$-almost flat quotients then $G$ has a finite-index nilpotent subgroup with Hirsch length at most~$d$.
\end{conjecture}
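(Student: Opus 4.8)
The plan is to establish the cycle of implications
\[
\text{virtually nilpotent}\ \Longrightarrow\ \text{uniformly almost flat coset spaces}\ \Longrightarrow\ \text{uniformly almost flat quotients}\ \Longrightarrow\ \text{virtually nilpotent},
\]
together with the quantitative Hirsch-length bound. The first implication is exactly \cref{lem:poly.growth}: a virtually nilpotent group has polynomial growth of some degree $D$, so if $H\le G$ has finite index then the ball $S^{\diam_S(G/H)}$ surjects onto $G/H$ and hence $[G:H]\le|S^{\diam_S(G/H)}|=O(\diam_S(G/H)^D)$, which rearranges to the uniformly $(1/D)$-almost flat condition for coset spaces. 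The second implication is trivial, a finite-index normal subgroup being in particular a finite-index subgroup. So the whole content of the conjecture is the third implication, and if polynomial growth can be proved then the Hirsch-length bound for the quotients case follows easily: the argument below in fact produces polynomial growth of degree at most $d$, and the Hirsch length of a finite-index nilpotent subgroup is bounded above by the growth degree via the Bass--Guivarc'h formula.

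So suppose $G$ is finitely generated and residually finite with uniformly $(1/d)$-almost flat quotients; the goal is to show that $G$ has polynomial growth of degree at most $d$, which by Gromov's theorem \citep{gromov} forces virtual nilpotence. The key step would be to produce, for every $n\in\N$, a finite-index normal subgroup $N=N(n)\normal G$ such that the quotient map $G\to G/N$ is injective on the ball $S^n$ and at the same time $\diam_S(G/N)=O(n)$, with the implied constant independent of $n$. Granting this, the hypothesis gives
\[
|S^n|\le[G:N]\le\bigl(\tfrac1\eps\diam_S(G/N)\bigr)^d=O_{\eps,d}(n^d),
\]
so $G$ has polynomial growth of degree at most $d$ and we are done. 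Residual finiteness alone supplies, for each $n$, a finite-index normal subgroup on which the quotient map is injective on $S^n$ --- simply intersect the normal cores of the witnessing subgroups for the finitely many nonidentity elements of $S^{2n}$ --- so the only real issue is to control the diameter at the same time.

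The crux is therefore the construction of these ``balanced'' quotients, and this is exactly the point at which the general argument is missing. In the two settings treated in this paper it can be carried out: for a finitely generated residually torsion-free nilpotent group one projects onto a torsion-free nilpotent quotient and reduces modulo a suitably chosen prime power, obtaining a finite quotient in which both the size and the diameter are governed by the nilpotency data; for a virtually polycyclic group the structure theory together with number-theoretic input and \citep[Theorem 4.1]{nil} --- itself resting on the Breuillard--Green--Tao refinement \citep{bgt} of Gromov's theorem --- again allows one to build quotients of controlled diameter, which is essentially the content of the proof of \cref{thm:poly}. For a general finitely generated residually finite group no such structure is to hand, and it is genuinely unclear that balanced quotients must exist: residual finiteness gives injectivity on $S^n$ with no control whatsoever on the diameter, which a priori could be superpolynomially larger than $n$.

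I expect this to be the principal obstacle. One natural line of attack is to argue in the contrapositive and use \cref{thm:bt}: if $G$ had superpolynomial growth one would try to build directly an infinite family of finite quotients of small diameter violating the almost-flatness bound, using the structure of almost flat finite groups provided by \cref{thm:bt} to ``densify'' the quotients coming from residual finiteness. Carrying this out, however, seems to require precisely the kind of algebraic control --- a normal nilpotent subgroup of bounded complexity, a generating set adapted to a polycyclic series, and so on --- that the polycyclic or residually nilpotent hypotheses supply and that is absent in general. A further, related difficulty, already flagged in connection with \cref{thm:poly}, is that having uniformly almost flat quotients or coset spaces is not known to pass to finitely generated subgroups, so one cannot simply localise to an accessible subgroup, for instance the solvable or free subgroup furnished by the Tits alternative in the linear case. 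Overcoming these points, presumably by combining the Breuillard--Green--Tao machinery with a finer analysis of the finite quotients themselves (perhaps through their Benjamini--Schramm or ultralimits), is what a full proof of the conjecture would require.
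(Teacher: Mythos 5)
This statement is a \emph{conjecture} in the paper; there is no proof to compare your proposal against, and indeed the paper itself does not claim one. Your treatment of it as an open problem is the correct posture. Your bookkeeping of the known directions is accurate: virtual nilpotence implies polynomial growth, hence uniformly almost flat coset spaces by \cref{lem:poly.growth}; coset spaces formally implies quotients; and the open content is the converse. Your observation that a positive solution would, via the Bass--Guivarc'h formula, automatically yield the Hirsch-length bound is also correct, though the paper obtains the same quantitative consequence more directly from \cref{thm:nilp} by exhibiting the explicit quotients $G/N^p$ rather than passing through the growth degree; both routes are sound.

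Your diagnosis of the obstruction is essentially the one the paper itself flags. The ``balanced quotient'' heuristic --- find $N(n) \normal G$ of finite index with $N(n) \cap S^{2n} = \{1\}$ and $\diam_S(G/N(n)) = O(n)$, then read off $|S^n| \le [G:N(n)] \le (\diam_S(G/N(n))/\eps)^d = O(n^d)$ --- would indeed close the argument, and you are right that residual finiteness supplies the injectivity with no control whatsoever on diameter, which is precisely where the general case stalls. One remark worth making is that the paper's implicit program is somewhat different from the one you sketch: rather than constructing balanced quotients directly, the authors suggest (via \cref{thm:kv} and the final conjecture of the introduction) an induction on $d$ modelled on the classical reduction of Gromov's theorem to Wolf's theorem, namely: uniformly almost flat quotients give a virtual surjection onto $\Z$, and one would then want to show the kernel is finitely generated with uniformly $(1/(d-1))$-almost flat quotients. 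Your proposal and theirs are complementary; both founder at the same place, which is the absence of algebraic structure to leverage once one leaves the polycyclic or residually torsion-free nilpotent settings. As an analysis of where the difficulty lies and why the paper's established cases succeed where the general case does not, your account is accurate and in line with the authors' own discussion.
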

We caution however that these uniform almost flat properties appear to be quite subtle, not least because they have both geometric and algebraic content. Indeed, whilst the almost flat condition is geometric in nature, how hard it is to have that property uniformly over all quotients depends strongly on the algebraic structure of the group. \cref{prop:heis} provides a concrete illustration of this, since the only difference between the quotient and coset-space conditions is algebraic. In a similar vein, proving the quotient version of \cref{conj:main} appears to be harder than proving the coset-space version in part because there is no obvious way of relating an arbitrary finite quotient of a finite-index subgroup of a group $G$ to a finite quotient of $G$ itself.

We can show that the qualitative conclusion of \cref{conj:main} -- i.e. that uniformly almost flat coset spaces imply virtual nilpotence -- is enough to imply the full conjecture. Indeed, given uniformly almost flat coset spaces imply virtual nilpotence, the bound on the Hirsch length of the nilpotent subgroup would follow immediately from the contrapositive of the following proposition.
\begin{prop}\label{thm:nilp}
Suppose $G$ is a finitely generated virtually nilpotent group with Hirsch length $h\in\N$. Then $G$ does not have uniformly $\alpha$-almost flat quotients for any $\alpha>1/h$.
\end{prop}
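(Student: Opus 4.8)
The plan is to exhibit, for every $\alpha>1/h$, an explicit sequence of finite-index normal subgroups $N_k\normal G$ with $[G:N_k]\to\infty$ and $\diam_S(G/N_k)/[G:N_k]^\alpha\to0$; this rules out the defining inequality for every $\eps>0$ (via $N=N_k$ for $k$ large), so $G$ does not have uniformly $\alpha$-almost flat quotients. Since the property is independent of the generating set (\cref{lem:indep.gen.set}) we may work with an arbitrary finite symmetric $S$ containing the identity, and since there is no $\alpha>1/h$ when $h=0$ we may assume $h\ge1$. The prototype to keep in mind is $G=\Z^h$, $N_k=k\Z^h$, where $[G:N_k]=k^h$ while $\diam_S(G/N_k)=O(k)$, so the exponent $1/h$ is attained but nothing smaller; the aim is to reproduce this behaviour in an arbitrary virtually nilpotent $G$ of Hirsch length $h$.

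First I would reduce to the torsion-free nilpotent case. Taking the normal core of a finite-index torsion-free nilpotent subgroup, fix a torsion-free nilpotent $N\normal G$ of finite index; then $N$ is finitely generated, of some nilpotency class $c$, and of Hirsch length $h$, since Hirsch length is a commensurability invariant. A routine transfer argument --- writing any $g\in G$ as a bounded-$S$-length coset representative of $N$ times an element of $N$ --- produces constants $C_0,C_1$, depending only on $G$, $S$ and a fixed generating set $S_N$ of $N$, such that $\diam_S(G/M)\le C_0+C_1\diam_{S_N}(N/M)$ whenever $M\normal G$ is contained in $N$; moreover $[G:M]\ge[N:M]$. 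So it suffices to produce subgroups $N_k$ that are \emph{characteristic} in $N$ (hence normal in $G$), of finite index, with $\diam_{S_N}(N/N_k)=O_N(k)$ and $[N:N_k]\ge k^h\to\infty$.

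For this I would take $N_k=\langle g^{mk}:g\in N\rangle$, the verbal --- hence characteristic --- subgroup generated by the $mk$-th powers, where $m=m(c)$ is a fixed integer (for instance $m=c!$) with $j\mid m$ for all $1\le j\le c$; note $N/N_k$ has exponent dividing $mk$, hence is finite, being a finitely generated nilpotent torsion group. Fix a Mal'cev basis $t_1,\dots,t_h$ of $N$, so that every element has a unique normal form $t_1^{a_1}\cdots t_h^{a_h}$ with $a_i\in\Z$. The ``coordinates divisible by $k$'' set $K_k=\{t_1^{a_1}\cdots t_h^{a_h}:k\mid a_i\text{ for all }i\}$ is a subgroup of index exactly $k^h$, because the $i$-th Mal'cev coordinate of a product $xy$ equals $a_i+b_i$ plus a polynomial with integer coefficients in the coordinates of $x$ and $y$ that vanishes whenever those of $x$, or those of $y$, all vanish (so each coset contains a unique representative with all coordinates in $[0,k)$). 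Two facts then close the argument. \emph{Small diameter:} in $N/N_k$ the image of each $t_i$ satisfies $(\bar{t}_i)^{mk}=1$, so every element has a normal form with each coordinate in $[0,mk)$, whence $\diam_{S_N}(N/N_k)\le h\cdot mk\cdot L=O_N(k)$, with $L$ bounding the $S_N$-lengths of the $t_i$. \emph{Large index:} $N_k\subseteq K_k$, so $[N:N_k]\ge[N:K_k]=k^h$. For the inclusion one uses that, for fixed $g$, the $i$-th Mal'cev coordinate of $g^n$ is an integer-valued polynomial in $n$ of degree at most $c$ vanishing at $n=0$ (a consequence of the Hall--Petresco identity / Mal'cev's coordinate theory), hence an integer combination of $\binom n1,\dots,\binom nc$; since $k\mid\binom{mk}{j}$ for $1\le j\le c$, every coordinate of $g^{mk}$ is divisible by $k$, i.e. $g^{mk}\in K_k$.

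Putting it together, $\diam_S(G/N_k)\le C_0+C_1 O_N(k)\le C_2k$ while $[G:N_k]\ge[N:N_k]\ge k^h$, so for any $\alpha>1/h$ we get $\diam_S(G/N_k)/[G:N_k]^\alpha\le C_2k^{1-h\alpha}\to0$ as $k\to\infty$, which is the required failure of uniform $\alpha$-almost flatness. The step I expect to need the most care --- more a matter of bookkeeping than of genuine difficulty --- is the \emph{large index} fact: controlling the Mal'cev coordinates of $mk$-th powers through Hall--Petresco and the divisibility $k\mid\binom{mk}{j}$, which is precisely what forces the lower bound on $[N:N_k]$ to carry the full exponent $h$ rather than something smaller, and hence pins down the threshold at exactly $1/h$.
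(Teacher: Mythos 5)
Your proposal is essentially the paper's: both strategies reduce to a normal torsion-free nilpotent finite-index subgroup $N$, then exhibit power subgroups of $N$ whose indices grow like $k^h$ while the diameters of the corresponding quotients grow only linearly in $k$, the key point being that the power subgroup $N^{mk}$ is a subgroup of the ``box'' $K_k=\{t_1^{k\ell_1}\cdots t_h^{k\ell_h}\}$. The one place where your account is not quite watertight is the justification that $K_k$ is a subgroup: you appeal to the assertion that the Mal'cev multiplication polynomials have \emph{integer} coefficients with all monomials involving at least one $a$- and one $b$-variable, but in general the multiplication polynomials associated to an arbitrary Mal'cev basis are only \emph{integer-valued}, and integer-valuedness alone does not give the divisibility you need (for instance $\binom{k}{2}\binom{k}{2}$ can fail to be divisible by $k$ in small cases, so a term $\binom{a}{2}\binom{b}{2}$ appearing with denominator would break the argument). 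The paper avoids this by proving \cref{prop:tfn.reduc.mod.p} via Jennings' embedding of $N$ into the upper unitriangular integer matrices: in that model the Mal'cev coordinates are literally matrix entries, multiplication $(I+A)(I+B)=I+A+B+AB$ has genuine integer coefficients, and $K_k$ is visibly the subgroup of matrices congruent to the identity mod $k$. Once you fix the Mal'cev basis to come from such an embedding, your argument closes; otherwise this step needs an extra appeal to Jennings or to a comparable integrality result.

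Two small points worth flagging. First, you work with $N^{mk}$ for \emph{all} $k$ (with $m$ a fixed multiple of $1,\dots,c$), whereas the paper restricts to $N^p$ for large primes $p$; either family of subgroups suffices, and your choice makes the divisibility $k\mid\binom{mk}{j}$ a clean consequence of $\binom{mk}{j}=\frac{mk}{j}\binom{mk-1}{j-1}$, though it forces you to settle for the lower bound $[N:N^{mk}]\ge k^h$ rather than an exact formula. Second, your use of the Hall--Petrescu $n$-th power polynomials (integer-valued of degree $\le c$ vanishing at $0$, hence $\Z$-combinations of $\binom{n}{1},\dots,\binom{n}{c}$) to deduce $N^{mk}\subseteq K_k$ is in essence the same inductive content as the paper's \cref{prop:tfn.reduc.mod.p}, just packaged via integer-valued polynomial theory rather than via matrix computation; both rest ultimately on the same structural facts about torsion-free nilpotent groups.
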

We also have an analogous result for uniformly almost flat coset spaces, which in particular gives the quantitative conclusion of \cref{thm:resid}.
\begin{prop}\label{thm:nilp.coset}
Suppose $G$ is a finitely generated virtually nilpotent group, and let $N$ be a finite-index nilpotent subgroup. Suppose $N$ has Hirsch length $h\in\N$ and $[N,N]$ has Hirsch length $h'$. Then $G$ does not have uniformly $\alpha$-almost flat coset spaces for any $\alpha>1/(h+h')$.
\end{prop}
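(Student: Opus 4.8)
The plan is, for a given $\alpha>1/(h+h')$, to construct an infinite family $(H_m)_{m\in\N}$ of finite-index subgroups of $G$ for which $\diam_S(G/H_m)/[G:H_m]^\alpha\to 0$, which rules out the uniform $\alpha$-almost flat coset-space condition. We may assume $G$ is infinite (otherwise there is nothing to prove), so that $h\ge1$. Since all of the $H_m$ we construct will lie inside $N$, we first reduce to the case $G=N$: fixing a finite generating set $S_N$ of $N$, and using that $G/H$ is a union of at most $[G:N]$ cosets of $N/H$, one has $\diam_S(G/H)=O(\diam_{S_N}(N/H))$ for every finite-index $H\le N$ with implied constant independent of $H$, while $[G:H]\ge[N:H]$; thus it suffices to find finite-index $H_m\le N$ with $\diam_{S_N}(N/H_m)/[N:H_m]^\alpha\to 0$. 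Moreover the quotient of $N$ by its (finite) torsion subgroup is again finitely generated nilpotent of the same Hirsch length $h$, with derived subgroup of the same Hirsch length $h'$, and a family of subgroups witnessing the claim for this quotient pulls back under the quotient map (whose kernel is finite) to one witnessing it for $N$; so we may assume in addition that $N$ is torsion-free.

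So let $N$ be torsion-free finitely generated nilpotent of class $c$. For $j\ge1$ let $d_j$ be the rank of the free abelian group $\sqrt{\gamma_j(N)}/\sqrt{\gamma_{j+1}(N)}$, so that $h=\sum_{j\ge1}d_j$ and $h'=\sum_{j\ge2}d_j$, and fix a Mal'cev basis $u_1,\dots,u_h$ of $N$ adapted to the isolated lower central series: every element of $N$ is uniquely of the form $u_1^{a_1}\cdots u_h^{a_h}$ with $a_i\in\Z$, and each $u_i$ carries a weight $w_i\in\{1,\dots,c\}$, namely its degree in that series, with $w_i=j$ for exactly $d_j$ values of $i$, so that $\sum_i w_i=\sum_j j\,d_j$. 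For $m\in\N$ put $H_m:=\langle u_i^{m^{w_i}}:1\le i\le h\rangle$.

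Two standard facts about such a basis now do the work. First, because it is adapted to the lower central series, the set $L_m:=\{u_1^{a_1}\cdots u_h^{a_h}:m^{w_i}\mid a_i\text{ for all }i\}$ is a subgroup — this uses the weighted-homogeneity of the Mal'cev multiplication and commutator polynomials — and a short induction then gives $H_m=L_m$; hence $[N:H_m]=\prod_i m^{w_i}=m^{\sum_i w_i}$ and the $m^{\sum_i w_i}$ elements $u_1^{a_1}\cdots u_h^{a_h}$ with $0\le a_i<m^{w_i}$ form a transversal for $H_m$ in $N$. Second, for a suitable constant $C$ independent of $m$, the ball $S_N^{Cm}$ contains every $u_1^{a_1}\cdots u_h^{a_h}$ with $|a_i|\le m^{w_i}$ for all $i$ — this is the standard lower bound for balls in a finitely generated nilpotent group equipped with a Mal'cev basis adapted to the lower central series (see e.g.\ \citep{nil,ttBalls}). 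Since the transversal found above consists of such elements, $\diam_{S_N}(N/H_m)\le Cm$. Combining these, $\diam_{S_N}(N/H_m)/[N:H_m]^\alpha=O(m^{1-\alpha\sum_i w_i})$, which tends to $0$ as $m\to\infty$ because $\sum_i w_i=\sum_j j\,d_j\ge\sum_j d_j+\sum_{j\ge2}d_j=h+h'$ and $\alpha(h+h')>1$. This proves the proposition.

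The real content lies in justifying the two facts above, i.e.\ in controlling the non-commutative collecting process with respect to the Mal'cev basis together with the distortion of the terms of the lower central series inside $N$; this is routine but must be done with care, and is essentially what underlies the ball-volume estimates of the cited references. It is worth noting that the argument in fact establishes the stronger statement that $G$ has no uniformly $\alpha$-almost flat coset spaces for any $\alpha>1/\sum_j j\,d_j$, a bound that coincides with $1/(h+h')$ exactly when $N$ has nilpotency class at most $2$ (as it does for the Heisenberg group of \cref{prop:heis}). For comparison, \cref{thm:nilp} for quotients comes out of the same construction with $H_m$ replaced by the normal subgroup $\langle u_i^m:1\le i\le h\rangle^N$, which has index only $\asymp m^h$; using the non-normal $H_m$ here is precisely what gains the extra factor $m^{h'}$.
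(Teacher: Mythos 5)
Your proposal takes a broadly similar route to the paper's, but it contains a genuine gap in the crucial first step, namely the claim that $L_m=\{u_1^{a_1}\cdots u_h^{a_h}:m^{w_i}\mid a_i\text{ for all }i\}$ is a subgroup when the weights $w_i$ are the \emph{full} weights in the lower central series. You attribute this to the ``weighted-homogeneity of the Mal'cev multiplication and commutator polynomials,'' but the homogeneity that actually holds goes in the opposite direction from what you need. Writing the multiplication in Mal'cev coordinates, the $k$-th coordinate of a product $g\cdot g'$ is a polynomial in the coordinates $(a_i)$ and $(a_j')$ whose nonlinear monomials have total weight \emph{at most} $w_k$, not at least $w_k$: since $[\nn_i,\nn_j]\subseteq\nn_{i+j}$, a nonzero structure constant $c_{ijk}$ forces $w_i+w_j\le w_k$, so a monomial $a_ia_j'$ entering the $k$-th coordinate has weight $w_i+w_j$ which can be \emph{strictly less} than $w_k$. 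Closure of $L_m$ under multiplication would instead require $w_i+w_j\ge w_k$ whenever $c_{ijk}\ne0$, i.e.\ equality $w_k=w_i+w_j$ throughout — precisely the statement that the Mal'cev completion carries a grading compatible with the lower central series (i.e.\ is a Carnot / stratified Lie algebra). This is false in general. For instance, the five-dimensional filiform Lie algebra with $[e_1,e_2]=e_3$, $[e_1,e_3]=e_4$, $[e_1,e_4]=e_5$, $[e_2,e_3]=e_5$ has weights $(1,1,2,3,4)$, so $[e_2,e_3]$ ought to live in weight $3$ but equals $e_5$ of weight $4$, and no basis adapted to the lower central series removes this. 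For a lattice in such a group your $L_m$ is not closed under products; $H_m:=\langle u_i^{m^{w_i}}\rangle$ is then strictly larger than $L_m$, and your index count $[N:H_m]=m^{\sum_iw_i}$ (and hence your diameter-to-index comparison) breaks down. There is also the secondary issue that the multiplication polynomials have rational, not integer, coefficients, so some fixed multiplier $q$ must be introduced even when the homogeneity does hold.

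The paper's proof sidesteps both problems. In \cref{prop:tfn.M^n.subgrp} it replaces $w_i$ with $b(i)=\min(w_i,2)$ and introduces a constant $q$ absorbing the denominators of the structure constants; then the requirement $b(i)+b(j)\ge b(k)$ reduces to the automatic inequality $b(i)+b(j)\ge2\ge b(k)$, and one obtains a bona fide subgroup $H_m$ with $[N:H_m]=q^hm^{h+h'}$, which is exactly the exponent in the theorem. The diameter bound is then obtained not by quoting a box-versus-ball estimate but by a separate argument: using the multilinearity of simple commutators modulo $N_{i+1}$ (\cref{lem:multilin}) to write any power $u_{ij}^r$ with $r\le m^i$ of a weight-$i$ simple commutator as a word of length $O_i(m)$ modulo $N_{i+1}$, combined with an exponent argument on $K_i/(H_m\cap K_i)\overline N_{i+1}$. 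Your proposed strengthening to $\alpha>1/\sum_j j\,d_j$ (the Bass--Guivarc'h homogeneous dimension) is plausible and is indeed sharp for the Heisenberg group as in \cref{prop:heis}, but your argument does not prove it; a correct proof of that refinement would need to produce genuine subgroups of index $\asymp m^{\sum_j j\,d_j}$ in a non-Carnot group, which requires an idea beyond the box construction.
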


In his forthcoming PhD thesis, the first author will show that the coset-space version of \cref{conj:main} holds for all finitely generated residually finite abelian-by-$\Z$ groups. In the special cases of the Baumslag--Solitar groups $\mathrm{BS}(1,k)$ with $k\ge2$ and wreath products $A\wr\Z$ with $A$ abelian, he will show that the full conjecture holds. Since wreath products $H\wr\Z$ with $H$ non-abelian are not residually finite \citep[Theorem 3.2]{residual}, this completely settles \cref{conj:main} for groups of this form.

We now note some further ad-hoc methods for showing that certain groups satisfy \cref{conj:main}.
\begin{cor}\label{cor:torsion}
Suppose $G$ is a finitely generated infinite residually finite torsion group. Then $G$ does not have uniformly almost flat quotients.
\end{cor}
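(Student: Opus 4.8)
The plan is to reduce to \cref{thm:kv} and then invoke the elementary fact that a torsion group admits no virtual surjection onto $\Z$. Suppose, for a contradiction, that $G$ has uniformly $\alpha$-almost flat quotients for some $\alpha\in(0,1]$, witnessed by some $\eps>0$.

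The first step is to observe that $G$ is then residually $\alpha$-almost flat. Indeed, since $G$ is residually finite, for each non-identity $g\in G$ there is a finite-index normal subgroup $N_g\normal G$ with $g\notin N_g$, and our hypothesis automatically gives $\diam_S(G/N_g)\ge\eps[G:N_g]^\alpha$. In particular $G$ is residually almost flat, and since it is also infinite and residually finite, \cref{thm:kv} shows that it admits a virtual surjection onto $\Z$: there is a finite-index subgroup $H\le G$ and a surjective homomorphism $\varphi\colon H\to\Z$.

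The second step derives the contradiction. The subgroup $H$ is again a torsion group, being a subgroup of the torsion group $G$, so every $h\in H$ has finite order; hence $\varphi(h)$ is a torsion element of $\Z$ and therefore equals $0$. Thus $\varphi$ is trivial, contradicting surjectivity, and we conclude that $G$ has no uniformly $\alpha$-almost flat quotients for any $\alpha$.

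There is no real obstacle internal to this argument: the genuine content is all imported through \cref{thm:kv}, whose proof (sketched in \cref{sec:kv}) ultimately rests on the result of Breuillard and the second author that a residually almost flat group virtually surjects onto $\Z$, and hence on Breuillard, Green and Tao's refinement of Gromov's theorem. The only mild subtlety is the reduction in the first step, which uses the residual finiteness of $G$ to pass from the hypothesis on \emph{all} finite quotients to the separating family of quotients required to invoke \cref{thm:kv}.
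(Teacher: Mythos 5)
Your proof is correct and follows essentially the same route as the paper's one‑line argument: observe that a torsion group admits no virtual surjection to $\Z$, and then conclude via \cref{thm:kv}. You merely make explicit the (easy but implicit) reduction that, for a residually finite group, having uniformly $\alpha$-almost flat quotients implies being residually $\alpha$-almost flat.
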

\begin{proof}
Being torsion, $G$ does not admit a virtual surjection to $\Z$, so the result follows from \cref{thm:kv}.
\end{proof}
\begin{remark}\cref{cor:torsion} implies for example that Grigorchuk's famous group of intermediate growth \citep{grigorchuk}, which is finitely generated and residually finite, satisfies \cref{conj:main}. Grigorchuk~\citep{grigorchukTF} also constructed a torsion-free group of intermediate growth, also finitely generated and residually finite, and of course the same argument does not apply directly to this group. However, since this group admits the periodic Grigorchuk group as a quotient, we can nonetheless conclude that it also satisfies \cref{conj:main}.
\end{remark}

The relation of \cref{thm:poly} to \cref{conj:main} is analogous to the relation of Wolf's theorem to Gromov's theorem. Wolf's theorem had been known for some time before Gromov proved his theorem in general, and indeed Gromov's, Kleiner's and Ozawa's proofs of Gromov's theorem all reduce the general case to the polycyclic case, and hence to Wolf's theorem. The reduction of Gromov's theorem to Wolf's theorem proceeds broadly as follows. One first shows that if $G$ is a group with polynomial growth of degree $d$ then $G$ virtually surjects onto $\Z$. One then shows that the kernel of this surjection is finitely generated and has polynomial growth of degree at most $d-1$, and concludes by induction that $G$ must be virtually polycyclic. \cref{thm:kv} is suggestive of a similar reduction of \cref{conj:main}. Indeed, in light of \cref{thm:kv}, one  would hope to be able to deduce \cref{conj:main} from the polycycilc case and the following conjecture.
\begin{conjecture}
Let $d\in\N$. Suppose $G$ is a finitely generated infinite residually finite group with uniformly $(1/d)$-almost flat quotients, and $H$ is a finite-index subgroup admitting a quotient $H/K\cong\Z$. Then $K$ is finitely generated and has uniformly $(1/(d-1))$-almost flat quotients.
\end{conjecture}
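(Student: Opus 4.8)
The plan is to follow the classical reduction of Gromov's theorem to the polycyclic case, replacing ``polynomial growth of degree $d$'' throughout by ``uniformly $(1/d)$-almost flat quotients''. Recall that in the growth setting one argues: if $G$ has polynomial growth of degree $d$ and $H\le G$ has finite index with $H/K\cong\Z$, then $K$ is finitely generated and has polynomial growth of degree at most $d-1$; the first fact uses a triangularity trick together with the polynomial growth of $H$, and the second is a ball-counting argument exploiting the decomposition $H=K\rtimes\Z$. Before starting I would make two routine reductions: replacing $H$ by its normal core in $G$ we may assume $H\normal G$, which is harmless since uniformly almost flat quotients is inherited by finite overgroups (given $N\normal K$, compare the finite group $K/N$ with the finite-index subgroup $K_0N/N\le K/N$ coming from the smaller kernel $K_0$, using the usual comparability of diameters of finite-index subgroups); and since $\Z$ is free the extension $K\hookrightarrow H\twoheadrightarrow\Z$ splits, so we may write $H=K\rtimes\langle t\rangle$ with $K\normal H$.

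For finite generation of $K$: writing $H=\langle k_1t^{m_1},\dots,k_rt^{m_r}\rangle$ one checks that $K=\langle t^ik_jt^{-i}:i\in\Z,\ 1\le j\le r\rangle$, so the subgroups $K_n=\langle t^ik_jt^{-i}:|i|\le n\rangle$ form an ascending chain of finitely generated subgroups with union $K$ and with $t^{\pm1}K_nt^{\mp1}\subseteq K_{n+1}$. If $K$ is not finitely generated then, after passing to a subsequence, $K_n\subsetneq K_{n+1}$ strictly, so one may pick $x_n\in K_n\setminus K_{n-1}$ of word length $O(n)$ in $G$; a standard triangularity argument then makes the $2^m$ products $\prod_{n\le m}x_n^{\eps_n}$ with $\eps_n\in\{0,1\}$ pairwise distinct, all lying in the ball of radius $O(m^2)$. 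The aim is to convert this into a sequence of finite quotients of $G$ of size $\ge2^m$ and of diameter polynomial in $m$, contradicting uniformly $(1/d)$-almost flat quotients for large $m$ since a polynomial in $m$ is eventually far smaller than $(2^m)^{1/d}$. The model to keep in mind is $\mathrm{BS}(1,k)$, where $K=\Z[1/k]$ is not finitely generated and the quotients $(\Z/(k^n-1))\rtimes(\Z/n)$ have exponential size and diameter $O(n)$. Producing such quotients in general — that is, residually finite quotients of $G$ that separate the $x_n$ while keeping the diameter polynomially bounded — is the first serious obstacle, and is essentially what the first author establishes in his thesis for the Baumslag--Solitar groups and for $A\wr\Z$.

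Assuming $K$ is finitely generated, I would then show it has uniformly $(1/(d-1))$-almost flat quotients. Let $N\normal K$ have finite index $q$, and set $r=\diam(K/N)$. Since $K$ is finitely generated it has only finitely many subgroups of index $q$, so the conjugation orbit $\{t^iNt^{-i}:i\in\Z\}$ is finite; its join $\hat N=\langle t^iNt^{-i}:i\in\Z\rangle$ is then a finite-index subgroup of $K$ containing $N$, normal in $H$, with $[K:\hat N]\le q$ and $\diam(K/\hat N)\le r$. The quotient $H/\hat N$ is $(K/\hat N)\rtimes\Z$ with $K/\hat N$ finite and $t$ acting by some automorphism $\sigma$, so for $m$ a multiple of $\ord(\sigma)$ the element $t^m$ is central modulo $\hat N$ and $Q=(K/\hat N)\rtimes(\Z/m\Z)$ is a finite quotient of $H$ with $|Q|=m\,[K:\hat N]$ and $\diam(Q)=O(r+m)$; taking $m\asymp r$ gives a quotient of $H$ of diameter $O(r)$ and size $\asymp r\,[K:\hat N]$, and the hope is that if $r$ were much smaller than $q^{1/(d-1)}$ one could read off $\diam(Q)\ll|Q|^{1/d}$ and contradict almost flatness.

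I expect the principal obstacle to lie in the passage from $H$ back to $G$: uniformly almost flat quotients is not known to pass to finite-index subgroups — as the authors observe, an arbitrary finite quotient of $H$ need not be related in any obvious way to a finite quotient of $G$ — so one must either prove such an inheritance statement (perhaps exploiting that $K$ is precisely the kernel of a virtual surjection to $\Z$), or else work directly with quotients of $G$, which is delicate because $\hat N$ is typically not normal in $G$ and passing to its $G$-core can inflate both the index and the diameter. A secondary difficulty, even granting the passage to $H$, is that $\ord(\sigma)$ and the size of the conjugation orbit of $N$ in $K$ may be large — for instance when $t$ acts on $K$ with infinite order — which can spoil the relation between $r$ and $q$ above; one would then need either to pass first to a large $\sigma$-equivariant quotient of $K/\hat N$ on which $t$ acts with small order, or to treat this regime by a separate construction of bad quotients analogous to the one used in the non-finitely-generated case.
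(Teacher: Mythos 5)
This statement is a \emph{conjecture} in the paper, proposed by the authors as a desirable intermediate step towards \cref{conj:main} in analogy with the standard reduction of Gromov's theorem to Wolf's theorem; the paper offers no proof, so there is nothing to compare your sketch against. Your proposal is honest about this: it lays out the analogue of the Gromov-to-Wolf reduction and correctly flags the points where the analogy breaks, and those flags align with the obstacles the authors themselves allude to (in particular, that uniformly almost flat quotients is not known to pass to finite-index subgroups, since an arbitrary finite quotient of $H$ need not be related to any finite quotient of $G$; and that in the non--finitely generated case one must manufacture finite quotients of $G$, not merely large finite sets in balls). So as a diagnosis of the difficulty your note is accurate, but it is not a proof, and cannot be, given the current state of the art.

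Beyond the obstacles you already name, one further gap in the second half deserves attention. You pass from $N\normal K$ to the join $\hat N=\langle t^iNt^{-i}:i\in\Z\rangle$ so as to get something $H$-invariant, and you then estimate $|Q|\asymp r\,[K:\hat N]$, where $r=\diam(K/N)$. But $\hat N\supseteq N$, so $[K:\hat N]$ may be drastically smaller than $q=[K:N]$, and $\diam(K/\hat N)\le r$ gives only an upper bound. The hypothesis $r\ll q^{1/(d-1)}$ therefore does not yield $r\ll(r\,[K:\hat N])^{1/d}$: there is no reason a witness $N$ to the failure of $(1/(d-1))$-almost flatness produces, after taking the $t$-invariant closure, a witness quotient of $H$ at all. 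One would either need $\hat N$ to inherit the same quantitative defect from $N$ (which is not automatic), or to replace the join by the $t$-core $\bigcap_i t^iNt^{-i}$ — which restores a lower bound on the index but loses control of the diameter in the other direction. This is essentially the same ``algebraic content'' phenomenon the paper highlights in the discussion after \cref{conj:main}, and it compounds the difficulty of relating quotients of $H$ to quotients of $G$ that you correctly identify as the principal obstacle.
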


\subsection*{Acknowledgements} 
We are indebted to James Rawson, who investigated the property of having uniformly almost flat quotients under the supervision of the second author through the Summer Research in Maths programme at the University of Cambridge. In particular, we thank him for the suggestion to consider groups with uniformly almost flat coset spaces in addition to those with uniformly almost flat quotients. The first author thanks Yves Cornulier, and the second author thanks Ana Khukhro and Alain Valette, for helpful conversations.  We also thank  Charles Cox and Alain Valette for comments on a draft, and Mark Hagen for help with the references.

\section{Basic properties of uniform almost flatness}\label{sec:basic}
The first two results of this section are essentially trivial, but of such fundamental importance that we include them nonetheless.

\begin{lem}\label{lem:indep.gen.set}
The definitions of being residually $\alpha$-almost flat, strongly residually $\alpha$-almost flat, having uniformly $\alpha$-almost flat quotients, and having uniformly $\alpha$-almost flat coset spaces do not depend on the choice of generating set.
\end{lem}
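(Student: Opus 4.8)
The plan is to reduce the statement to the standard fact that two finite symmetric generating sets $S$ and $T$ of a group $G$ are bi-Lipschitz equivalent as word metrics, and then check that all four properties in question are invariant under such a change. First I would observe that if $S,T$ are both finite symmetric generating sets containing the identity, then there is a constant $C = C(S,T) \ge 1$ such that $T \subseteq S^C$ and $S \subseteq T^C$: indeed each of the finitely many elements of $T$ lies in some power of $S$, and we take $C$ large enough to cover all of them, and symmetrically. This immediately gives, for every subgroup $H \le G$ (in particular every normal subgroup) and every $n \in \N$, the inclusions $T^n H \subseteq S^{Cn} H$ and $S^n H \subseteq T^{Cn} H$, and hence
\[
\tfrac1C \diam_S(G/H) \le \diam_T(G/H) \le C\,\diam_S(G/H),
\]
with the same inequalities holding for $\diam_S(G)$, $\diam_T(G)$ when $H$ is trivial and $G$ is finite. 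Note this passes to quotients automatically since $S$ (resp. $T$) projects to a generating set of $G/N$.

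Next I would run each of the four definitions through this comparison. For having uniformly $\alpha$-almost flat quotients: if the bound $\diam_S(G/N) \ge \eps [G:N]^\alpha$ holds for all finite-index $N \normal G$, then $\diam_T(G/N) \ge \tfrac1C \diam_S(G/N) \ge \tfrac\eps C [G:N]^\alpha$, so the property holds with respect to $T$ with the constant $\eps/C$; the converse is symmetric. The argument for uniformly $\alpha$-almost flat coset spaces is verbatim the same with finite-index $H \le G$ in place of $N \normal G$. For residual $\alpha$-almost flatness: given the witnessing $\eps > 0$ and the subgroups $N_g$ for $S$, the same subgroups $N_g$ work for $T$ with constant $\eps/C$, since $g \notin N_g$ is a condition not involving the generating set and the diameter bound degrades only by the factor $C$; again the converse is symmetric. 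Strong residual $\alpha$-almost flatness is identical, with finite subsets $A \subseteq G$ and subgroups $N_A$ in place of elements and $N_g$.

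The only mild subtlety -- and the step I would be most careful about -- is purely bookkeeping: making sure that in each case the quantifier structure is preserved, i.e. that the $\eps$ (or, for the uniform-quotient and uniform-coset-space properties, the fact that a single $\eps$ works simultaneously for all $N$ or all $H$) survives the rescaling. Since $C$ depends only on $S$ and $T$ and not on $N$, $H$, $g$ or $A$, replacing $\eps$ by $\eps/C$ is harmless and uniform, so there is no real obstacle here. I would therefore present the proof compactly: prove the two-sided diameter comparison once as a displayed inequality, then remark that substituting it into each of the four defining inequalities and absorbing the constant $C$ into $\eps$ yields the claim, the reverse implications following by symmetry of the roles of $S$ and $T$.
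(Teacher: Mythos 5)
Your proof is correct and matches the paper's argument: both establish the comparison $S \subseteq T^m$ to deduce $\diam_T(G/H) \le m\,\diam_S(G/H)$, and then observe that absorbing the constant into $\eps$ preserves each of the four definitions. The paper states this more tersely (proving only one direction of the diameter comparison and leaving the rest implicit), but the substance is identical.
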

\begin{proof}
Let $G$ be a group with finite symmetric generating sets $S$ and $T$, both containing the identity. Let $m\in\N$ such that $S\subseteq T^m$. Suppose $H\le G$ has finite index, and let $\gamma=\diam_S(G/H)$. Then $G\subseteq S^\gamma H\subseteq T^{m\gamma}H$, so that $\diam_T(G/H)\le m\diam_S(G/H)$.
\end{proof}

\begin{lem}\label{lem:poly.growth}\label{poly_growth_implies_uniform_D_a}
Suppose $G$ is a group of polynomial growth of degree $d$. Then $G$ has uniformly $(1/d)$-almost flat coset spaces.
\end{lem}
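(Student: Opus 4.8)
The plan is to exploit the fact that polynomial growth of degree $d$ is, by definition, an upper bound of the form $|S^n|\le Cn^d$ on the sizes of balls in $G$, and to observe that such a bound immediately limits how many cosets of a finite-index subgroup a ball of a given radius can reach.

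Concretely, first I would fix a finite symmetric generating set $S$ containing the identity and record the defining inequality $|S^n|\le Cn^d$, valid for all $n\in\N$ with some constant $C>0$. Then, given an arbitrary finite-index subgroup $H\le G$, I would put $n=\diam_S(G/H)$, so that $S^nH=G$ by definition; since the coset projection $G\to G/H$ then maps $S^n$ onto $G/H$, we get $[G:H]=|G/H|\le|S^n|\le Cn^d$. Rearranging yields $\diam_S(G/H)=n\ge C^{-1/d}[G:H]^{1/d}$, so the definition of having uniformly $(1/d)$-almost flat coset spaces is satisfied with $\eps=C^{-1/d}$, a constant not depending on $H$.

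There is no real obstacle here, and in particular no information about the algebraic structure of $G$ is needed beyond the growth bound; the one point to keep track of is the logical direction of the argument, namely that it is the \emph{upper} bound on ball growth --- bounding the number of cosets reachable by a radius-$n$ ball --- that forces $n$ to be large whenever $[G:H]$ is large.
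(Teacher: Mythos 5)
Your proof is correct and is essentially the paper's own argument, just with the intermediate step (that $S^{\diam_S(G/H)}$ surjects onto $G/H$, so $[G:H]\le|S^{\diam_S(G/H)}|\le C\diam_S(G/H)^d$) spelled out explicitly. Nothing further is needed.
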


\begin{proof}
Let $S$ be a finite symmetric generating set for $G$, so that $|S^n|\le Cn^d$ for some $C>0$ for all $n\in\N$. In particular, if $H\le G$ has finite index then $[G:H]\le C\diam_S(G/H)^d$.
\end{proof}

We now show that having uniformly almost flat coset spaces is equivalent to having a finite-index subgroup that has this property. We start with the following slight variant of a classical result providing a finite generating set for a finite-index subgroup.

\begin{lem} \label{generating_set_for_the_subgroup_in_terms_of_ball}
Suppose $G$ is a group with symmetric generating set $S$ containing the identity, and that $H\le G$ has finite index. Then $S^{2\diam_S(G/H)+1} \cap H$ generates $H$ with diameter at most $\diam_S(G)$.
\end{lem}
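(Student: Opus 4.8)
The plan is to run the classical Schreier-generator construction while keeping track of word lengths, so that the generation statement and the diameter bound fall out together. Write $\gamma=\diam_S(G/H)$ and $D=\diam_S(G)$. The first step is to note that since $S$ is symmetric the identity $S^\gamma H=G$ is equivalent to $HS^\gamma=G$; consequently every \emph{right} coset $Hg$ meets $S^\gamma$, so I may fix a set $T\subseteq S^\gamma$ of representatives for the right cosets of $H$ in $G$, and since $1\in S\cap H$ I may take $1\in T$. For $g\in G$ let $\bar g$ denote the representative in $T$ of the coset $Hg$, so that $\bar g=1$ precisely when $g\in H$.

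Next, given an arbitrary $h\in H$, I would write $h=s_1\cdots s_D$ with each $s_i\in S$ — possible because $S^D=G$ and we may pad with copies of $1\in S$ — and set $t_i=\overline{s_1\cdots s_i}$ for $0\le i\le D$, so that $t_0=\bar 1=1$ and $t_D=\bar h=1$. The key observation is that each element $u_i:=t_{i-1}s_it_i^{-1}$ lies in $H$: indeed $Ht_{i-1}s_i=H(s_1\cdots s_{i-1})s_i=H(s_1\cdots s_i)=Ht_i$, so $t_{i-1}s_it_i^{-1}\in H$. At the same time $u_i\in S^\gamma\cdot S\cdot S^\gamma=S^{2\gamma+1}$, hence $u_i\in S^{2\gamma+1}\cap H$; and the product telescopes, $u_1u_2\cdots u_D=t_0(s_1\cdots s_D)t_D^{-1}=h$. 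Thus $h\in(S^{2\gamma+1}\cap H)^D$. Since $h\in H$ was arbitrary, this shows at once that $S^{2\gamma+1}\cap H$ generates $H$ and that it does so with diameter at most $D=\diam_S(G)$; that this set is symmetric and contains the identity is immediate from $S$ being symmetric, $H$ being a subgroup, and $1\in S$.

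I do not expect a serious obstacle: the only point that needs care is the bookkeeping between left and right cosets. The definition of $\diam_S(G/H)$ is phrased via $S^nH=G$, i.e. in terms of left cosets, whereas the Schreier-style rewriting above naturally uses right cosets; the symmetry reduction $S^\gamma H=G\iff HS^\gamma=G$ carried out in the first step is precisely what reconciles the two. Everything else is the telescoping identity together with the elementary length estimate $t_{i-1}s_it_i^{-1}\in S^{2\gamma+1}$, so no further input is required.
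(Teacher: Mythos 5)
Your proof is correct and takes essentially the same approach as the paper: the paper simply black-boxes the Schreier rewriting via a citation to Hall's Lemma 7.2.2 (which gives $S^m\cap H\subseteq (CSC^{-1}\cap H)^m$), whereas you have unfolded that rewriting explicitly, including the telescoping identity and the left/right coset bookkeeping.
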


\begin{proof}
By the definition of diameter, $S ^{\diam_S(G/H)}$ contains a set of coset representations of $H$ in $G$, call this set $C$. It is shown in the proof of \citep[Lemma~7.2.2]{the_theory_of_groups} that $S^m\cap H\subseteq(CSC^{-1}\cap H)^m$ for all $m\in\N$, and hence in particular that  $CSC^{-1} \cap H \subseteq S^{2\diam_S(G/H) +1}$ generates $H$ with diameter at most $\diam_S(G)$.
\end{proof}

\begin{lem} \label{boundind_diam_G_K}
Suppose $K \le H \le G = \langle S \rangle$ with $|G:K|<\infty$, and let $T=S^{2\diam_S(G/H)+1} \cap H$ (which generates $H$ by \cref{generating_set_for_the_subgroup_in_terms_of_ball}).
Then $$\diam_T (H/K) \leq \diam_S(G/K) \leq 4 \diam_S (G/H) \diam_T (H/K).$$
\end{lem}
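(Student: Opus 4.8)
The plan is to prove the two inequalities separately, using the geometric relationship between the word metrics on $G$ (with respect to $S$) and on $H$ (with respect to $T$), where $T = S^{2\diam_S(G/H)+1}\cap H$.

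For the first inequality $\diam_T(H/K)\le\diam_S(G/K)$, I would unwind the proof of \cref{generating_set_for_the_subgroup_in_terms_of_ball}: with $C\subseteq S^{\diam_S(G/H)}$ a set of coset representatives for $H$ in $G$, the cited fact from \citep[Lemma~7.2.2]{the_theory_of_groups} gives $S^m\cap H\subseteq(CSC^{-1}\cap H)^m\subseteq T^m$ for every $m$. Now take $m=\diam_S(G/K)$. Since $S^mK=G$, in particular $S^m$ meets every coset of $K$ that lies inside $H$; intersecting with $H$ and using $S^m\cap H\subseteq T^m$ shows that $T^m$ meets every coset of $K$ in $H$, i.e. $T^mK=H$. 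Hence $\diam_T(H/K)\le m=\diam_S(G/K)$.

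For the second inequality, write $h=\diam_S(G/H)$ and $\delta=\diam_T(H/K)$. Each element of $T$ lies in $S^{2h+1}$ by definition, so $T^\delta\subseteq S^{(2h+1)\delta}$. Since $T^\delta K=H$, we get $H\subseteq S^{(2h+1)\delta}K$. Combining this with $G\subseteq S^hH$ yields $G\subseteq S^{h+(2h+1)\delta}K$, so $\diam_S(G/K)\le h+(2h+1)\delta$. It remains to absorb this into the stated bound $4h\delta$. Since $H\le G$ has finite index and $G$ is infinite-or-finite as the case may be, we may assume $h\ge1$ and $\delta\ge1$ (if $h=0$ then $H=G$, $T$ generates with $S^1$, and the claim is immediate; if $\delta=0$ then $K=H$ and $\diam_S(G/K)=\diam_S(G/H)=h\le 0$ forces the statement trivially — more carefully, one checks the edge cases directly). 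Granting $h,\delta\ge1$, we have $h+(2h+1)\delta\le h\delta+3h\delta=4h\delta$, which is the desired inequality. The only mild subtlety — and the step I would be most careful about — is handling these degenerate cases $h=0$ or $\delta=0$ cleanly, together with making sure the quoted containment $S^m\cap H\subseteq(CSC^{-1}\cap H)^m$ is applied with $C$ symmetric (or replacing $C$ by $C\cup C^{-1}$, which only enlarges the relevant ball radius by a bounded factor and does not affect the final constant once one is slightly generous, as we have been).
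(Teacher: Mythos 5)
Your proof is correct and takes essentially the same route as the paper's: the first inequality comes from the Reidemeister–Schreier rewriting $S^m\cap H\subseteq T^m$, and the second from $T\subseteq S^{2\diam_S(G/H)+1}$ together with $G\subseteq S^{\diam_S(G/H)}H$, with the same clean-up to reach the constant $4$. Your caveat about $C$ being symmetric is unnecessary: since $S$ is symmetric, any set of coset representatives $C\subseteq S^{\diam_S(G/H)}$ automatically has $C^{-1}\subseteq S^{\diam_S(G/H)}$, so $CSC^{-1}\cap H\subseteq T$ without any adjustment.
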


\begin{proof}
For the first inequality, given $h \in H$, note that $ h \in s_1 \ldots s_m K$ for some  $m \leq \diam_S(G/K)$ and $s_i \in S$. Since $K \subeq H$, $s_1 \ldots s_m  \in H$. Let $T$ be the generating set for $H$ defined in \cref{generating_set_for_the_subgroup_in_terms_of_ball}.  Applying \citep[Lemma~7.2.2]{the_theory_of_groups} once more as in the proof of the previous lemma, we can write $s_1 \ldots s_m$ as $t_1 \ldots t_m$ for some $t_i \in T$. Hence $h \in t_1 \ldots t_m K$. For the second inequality, by \cref{generating_set_for_the_subgroup_in_terms_of_ball}, $S^{3 \dgk  }\cap H$ generates $H$. Therefore, ($S^{3 \dgh }\cap H )^{\dgk}$ contains a set of coset representative for $K$ in $H$. Hence $H \subeq S ^{3\dgh  \dgk } K$. Hence, $$G = S ^{\diam(G/H)} H =  S ^{\diam(G/H)} (S ^{3\dgh \dgk } K) = S ^{4\dgh \dgk } K.$$
\end{proof}

\begin{lem} \label{uniform_D_alpha_passes_finte_index_subgroup}
Suppose $ G = \langle S \rangle$ has uniformly   $\alpha$-almost flat coset spaces and $H$ is a finite index subgroup of $G$, then $H$ has uniformly   $\alpha$-almost flat coset spaces.
\end{lem}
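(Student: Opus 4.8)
The plan is to transfer the lower bounds on the diameters of the coset spaces of $G$ to lower bounds on the diameters of the coset spaces of $H$, using \cref{boundind_diam_G_K}. By \cref{lem:indep.gen.set} the conclusion does not depend on which finite generating set we fix for $H$, so it is enough to verify it for the generating set $T=S^{2\diam_S(G/H)+1}\cap H$ appearing in \cref{boundind_diam_G_K} (which indeed generates $H$ by \cref{generating_set_for_the_subgroup_in_terms_of_ball}).

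So I would first fix $\eps>0$ such that $\diam_S(G/M)\ge\eps[G:M]^\alpha$ for every finite-index subgroup $M\le G$, and then let $K\le H$ be an arbitrary subgroup of finite index in $H$. Since $H$ has finite index in $G$, so does $K$, with $[G:K]=[G:H][H:K]$ by the tower law; hence the hypothesis applied to $M=K$ gives
\[
\diam_S(G/K)\ge\eps[G:K]^\alpha=\eps[G:H]^\alpha[H:K]^\alpha .
\]
On the other hand, the second inequality of \cref{boundind_diam_G_K} gives $\diam_S(G/K)\le4\diam_S(G/H)\diam_T(H/K)$. Combining these two bounds and rearranging yields
\[
\diam_T(H/K)\ge\frac{\eps[G:H]^\alpha}{4\diam_S(G/H)}\,[H:K]^\alpha .
\]

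Since $\eps$, $[G:H]$ and $\diam_S(G/H)$ depend only on $G$, $H$ and $S$, and in particular not on $K$, the quantity $\eps[G:H]^\alpha/(4\diam_S(G/H))$ is a fixed positive constant, and it witnesses that $H$ has uniformly $\alpha$-almost flat coset spaces with respect to $T$ (hence, by \cref{lem:indep.gen.set}, with respect to any generating set). All the real content here is already contained in \cref{generating_set_for_the_subgroup_in_terms_of_ball,boundind_diam_G_K}, so I do not anticipate any genuine obstacle; the only point requiring care is to check that the implied constant really is independent of the subgroup $K$, which is why it is important that \cref{boundind_diam_G_K} bounds $\diam_S(G/K)$ by a multiple of $\diam_T(H/K)$ in which the multiplicative factor $4\diam_S(G/H)$ does not involve $K$.
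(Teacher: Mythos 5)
Your proof is correct and takes essentially the same route as the paper: both invoke \cref{generating_set_for_the_subgroup_in_terms_of_ball} to get a finite generating set for $H$ and the second inequality of \cref{boundind_diam_G_K} to transfer the lower bound on $\diam_S(G/K)$ to one on $\diam_T(H/K)$, with the resulting constant independent of $K$. The only cosmetic difference is that the paper further bounds $\diam_S(G/H)\le[G:H]$ to write the constant as $c/(4[G:H]^{1-\alpha})$, whereas you keep $\diam_S(G/H)$ in the denominator, which is marginally sharper but immaterial.
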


\begin{proof}
Let $H$ be a subgroup of $G$ with index $k$.  According to \cref{generating_set_for_the_subgroup_in_terms_of_ball}, $H$ is generated by the finite subset $T \coloneqq H \cap S^{3k}$. Let $K$ be a finite index subgroup of $H$, since $G$ has uniformly   $\alpha$-almost flat coset spaces, there exists $c>0$ such that $\diam_S(G/K) \geq c |G:K|^\a$. By \cref{boundind_diam_G_K}, we have  
\[ \diam_T (H/K) \geq \frac{\dgk}{4\dgh} \geq \frac{c|G:K|^\a}{4|G:H|} = \frac{c|H:K|^\a}{4|G:H|^{1-\a}}.\qedhere\] 
\end{proof}

It turns out that both having uniformly    almost flat coset spaces and  uniformly    almost flat quotients pass from the finite-index subgroup to the whole group. 

\begin{lem} \label{uniform_D_alpha_passes_to_the_whole_group}
Suppose $G =\langle S \rangle$ has a finite-index subgroup $H$ which has uniformly   $\alpha$-almost flat   coset (resp. quotients) spaces, then $G$ has uniformly   $\alpha$-almost flat coset (resp. quotients) spaces.
\end{lem}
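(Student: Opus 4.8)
The plan is to run the reverse of the argument used in \cref{uniform_D_alpha_passes_finte_index_subgroup}, exploiting the two-sided comparison of diameters in \cref{boundind_diam_G_K}. Fix a finite-index subgroup $H=\langle T\rangle$ that has uniformly $\alpha$-almost flat coset spaces, say with constant $c>0$, and write $k=[G:H]$ and $\delta=\diam_S(G/H)$, both finite constants depending only on $G$, $H$, $S$ and $T$. Given an arbitrary finite-index subgroup $K\le G$, the first step is to reduce to the case $K\le H$: replace $K$ by $K\cap H$, which is finite-index in $H$, and note $[H:K\cap H]\ge[G:K]/k$ while $\diam_S(G/K)\ge\diam_S(G/(K\cap H))/(\text{something bounded})$ -- more simply, since $K\cap H\le K$ we have $\diam_S(G/K)\le\diam_S(G/(K\cap H))$, and a lower bound for the latter in terms of $[G:K\cap H]\ge[G:K]$ yields one for $[G:K]$ up to the constant $k^\alpha$. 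So it suffices to prove the bound for finite-index $K\le H$.

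For such a $K$, \cref{boundind_diam_G_K} gives $\diam_S(G/K)\ge\diam_T(H/K)$, and since $H$ has uniformly $\alpha$-almost flat coset spaces, $\diam_T(H/K)\ge c\,[H:K]^\alpha=c\,([G:K]/k)^\alpha$. Hence $\diam_S(G/K)\ge (c/k^\alpha)[G:K]^\alpha$, which is the desired inequality with constant $c/k^\alpha$ (absorbing the loss from the reduction step above into the constant). For the quotients version, the argument is the same except that we must take $K\normal G$; then $K\cap H$ need not be normal in $G$, but $K\cap H$ \emph{is} normal in $H$, and we can instead pass to the normal core of $K\cap H$ in $G$, or simply observe that since $K\normal G$ we have $KH/K\cong H/(K\cap H)$ and can work with the normal subgroup $K\cap H\normal H$ directly, applying the quotient hypothesis for $H$ to $K\cap H$. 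The comparison $\diam_S(G/K)\ge\diam_T(H/(K\cap H))$ still follows from the same reasoning as in \cref{boundind_diam_G_K} (the first inequality there only used $K\subeq H$, which holds for $K\cap H$), giving $\diam_S(G/K)\ge c\,[H:K\cap H]^\alpha\ge c\,([G:K]/k)^\alpha$.

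The main obstacle is the bookkeeping in the quotients case: one has to be careful that intersecting an arbitrary normal subgroup $K\normal G$ with $H$ produces something to which the hypothesis on $H$ genuinely applies (it does, being normal in $H$) and that the index does not degrade by more than the fixed factor $k$, and that the diameter comparison of \cref{boundind_diam_G_K} -- whose second inequality is not needed here -- still runs when $K$ is replaced by $K\cap H$. None of this is deep; it is essentially the same computation as in \cref{uniform_D_alpha_passes_finte_index_subgroup} read in the opposite direction, with the roles of the two diameter inequalities in \cref{boundind_diam_G_K} swapped.
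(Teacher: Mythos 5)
Your overall plan --- intersect the given subgroup $L$ of $G$ with $H$, apply the first inequality of \cref{boundind_diam_G_K} to $K\coloneqq L\cap H\le H$, and feed the hypothesis on $H$ into that --- is the same as the paper's. But there is a genuine gap in the reduction step, and it is exactly the point you dismiss as "not needed."

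Since $K=L\cap H\le L$, the trivial comparison runs $\diam_S(G/L)\le\diam_S(G/K)$, which you correctly note. That is the \emph{wrong} direction: you are trying to get a \emph{lower} bound on $\diam_S(G/L)$, and a lower bound on the larger quantity $\diam_S(G/K)$ does not give you one on $\diam_S(G/L)$. Your "more simply'' sentence takes a lower bound on $\diam_S(G/(K\cap H))$ and then asserts it "yields one for $[G:K]$," but no such transfer is available from $\diam_S(G/L)\le\diam_S(G/K)$ alone. Likewise your claim in the quotient case that $\diam_S(G/K)\ge\diam_T(H/(K\cap H))$ "follows from the same reasoning" as the first inequality of \cref{boundind_diam_G_K} is not correct: that first inequality applies to a chain $K'\le H\le G$ and compares $\diam_T(H/K')$ with $\diam_S(G/K')$ for the \emph{same} $K'$; applying it to $K'=K\cap H$ gives a lower bound on $\diam_S(G/(K\cap H))$, which again sits on the wrong side of $\diam_S(G/K)$.

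The missing ingredient is precisely the \emph{second} inequality of \cref{boundind_diam_G_K}, which you say "is not needed here." Applying it to the chain $H\cap L\le L\le G$ gives
\[
\diam_S(G/(H\cap L))\le 4\,\diam_S(G/L)\,\diam_{T'}(L/(H\cap L))\le 4\,[L:H\cap L]\,\diam_S(G/L)\le 4\,[G:H]\,\diam_S(G/L),
\]
i.e.\ $\diam_S(G/L)\ge\diam_S(G/(H\cap L))/(4[G:H])$, which is the "something bounded'' in your first (correct but unproved) formulation. Only after this step can you invoke the first inequality of \cref{boundind_diam_G_K} on the chain $H\cap L\le H\le G$ together with the hypothesis on $H$. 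This is what the paper does: one application of each half of \cref{boundind_diam_G_K}. Your claim that the second inequality can be dispensed with is what breaks the argument.
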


\begin{proof} 
Let $T$ be the generating set for $H$ defined in \cref{generating_set_for_the_subgroup_in_terms_of_ball}. 
Suppose $L$ is a finite index subgroup (resp. normal subgroup) of $G$, then $K \coloneqq H \cap L$ is a finite index subgroup (resp. finite index normal subgroup) of $H$ with $|L:K|\leq|G:H|$. 
 Therefore, \cref{boundind_diam_G_K} tells us that $$\diam_S(G/K) \geq \diam_T (H/K) \geq  c|H:K|^\a = c \frac{|G:L|^\a |L: K|^\a } { |G:H |^\a}  \geq c \frac{|G:L|^\a} { |G:H |^\a}.$$
Applying  \cref{boundind_diam_G_K} again, we can deduce that 
 \[ \diam_S (G/L) \geq \frac{ \dgk }{4 |L:K|} \geq \frac{ c |G:L|^\a }{4 |G:H| |G:H|^\a} = \frac{ c |G:L|^\a }{4  |G:H|^{1+\a}}. \qedhere \]  
\end{proof}

\section{Example: The Heisenberg group}
In this short section we prove \cref{prop:heis}.
\begin{proof}[Proof of \cref{prop:heis}]
Write $H$ for the Heisenberg group, let $S_0$ be the generating set
\[
x=\left(
\begin{array}{ccc}
1&1&0\\
0&1&0\\
0&0&1\\
\end{array}
\right),\quad
y=\left(
\begin{array}{ccc}
1&0&0\\
0&1&1\\
0&0&1\\
\end{array}
\right),\quad
z=\left(
\begin{array}{ccc}
1&0&1\\
0&1&0\\
0&0&1\\
\end{array}
\right),
\]
and set $S=S_0\cup S_0^{-1}\cup\{1\}$. Note that $z=[x,y]$ and that $[H,H]=Z(H)=\la z\ra$. Note moreover that the commutator is bilinear in the sense that $[x^k,y^\ell]=z^{k\ell}$ for all $k,\ell\in\Z$.

To see that $H$ has uniformly $(1/3)$-almost flat quotients, suppose $G$ is a finite quotient of $H$, with quotient homomorphism $f:H\to G$. The abelianisation of $G$ is a quotient of $H/[H,H]\cong\Z^2$, so by \cref{lem:poly.growth} we have $|G/[G,G]|\ll\diam_S(G/[G,G])^2\le\diam_S(G)^2$. Moreover, setting $m=\min\{n\in\N:f(x^n)\in\la f(y),f(z)\ra\}$, we certainly have $\diam_S(G)\ge m/2$. Centrality of $z$ implies that $[f(x^m),f(y)]=1 $, and bilinearity of the commutator therefore implies that $|[G,G]|\le m\le\diam_S(G)$, so that $|G|\ll\diam_S(G)^3$ as required.

To see that $H$ does not have uniformly $\alpha$-almost flat quotients for any $\alpha>1/3$, for each prime $p$ let $K_p$ be the normal subgroup
\[
K_p=\left(
\begin{array}{ccc}
1&p\Z&p\Z\\
0&1&p\Z\\
0&0&1\\
\end{array}
\right).
\]
A complete set of coset representatives for $K_p$ in $H$ is given by
\[
z^cy^bx^a=\left(
\begin{array}{ccc}
1&a&c\\
0&1&b\\
0&0&1\\
\end{array}
\right),\qquad0\le a,b,c<p,
\]
so that $[H:K_p]=p^3$ and $\diam_S(H/K_p)\le3p$.

It is well known and easy to check that $H$ has polynomial growth of degree $4$, so \cref{lem:poly.growth} implies that it has uniformly $(1/4)$-almost flat coset spaces. To see that $H$ does not have uniformly $\beta$-almost flat coset spaces for any $\beta>1/4$, for each $n\in\N$ let $H_n$ be the subgroup
\[
H_n=\left(
\begin{array}{ccc}
1&n\Z&n^2\Z\\
0&1&n\Z\\
0&0&1\\
\end{array}
\right).
\]
Similarly to $K_p$ a complete set of coset representatives for $H_n$ in $H$ is given by
\[
z^cy^bx^a=\left(
\begin{array}{ccc}
1&a&c\\
0&1&b\\
0&0&1\\
\end{array}
\right),\qquad0\le a,b<n,\quad0\le c<n^2,
\]
so that $[H:H_n]=n^4$. To obtain the desired result, we will show moreover that $\diam_S(H/H_n)\le10n$. To see this, for any $c\in\{0,\ldots,n^2-1\}$, write $c$ in base $n$, that is to say write $c=c_0+c_1n$ with $c_0,c_1\in\{0,1,\ldots,n-1\}$, and then note that $z^c=[x^{c_0},y][x^{c_1},y^n]\in S^{8n}$. An arbitrary coset representative $z^cy^bx^a$ therefore belongs to $S^{10n}$, and the claim is proved.
\end{proof}

\section{Khukhro and Valette's results}\label{sec:kv}
In this section we prove Khukhro and Valette's results on residual almost flatness in the form we stated in the introduction. In proving \cref{thm:kv.detailed} we make use of the standard fact that a finitely generated group has finitely many subgroups of any given finite index.
\begin{lem}[{\citep[Corollary 1.1.2]{lub-seg}}]\label{lem:finitely.many.subgroups.index}
Suppose $G$ is a group generated by at most $r$ elements, and let $n\in\N$. Then $G$ has at most $n(n!)^{r-1}$ subgroups of index $n$.
\end{lem}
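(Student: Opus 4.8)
The plan is to prove this via the classical correspondence between index-$n$ subgroups and transitive permutation representations on $n$ points, following the proof in the cited reference (essentially M.\ Hall's argument).

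First I would pass from subgroups to actions. Given a subgroup $H\le G$ of index $n$, left multiplication gives an action of $G$ on the coset space $G/H$, a set of size $n$ containing the distinguished point $H$. Choosing a \emph{labeling}, i.e.\ a bijection $\ell\colon G/H\to\{1,\dots,n\}$ with $\ell(H)=1$, transports this to a homomorphism $\rho_\ell\colon G\to\mathrm{Sym}(n)$ whose image acts transitively on $\{1,\dots,n\}$ and for which the stabilizer of the point $1$ is precisely $H$; there are exactly $(n-1)!$ such labelings for each fixed $H$. Next I would check that the assignment $(H,\ell)\mapsto\rho_\ell$ is a bijection from the set of pairs (index-$n$ subgroup, labeling) onto the set of \emph{transitive} homomorphisms $G\to\mathrm{Sym}(n)$. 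Conversely, a transitive homomorphism $\rho$ has point stabilizer $H_\rho\coloneqq\mathrm{Stab}_\rho(1)$ of index exactly $n$, by the orbit--stabilizer theorem together with transitivity, and $gH_\rho\mapsto\rho(g)(1)$ is then a well-defined labeling of $G/H_\rho$ from which $\rho$ is recovered; these two constructions are mutually inverse. Consequently, writing $t_n$ for the number of transitive homomorphisms $G\to\mathrm{Sym}(n)$, the number of index-$n$ subgroups of $G$ equals $t_n/(n-1)!$.

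Finally, since $G$ is generated by at most $r$ elements, any homomorphism out of $G$ is determined by the images of those generators, so $t_n\le|\mathrm{Sym}(n)|^r=(n!)^r$, and hence the number of index-$n$ subgroups is at most $(n!)^r/(n-1)!=n(n!)^{r-1}$, as claimed. The only step that needs genuine care is the bijection in the middle: one must confirm that $\mathrm{Stab}_\rho(1)$ has index exactly $n$ and that the reconstructed labeling is well-defined and $G$-equivariant, both of which use transitivity and the identification of the stabilizer. Everything else — the count $(n-1)!$ of labelings and the final arithmetic — is routine bookkeeping.
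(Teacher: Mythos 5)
Your argument is correct and is exactly the standard M.\ Hall counting argument that underlies the cited result \citep[Corollary 1.1.2]{lub-seg}: identify index-$n$ subgroups with transitive actions on $n$ points via coset labelings (each subgroup corresponding to $(n-1)!$ labeled actions), bound the number of homomorphisms to $\mathrm{Sym}(n)$ by $(n!)^r$ using the generating set, and divide. The paper itself supplies no proof of this lemma, only the citation, so there is nothing to diverge from; your treatment matches the source.
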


\begin{proof}[Proof of \cref{thm:kv.detailed}]
The implication \eqref{item:box}$\implies$\eqref{item:raf} is trivial. The implication \eqref{item:indic}$\implies$\eqref{item:box} is proved in \citep[Theorem~4]{box}; indeed, it is shown there that for any finite generating set $S$ there exists a decreasing sequence $(N_k)_{k\in\N}$ of finite-index normal subgroups with trivial intersection and $\eps,\alpha\in(0,1)$ such that $\diam_S(G/N_k)\ge\eps[G:N_k]^\alpha$ for all $k$. It remains to prove \eqref{item:raf}$\implies$\eqref{item:indic}, which we do following the proof of \citep[Corollary~1.7]{nil}.

Fix a finite generating set $S$. By definition, if \eqref{item:raf} holds then there exist $\eps,\alpha\in(0,1)$ and, for each $g\in G$, a normal subgroup $N_g\normal G$ of finite index such that $g\notin N_g$ and $\diam_S(G/N_g)\ge\eps[G:N_g]^\alpha$. As $g$ ranges over all the elements of $G$, the indices $|G:N_g|$ must be unbounded, since if this were not the case then \cref{lem:finitely.many.subgroups.index} would imply that $\{N_g:g\in G\}$ was a finite set, and then $\bigcap_{g\in G}N_g$ would contain non-identity elements. 

Fix $d\in\N$ so that $2/(d+2)<\alpha$, and let $A=A(d)>0$ be as in \cref{thm:bt}. Since $[G:N_g]$ is unbounded, there exist $g\in G$ with arbitrarily large $|G:N_g|$ such that $\diam_S(G/N_g)\ge A|G:N_g|^{2/(d+2)}$. \cref{thm:bt} then implies that for each of these $g$ there exist a subgroups $H_g\normal G$ and $K_g\le G$ with $H_g\le K_g$ and $|G:K_g|\le(2d)!$ such that $H_g\subseteq S^{O_d(\diam_S(G/N_g)^{1/2})}N_g$ and $K_g/H_g$ is abelian. 
Note that since $|G:N_g|$ is unbounded over $g\in G$, so is $\diam_S(G/N_g)$, hence so is $\diam_S(G/H_g)$, and hence so is $|G:H_g|$. Since $|G:K_g|\le(2d)!$, this in turn implies that $|K_g:H_g|$ is unbounded over $g\in G$. However, \cref{lem:finitely.many.subgroups.index} implies that the set $\{K_g:g\in G\}$ is finite, so there exists $g_0\in G$ such that $K_{g_0}$ admits abelian quotients of unbounded size, and hence has infinite abelianisation. Since $K_{g_0}$ is of finite index in $G$ and hence finitely generated, it follows that $K_{g_0}$ surjects onto $\Z$.
\end{proof}

\begin{proof}[Proof of \cref{thm:kv.1}]
The implications \eqref{item:1afcs} $\implies$ \eqref{item:1afq} $\implies$ \eqref{item:r1af} are trivial, and since virtually cyclic groups have linear growth the implication \eqref{item:vz} $\implies$ \eqref{item:1afcs} follows from \cref{lem:poly.growth}. It remains to prove that \eqref{item:r1af} $\implies$ \eqref{item:vz}, which we do following Khukhro and Valette.

Suppose then that $G$ is strongly residually $1$-almost flat, and fix a finite symmetric generating set $S$ containing the identity. In particular, this means that there exists $\eps>0$ such that for all $r\in\N$ there exists a finite-index normal subgroup $N_r\normal G$ such that $\diam_S(G/N_r)\ge\eps[G:N_r]$ and $N_r\cap S^{2r}=\{1\}$. Note that this last condition implies that $S^r$ injects into $G/N_r$, so that the ball of radius $r$ in $G/N_r$ has cardinality $|S^r|$.

Fix $r$, and pick $x,y\in G/N_r$ at distance $\diam_S(G/N_r)$. Set $k=\lfloor\diam_S(G/N_r)/(2r+1)\rfloor$, and let $x_0,\ldots,x_k$ lie on a geodesic between $x$ and $y$ with $d(x_i,x_{i+1})=2r+1$. Then the balls of radius $r$ centred at the points $x_i$ are disjoint, so that
\[
(k+1)|S^r|\le[G:N_r]\le\eps^{-1}\diam_S(G/N_r)\le\eps^{-1}(k+1)(2r+1)
\]
and hence $|S^r|\le\eps^{-1}(2r+1)$. It follows that $G$ has linear growth, and hence is virtually cyclic by \citep{wilkie-vdd}.
\end{proof}

\section{Residually nilpotent groups}
In this section we prove \cref{thm:resid,thm:nilp,thm:nilp.coset}. We start by recalling some necessary background on nilpotent groups. Given elements $x,y$ of a group $G$ we define the \emph{commutator} $[x,y]=x^{-1}y^{-1}xy$. Given elements $x_1,\ldots,x_k\in G$ we then define the \emph{simple commutator} $[x_1,\ldots,x_k]$ recursively via $[x_1,\ldots,x_k]=[[x_1,\ldots,x_{k-1}],x_k]$. We call $k$ the \emph{weight} of this simple commutator. We extend these notions to subgroups $H_1,\ldots,H_k$ by defining $[H_1,H_2]=\langle[h_1,h_2]:h_i\in H_i\rangle$ and then recursively $[H_1,\ldots,H_k]=[[H_1,\ldots,H_{k-1},H_k]$.

We define the \emph{lower central series} $G_1\ge G_2\ge\cdots$ of $G$ by $G_1=G$ and $G_{i+1}=[G,G_i]$, noting in particular that each quotient $G_i/G_{i+1}$ is abelian, in fact central in $G/G_{i+1}$. The group $G$ is \emph{nilpotent} if $G_{k+1}=\{e\}$ for some $k\in\N$; the smallest such $k$ is called the \emph{class} of $G$.

It is well known that
\begin{equation}\label{eq:[G_i,G_j]}
[G_i,G_j]\subseteq G_{i+j}
\end{equation}
for all $i,j\in\N$. It is also well known that $G_i/G_{i+1}$ is generated by simple commutators of weight $i$.

\begin{lem}[{\citep[Lemmas 5.5.2 \& 5.5.3]{tointon.book}}]\label{lem:multilin}
Let $G$ be a group, and let $i,j,k\in\N$. Then the maps
\[
\begin{array}{ccc}
G_i\times G_j&\to&G_{i+j}/G_{i+j+1}\\
(x,y)&\mapsto&[x,y]G_{i+j+1}
\end{array}
\]
and
\[
\begin{array}{ccc}
G\times\cdots\times G&\to&G_k/G_{k+1}\\
(x_1,\ldots,x_k)&\mapsto&[x_1,\ldots,x_k]G_{k+1}
\end{array}
\]
are homomorphisms in each variable.
\end{lem}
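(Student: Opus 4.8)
The plan is to derive both statements from two elementary commutator identities together with the containment $[G_i,G_j]\subseteq G_{i+j}$ recorded in \eqref{eq:[G_i,G_j]} and the observation that each $G_n/G_{n+1}$ is central in $G/G_{n+1}$; the latter means that modulo $G_{n+1}$ any two elements of $G_n$ commute, and any factor lying in $G_{n+1}$ may be deleted from a product. Writing $a^b=b^{-1}ab=a[a,b]$, one checks in a line the identities
\[
[xy,z]=[x,z]^y[y,z]=[x,z]\,[x,z,y]\,[y,z],\qquad [x,yz]=[x,z]\,[x,y]^z=[x,z]\,[x,y]\,[x,y,z].
\]

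For the bilinear map, I would take $x,x'\in G_i$ and $y\in G_j$ and use the first identity to write $[xx',y]=[x,y]\,[x,y,x']\,[x',y]$. Since $[x,y]\in G_{i+j}$ by \eqref{eq:[G_i,G_j]} and $x'\in G_i$, the middle term satisfies $[x,y,x']\in[G_{i+j},G_i]\subseteq G_{2i+j}\subseteq G_{i+j+1}$ (using $i\ge1$); passing to $G/G_{i+j+1}$ I may delete it and transpose the other two, giving $[xx',y]\,G_{i+j+1}=\big([x,y]\,G_{i+j+1}\big)\big([x',y]\,G_{i+j+1}\big)$. Additivity in the second variable is the same computation with the second identity, noting $[x,y,y']\in[G_{i+j},G_j]\subseteq G_{i+2j}\subseteq G_{i+j+1}$.

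For the $k$-fold simple commutator I would induct on $k$, the cases $k=1$ (the quotient map $G\to G/G_2$) and $k=2$ (the bilinear map with $i=j=1$) being immediate. For $k\ge3$, write $w=[x_1,\dots,x_{k-1}]\in G_{k-1}$, so $[x_1,\dots,x_k]=[w,x_k]$; homomorphy in $x_k$ then follows directly from the bilinear case for $[\cdot,\cdot]\colon G_{k-1}\times G_1\to G_k/G_{k+1}$. For a variable $x_m$ with $m<k$, the inductive hypothesis supplies some $g\in G_k$ with
\[
[x_1,\dots,x_mx_m',\dots,x_{k-1}]=w_1\,w_2\,g,\qquad w_1:=[x_1,\dots,x_m,\dots,x_{k-1}],\quad w_2:=[x_1,\dots,x_m',\dots,x_{k-1}],
\]
all of $w_1,w_2,g$ lying in $G_{k-1}$. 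Applying the bilinear case again and using $[g,x_k]\in[G_k,G]=G_{k+1}$ I would obtain
\[
[x_1,\dots,x_mx_m',\dots,x_k]\equiv[w_1,x_k]\,[w_2,x_k]\equiv[x_1,\dots,x_m,\dots,x_k]\,[x_1,\dots,x_m',\dots,x_k]\pmod{G_{k+1}},
\]
which closes the induction.

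I do not expect a genuine obstacle here: the entire argument is bookkeeping about which term of a commutator identity lands in which stage of the lower central series, and checking that every rearrangement is performed modulo a subgroup relative to which the relevant elements are central. The one place to take a little care is the inductive step for the $k$-fold commutator, where the hypothesis must be applied at weight $k-1$ and then fed into the already-established bilinear statement used with $i=k-1$ and $j=1$, rather than attempting to manipulate the weight-$k$ commutator directly.
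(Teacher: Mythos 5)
Your proof is correct. Note that the paper does not actually prove this statement: it is cited verbatim from Tointon's book (Lemmas 5.5.2 and 5.5.3), so there is no in-paper argument to compare against. That said, the route you take — expanding $[xy,z]$ and $[x,yz]$ via the standard commutator identities, using $[G_i,G_j]\subseteq G_{i+j}$ to show the extra commutator term $[x,z,y]$ (respectively $[x,y,z]$) falls one level deeper than $G_{i+j}$, and then passing to the abelian quotient $G_{i+j}/G_{i+j+1}$ to delete and rearrange terms — is precisely the standard proof, and matches the argument in the cited source. Your inductive treatment of the weight-$k$ simple commutator, reducing the $m<k$ case to the inductive hypothesis at weight $k-1$ followed by an application of the already-established bilinear statement with $(i,j)=(k-1,1)$ and absorbing $[g,x_k]\in[G_k,G]\subseteq G_{k+1}$, is also the standard bookkeeping and is carried out without error. (A small remark: the separate base case $k=2$ is redundant once Part 1 is in place, since the induction closes from $k=1$; but this does no harm.)
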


The set of finite-order elements of a nilpotent group $G$ form a subgroup $T(G)\le G$, called the \emph{torsion subgroup} of $G$. The torsion subgroup is trivially characteristic, and $G/T(G)$ is torsion-free. If $G$ is finitely generated then $T(G)$ is finite.

Following \citep{mpty}, for each $k\in\N$ we also define the \emph{generalised commutator subgroup} $\overline G_k$ by $\overline G_k=\{g\in G:(\exists n\in\N)(g^n\in G_k)\}$. The generalised commutator subgroups are all characteristic in $G$, each $\overline G_i/\overline G_{i+1}$ is torsion-free abelian, and if $G$ is finitely generated then each $\overline G_k$ contains $G_k$ as a finite-index subgroup \citep[Lemma 4.1]{mpty}. Moreover, we have $[\overline G_i,\overline G_j]\le\overline G_{i+j}$ for all $i,j\in\N$ \citep[Lemma 4.5]{mpty}.

If $N$ is a simply connected nilpotent Lie group with Lie algebra $\nn$ then there exist mutually inverse diffeomorphisms $\exp:\nn\to N$ and $\log:N\to\nn$ 
\citep[Theorem 1.2.1]{cor-gre}. 
The lower central series $\nn=\nn_1\ge\nn_2\ge\cdots$ of $\nn$ is defined recursively by $\nn_{i+1}=[\nn,\nn_i]=\Span_\R\{[x,y]:x\in\nn,y\in\nn_i\}$. The terms $N_i$ of the lower central series of $N$ are Lie subgroups and $\exp N_i=\nn_i$ for each $i$ \citep[Theorem XII.3.1]{hochschild}. 
Simply connected nilpotent Lie groups are \emph{uniquely divisible}: for every $x\in N$ and $k\in\N$ there exists a unique $y\in N$ such that $y^k=x$.

The \emph{structure constants} of a basis $v_1,\ldots,v_h$ of a Lie algebra $\nn$ are the unique $c_{ijk}\in\R$ such that
\[
[v_i,v_j]=\sum_{k=1}^hc_{ijk}v_k
\]
for all $i,j$.

Every finitely generated torsion-free nilpotent group $G$ embeds as a discrete cocompact subgroup in a simply connected nilpotent Lie group $N$, unique up to isomorphism, called the \emph{Mal'cev completion} of $G$ \citep{malcev,raghu}. Moreover, there exists a certain basis for the Lie algebra of $N$ that is compatible with $G$ in a rather strong sense, as follows.
\begin{thm}\label{thm:malcev.basis}
Suppose $N$ is a simply connected nilpotent Lie group with Lie algebra $\nn$ with dimension $h$, and $G$ is a discrete cocompact subgroup. Then there exists a basis $v_1,\cdots,v_h$ of $\nn$ with rational structure constants such that, writing $x_i=\exp v_i$ for each $i$, the following conditions are satisfied.
\begin{enumerate}[(i)]
\item Every element $g\in N$ can be written uniquely as $g=x_1^{\ell_1}\cdots x_h^{\ell_h}$ with $\ell_i\in\R$.\label{item:malcev.unique}
\item Writing $h_i=\dim\nn-\dim\nn_{i+1}$ for each $i$, we have $N_i=\{x_{h_{i-1}+1}^{\ell_{h_{i-1}+1}}\cdots x_h^{\ell_h}:\ell_i\in\R\}$. \label{item:basis.pass.lcs}
\item We have $G=\{x_1^{\ell_1}\cdots x_h^{\ell_h}:\ell_i\in\Z\}$.\label{item:malcev.lattice}
\end{enumerate}
\end{thm}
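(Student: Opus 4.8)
The plan is to build the basis by descending induction through the lower central series $\nn=\nn_1\supseteq\nn_2\supseteq\cdots$, adjoining at each stage the logarithms of suitably chosen elements of $G$, and then to check the three properties together with rationality of the structure constants. The essential structural input, which I would extract from the work of Mal'cev and Raghunathan underlying the existence of the Mal'cev completion discussed above, is that every term $N_i$ of the lower central series of $N$ is rational with respect to $G$: the subgroup $G\cap N_i$ is discrete and cocompact in $N_i$, and its image under the projection $N_i\to N_i/N_{i+1}$ is a lattice in the simply connected abelian --- hence Euclidean --- group $N_i/N_{i+1}$. One way to see this is that $\Span_\Q\log G$ is a $\Q$-Lie-subalgebra $\nn_\Q\subeq\nn$ with $\nn_\Q\otimes_\Q\R=\nn$ and that each $\nn_i$ is defined over $\Q$ in this structure; alternatively one argues directly using $[G,G\cap N_i]\subeq G\cap N_{i+1}$.

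Granting this, write $c$ for the nilpotency class of $N$, so that $h_0=0$, $h_c=h$, and $\dim(\nn_i/\nn_{i+1})=h_i-h_{i-1}$. Processing $i=c,c-1,\ldots,1$ in turn: assuming vectors $v_{h_i+1},\ldots,v_h$ spanning $\nn_{i+1}$ have already been produced, use the input above to choose $g_{h_{i-1}+1},\ldots,g_{h_i}\in G\cap N_i$ whose images in $N_i/N_{i+1}$ form a $\Z$-basis of the lattice there, and set $v_j=\log g_j$ for $h_{i-1}<j\le h_i$. After all $c$ steps this produces vectors $v_1,\ldots,v_h$ forming a basis of $\nn$ that is adapted to the flag, with $\nn_{i+1}=\Span_\R\{v_{h_i+1},\ldots,v_h\}$, and with each $x_j=\exp v_j$ lying in $G$ by construction.

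It then remains to verify (i)--(iii) and rationality. Property (i) --- that $(\ell_1,\ldots,\ell_h)\mapsto x_1^{\ell_1}\cdots x_h^{\ell_h}$ is a bijection $\R^h\to N$, i.e.\ that these are global exponential coordinates of the second kind --- I would prove by induction on $c$ using the central extension $N_c\to N\to N/N_c$: the last $h-h_{c-1}$ coordinates parametrise the Euclidean central subgroup $N_c$, the first $h_{c-1}$ descend to coordinates of the second kind on the group $N/N_c$ of smaller class by the inductive hypothesis, and unique divisibility lets one glue these together. Property (ii) is then immediate from flag-compatibility, since setting $\ell_1=\cdots=\ell_{h_{i-1}}=0$ recovers exactly $\exp\nn_i=N_i$. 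For (iii), the inclusion $\{x_1^{\ell_1}\cdots x_h^{\ell_h}:\ell_i\in\Z\}\subeq G$ is clear as each $x_i\in G$; conversely, given $g\in G$ write $g=x_1^{\ell_1}\cdots x_h^{\ell_h}$ with $\ell_i\in\R$ by (i), project to $N/N_2$, and use that $x_1N_2,\ldots,x_{h_1}N_2$ form a $\Z$-basis of the lattice $(G\cap N_1)N_2/N_2$ to force $\ell_1,\ldots,\ell_{h_1}\in\Z$; then $x_{h_1}^{-\ell_{h_1}}\cdots x_1^{-\ell_1}g=x_{h_1+1}^{\ell_{h_1+1}}\cdots x_h^{\ell_h}$ lies in $G\cap N_2$, which is a lattice in $N_2$ for which $v_{h_1+1},\ldots,v_h$ plays the same role, so induction on the class yields the remaining $\ell_j\in\Z$. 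Finally, rationality of the structure constants is automatic: each $v_i=\log x_i$ lies in $\nn_\Q=\Span_\Q\log G$, so $v_1,\ldots,v_h$ is a $\Q$-basis of $\nn_\Q$, and since $\nn_\Q$ is closed under the bracket each $[v_i,v_j]$ is a $\Q$-combination of the $v_k$.

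I expect the main obstacle to be the structural input itself --- that $G\cap N_i$ is cocompact in $N_i$ and projects onto a lattice in $N_i/N_{i+1}$ --- since this is the one place where the arithmetic of the lattice genuinely enters rather than being formal bookkeeping; once it is in hand, the remaining work is organising the two nested inductions (descending through the lower central series for the construction, and on the nilpotency class for (i) and (iii)).
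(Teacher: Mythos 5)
Your proposal is correct and is essentially a careful reconstruction of the standard argument that the paper does not spell out but simply attributes to Corwin--Greenleaf: rationality of the $N_i$ gives lattices in the successive abelian quotients of the lower central series, one lifts a $\Z$-basis of each to $G$ and takes logarithms to build a flag-adapted basis, and properties (i)--(iii) and rationality of the structure constants then follow by induction and from the $\Q$-Lie algebra $\Span_\Q\log G$.

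One place to tighten the write-up. In your proof of (iii), after peeling off the first $h_1$ factors you pass to $N_2$ and assert that $v_{h_1+1},\ldots,v_h$ ``plays the same role'' there, appealing to induction on the nilpotency class. But the filtration of $N_2$ that this basis respects is $N_2\supseteq N_3\supseteq\cdots$, which is \emph{not} the lower central series of $N_2$: one has $[N_2,N_2]\subseteq N_4$, so the chain $N_2\supseteq N_3\supseteq\cdots$ is generally strictly coarser than the lower central series of $N_2$, and $v_{h_1+1},\ldots,v_h$ need not be a strong Mal'cev basis of $\nn_2$ passing through the lower central series of $\nn_2$. The fix is to broaden the inductive statement: prove (i)--(iii) for an arbitrary rational flag $\nn=\mathfrak{h}_0\supseteq\mathfrak{h}_1\supseteq\cdots\supseteq\mathfrak{h}_k=0$ of ideals with $[\nn,\mathfrak{h}_i]\subseteq\mathfrak{h}_{i+1}$ (this is exactly what Corwin--Greenleaf do), which your argument handles verbatim, and then specialise to the lower central series at the end. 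With that reformulation the nested inductions close up cleanly.
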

\begin{proof}
This is standard and implicit in \citep{cor-gre}, so we provide only brief details of how it may be read out of that source. The normal subgroups $N_i$ are rational by \citep[Theorem 5.1.8 \& Corollary 5.2.2]{cor-gre}, so the existence of a basis satisfying conditions \eqref{item:malcev.unique}--\eqref{item:malcev.lattice} follows from \citep[Corollary 5.3.2 \& Proposition 1.2.7]{cor-gre}. The proof of \citep[Theorem 5.1.8 (a)]{cor-gre} then shows (as stated in the second sentence of the proof) that the structure constants are rational.
\end{proof}
A basis satisfying conditions \eqref{item:malcev.unique} and \eqref{item:basis.pass.lcs} of \cref{thm:malcev.basis} is called a \emph{strong Mal'cev basis for $\nn$ passing through the lower central series}; if such a basis also satisfies \eqref{item:malcev.lattice} then it is said to be \emph{strongly based on $G$}.

In the following proposition and elsewhere, given a group $G$ and $m\in\N$ we write $G^m=\la g^m:g\in G\ra$.
\begin{prop}\label{prop:tfn.reduc.mod.p}
Suppose $G$ is a torsion-free nilpotent group with Hirsch length $h$. Then $G$ has a generating set $x_1,\ldots,x_h$ such that
\begin{enumerate}[(i)]
\item every element $g\in G$ can be written uniquely as $g=x_1^{\ell_1}\cdots x_h^{\ell_h}$ with $\ell_i\in\Z$;\label{item:unique}
\item $[x_i,x_j]\in\langle x_{j+1},\ldots,x_h\rangle$ whenever $1\le i<j\le h$;\label{item:triangular}
\item for all but finitely many primes $p$ we have $G^p=\{x_1^{p\ell_1}\cdots x_h^{p\ell_h}:\ell_i\in\Z\}$.\label{item:G^p}
\end{enumerate}
\end{prop}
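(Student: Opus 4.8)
\emph{Setup, and parts (i) and (ii).} Having finite Hirsch length, $G$ is finitely generated, hence polycyclic, so it embeds as a discrete cocompact subgroup in its Mal'cev completion $N$, a simply connected nilpotent Lie group whose Lie algebra $\nn$ has dimension $h$. The plan is to fix a basis $v_1,\dots,v_h$ of $\nn$ as provided by \cref{thm:malcev.basis} --- one with rational structure constants, strongly based on $G$, and passing through the lower central series --- and to take $x_i=\exp v_i$. Conditions (i) and (iii) of \cref{thm:malcev.basis} then say precisely that $x_1,\dots,x_h$ generate $G$ and that every element of $G$ has a unique expression $x_1^{\ell_1}\cdots x_h^{\ell_h}$ with all $\ell_i\in\Z$, which is part~(i) of the proposition. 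For part~(ii), fix $1\le i<j\le h$ and let $w_i,w_j\ge1$ be largest with $x_i\in N_{w_i}$ and $x_j\in N_{w_j}$; in the notation of condition~(ii) of \cref{thm:malcev.basis} this means $h_{w_j-1}<j\le h_{w_j}$. Then $[x_i,x_j]\in[N_{w_i},N_{w_j}]\subseteq N_{w_i+w_j}\subseteq N_{w_j+1}$, and by that same condition the subgroup $N_{w_j+1}$ consists of those $x_1^{r_1}\cdots x_h^{r_h}$ with $r_1=\dots=r_{h_{w_j}}=0$. Since $[x_i,x_j]\in G$, its exponents are integers by the uniqueness in \cref{thm:malcev.basis}, so $[x_i,x_j]\in\langle x_{h_{w_j}+1},\dots,x_h\rangle\subseteq\langle x_{j+1},\dots,x_h\rangle$, using $j\le h_{w_j}$.

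\emph{Reformulating part (iii).} Write $\pi\colon\R^h\to N$, $\pi(\ell)=x_1^{\ell_1}\cdots x_h^{\ell_h}$; by \cref{thm:malcev.basis} this restricts to a bijection $\Z^h\to G$. For a prime $p$ put $\Lambda_p=\pi(p\Z^h)=\{x_1^{p\ell_1}\cdots x_h^{p\ell_h}:\ell_i\in\Z\}$ and $H_p=\langle x_1^p,\dots,x_h^p\rangle$. Since $x_i^{\pm p}=\pi(\pm p\,e_i)\in\Lambda_p$ (writing $e_1,\dots,e_h$ for the standard basis of $\Z^h$) and $\pi(p\ell)=(x_1^p)^{\ell_1}\cdots(x_h^p)^{\ell_h}$, we have $\Lambda_p\subseteq H_p\subseteq G^p$ for every $p$, so part~(iii) reduces to showing $G^p\subseteq\Lambda_p$ for all but finitely many $p$. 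The plan is to deduce this from the classical fact --- obtainable from \cref{thm:malcev.basis} via the Baker--Campbell--Hausdorff formula, or directly from the Hall--Petresco collection process --- that, the structure constants being rational, multiplication and the power map in these coordinates are polynomial with rational coefficients: there exist a positive integer $D$, depending only on $G$ and the chosen basis, polynomials $\mu_k$ with coefficients of denominator dividing $D$ such that $\pi(\ell)\pi(\ell')=\pi(\mu(\ell,\ell'))$, and for each $m$ polynomials $\tau^{(m)}_k(\ell)$ --- polynomial in $m$ of degree at most the class $c$ of $G$, with no constant term in $m$ (since $\pi(\ell)^0=\pi(0)$), and with $\ell$-coefficients again of denominator dividing $D$ --- such that $\pi(\ell)^m=\pi(\tau^{(m)}(\ell))$. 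I will also use the elementary fact that $\mu_k(\ell,\ell')-\ell_k-\ell'_k$ vanishes whenever $\ell=0$ or $\ell'=0$, so that, the entries of $\ell$ and $\ell'$ being disjoint indeterminates, every monomial of this difference has degree at least $2$.

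\emph{The arithmetic step.} Let $p$ be any prime with $p\nmid D$; this excludes only finitely many primes. For $\ell,\ell'\in\Z^h$, substituting $(p\ell,p\ell')$ into the degree-$\ge2$ polynomial $\mu_k(\ell,\ell')-\ell_k-\ell'_k$ introduces a factor of $p^2$, so $\mu_k(p\ell,p\ell')-p(\ell_k+\ell'_k)\in\tfrac{p^2}{D}\Z$; since $\mu_k(p\ell,p\ell')$ is an integer --- it is a normal-form exponent of an element of $G$ --- and $\gcd(p,D)=1$, this difference lies in $\Z\cap\tfrac{p^2}{D}\Z=p^2\Z$, whence $\mu_k(p\ell,p\ell')\in p\Z$. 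Thus $\Lambda_p$ is closed under multiplication; as it contains each $x_i^{\pm p}$ it contains $H_p$, and with $\Lambda_p\subseteq H_p$ we obtain that $\Lambda_p=H_p$ is a subgroup of $G$. It remains to show $g^p\in\Lambda_p$ for each $g\in G$. Writing $g=\pi(\ell)$ with $\ell\in\Z^h$, the $k$-th exponent of $g^p$ equals $\tau^{(p)}_k(\ell)=\sum_{j=1}^{c}A_{kj}(\ell)\,p^j$ (no $j=0$ term, as $\tau^{(m)}_k$ has no constant term in $m$); since each $A_{kj}(\ell)\in\tfrac1D\Z$ and $j\ge1$, this lies in $\tfrac{p}{D}\Z$, and being an integer with $\gcd(p,D)=1$ it lies in $\Z\cap\tfrac{p}{D}\Z=p\Z$. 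Hence $g^p\in\Lambda_p$, so $G^p\subseteq\Lambda_p$, and therefore $G^p=\Lambda_p$.

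\emph{Expected main obstacle.} The essentially new observation is the elementary arithmetic just given: once $p$ is coprime to a fixed integer, the nonlinear corrections in both the group law and the power map are divisible by $p$, so $p$-th powers land exactly in the sublattice. The bulk of the work is instead in justifying the classical input --- the polynomiality of multiplication and of the power map with a uniform bound on denominators, and the vanishing of the constant term of the power polynomial. This is due to Hall and Mal'cev; the cleanest route is via \cref{thm:malcev.basis}, since rational structure constants give a common denominator for all iterated brackets of weight at most $c$, hence (through Baker--Campbell--Hausdorff) for multiplication in exponential coordinates of the first kind, while the change of variables to the second-kind coordinates $\pi$ is unipotent triangular with denominators dividing $D$, so its inverse keeps denominators bounded; the power polynomials then arise by iteration. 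Writing this out carefully, though not conceptually hard, is where most of the effort goes.
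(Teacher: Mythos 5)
Your proof is correct, but it takes a genuinely different route from the paper's. You work inside the Mal'cev completion, invoking the classical polynomiality (with a bounded common denominator $D$) of the group law and of the power map in Mal'cev coordinates of the second kind, and then make the elementary observation that the nonlinear correction terms pick up a factor of $p^2$ (for the product) or $p$ (for the $p$-th power) as soon as $p\nmid D$, so that $\Lambda_p$ is closed under products and contains every $g^p$. The paper instead uses Jennings' theorem to embed $G$ in an upper unitriangular integer matrix group $\Gamma$: it verifies the proposition directly for $\Gamma$ by writing $g=I+A$ with $A$ nilpotent of index at most the matrix degree $n$, applying the binomial theorem (using $p\mid\binom{p}{m}$ for $1\le m\le n-1<p$) to get $g^p\in K_p$ for $p\ge n$, and then proving that the property of the proposition passes to subgroups by an induction on Hirsch length via the last coordinate subgroup $\la x_d\ra$. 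The two proofs buy different things: the paper's is elementary and self-contained, needing no BCH/Hall--Petresco input and making only the crude exclusion $p<n$; yours fits naturally with the Mal'cev machinery that the paper already sets up for \cref{prop:tfn.M^n.subgrp}, avoids the subgroup-inheritance induction entirely by working in $G$ directly, and makes the arithmetic input (a single coprimality condition $p\nmid D$) completely explicit --- but at the cost of importing the Hall--Mal'cev polynomial structure theory, which, as you acknowledge, is where the bulk of the unwritten work would sit if one had to reproduce it from scratch. One small presentational point: the subgroup-closure of $\Lambda_p$ is argued only for products, but your sandwich $\Lambda_p\subseteq H_p\subseteq G^p$ together with $H_p\subseteq\Lambda_p$ correctly delivers $\Lambda_p=H_p$ as a subgroup, so nothing is missing.
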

\begin{remark*}
Conversely, the existence of a generating set satisfying \eqref{item:unique} and \eqref{item:triangular} implies that $G$ is torsion-free nilpotent.
\end{remark*}
\begin{proof}
A result of Jennings \citep{jennings} states that $G$ can be embedded in an upper unitriangular integer matrix group $\Gamma$. We will prove that $\Gamma$ satisfies the proposition, and also that the property of having a generating set satisfying \eqref{item:unique}--\eqref{item:G^p} is inherited by subgroups.

There exist elementary matrices $x_1,\ldots,x_d\in\Gamma$ such that every element $g\in\Gamma$ can be written uniquely as $g=x_1^{\ell_1}\cdots x_d^{\ell_d}$ with $\ell_i\in\Z$ exactly the non-diagonal matrix entries of $g$, and such that $[x_i,x_j]\in\langle x_{j+1},\ldots,x_d\rangle$ whenever $1\le i<j\le d$. Write $K_p=\{x_1^{p\ell_1}\cdots x_d^{p\ell_d}:\ell_i\in\Z\}$. Note $K_p$ consists of exactly of those matrices of the form $I+pB$, with $B$ a strictly upper-triangular matrix. Note that if $g=I+pB$ and $h=I+pC$ are two such matrices then $gh=I+p(B+C+pBC)\in K_p$, and also that $x_i^{-p}\in K_p$ for each $i$, so that $K_p$ is a subgroup for all primes $p$.

Clearly $K_p\le\Gamma^p$ for all primes $p$. Conversely, let $g\in\Gamma$, and write $g=I+A$, with $I$ the identity matrix and $A$ a strictly upper-triangular matrix. Writing $n$ for the degree of $\Gamma$, we have $A^n=0$, and hence for $p\ge n$ the binomial theorem gives $g^p=I + \sum_{m=1}^{n-1}{p \choose m}A^m=I+pB$ for some strictly upper-triangular matrix $B$, so that $g\in K_p$. Thus $\Gamma$ satisfies the proposition.

Now suppose that $\Gamma$ is an arbitrary group with Hirsch length $d$, and that $x_1,\ldots,x_d$ is generating set for $\Gamma$ satisfying \eqref{item:unique}--\eqref{item:G^p}. We will prove by induction on $d$ that every subgroup of $\Gamma$ satisfies the proposition. Suppose, then, that $G\le\Gamma$ has Hirsch length $h$. Let $p$ be a prime such that the generating set $x_i$ satisfies \eqref{item:G^p}, noting that
\begin{equation}\label{eq:G^p.1}
G^p\subseteq\{x_1^{p\ell_1}\cdots x_d^{p\ell_d}:\ell_i\in\Z\}.
\end{equation}
Since $G/G\cap\la x_d\ra$ is isomorphic to a subgroup of $\Gamma/\la x_d\ra$, we may assume by induction on $d$ that $G/G\cap\la x_d\ra$ satisfies the proposition. If $G\cap\la x_d\ra\ne\{e\}$ then $G$ itself therefore satisfies the proposition, so we may assume that $G\cap\la x_d\ra\ne\{e\}$, hence $G\cap\la x_d\ra=\la x_d^m\ra$ for some $m\in\N$. This implies that $G/G\cap\la x_d\ra$ has Hirsch length $h-1$, so that there exists a generating set $z_1\la x_d^m\ra,\ldots,z_{h-1}\la x_d^m\ra$ for $G/G\cap\la x_d\ra$ satisfying \eqref{item:unique}--\eqref{item:G^p}. In particular, this means that
\[
(G/G\cap\la x_d\ra)^p=\{z_1^{pk_1}\cdots z_{h-1}^{pk_{h-1}}\la x_d^m\ra:k_i\in\Z\},
\]
and hence
\begin{equation}\label{eq:G^p.2}
G^p\subseteq\{z_1^{pk_1}\cdots z_{h-1}^{pk_{h-1}}x_d^{mk_h}:k_i\in\Z\}.
\end{equation}
Now for each choice of $k_i\in\Z$ we have $z_1^{pk_1}\cdots z_{h-1}^{pk_{h-1}}\in\{x_1^{p\ell_1}\cdots x_d^{p\ell_d}:\ell_i\in\Z\}$, so \eqref{eq:G^p.1} and \eqref{eq:G^p.2} combine to imply that $G^p\subseteq\{z_1^{pk_1}\cdots z_{h-1}^{pk_{h-1}}x_d^{mk_h}:k_i\in\Z,\,p\mid mk_h\}$. Provided $p$ does not divide $m$, it follows that
\[
G^p=\{z_1^{p\ell_1}\cdots z_{h-1}^{p\ell_{h-1}}x_d^{mp\ell_h}:\ell_i\in\Z\},
\]
and so the required generating set is $z_1,\ldots,z_{h-1},x_d^m$.
\end{proof}

\begin{prop}\label{prop:tfn.M^n.subgrp}
Suppose $G$ is a torsion-free nilpotent group with class $c$ and Hirsch length $h$. For each $i=1,\ldots,c-1$, write $a_i$ for the Hirsch length of $G_i/G_{i+1}$, and define $b(j)=1$ for $j=1,\ldots,a_1$ and $b(j)=2$ for $j=a+1,\ldots,h$. Then $G$ has a generating set $x_1,\ldots,x_h$ such that
\begin{enumerate}[(i)]
\item every element $g\in G$ can be written uniquely as $g=x_1^{\ell_1}\cdots x_h^{\ell_h}$ with $\ell_i\in\Z$;\label{item:unique.M^n}
\item $[x_i,x_j]\in\langle x_{j+1},\ldots,x_h\rangle$ whenever $1\le i<j\le h$;\label{item:triangular.M^n}
\item there exists $q\in\N$ such that for each $m\in\N$ the set $\{x_1^{qm^{b(1)}\ell_1}\cdots x_h^{qm^{b(h)}\ell_h}:\ell_i\in\Z\}$ is a subgroup of $G$.\label{item:M^n.subgroup}
\end{enumerate}
\end{prop}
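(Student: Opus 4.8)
We need: $G$ torsion-free nilpotent of class $c$ and Hirsch length $h$, with $a_i$ the Hirsch length of $G_i/G_{i+1}$. We want a generating set $x_1, \ldots, x_h$ satisfying: (i) unique normal form, (ii) triangularity of commutators $[x_i, x_j] \in \langle x_{j+1}, \ldots, x_h\rangle$ for $i < j$, and (iii) there is $q \in \mathbb{N}$ such that for each $m$, the set $\{x_1^{qm^{b(1)}\ell_1} \cdots x_h^{qm^{b(h)}\ell_h} : \ell_i \in \mathbb{Z}\}$ is a subgroup, where $b(j) = 1$ for $j \le a_1$ and $b(j) = 2$ for $j > a_1$.

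Let me think about what's going on. This is modeled on Proposition \ref{prop:tfn.reduc.mod.p}, which shows that for all but finitely many primes $p$, $G^p = \{x_1^{p\ell_1} \cdots x_h^{p\ell_h} : \ell_i\}$ is a subgroup (uniform exponent $p$ on each coordinate). Here we want something different: instead of exponent $p$ everywhere, we want exponent $qm$ on the "abelianization" coordinates and $qm^2$ on everything above $G_2$. This is natural because in the Heisenberg example, the coset-space subgroups $H_n$ had $n\mathbb{Z}$ in the degree-1 entries and $n^2\mathbb{Z}$ in the center — exactly this pattern. The set with exponents $(qm, qm, qm^2)$ on $(x_1, x_2, x_3)$ closes up under multiplication precisely because the commutator $[x_1^{qm}, x_2^{qm}]$ lands in $x_3^{q^2m^2\mathbb{Z}}$, which (if $q \mid q^2$... well with the right $q$) lies in $x_3^{qm^2\mathbb{Z}}$.

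**The approach.** First obtain a generating set $x_1, \ldots, x_h$ satisfying (i) and (ii). I'd do this via the Mal'cev completion: pass to the simply connected nilpotent Lie group $N$ with Lie algebra $\mathfrak n$ in which $G$ sits as a lattice, apply Theorem \ref{thm:malcev.basis} to get a strong Mal'cev basis $v_1, \ldots, v_h$ passing through the lower central series and strongly based on $G$, set $x_i = \exp v_i$. Actually one must be careful: Theorem \ref{thm:malcev.basis} passes through the *lower central series of $N$*, and $G_i$ is a finite-index subgroup of $\overline{G}_i = G \cap N_i$, so the "Hirsch length of $G_i/G_{i+1}$" equals $\dim \mathfrak n_i - \dim \mathfrak n_{i+1}$; this matches the $a_i$ in the statement and in particular $a_1 = \dim\mathfrak n - \dim\mathfrak n_2 = h_1$ in the notation of Theorem \ref{thm:malcev.basis}\eqref{item:basis.pass.lcs}. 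So with this basis, $x_{a_1+1}, \ldots, x_h$ all lie in $N_2$, which forces any commutator $[x_i, x_j]$ (having weight $\ge 2$) to lie in $\langle x_{a_1+1}, \ldots, x_h\rangle$ up to the triangularity coming from the basis passing through the lower central series — in fact $[x_i,x_j] \in N_{\text{wt}}$ and expressing it in the normal form only uses $x_k$ with $k > \max(i,j) \ge j$, giving (ii). (Alternatively, invoke Proposition \ref{prop:tfn.reduc.mod.p} directly for (i), (ii) and then re-sort the generators so that the degree-1 ones come first, checking triangularity is preserved; but the Mal'cev route makes the grading transparent.) Either way, (i) and (ii) are the "standard" part.

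**The substantive step** is (iii). With the basis chosen so that $x_1, \ldots, x_{a_1}$ span modulo $N_2$ and $x_{a_1+1}, \ldots, x_h \in N_2 = \exp\mathfrak n_2$, I claim a suitable $q$ works. The key computations: (a) for $i < j$ with both $\le a_1$ (degree-1 generators), $[x_i, x_j] \in N_2$, and since $N_2$ is spanned in normal form by $x_{a_1+1}, \ldots, x_h$ with *integer* exponents times some fixed denominator — use rationality of the structure constants from Theorem \ref{thm:malcev.basis} — there is a common denominator $q_0$ such that $[x_i^{s}, x_j^{t}] \in \{x_{a_1+1}^{(st/q_0)\ell_{a_1+1}}\cdots\}$; more carefully, by Lemma \ref{lem:multilin} / bilinearity of the commutator modulo higher terms and induction on the lower central series, $[x_i^{qm\ell}, x_j^{qm\ell'}]$ expands, via the Baker–Campbell–Hausdorff / Hall collection process, as a product of $x_k^{(\text{poly in } qm)}$ terms where each term has total degree $\ge 2$ in $qm$, hence is divisible by $q^2m^2$ in the exponent, hence (after absorbing constants into $q$) lies in $x_k^{qm^2\mathbb Z}$ for $k > a_1$; (b) similarly $[x_i^{qm\ell}, x_j^{qm^2\ell'}]$ and $[x_i^{qm^2\ell}, x_j^{qm^2\ell'}]$ all land in terms of degree $\ge 3$ hence $\ge 2$ in $qm$ in coordinates of level $\ge 3$, again absorbable. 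So the proposed set is closed under the "rearrangement" needed to multiply two elements in normal form and re-collect into normal form, and it contains inverses. I would phrase this cleanly by: take $q$ to be a common multiple of all denominators appearing when one writes $[x_i, x_j]$ (for every $i<j$) in normal form, and of the denominators in the BCH-style identities for rearranging $x_i^a x_j^b = x_j^b x_i^a \cdot (\text{commutator corrections})$; then verify by a downward induction on the lower central series that $P_m := \{x_1^{qm^{b(1)}\ell_1} \cdots x_h^{qm^{b(h)}\ell_h}\}$ is closed under products and inverses, the inductive hypothesis being the same statement for $G/N_k$ (whose generators are the images of $x_1, \ldots, x_{h - a_{c}}$ or so).

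**Main obstacle.** The delicate point is controlling denominators uniformly in $m$ and making the "degree $\ge 2$ implies divisible by $m^2$" bookkeeping precise. When one re-collects a word $x_1^{qm^{b(1)}\ell_1}\cdots x_h^{qm^{b(h)}\ell_h} \cdot x_1^{qm^{b(1)}\ell_1'}\cdots x_h^{qm^{b(h)}\ell_h'}$ into normal form, the correction terms are (by iterated use of Lemma \ref{lem:multilin} and the Hall–Petrescu collection formula) products of basic commutators $[x_{i_1}, \ldots, x_{i_r}]^{c \cdot \ell_{i_1}\cdots\ell_{i_r} \cdot q^r m^{b(i_1)+\cdots+b(i_r)}}$ with $c \in \mathbb{Q}$ having denominator dividing $q$; one needs: (1) such a basic commutator lies in $N_{i_1 + \cdots + i_r}$... no — lies in $N_r$ (weight $r$), hence in $\langle x_{a_1+1},\ldots\rangle$ when $r \ge 2$, so $b(i) = 2$ for those indices, and its exponent is $q^r m^{(\text{sum of }b)} \ge q^2 m^{2r} $... wait, need $b$'s: if all $i_s \le a_1$ then sum of $b(i_s) = r \ge 2$, giving $m^r$ divisible by $m^2$ — good since the commutator sits in level $\ge 2$ coordinates which only require exponent divisible by $qm^2$; if some $i_s > a_1$ then that $b(i_s) = 2$ and the commutator sits in level $\ge 3$, still fine. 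So the arithmetic works, but writing it so that the reader believes the denominators are bounded uniformly in $m$ (they are, since $q$ is chosen once using finitely many structure-constant-type data) and that the collection process terminates (it does, by nilpotency) is where the care goes. I'd present this as a lemma-free inductive argument mirroring the induction in the proof of Proposition \ref{prop:tfn.reduc.mod.p}, quotienting by $\langle x_h\rangle$ (or by the last term of the lower central series) at each step.
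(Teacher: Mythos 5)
Your proof for parts (i) and (ii) is exactly what the paper does: Mal'cev completion, strong Mal'cev basis through the lower central series strongly based on $G$, and $x_i = \exp v_i$. You also correctly identify the key arithmetic observation behind (iii): since $b(i)+b(j)\ge 2$ always, and any commutator lands in coordinates indexed by $k>a_1$ (where $b(k)=2$), the exponent $m^{b(i)+b(j)}$ is automatically divisible by $m^{b(k)}=m^2$.

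Where you diverge is in how you realise (iii). You propose to carry out a Hall--Petrescu/collection argument directly in the group, inducting on the lower central series and tracking denominators by hand. The paper sidesteps all of that by doing the entire computation \emph{in the Lie algebra}, where the bracket is exactly bilinear with no higher correction terms: scaling $f_i = d e_i$ makes the structure constants integers, one checks in a single line that the additive lattice $\Lambda = \Z\text{-span}\{m^{b(i)}f_i\}$ is closed under the Lie bracket (precisely the degree argument you identified), and then one invokes two ready-made facts -- \citep[Lemma 4.3]{proper.progs}, which gives an $r$ depending only on the class $c$ such that $\exp(r\Lambda)$ is a subgroup of $N$ whenever $\Lambda$ is a bracket-closed lattice, and \citep[Corollary 3.11]{ttBalls}, which converts $\exp(r\Lambda)$ into the desired Mal'cev normal form $\{x_1^{drm^{b(1)}\ell_1}\cdots x_h^{drm^{b(h)}\ell_h}\}$. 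Those two citations absorb exactly the BCH-denominator and collection bookkeeping you flag as the ``main obstacle.'' Your route is not wrong, but it amounts to reproving that machinery by hand; if you do attempt it, note that the Hall--Petrescu corrections carry binomial-coefficient exponents like $\binom{qm}{2}$, so you must choose $q$ not merely to absorb the structure-constant denominators but also the factorials appearing in the collection identities up to weight $c$ -- doable, but fiddlier than your sketch suggests. Working in $\nn$ and citing the quantitative exponentiation lemma is the cleaner way through.
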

\begin{proof}
Write $N$ for the Mal'cev completion of $G$ and $\nn$ for the Lie algebra of $N$. Let $e_1,\ldots,e_h$ be a strong Mal'cev basis for $\nn$ passing through the lower central series strongly based on $G$, and write $c_{ijk}\in\Q$ for the structure constants. Setting $x_i=\exp e_i$ for each $i$, we obtain a generating set satisfying \eqref{item:unique.M^n} and \eqref{item:triangular.M^n}.

Let $d$ be the lowest common denominator of the rationals $c_{ijk}$. For each $i$ set $f_i=de_i$, noting that $[f_i,f_j]=\sum_{k=a+1}^hdc_{ijk}f_k$ for all $i,j$. Let us emphasise that the coefficients $dc_{ijk}$ in this sum are integers. Given $m\in\N$, we therefore have $[m^{b(i)}f_i,m^{b(j)}f_j]=\sum_{k=a+1}^hdc_{ijk}m^{b(i)+b(j)}f_k$ for all $i,j$. Writing $\Lambda$ for the additive subgroup of $\nn$ spanned by $m^{b(i)}f_i$, it follows that $[\Lambda,\Lambda]\subseteq\Lambda$, so \citep[Lemma 4.3]{proper.progs} implies that there exists $r$ depending only on $c$ such that $\exp(r\Lambda)$ is a subgroup of $N$, and \citep[Corollary 3.11]{ttBalls} then implies that $\exp(r\Lambda)=\{x_1^{drm^{b(1)}\ell_1}\cdots x_h^{drm^{b(h)}\ell_h}:\ell_i\in\Z\}$, giving \eqref{item:M^n.subgroup}.
\end{proof}

\begin{lem}\label{lem:G^p.cosets}
Suppose $G$ is a torsion-free nilpotent group with Hirsch length $h$, and $x_1,\ldots,x_h$ is a generating set such that every element $g\in G$ can be written uniquely as $g=x_1^{\ell_1}\cdots x_h^{\ell_h}$ with $\ell_i\in\Z$, and $[x_i,x_j]\in\langle x_{j+1},\ldots,x_h\rangle$ whenever $1\le i<j\le h$. Suppose further that $H=\{x_1^{q_1\ell_1}\cdots x_h^{q_h\ell_h}:\ell_i\in\Z\}$ is a subgroup of $G$ for some positive integers $q_1,\ldots,q_h$. Then $X=\{x_1^{k_1}\cdots x_h^{k_h}:0\le k_i<q_i\}$ contains exactly one element from each coset of $H$ in $G$.
\end{lem}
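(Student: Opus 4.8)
The plan is to show that $X$ meets every coset of $H$ exactly once by a counting argument combined with a normal-form reduction. First I would observe that $|X| = q_1 q_2 \cdots q_h$, simply by the uniqueness of the expression $g = x_1^{k_1}\cdots x_h^{k_h}$ (condition \eqref{item:unique}-type hypothesis), so that the elements listed are genuinely distinct. On the other side, I would compute $[G:H]$: using the unique normal form for elements of $G$ together with the triangular commutator relations $[x_i,x_j] \in \langle x_{j+1},\ldots,x_h\rangle$, one sees that the subgroup $H$ has index exactly $q_1 q_2 \cdots q_h$ in $G$. (This is the standard fact that a ``triangular'' sublattice $\{x_1^{q_1\ell_1}\cdots x_h^{q_h\ell_h}\}$ has index equal to the product of the diagonal entries; it can be proved by induction on $h$, pushing down to $G/\langle x_h\rangle^{G}$-type quotients, or more directly by the orbit–stabiliser-style count below.) Hence it suffices to prove that $X$ contains at least one element from every coset of $H$; equivalently, that $G = XH$.

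To see $G = XH$, I would take an arbitrary $g = x_1^{\ell_1}\cdots x_h^{\ell_h} \in G$ and reduce it modulo $H$ coordinate by coordinate, from left to right. Write $\ell_1 = a_1 q_1 + k_1$ with $0 \le k_1 < q_1$; then $g' := x_1^{-a_1 q_1} g = x_1^{k_1} x_1^{\ell_1 - a_1 q_1 - k_1}\cdots$, but more usefully, multiplying $g$ on the \emph{left} by $x_1^{-a_1 q_1} \in H$ we may assume $0 \le \ell_1 < q_1$. Now I want to adjust $\ell_2$ without disturbing $\ell_1$: multiplying on the right by an element of $H$ supported on indices $\ge 2$, namely $x_2^{-a_2 q_2}(\text{higher terms})$, changes $\ell_2$ by a multiple of $q_2$ and changes $\ell_1$ not at all, since the commutators $[x_1, x_j]$ for $j \ge 2$ lie in $\langle x_{j+1},\ldots,x_h\rangle$, hence the $x_1$-exponent of $x_1^{\ell_1} \cdot (x_j^{c}\cdots)$ remains $\ell_1$ when rewritten in normal form. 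Iterating, after $h$ steps I arrive at an element of $X$ in the same coset $gH$. The one point needing care is that multiplying by a \emph{generator} $x_j^{q_j}$ of $H$ on the right is not quite enough — I need that the whole subgroup $H$ contains, for each $j$, elements whose normal form starts $x_j^{q_j}\cdots$ with all earlier exponents zero; but $x_j^{q_j}$ itself is such an element (it lies in $H$ by hypothesis since $H = \{x_1^{q_1\ell_1}\cdots x_h^{q_h\ell_h}\}$, taking $\ell_i = \delta_{ij}$), so this is immediate.

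The main obstacle, and the step I would spend the most care on, is the claim that right-multiplication by the ``tail'' generators of $H$ adjusts the exponents in a strictly triangular fashion — i.e. that modifying $\ell_j$ leaves $\ell_1,\ldots,\ell_{j-1}$ untouched. This is exactly where the hypothesis $[x_i,x_j]\in\langle x_{j+1},\ldots,x_h\rangle$ is used: when we take $x_1^{\ell_1}\cdots x_{j-1}^{\ell_{j-1}} x_j^{\ell_j}\cdots x_h^{\ell_h}$ and multiply on the right by $x_j^{q_j}$, moving the $x_j^{q_j}$ leftward past $x_{j+1}^{\ell_{j+1}},\ldots,x_h^{\ell_h}$ only introduces commutator corrections lying in $\langle x_{j+1},\ldots,x_h\rangle$, so when it finally sits next to $x_j^{\ell_j}$ the exponents of $x_1,\ldots,x_{j-1}$ are genuinely unchanged and the exponent of $x_j$ has become $\ell_j + q_j$. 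Making this precise amounts to an induction on the number of transpositions needed to bring the normal form back into order, which is routine but is the heart of the matter. Once $G = XH$ is established, combining with $|X| = q_1\cdots q_h = [G:H]$ forces $X$ to be an exact transversal, completing the proof.
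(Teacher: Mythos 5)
There is a genuine gap in the very first reduction step. You replace $g$ by $x_1^{-a_1q_1}g$ and declare this harmless ``since $x_1^{-a_1q_1}\in H$'', but what must be shown is that $g$ lies in $XH$, i.e.\ that the \emph{left} coset $gH$ meets $X$, and left-multiplication by an element of $H$ does not preserve the left coset $gH$ unless $H$ is normal in $G$. The lemma does not assume $H$ normal, and indeed cannot: in the proof of \cref{thm:nilp.coset} it is applied to the non-normal subgroups $H_m$, which is the whole point of the coset-space (as opposed to quotient) statement. So showing $x_1^{-a_1q_1}g\in XH$ does not give $g\in XH$, and as written the argument is broken from step one. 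The fix is to multiply on the \emph{right} by $x_1^{-a_1q_1}\in H$ and then re-order: the triangular commutator hypothesis ensures that pulling $x_1^{-a_1q_1}$ leftward past $x_2^{\ell_2}\cdots x_h^{\ell_h}$ only produces correction factors lying in $\langle x_2,\ldots,x_h\rangle$, so that $gx_1^{-a_1q_1}$ has normal-form $x_1$-exponent $\ell_1-a_1q_1$ while remaining in $gH$; after this the left-to-right reduction you sketch does go through.

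A second, related issue is that your counting argument treats the identity $[G:H]=q_1\cdots q_h$ as a citable black box, but in the nonabelian setting that identity \emph{is} essentially the injectivity half of the lemma. Your reduction establishes only $G=XH$, i.e.\ $[G:H]\le q_1\cdots q_h$; for the matching lower bound (equivalently, that distinct elements of $X$ lie in distinct cosets) an argument is needed, and any honest proof of it will look like the paper's second paragraph: for distinct tuples $(k_i),(\ell_i)$ one takes the least $i$ with $k_i\ne\ell_i$ and observes that the normal form of $a^{-1}b$ begins $x_i^{\ell_i-k_i}\cdots$ with $0<|\ell_i-k_i|<q_i$, so $a^{-1}b\notin H$. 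You would need to supply this step (or an equivalent independent proof of the index formula) to close the argument. For comparison, the paper takes a somewhat different route on the surjectivity side as well: it inducts by passing to $G/\langle x_h\rangle$ and exploits the centrality of $x_h$ to fix up the final exponent, rather than reducing exponents one at a time from the front.
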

\begin{proof}
Let $\ell_1,\ldots,\ell_h\in\Z$. We may assume by induction on $d$ that the lemma holds modulo $\langle x_h\rangle$, so that there exist $n_1,\ldots,n_{h-1},r_1,\ldots,r_{h-1}\in\Z$ with $0\le r_i<q_i$, as well as some $m\in\Z$, such that
\[
x_1^{\ell_1}\cdots x_{h-1}^{\ell_{h-1}}=x_1^{r_1}\cdots x_{h-1}^{r_{h-1}}x_1^{q_1n_1}\cdots x_{h-1}^{q_{h-1}n_{h-1}}x_h^m
\]
and hence
\[
x_1^{\ell_1}\cdots x_h^{\ell_h}=x_1^{r_1}\cdots x_{h-1}^{r_{h-1}}x_1^{q_1n_1}\cdots x_{h-1}^{q_{h-1}n_{h-1}}x_h^{m+\ell_h}.
\]
Letting $n_h,r_h\in\Z$ be such that $m+\ell_h=q_hn_h+r_h$ and $0\le r_h<q_h$, by centrality of $x_h$ we then have
\[
x_1^{\ell_1}\cdots x_h^{\ell_h}=x_1^{r_1}\cdots x_h^{r_h}x_1^{q_1n_1}\cdots x_h^{q_hn_h},
\]
so that $X$ contains a representative of the coset $x_1^{\ell_1}\cdots x_h^{\ell_h}$. To see that there are no duplicates, suppose $(k_1,\ldots,k_h)$ and $(\ell_1,\ldots,\ell_h)$ are distinct tuples with $0\le k_i,\ell_i<q_i$, and let $a=x_1^{k_1}\cdots x_h^{k_h}$ and $b=x_1^{\ell_1}\cdots x_h^{\ell_h}$. Let $i$ be minimal such that $k_i\ne\ell_i$. Then $a^{-1}b=x_i^{\ell_i-k_i}\cdots\notin H$, so $a$ and $b$ belong to distinct cosets.
\end{proof}

\begin{proof}[Proof of \cref{thm:nilp,thm:nilp.coset}]
We may assume that $N$ is a normal torsion-free nilpotent subgroup of finite index in $G$. Write $c$ for the class of $N$. For each $i=1,\ldots,c-1$ write $a_i$ for the Hirsch length of $G_i/G_{i+1}$, and write $h_i=a_1+\cdots+a_i$.

Let $x_1,\ldots,x_h$ be the generating set for $N$ given by \cref{prop:tfn.reduc.mod.p}, let $1=y_1,\ldots,y_k$ be a set of coset representatives for $N$ in $G$, and let $S_0=\{x_1,\ldots,x_h,y_1,\ldots,y_k\}$ and $S=S_0\cup S_0^{-1}$. Given a prime $p$, the subgroup $N^p$ is characteristic in $N$ and hence normal in $G$. Moreover, for the all but finitely many primes for which property \eqref{item:G^p} of \cref{prop:tfn.reduc.mod.p} holds for the generating set $x_1,\ldots,x_h$, \cref{lem:G^p.cosets} implies that $[G:N_p]=kp^h$. On the other hand, \cref{lem:G^p.cosets} also implies that $\diam_S(G/N^p)\le1+hp$ for all such primes. Letting $p\to\infty$ therefore shows that $G$ does not have uniformly $\alpha$-almost flat quotients for any $\alpha>1/h$.

To show that $G$ does not have uniformly $\beta$-almost flat coset spaces for $\beta>1/(h+h')$, it suffices by \cref{uniform_D_alpha_passes_finte_index_subgroup} to show that $N$ does not have uniformly $\beta$-almost flat coset spaces for $\beta>1/(h+h')$. To see this, let $S_0=\{z_1,\ldots,z_h\}$ be the generating set for $N$ given by \cref{prop:tfn.M^n.subgrp}, and let $S=S_0\cup S_0^{-1}\cup\{1\}$. By definition of $S_0$ there exists $q\in\N$ such that for each $m\in\N$ the set $H_m=\{z_1^{qm^{b(1)}\ell_1}\cdots z_h^{qm^{b(h)}\ell_h}:\ell_i\in\Z\}$ is a subgroup of $N$, where $b(i)=1$ for $i\le h-h'=a_1$ and $b(i)=2$ otherwise. It then follows from \cref{lem:G^p.cosets} that $[N:H_m]=q^hm^{h+h'}$.

Since $N_i/\overline N_{i+1}$ is generated by the simple commutators of weight $i$ in the elements of $S_0$, there exists a subset $u_{i1},\ldots,u_{ia_i}$ of such simple commutators that generates a finite-index subgroup $K_i/\overline N_{i+1}$ of $\overline N_i/\overline N_{i+1}$. Given $m\in\N$, we claim that for each $j$ we have
\begin{equation}\label{eq:simple.coms.m^i}
u_{ij}^r\in S^{O_i(m)}N_{i+1}\qquad\qquad(r=1,\ldots,m^i).
\end{equation}
First, writing $r$ in base $m$ we obtain $b\le i$ and integers $\ell_{11},\ldots,\ell_{1i},\ldots,\ell_{b1},\ldots,\ell_{bi}\in[1,m]$
such that 
$r=\sum_{j=1}^b\prod_{t=1}^i\ell_{jt}$. \cref{lem:multilin} therefore implies that for any simple commutator $[s_1,\ldots,s_i]$ of weight $i$ in the elements of $S_0$ we have
\[
[s_1,\ldots,s_i]^r\in[s_1^{\ell_{11}},\ldots,s_i^{\ell_{1i}}]\cdots[s_1^{\ell_{b1}},\ldots,s_i^{\ell_{bi}}]N_{i+1}\subseteq S^{O_i(m)}N_{i+1}
\]
as claimed.

We now claim that $K_i\subseteq S^{O_i(a_iqm)}H_m\overline N_{i+1}$. Indeed, since $K_i/\overline N_{i+1}$ is abelian, the subgroup $(H_m\cap K_i)\overline N_{i+1}$ is normal in $K_i$. The quotient $K_i/(H_m\cap K_i)\overline N_{i+1}$ is then an abelian group with exponent dividing $qm^{b(i)}$, and in particular dividing $qm^i$, and it then follows from \eqref{eq:simple.coms.m^i} that in fact $K_i\subseteq S^{O_i(a_iqm)}(H_m\cap K_i)\overline N_{i+1}$. Since $K_i$ has finite index in $\overline N_i$, it follows that there is some constant $b_i$, independent of $m$, such that $\overline N_i\subseteq S^{b_im}H_m\overline N_{i+1}$, and then writing $b=b_1+\cdots+b_c$ we have $N\subseteq S^{b_im}H_m$. In particular, $\diam_{S}(N/H_m)\ll m$ as $m\to\infty$, and $N$ does not have uniformly $\beta$-almost flat coset spaces for $\beta>1/(h+h')$.
\end{proof}

\begin{lem}\label{lem:resid.tfn}
Let $G$ be a group. Then $G$ is residually torsion-free nilpotent if and only if $\bigcap_{i=1}^\infty\overline G_i=\{e\}$.
\end{lem}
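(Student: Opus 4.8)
The plan is to prove both implications directly from the definitions, using the structural facts recalled just above about the generalised commutator subgroups $\overline G_k$. Recall that each $\overline G_i/\overline G_{i+1}$ is torsion-free abelian, that the $\overline G_k$ are characteristic in $G$, and that $[\overline G_i,\overline G_j]\le\overline G_{i+j}$; these are the properties we will lean on.

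For the forward direction, suppose $G$ is residually torsion-free nilpotent, and let $g\in G$ be a non-identity element. Then there is a normal subgroup $M\normal G$ with $g\notin M$ and $G/M$ torsion-free nilpotent. Since $G/M$ is nilpotent of some class $c$, we have $(G/M)_{c+1}=\{e\}$, which translates to $G_{c+1}\le M$. I would then observe that because $G/M$ is \emph{torsion-free}, the generalised lower central terms of $G/M$ coincide with the ordinary ones -- more directly, since $G_{c+1}\le M$ and $G/M$ is torsion-free, any element $x$ with $x^n\in G_{c+1}$ satisfies $(xM)^n=e$ in $G/M$, hence $xM=e$, i.e. $x\in M$; this shows $\overline G_{c+1}\le M$. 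Therefore $g\notin\overline G_{c+1}$, and since this holds for every non-identity $g$ we conclude $\bigcap_{i=1}^\infty\overline G_i=\{e\}$.

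For the reverse direction, suppose $\bigcap_{i=1}^\infty\overline G_i=\{e\}$, and let $g\ne e$. Then there exists $k$ with $g\notin\overline G_k$, so it suffices to show that $G/\overline G_k$ is residually torsion-free nilpotent -- in fact I will show it is itself torsion-free nilpotent, which finishes the proof since $\overline G_k$ is normal (indeed characteristic) in $G$. Nilpotence of class at most $k-1$ follows from $G_k\le\overline G_k$ together with the fact that the lower central series of $G/\overline G_k$ is the image of that of $G$. For torsion-freeness, suppose $(x\overline G_k)^n=\overline G_k$, i.e. $x^n\in\overline G_k$; I want $x\in\overline G_k$. Here I would argue by downward induction on the largest $j$ with $x\in\overline G_j$ (such $j$ exists and is finite since $\overline G_1=G$ and $\bigcap\overline G_i=\{e\}$, unless $x=e$): if $j\ge k$ we are done; otherwise $x\overline G_{j+1}$ is a torsion element of the torsion-free abelian group $\overline G_j/\overline G_{j+1}$ -- wait, we need $x^n\in\overline G_{j+1}$, which follows from $x^n\in\overline G_k\le\overline G_{j+1}$ -- so $x\overline G_{j+1}=\overline G_{j+1}$, contradicting maximality of $j$. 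Hence $x\in\overline G_k$ as required.

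The step I expect to require the most care is the torsion-freeness of $G/\overline G_k$ in the reverse direction: one must make sure the induction is set up correctly (the right monotone quantity is "the largest $j$ with $x\in\overline G_j$", which is well-defined precisely because of the hypothesis $\bigcap\overline G_i=\{e\}$), and that the torsion-free abelian property of each factor $\overline G_j/\overline G_{j+1}$ is applied to the correct element and exponent. Everything else is a routine unwinding of the definitions of nilpotence and of the generalised commutator subgroups, together with the stated properties from \citep{mpty}.
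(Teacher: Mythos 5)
Your proof is correct and follows essentially the same route as the paper's, which treats the fact that $G/\overline G_k$ is torsion-free nilpotent as a standing consequence of the cited properties of the $\overline G_k$, and argues that a torsion-free nilpotent quotient of class $c$ must have kernel containing $\overline G_{c+1}$. One small simplification: torsion-freeness of $G/\overline G_k$ is immediate from the definition of $\overline G_k$ (if $x^n\in\overline G_k$ then $(x^n)^m\in G_k$ for some $m$, hence $x\in\overline G_k$), so the downward induction on $j$ and the reliance on the hypothesis $\bigcap_i\overline G_i=\{e\}$ in that part of your argument are unnecessary.
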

\begin{proof}
Since each $G/\overline G_i$ is torsion-free nilpotent, $G$ is certainly residually torsion-free nilpotent if $\bigcap_{i=1}^\infty\overline G_i=\{e\}$. Conversely, if $N\normal G$ is such that $G/N$ is torsion-free nilpotent of class $i$ then $N\supseteq\overline G_{i+1}$, so $\bigcap_{i=1}^\infty\overline G_i$ is exactly the set of elements that have trivial image in all torsion-free nilpotent quotients of $G$.
\end{proof}

\begin{lem}\label{lem:lastG_k}
Let $G$ be a group, let $k\in\N$, and suppose $[\overline G_k:\overline G_{k+1}]<\infty$. Then $\bigcap_{i=1}^\infty\overline G_i=\overline G_k$.
\end{lem}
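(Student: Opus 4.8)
The plan is to show that the hypothesis forces the generalised commutator series to stabilise from index $k$ onwards, that is $\overline G_k=\overline G_{k+1}=\overline G_{k+2}=\cdots$; since $\overline G_1\ge\overline G_2\ge\cdots$ is a decreasing sequence of subgroups, the intersection $\bigcap_{i\ge1}\overline G_i$ is then equal to this eventual common value, which is $\overline G_k$. The initial step is immediate: $\overline G_k/\overline G_{k+1}$ is torsion-free abelian and, by hypothesis, finite; but a finite torsion-free group is trivial, so $\overline G_k=\overline G_{k+1}$.

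The main step is the following claim, which I would then iterate: \emph{for every $m\ge1$, if $\overline G_m=\overline G_{m+1}$ then $\overline G_{m+1}=\overline G_{m+2}$.} Since $G_{m+2}\le G_{m+1}$ gives $\overline G_{m+2}\le\overline G_{m+1}$ automatically, only the reverse inclusion needs proof. Given $g\in\overline G_{m+1}$, pick $a\ge1$ with $g^a\in G_{m+1}=[G,G_m]$ and write $g^a$ as a finite product of terms $[x_s,y_s]^{\pm1}$ with $x_s\in G$ and $y_s\in G_m$. By hypothesis $G_m\le\overline G_m=\overline G_{m+1}$, so each $y_s$ has a power $y_s^{b_s}\in G_{m+1}$. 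Now \cref{lem:multilin}, applied with first index $1$ and second index $m$, makes $(x,y)\mapsto[x,y]G_{m+2}$ a homomorphism from $G\times G_m$ to $G_{m+1}/G_{m+2}$ in the second variable; hence $[x_s,y_s]^{b_s}G_{m+2}=[x_s,y_s^{b_s}]G_{m+2}$, and since $y_s^{b_s}\in G_{m+1}$ we have $[x_s,y_s^{b_s}]\in[G,G_{m+1}]=G_{m+2}$, so $[x_s,y_s]^{b_s}\in G_{m+2}$. Taking $b$ to be a common multiple of the $b_s$ and working in the abelian group $G_{m+1}/G_{m+2}$, it follows that $g^{ab}G_{m+2}=\prod_s([x_s,y_s]^{b})^{\pm1}G_{m+2}=G_{m+2}$, i.e. $g^{ab}\in G_{m+2}$ and so $g\in\overline G_{m+2}$, as required. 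Combining this claim with the initial step and inducting on $m$ yields $\overline G_m=\overline G_k$ for all $m\ge k$, whence $\bigcap_{i\ge1}\overline G_i=\overline G_k$.

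The only place requiring care is the claim, and within it the manoeuvre of transferring root-extraction from the inner entries $y_s$ of the defining commutators to the commutators themselves; this is precisely what the multilinearity of commutators modulo the next term of the lower central series (\cref{lem:multilin}) is for, so I do not anticipate a genuine obstacle — only some bookkeeping over the exponents $a$, $b_s$ and $b$. A final sanity check worth building in is that the hypothesis $[\overline G_k:\overline G_{k+1}]<\infty$ is used \emph{only} to get $\overline G_k=\overline G_{k+1}$; everything afterwards is a purely algebraic propagation argument that does not see the finiteness again.
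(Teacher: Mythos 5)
Your proof is correct and takes essentially the same approach as the paper's: both use the hypothesis only to force $\overline G_k=\overline G_{k+1}$ (a finite torsion-free abelian group being trivial), and then propagate the stabilisation step-by-step via bilinearity of the commutator modulo the next lower central term (\cref{lem:multilin}). The paper phrases the propagation step more abstractly, deducing that $G_{k+1}/G_{k+2}$ is a torsion group because it is an abelian group generated by finite-order elements, whereas you unfold the identical argument at the element level with explicit exponents $a$, $b_s$, $b$.
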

\begin{proof}
We will show that if $[\overline G_k:\overline G_{k+1}]<\infty$ then $\overline G_k=\overline G_{k+2}$. By induction this will imply that $\overline G_k=\overline G_\ell$ for all $\ell\ge k$, and hence prove the lemma. The quotient group $\overline G_k/\overline G_{k+1}$ is torsion-free abelian, so in this case it must be trivial, so that $\overline G_k=\overline G_{k+1}$. This implies in particular that $G_k\subseteq\overline G_{k+1}$, and hence that $G_k/G_{k+1}$ is a torsion group. It follows from \cref{lem:multilin} that the map
\[
\begin{array}{ccc}
G\times G_k&\to&G_{k+1}/G_{k+2}\\
(x,y)&\mapsto&[x,y]G_{k+2}
\end{array}
\]
is a homomorphism in each variable, and from \eqref{eq:[G_i,G_j]} that $G_{k+1}$ is in the kernel with respect to the second variable. Since $G_k/G_{k+1}$ is a torsion group, this implies that $G_{k+1}/G_{k+2}$ is generated by elements of finite order. Since $G_{k+1}/G_{k+2}$ is abelian, this implies that $G_{k+1}/G_{k+2}$ is a torsion group, hence that $G_{k+1}\subseteq\overline G_{k+2}$, and hence that $\overline G_{k+1}=\overline G_{k+2}$.
\end{proof}

\begin{proof}[Proof of \cref{thm:resid}]
\cref{lem:resid.tfn,lem:lastG_k} imply that if $\overline G_k=\overline G_{k+1}$ for some $k\in\N$ then $\overline G_k=\{e\}$. In particular, this implies that if the Hirsch length of $G/G_k$ equals that of $G/G_{k+1}$ then $\overline G_k=\{e\}$. 

We will firstly show that if \( G \) is not nilpotent with Hirsch length \(\leq d\), then \( G \) does not have uniformly \((1/d)\)-almost flat quotient spaces. Suppose we also have \( h(G/G_{d+2}) \leq d \).  Then we have the following inequalities
\[
0 = h(G/G_1) \leq h(G/G_2) \leq \cdots \leq h(G/G_{d+2}) \leq d.
\]
In particular, there exists some \( i \leq d+1 \) such that \( h(G/G_i) = h(G/G_{i+1}) \). It follows that \( \overline{G}_i = \{ e \} \) and \( h(G) = h(G/G_i) \leq d \), so \( G \) is nilpotent with Hirsch length \(\leq d\), contradicting our assumption. Hence, \( h(G/G_{d+2}) > d \). The finite quotients of \( G/G_{d+2} \) are all also finite quotients of \( G \), and by \cref{thm:nilp} they are not uniformly \((1/d)\)-almost flat. 

Similarly, suppose \( G \) is not nilpotent and that \( h(G) + h([G, G]) \leq d \). Suppose in addition that
\[
h(G/G_{d+2}) + h(G_2/G_{d+2}) \leq d.
\]
Then we have the following chain of inequalities:
\[
\begin{split}0 \leq h(G/G_1) \leq h(G/G_2) + h(G_2/G_2) \leq h(G/G_3) + h(G_2/G_3) \leq \cdots\qquad\qquad \\\leq h(G/G_{d+2}) + h(G_2/G_{d+2}) \leq d.\end{split}
\]

If \( h(G/G_1) = h(G/G_2) \), then \( \overline{G}_1 = \{ e \} \), so \( G \) is the trivial group, which clearly contradicts our assumption. Otherwise, there exists some \( i \leq d+1 \) such that
\[
h(G/G_i) + h(G_2/G_i) = h(G/G_{i+1}) + h(G_2/G_{i+1}).
\]
It follows that \( \overline{G}_i = \{ e \} \), and
\[
h(G) + h([G, G]) = h(G/G_i) + h(G_2/G_i) \leq d,
\]
so \( G \) is nilpotent with \( h(G) + h([G, G]) \leq d \), again contradicting our assumption. Hence,
\[
h(G/G_{d+2}) + h(G_2/G_{d+2}) > d.
\]
The finite coset spaces of \( G/G_{d+2} \) are also finite coset spaces of \( G \), and by \cref{thm:nilp.coset} the finite coset spaces of this quotient are not uniformly $(1/d)$-almost flat.

\end{proof}

\section{Abelian-by-$\Z$ groups}\label{sec:ab-by-z}

In this section we prove \cref{thm:poly} for groups of the form $\Z^n\rtimes\Z$.

\subsection{Some results about semidirect product} \label{subsec_semi_direct}
Let $G = K \rtimes_\P H$ with  $\Phi : H \rightarrow \Aut(K)$. With a slight abuse of notation, we often identify $(1_K,h) \in G$ as $h \in H$, and  $(k,1_H) \in G$ as $k \in K$. Similarly, given a subgroup $K'$ of $K$,  we often identify the subgroup $\{ (k,1_H) \mid k \in K'\}$ of $G$ as $K'$. Also, given $H' \subg H$, we often write $K \rtimes_\P H' $ to be the semidirect product defined by $\Phi|_{H'} : H' \rightarrow \Aut(K)$.
We often omit the map  $\Phi$  if it is not needed for the proof.

\begin{lem}  \label{subgroup_of_semidireict_product}
Let $G = B \rtimes_\Phi A$. Let $J $ be a subgroup of $ A$ with index $m$ and $K $ be a subgroup of $B$ with index $r$. Suppose 
for every $j \in J $ and every $ k \in K, \Phi (j)(k) \in K.$ Then we have
\begin{enumerate} [(i)]
\item $K \rtimes_\Psi J$ is a subgroup of $ G $, where we define $\Psi(j)(k) = \Phi(j)(k)$ for $j \in J$ and $k \in K$;
\item $|G:K \rtimes_\Psi J| = rm$.
\end{enumerate}
\end{lem}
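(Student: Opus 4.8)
The plan is to verify directly that $K \rtimes_\Psi J$ is a subgroup using the stability hypothesis, and then to compute the index via a coset-counting argument. First I would check that $\Psi$ is well-defined as a map $J \to \Aut(K)$: for $j \in J$, the map $k \mapsto \Phi(j)(k)$ sends $K$ into $K$ by hypothesis, and applying the same to $j^{-1} \in J$ (which lies in $J$ since $J$ is a subgroup) shows $\Phi(j)|_K$ is a bijection of $K$, hence $\Psi(j) \in \Aut(K)$; that $\Psi$ is a homomorphism is inherited from $\Phi$. Then $K \rtimes_\Psi J$ is a group in its own right, and the inclusion map into $G = B \rtimes_\Phi A$ is a homomorphism because the multiplication rule $(k_1,j_1)(k_2,j_2) = (k_1 \Phi(j_1)(k_2), j_1 j_2)$ in $G$ restricts to the corresponding rule in $K \rtimes_\Psi J$ precisely because $\Phi(j_1)(k_2) = \Psi(j_1)(k_2) \in K$. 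So the image is a subgroup of $G$, giving (i).

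For (ii), I would identify $K \rtimes_\Psi J$ with its image $\{(k,j) : k \in K, j \in J\}$ inside $G$, and count cosets. Pick coset representatives $b_1, \dots, b_r$ for $K$ in $B$ and $a_1, \dots, a_m$ for $J$ in $A$. The claim is that the $rm$ elements $(b_s, 1_A)(1_B, a_t) = (b_s, a_t)$ form a complete set of right (or left) coset representatives for $K \rtimes_\Psi J$ in $G$. Given an arbitrary $(b,a) \in G$, write $a = a_t j$ with $j \in J$ and $a_t$ the appropriate representative; then $(b,a)(k,j')^{-1}$ for suitable $(k,j') \in K \rtimes_\Psi J$ should land in the coset of some $(b_s, a_t)$ — concretely, $(b,a) = (b, a_t j) = (b, 1_A)(1_B, a_t)(1_B, j)$, and one absorbs the $(1_B,j)$ factor and adjusts $b$ modulo $K$ using that $K$ is $\Phi(j)$-stable. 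This shows there are at most $rm$ cosets; for the lower bound, if $(b_s,a_t)$ and $(b_{s'},a_{t'})$ lie in the same coset then comparing the $A$-components forces $a_t J = a_{t'} J$, hence $t = t'$, and then comparing $B$-components forces $b_s K = b_{s'} K$, hence $s = s'$. So there are exactly $rm$ cosets.

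I expect the main obstacle — really the only point requiring care — to be the bookkeeping in the coset argument for (ii): one must be consistent about left versus right cosets throughout (the semidirect product multiplication is not symmetric, so $K\rtimes_\Psi J$ acting on the right of $(b,a)$ behaves differently from acting on the left), and one must invoke the $\Phi(j)$-stability of $K$ at exactly the right moment when normalizing the $B$-component. Everything else is formal manipulation of the semidirect product multiplication rule. I would write (ii) using whichever side (I suspect right cosets) makes the normalization cleanest, and simply remark that $[G : K\rtimes_\Psi J] = rm$ since the argument is symmetric in left and right cosets, or alternatively deduce it from $[G:B\rtimes_\Phi J] \cdot [B \rtimes_\Phi J : K \rtimes_\Psi J]$ after first observing $B \rtimes_\Phi J$ is a subgroup of index $m$ and $K \rtimes_\Psi J$ has index $r$ inside it by the same stability hypothesis — a cleaner two-step factorization that avoids the simultaneous double count.
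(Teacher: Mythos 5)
Your argument for part (i) is the same as the paper's: show $\Psi(j)\in\Aut(K)$ by applying the hypothesis to $j$ and $j^{-1}$, then observe closure under the semidirect multiplication rule. For part (ii) you sketch two routes. The \emph{direct} coset count with representatives $(b_s,a_t)$ is the route the paper takes, and your instinct to flag left--versus--right bookkeeping as the one genuinely delicate point is exactly right; in fact this is where the paper's own write-up slips. The paper takes $a=j_sj$ and $b=k_tk$ (left-coset decompositions) and then factorizes $(b,a)=(k_t,j_s)\cdot(\Phi(j_s^{-1})(k),j)$; but $j_s\notin J$ in general, so the hypothesis does not give $\Phi(j_s^{-1})(k)\in K$, and the second factor need not lie in $K\rtimes_\Psi J$. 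Indeed in $G=(\Z_2\times\Z_2)\rtimes\Z_3\cong A_4$ with $J=\{0\}$ and $K$ of order $2$, the proposed $(k_t,j_s)$ are not distinct as left-coset representatives. The direct count does go through if one uses right cosets and right-coset representatives throughout, the key point being that for $j\in J$ the automorphism $\Phi(j)$ permutes the right cosets of $K$ in $B$ (since $\Phi(j)(K)=K$); the paper's argument also silently switches to the right-coset criterion $gh^{-1}\in H$ when checking non-redundancy, which is inconsistent with the left-coset surjectivity step. Your \emph{alternative} two-step factorization is both cleaner and fully correct: $B\normal G$ gives $[G:B\rtimes_\Phi J]=[A:J]=m$, and $\Phi(j)(K)=K$ for $j\in J$ gives $[B\rtimes_\Phi J:K\rtimes_\Psi J]=[B:K]=r$, whence the index is $rm$ by multiplicativity. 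That is the route I would recommend writing up, since it avoids the simultaneous double count and the left/right pitfall entirely.
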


\begin{proof}
\mbox{}
\begin{enumerate} [(i)]
\item 
Firstly, we will show that every $\Psi(j) \in \Aut(K)$. Clearly, $\Psi(j)$ is an injective homomorphism because $\Phi(j)$ is. For surjectivity, let $k' \in K$, then we have $\Phi(j\i)(k') \in K$ with $\Phi(j) \big(\Phi(j\i)(k')\big) = \Phi(j)\Phi(j\i)(k') = k'.$  To see the group $K \rtimes J$ is closed. Let $(k_1,j_1), (k_2,j_2)  \in K \rtimes J$. Then 
$(k_1,j_1) (k_2,j_2) = (k_1\cdot  \Phi(j_1) (k_2), j_1  j_2) \in K \rtimes J.$

\item 
Let $j_1, \ldots j_m$ be a set of  coset representatives of $J$ in $A$ and $k_1, \ldots k_r$ be a set of the coset representatives of $K$ in $B$. We claim that the set $\{ (k_t,j_s) | 1\leq s \leq m, 1 \leq t \leq r \}$ forms a complete set of coset representative of $K \rtimes_\Psi J $ in $ G $. Let $(b,a) \in G$. Then $a =j_s    j$, $b=k_t  k$ for some  $s \in \{1, \ldots m \}$, $t \in \{1, \ldots, r \}$ ,  $j \in J$, and $k \in K$. Then  
\[(b,a) = (k_t  k,j_s    j) = (k_t\cdot   \Psi(j_s)\Psi(j\i_s) k,j_s \cdot  j) = (k_t ,j_s  ) \cdot  (\Psi(j\i_s) k,j).\]

We also need to show that there is no redundant element, this is clear because 
$K \rtimes_\Psi J \ni (k_t,j_s)(k_v,j_u)^{-1} = (k_t,j_s)(\Psi(j\i_u)(k_v \i), j\i_u) $ implies that $j_s =j_u$ and hence $k_t=k_v$.
\end{enumerate}
\end{proof}

\begin{lem}  \label{semidirect_group_normal_subgroup}
Let $G = B \rtimes_\Phi A$. Let $J \nsubg A$ and  $K \nsubg B$, such that  
\begin{enumerate} [(i)]
\item $\forall  j \in A$ and $\forall k \in K$, $\Phi (j)(k) \in K$ (this is equivalently to $K \nsubg G$);

\item  $\forall j \in J$, and $\forall b \in B$, we have  $\Phi(j)(b) \in b K $; equivalently, for all $j \in J$, $\Phi(j): B \rightarrow B$ projects the identity map  $ \bar{\Phi}(j):\frac{B}{K} \rightarrow \frac{B}{K}$.

\end{enumerate} 

Then,
\begin{enumerate} [(i)]
\item The subgroup $K \rtimes J$ is  normal in $G$;
\item The semidirect product $\frac{B}{K} \rtimes_\Psi \frac{A}{J} $, with $\Psi(a J)(b K) = \Phi(a)(b)K$ is  well defined;
\item In particular, we have the following isomorphism:
\[ \frac{G}{ K \rtimes_\Phi J }  \cong \frac{B}{K} \rtimes_\Psi \frac{A}{J}.  \]
\end{enumerate}

\end{lem}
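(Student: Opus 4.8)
The plan is to prove the three parts in turn, each coming down to a short computation with the semidirect-product multiplication $(b_1,a_1)(b_2,a_2)=(b_1\,\Phi(a_1)(b_2),a_1a_2)$ fixed in \cref{subsec_semi_direct}, with hypotheses (i) and (ii) invoked at the right moments.

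For part~(i), I would first apply \cref{subgroup_of_semidireict_product} -- whose hypothesis holds because (i) gives $\Phi(j)(k)\in K$ for all $j\in A\supseteq J$ and $k\in K$ -- to see that $K\rtimes J=\{(k,j):k\in K,\,j\in J\}$ is a subgroup of $G$. For normality, take $(b,a)\in G$ and $(k,j)\in K\rtimes J$ and expand $(b,a)(k,j)(b,a)\i$; its $A$-component is $aja\i$, which lies in $J$ since $J\nsubg A$, and its $B$-component simplifies to $b\,\Phi(a)(k)\,\Phi(aja\i)(b\i)$. Here $\Phi(a)(k)\in K$ by (i), while $\Phi(aja\i)(b\i)\in b\i K$ by (ii) applied to $aja\i\in J$, so this $B$-component has the shape $b\,k_1\,b\i\,k_2$ with $k_1,k_2\in K$, and this lies in $K$ because $K\nsubg B$ absorbs $b\,k_1\,b\i$. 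Hence the conjugate lies in $K\rtimes J$.

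For part~(ii), I would first observe that applying (i) to both $a$ and $a\i$ shows $\Phi(a)(K)=K$, so $\Phi(a)$ descends to an automorphism $\bar{\Phi}(a)$ of $B/K$; the substantive point is then the well-definedness of $\Psi$. If $a_1J=a_2J$, write $a_2=a_1j$ with $j\in J$: then $\Phi(a_2)(b)=\Phi(a_1)\big(\Phi(j)(b)\big)$, and since $\Phi(j)(b)\in bK$ by (ii) and $\Phi(a_1)$ preserves $K$ by (i), we get $\Phi(a_2)(b)\in\Phi(a_1)(b)K$, so $\bar{\Phi}(a_1)$ and $\bar{\Phi}(a_2)$ agree on $B/K$. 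As $a\mapsto\bar{\Phi}(a)$ is moreover a homomorphism into $\Aut(B/K)$, it follows that $\Psi$ is a well-defined homomorphism $A/J\to\Aut(B/K)$, so the semidirect product $\frac BK\rtimes_\Psi\frac AJ$ is defined.

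For part~(iii) I would define $\theta:G\to\frac BK\rtimes_\Psi\frac AJ$ by $\theta(b,a)=(bK,aJ)$, check directly that it is a homomorphism -- both $\theta\big((b_1,a_1)(b_2,a_2)\big)$ and $\theta(b_1,a_1)\,\theta(b_2,a_2)$ reduce to $\big(b_1\,\Phi(a_1)(b_2)K,\,a_1a_2J\big)$ -- note that it is surjective, and observe that its kernel is exactly $\{(b,a):b\in K,\,a\in J\}$, which is precisely the normal subgroup $K\rtimes J$ from part~(i); the claimed isomorphism then follows from the first isomorphism theorem. I do not expect a serious obstacle here: the only steps that genuinely use the hypotheses are the $B$-component bookkeeping in the normality computation and the application of (ii) in the well-definedness of $\Psi$, and everything else is a mechanical check against the conventions already set up in \cref{subsec_semi_direct}.
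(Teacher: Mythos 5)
Your proposal is correct and follows essentially the same route as the paper's proof: a direct conjugation check for normality in (i), well-definedness of $\Psi$ via $a_1J=a_2J$ together with hypothesis (ii) in part (ii), and the evident map $(b,a)\mapsto(bK,aJ)$ with kernel $K\rtimes J$ in part (iii). Incidentally, you handled the normality computation a little more carefully than the paper's display, which simplifies $\Phi(aj)\Phi(a^{-1})$ to $\Phi(j)$ rather than $\Phi(aja^{-1})$ --- a harmless slip, since $aja^{-1}\in J$ (by $J\normal A$) is what one actually needs to invoke hypothesis (ii).
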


\begin{proof}
\mbox{}
\begin{enumerate} [(i)]
\item  Let $(k,j) \in K \rtimes J$, $(b,a)\in B \rtimes A$. Then 
\begin{align*}
(b,a)(k,j)(\Phi( a \i)( b \i), a \i)&= (b\cdot \Phi(a)(k),a  j) (\Phi( a \i)( b \i), a \i) \\
&= ( b\cdot \Phi(a)(k)\cdot \Phi(a  j)\Phi( a \i)( b \i),a  j a \i) \\
&=(b\cdot \Phi(a)(k)\cdot \Phi(j)(b\i) ,a  j a \i)
\end{align*}
From (ii), we can deduce that $\Phi(j)(b\i) =   b \i  {k'}  $ for some $k' \in K$. Therefore the first component is indeed in $K$.

\item We want to show that $\Psi$ is a well-defined action, i.e. given  $a \in A$, we want to show that $\Psi(a J) \in \Aut\left(\frac{B}{K}\right)$. We will first show that $\Psi(a J)$ is well-defined and injective. For $b_1,b_2 \in B$, we have$$
\Psi(a J) (b_1  K) =\Psi(a J) (b_2  K)
\Leftrightarrow \Phi(a)(b_1 b_2 \i) \in K
\Leftrightarrow  \Phi( a \i) \Phi(a) (b_1 b_2 \i) \in K
\Leftrightarrow b_1K = b_2  K.
$$
For sujectivity, given $bK \in B/K$, we have $\Phi(a J)(\Psi( a \i)(b) K)=bK$. To show that $\Psi$ is a homomorphism, we have 
\begin{align*}
\Psi(a J) (b_1K)(b_2K)  &= \Psi(a J) (b_1 b_2K) \\
&= \Phi(a) (b_1 b_2)K \\
&= \big(\Phi(a) b_1\cdot \Phi(a)b_2\big)K \\
&= \Phi(a) b_1 K \cdot \Phi(a)b_2K \\
&= \Psi(a J) (b_1 K) \cdot \Psi(a J) (b_2 K).
\end{align*}
Next, we will show that $\Psi: \frac{A}{J} \rightarrow \Aut \left( \frac{B}{K} \right)$ is a well-defined map. Suppose $a_1  J = a_2  J$, then $  a_2   a_1 \i\in J.$
By assumption (ii), we have  $   \Phi(a_1)(b) K=  \Phi (a_2   a_1 \i) \Phi(a_1)(b) K=  \Phi(a_2)(b)  K ,$
 and so $\Psi(a_1 J) (bK) = \Psi(a_2 J) (bK)$.

\item
We will show the following map is an isomorphism:
\begin{align*}
\Pi : \frac{G}{K\rtimes J} &  \longrightarrow \frac{B}{K} \rtimes_\Psi \frac{A}{J} \\
 (b, a) K\rtimes J &\mapsto (b K, aJ).
\end{align*}
Firstly, we will show that $\Pi$ is well-defined and injective:
\begin{align*}
  (b, a) K\rtimes J =  (b',a') K\rtimes J   &  \Leftrightarrow   (b \cdot \Phi(a{a'} \i)(b') \i,a {a'} \i) \in  K\rtimes J;\\
  & \Leftrightarrow  a {a'} \i \in J \text{ and } b \cdot \Phi(a {a'} \i)(b') \i \in K;\\
  & \Leftrightarrow   a {a'} \i \in J \text{ and } b  {b'} \i \in K;\\
  & \Leftrightarrow   (bK,a J) =  (b'K,a' J).
\end{align*}
Next, we will show that the map $\Pi$ is a homomorphism:
\begin{align*}
\Pi \bigg( (b,a) K\rtimes J \cdot   (b',a') K\rtimes J \bigg) 
&=\Pi \bigg ( (b \cdot  \Phi(a)(b'),a {a'} \i)  K\rtimes J  \bigg) \\
&=(b \cdot  \Phi(a)(b') K , a {a'} \i  J) \\
&=(b  K , a  J) (b' K , a'  J).
\end{align*}

\end{enumerate}

The fact that $\Pi$ is surjective is clear from the definition. 
\end{proof}

\subsection{Some standard results from number theory}

We will introduce some lemmas from number theory which we will use later. Throughout this section, $K$ is a field with characteristic 0, and $L$ is the algebraic closure of $K$, i.e. every polynomial in $K[x]$ splits into linear factors in $L[x]$.

\begin{lem} \label{repeated_roots_implies_common_roots}
Suppose $f$ has repeated roots $\alpha$. 
Then $\a$ is also a root of $f'$.
In particular, $f$ and  $f'$ have a common root in $L$.

\end{lem}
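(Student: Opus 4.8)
The plan is to use the factorisation of $f$ over the algebraic closure $L$ coming from the fact that $\alpha$ is a repeated root. By definition of a repeated root, we can write $f(x) = (x-\alpha)^2 g(x)$ for some $g \in L[x]$, and the task reduces to a one-line computation with the product rule.

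First I would differentiate: $f'(x) = 2(x-\alpha)g(x) + (x-\alpha)^2 g'(x) = (x-\alpha)\bigl(2g(x) + (x-\alpha)g'(x)\bigr)$. Reading off the factor $(x-\alpha)$ shows $f'(\alpha) = 0$, which is the first assertion. The ``in particular'' clause is then immediate: $\alpha$ is an element of $L$ that is simultaneously a root of $f$ (since it is even a repeated root, so certainly $f(\alpha)=0$) and of $f'$, so $f$ and $f'$ share the root $\alpha$ in $L$.

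There is essentially no obstacle here; the only point worth a word of care is that the differentiation is the formal derivative of polynomials, so the product rule is a purely algebraic identity valid over any commutative ring, and in particular over $L$ — no appeal to analysis or to the characteristic-$0$ hypothesis is needed for this particular statement (the characteristic-$0$ assumption on $K$ is in force for later lemmas in the section). I would also note in passing that the converse-type refinement — that $\gcd(f,f')$ is nontrivial, not merely that they share a root in $L$ — follows since $(x-\alpha)$ is a common factor in $L[x]$, but that is not required for the statement as given.
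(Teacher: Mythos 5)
Your proof is correct and follows essentially the same route as the paper: write $f(x)=(x-\alpha)^2 g(x)$, differentiate by the product rule, and read off the factor $(x-\alpha)$ in $f'$. The paper's version is the same one-line computation (it even has a harmless typo, writing $f'(x)=(x-\alpha)^2 g(x)$ where $f(x)$ is meant); your added remark that only the formal derivative is needed, not the characteristic-$0$ hypothesis, is accurate but not part of the paper's argument.
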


\begin{proof}
Since $f(x)$ has a repeated roots $\alpha$,  $f'(x) = (x - \alpha)^2 g(x)$.
Therefore, by the product rule, we have 
$f'(x)= 2(x -\alpha)g +(x -\alpha)^2 g'(x).$
\end{proof}

\begin{lem} \label{common_root}
Let $f(x)$ , $ g(x) \in K[x]$. Then $\gcd(f , g ) $ is in $ K[x]$. In particular, if $\gcd(f , g ) = 1$  in  $K[x]$ then  $\gcd(f , g ) = 1$  in  $L[x]$.
 
\end{lem}

\begin{proof}
The $\gcd$ of two polynomial can be computed using Euclidean algorithm, the initial step as follows: Without loss of generality, we assume $\deg(g(x)) \leq \deg(f(x)) $. Using long division,  we get two polynomials $q(x)$ and $r(x)$ such that $f(x)=q(x)g(x)+r(x)$. Since $K$ is a field, both $q(x)$ and $r(x)$ are in $K[x]$. Then the process repeat where we divide $g(x)$ by the $r(x)$.  By induction we can deduce that  $\gcd(f, g ) \in K[x]$. 
\end{proof}

\begin{lem} \label{irr_implies_no_common_roots}
Suppose $f(x) \in K[x]$ is an irreducible, monic and non-constant polynomial. Then $f(x)$ and $f'(x)$ have no common roots in $L$.
\end{lem}

\begin{proof}
Suppose  $f(x)$ and $f'(x)$ have a common roots in $L$. 
Then $\gcd(f , f' ) \neq 1 $ in  $L[x]$.
By  \cref{common_root}, $\gcd(f , f' ) \neq 1 $ in  $K[x]$.
Also, since $\gcd(f, f') \in K[x]$ divides $f$ and $f$ is monic irreducible, we must have $\gcd(f , f' ) =f$. Hence $f$ divides $f'$.
However, $\deg(f')\leq \deg(f)$, so $f'=0$.
Since $K$ has characteristic $0$, $f$ having zero derivative implies that it must be a constant. This contradicts the fact that $f$ is non-constant.
\end{proof}

\begin{lem} \label{split_over_inf_primes_with_no_zero_roots}
Let \( P(x) \in \mathbb{Z}[x] \) be a monic polynomial with \( P(0) \ne 0 \). Then the set of primes \( p \) for which \( P(x) \) splits completely over \( \mathbb{F}_p \) and has no zero root has positive density. In particular, there are infinitely many such primes.
\end{lem}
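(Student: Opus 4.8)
The plan is to deduce the statement from the Chebotarev density theorem. First I would let $L$ denote the splitting field of $P$ over $\Q$, which is a finite Galois extension, and write $\alpha_1,\ldots,\alpha_n\in\mathcal{O}_L$ for the roots of $P$, listed with multiplicity; these are algebraic integers because $P$ is monic. A prime $p$ not dividing the discriminant of $L/\Q$ is unramified in $L$, and by the Chebotarev density theorem (or equivalently the older Frobenius density theorem) the set of such primes whose Frobenius conjugacy class in $\operatorname{Gal}(L/\Q)$ is trivial, that is, the primes that split completely in $\mathcal{O}_L$, has density $1/[L:\Q]>0$.

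Next I would check that for every such prime $p$ the polynomial $P$ splits completely over $\Fp$. Choosing a prime ideal $\mathfrak{p}$ of $\mathcal{O}_L$ lying above $p$, the fact that $p$ splits completely gives $\mathcal{O}_L/\mathfrak{p}\cong\Fp$. Reducing the factorisation $P(x)=\prod_{i=1}^n(x-\alpha_i)$ modulo $\mathfrak{p}$ then exhibits $\bar P(x)\in\Fp[x]$ as a product of linear factors $\prod_{i=1}^n(x-\bar\alpha_i)$ with each $\bar\alpha_i\in\Fp$, which is exactly the conclusion that $P$ splits completely over $\Fp$. I would note that this step is insensitive to whether $P$ has repeated roots.

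Finally, to guarantee that $\bar P$ has no zero root, i.e.\ $\bar P(0)\ne0$ in $\Fp$, it suffices to impose $p\nmid P(0)$; since $P(0)\ne0$ this excludes only finitely many primes, just as requiring $p$ to be unramified in $L$ excludes only finitely many. Discarding a finite set of primes does not affect the density, so the set of primes $p$ for which $P$ splits completely over $\Fp$ and has no zero root still has density $1/[L:\Q]>0$, and in particular is infinite.

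The only non-elementary ingredient, and hence the main point, is the density statement for primes splitting completely in a Galois number field, which is where Chebotarev (or Frobenius) enters; everything else is bookkeeping about reducing algebraic integers modulo a prime of $\mathcal{O}_L$ and throwing away finitely many bad primes.
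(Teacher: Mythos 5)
Your proposal is correct and essentially matches the paper's argument: the paper likewise invokes the Frobenius Density Theorem to obtain positive density for primes $p$ over which $P$ splits completely in $\F_p$, and then discards the finitely many primes dividing $P(0)$ to ensure no zero root. You simply unfold the density step in more detail (splitting field, Chebotarev, reduction modulo a prime of $\mathcal{O}_L$), which the paper treats as a citation.
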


\begin{proof}
The fact that the set of primes \( p \) for which \( P(x) \) splits completely over \( \mathbb{F}_p \) has positive density follows from the Frobenius Density Theorem (see~\citep{dasfrob} or~\citep[Page~11]{Frobenius_eng}). Moreover, for every such prime \( p > |P(0)| \), it is clear that \( P(x) \) has no root at zero modulo \( p \), since \( p \nmid P(0) \).
\end{proof}

Given a monic polynomial $P(x)$ in $\Z[X]$, we will denote $\Pr(P)$ the set of primes $p$ such that $P(x)$ splits over $\F_p$ with no zero roots. Given $p \in \Pr(P)$, we denote $\l(P,p)$ to be the lowest common multiple of the multiplicative order of the set of  roots of $P(x)$ in $\F_p$.

Let $R$ be an integral domain, we will also make use of the function  $res : R[x] \times  R[x] \rightarrow R$,  the \emph{resultant} of two polynomials. { The resultant $res(f,g)$ of $f$ and $g$ is defined as the determinant of their Sylvester matrix, which is a matrix whose entries are either zero or the coefficients of $f$ and $g$. One consequence of the definition is that, given $f$ , $ g \in \Z[x]$ and a prime $p$, we have $ \res(f \bmod{p}, g \bmod{p} ) = \res(f,g) \bmod{p}$.} We refer the readers to \citep[\S12]{res} for the relevant details. Here is an important property of this function we will use later.

\begin{thm}[{\citep[\S12]{res}}] \label{res}
Let $R$ be  an integral domain, let $\operatorname {Frac} (R)$ be its  field of fractions.
Given $R$ and  $f(x)$ , $ g(x) \in R[x]$, the following two statements are equivalent:

\begin{enumerate} [(i)]
\item $res(f,g) = 0.$
\item $f$  and  $g$  have a common root in the algebraic closure of  $\operatorname {Frac} (R)$. \
\end{enumerate}
\end{thm}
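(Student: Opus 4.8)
The plan is to reduce the theorem to a standard fact of linear algebra. Write $f=\sum_{i=0}^n a_ix^i$ and $g=\sum_{j=0}^m b_jx^j$ with $a_n\neq0$ and $b_m\neq0$; the degenerate cases in which $f$ or $g$ is zero or constant have to be treated separately, according to whatever convention one fixes for the declared degrees in the Sylvester matrix, and I set them aside here. Put $F=\operatorname{Frac}(R)$ and let $\overline F$ be its algebraic closure. Since the embedding $R\hookrightarrow\overline F$ leaves the (nonzero) leading coefficients $a_n,b_m$ nonzero, the Sylvester matrix of $f,g$ computed over $\overline F$ is simply the image under this embedding of the one computed over $R$; as the embedding is injective, $\res(f,g)=0$ in $R$ if and only if $\res(f,g)=0$ over $\overline F$. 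So it suffices to prove the equivalence over the algebraically closed field $\overline F$.

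The first step is to identify the Sylvester matrix as a matrix, with respect to the monomial bases, of the $\overline F$-linear map
\[
\Phi\colon\ \overline F[x]_{<m}\times\overline F[x]_{<n}\ \longrightarrow\ \overline F[x]_{<n+m},\qquad (u,v)\longmapsto uf+vg .
\]
Here source and target both have dimension $n+m$, so $\res(f,g)$ equals $\det\Phi$ up to sign; in particular $\res(f,g)=0$ if and only if $\Phi$ has non-trivial kernel, that is, there is a pair $(u,v)\neq(0,0)$ with $\deg u<m$, $\deg v<n$ and $uf+vg=0$. Since $f,g\neq0$ and $\overline F[x]$ is an integral domain, $u=0$ forces $v=0$ in such a pair, and conversely, so this is equivalent to the existence of a polynomial $u\neq0$ with $\deg u<m$ together with some $v$ satisfying $uf=-vg$.

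It then remains to show that this last condition is equivalent to $f$ and $g$ having a common root in $\overline F$, which, $\overline F$ being algebraically closed, amounts to $\gcd(f,g)$ being non-constant. If $d:=\gcd(f,g)$ has positive degree, write $f=df_1$ and $g=dg_1$ and take $u=g_1$, $v=-f_1$: then $u\neq0$, $\deg u=\deg g-\deg d<m$, $\deg v<n$, and $uf+vg=0$. Conversely, if $uf=-vg$ with $u\neq0$ and $\deg u<m$, then $g\mid uf$; were $\gcd(f,g)=1$ in $\overline F[x]$ this would force $g\mid u$ and hence $m=\deg g\le\deg u<m$, a contradiction, so $\gcd(f,g)$ is non-constant. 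Chaining the three steps gives the theorem. (Alternatively, one could prove and invoke the classical identity $\res(f,g)=a_n^{\,m}\prod_i g(\alpha_i)$, where $\alpha_i$ runs over the roots of $f$ in $\overline F$, from which the equivalence is immediate; but establishing that identity takes more work than the argument above.)

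I do not expect a genuine obstacle. The only points needing care are the bookkeeping to match the Sylvester determinant with $\det\Phi$ once the monomial bases are ordered, and the degenerate cases flagged at the outset, where a polynomial whose formally declared degree exceeds its actual degree, or a zero polynomial, must be handled by hand; this last point is essentially trivial but genuinely depends on the convention chosen for the Sylvester matrix, so it cannot be skipped entirely.
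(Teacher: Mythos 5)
The paper does not prove this statement; it simply cites it as a standard fact (to \citep[\S12]{res}) and moves on, so there is no in-paper argument to compare against. Your proof is correct and is the standard one: identifying the resultant (up to sign) with the determinant of the linear map $(u,v)\mapsto uf+vg$ on the appropriate spaces of bounded-degree polynomials over $\overline{F}$, and then translating the existence of a nontrivial kernel element into the condition that $\gcd(f,g)$ is non-constant, which over an algebraically closed field is exactly the existence of a common root. The one place that deserves care is exactly what you flagged: the equivalence can fail (or become a matter of convention) when the declared degree of $f$ or $g$ in the Sylvester matrix exceeds its true degree over $\overline{F}$, or when one of the polynomials is zero or constant; since you explicitly restricted to the case $a_n\neq 0$, $b_m\neq 0$ and set the degenerate cases aside, the argument as given is sound. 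For the paper's own use (where $f$ is a non-constant monic irreducible polynomial and $g=f'$, both over $\Z$ with characteristic-zero fraction field), those degenerate cases do not arise.
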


\begin{cor}
 \label{irr_monic_implies_distinct_roots_eventually}
Let $P(x)$  be a monic irreducible polynomial in  $ \Z[X]$. Then for all but finitely many $p \in \Pr(P)$, $P(x)$ has distinct roots over $\F_p$.
\end{cor}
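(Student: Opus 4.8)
The plan is to deduce this from \cref{irr_implies_no_common_roots} and \cref{res} via the resultant. Since $P(x)$ is monic irreducible over $\Z$ (hence over $\Q$), \cref{irr_implies_no_common_roots} applied with $K=\Q$ tells us that $P$ and $P'$ have no common root in $\overline{\Q}$. By \cref{res} applied with $R=\Z$ (so $\operatorname{Frac}(R)=\Q$), this is equivalent to $\res(P,P')\ne0$ as an integer.

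Next I would use the base-change property of the resultant recorded just before \cref{res}: for any prime $p$, $\res(P\bmod p, P'\bmod p)=\res(P,P')\bmod p$. Since $\res(P,P')$ is a nonzero integer, it is divisible by only finitely many primes $p$; for every prime $p$ outside this finite set we have $\res(P\bmod p, P'\bmod p)\ne0$ in $\F_p$. Applying \cref{res} again, this time with $R=\F_p$ (an integral domain since $\F_p$ is a field, so $\operatorname{Frac}(R)=\F_p$ and its algebraic closure is $\overline{\F_p}$), we conclude that $P\bmod p$ and its derivative have no common root in $\overline{\F_p}$ for all but finitely many $p$.

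Finally, for $p\in\Pr(P)$ the polynomial $P(x)$ splits completely over $\F_p$, so all its roots lie in $\F_p\subseteq\overline{\F_p}$; if it had a repeated root $\alpha\in\F_p$, then by \cref{repeated_roots_implies_common_roots} (applied over the field $\F_p$, with $L=\overline{\F_p}$) $\alpha$ would be a common root of $P$ and $P'$ in $\overline{\F_p}$, contradicting the previous paragraph. Hence for all but finitely many $p\in\Pr(P)$, $P(x)$ has distinct roots over $\F_p$. I do not anticipate a serious obstacle here; the only point requiring slight care is keeping track of which integral domain $R$ one invokes \cref{res} with at each stage, and noting that the derivative of $P\bmod p$ is the reduction mod $p$ of $P'$, so that the resultant computed over $\F_p$ really is the reduction of the integer $\res(P,P')$.
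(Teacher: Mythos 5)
Your proposal is correct and follows essentially the same line as the paper's proof: irreducibility over $\Q$ (Gauss's Lemma) plus \cref{irr_implies_no_common_roots} gives $\res(P,P')\neq 0$ in $\Z$ via \cref{res}, and reduction of the resultant modulo $p$ shows $P$ and $P'$ are coprime over $\overline{\F_p}$ for all but finitely many primes, whence distinct roots by \cref{repeated_roots_implies_common_roots}. Your write-up is just a bit more explicit about the two different integral domains $R$ in the two invocations of \cref{res}, which the paper leaves implicit.
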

\begin{proof}
 Since $P$ is irreducible over $\Z$, by Gauss's Lemma, it is also irreducible over  $\Q$. \cref{irr_implies_no_common_roots} tells us that  $P(x)$ and $P'(x)$ have no common roots in $\C$. By  \cref{res}, $res(P,P')  \in \Z \setminus \{0\}$.   In particular, for primes $p > res(P,P') $, we have $res(P,P') \not\equiv_p 0  $. Hence, $P(x)$ and $P'(x)$, when viewed as polynomials over $\F_p$, have no common roots (even in their splitting field). \ Therefore, \cref{repeated_roots_implies_common_roots} implies all roots of $P(x)$ are distinct.
\end{proof}
\begin{lem} \label{product_of_poly}
Let $\F$ be a field.
Let $f(x), g(x)$ be two monic non-zero polynomials in $ \F[x]$ such that  $f(x)g(x)$ splits over $\F[x]$. Then both $f(x)$ and $g(x)$ split over $\F$. \
\end{lem}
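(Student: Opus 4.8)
The plan is to reduce the claim to a statement about factorizations in $\F[x]$ using unique factorization. Since $\F[x]$ is a principal ideal domain, it is a unique factorization domain, so every monic non-zero polynomial factors uniquely (up to order) as a product of monic irreducible polynomials. The hypothesis that $f(x)g(x)$ splits over $\F$ means that in this factorization of $f(x)g(x)$, every irreducible factor is linear, i.e. of the form $(x-a)$ with $a\in\F$.

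First I would write $f(x)g(x) = \prod_{i}(x-a_i)$ with $a_i\in\F$, which is possible precisely because $fg$ splits over $\F$. Next I would take the factorization of $f$ into monic irreducibles over $\F$, say $f = \prod_j p_j$, and similarly $g = \prod_k q_k$. Then $\prod_j p_j \prod_k q_k$ is a factorization of $fg$ into monic irreducibles. By uniqueness of factorization in $\F[x]$, this factorization must coincide (up to reordering) with $\prod_i (x-a_i)$. In particular every $p_j$ and every $q_k$ equals some $(x-a_i)$, hence is linear with root in $\F$. Therefore $f = \prod_j(x-a_{i(j)})$ splits over $\F$, and likewise $g$ splits over $\F$.

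There is essentially no hard step here: the only thing to be careful about is making sure the polynomials involved are monic so that the factorization into monic irreducibles is genuinely unique (not merely unique up to units), which is guaranteed by the hypothesis that $f$, $g$, and hence $fg$ are monic. One could alternatively argue by induction on $\deg(fg)$: a linear factor $(x-a)$ of $fg = f\cdot g$ must divide $f$ or $g$ since $(x-a)$ generates a prime ideal in $\F[x]$; peeling it off and applying the inductive hypothesis to the quotient finishes the argument. I expect to present the unique-factorization version as it is cleanest, mentioning the inductive alternative only if brevity is desired.
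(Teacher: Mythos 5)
Your proof is correct and follows exactly the same route as the paper's, which simply cites that $\F[x]$ is a unique factorization domain and leaves the rest as immediate; you are just spelling out the details of that argument.
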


\begin{proof}
Since $\F$ is a field, $\F[x]$ is a unique factorisation domain, from which the result follows immediately.
\end{proof}

\subsection{The multiplicative order of the eigenvalues}  \label{subsec:mutliplicity_of_eigenvalue}
 Let $M \in \GL(n,\Z)$, we will denote $\ch(M)$ the characteristic polynomial of $M$, which is a degree $n$ monic polynomial in $\Z[x]$.  We denote $\Pr(M)$ to be $\Pr(\ch(M))$ and $\l(M,p)$ to be $\l(\ch(M),p)$.  Finally, we denote $\ord(M)= \inf \{n \in \N : M^n = I_n\}$, and similarity $\ord_p(M) = \inf\{n  \in \N : M^n \equiv_p I_n  \}$. Let $(p_k)_{k=1}^{\infty}$  be the sequence of strictly increasing primes such that $\ch(M)$ splits and has no zero roots.  Our aim is to prove that  $\l(M,p_k) \rightarrow \infty $  as $k \rightarrow \infty$. To simplify the notation, we will often drop the subscript $k$ and say  $\l(M,p) \rightarrow \infty $  as $p \rightarrow \infty$.

\begin{lem} \label{diag_case}
Let  $M \in \GL(n,\Z)$  such that $\ord(M) = \infty$.
Let $q$ be a prime such that $M$ is diagonalizable over $\F_q$.
Then $\l(M,q) \geq \log_{\n{M}_2} (q-1)$.
\end{lem}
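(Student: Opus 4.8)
The plan is to exploit the relationship between $\ord_q(M)$ and the multiplicative orders of the eigenvalues of $M$ over $\F_q$, and then to bound $\ord_q(M)$ below using the hypothesis $\ord(M)=\infty$. First I would observe that since $M$ is diagonalizable over $\F_q$, we have $M=PDP^{-1}$ for some $P\in\GL(n,\F_q)$ and $D$ diagonal with the eigenvalues $\mu_1,\ldots,\mu_n\in\F_q^\times$ (these are nonzero since $\ch(M)$ has no zero root, as $q\in\Pr(M)$) on the diagonal. Then $M^k\equiv_q I_n$ if and only if $\mu_i^k=1$ for all $i$, so $\ord_q(M)=\operatorname{lcm}(\ord(\mu_1),\ldots,\ord(\mu_n))=\l(M,q)$.

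Next I would bound $\ord_q(M)$ from below. Since $\ord(M)=\infty$, for every $k\in\N$ we have $M^k\neq I_n$ over $\Z$, i.e. $M^k-I_n$ is a nonzero integer matrix. Its entries are integers that can be bounded in terms of $\n{M}_2^k$: indeed, by submultiplicativity of the operator $2$-norm, $\n{M^k-I_n}_2\le\n{M}_2^k+1$, so every entry of $M^k-I_n$ has absolute value at most $\n{M}_2^k+1$. Now if $q$ divides every entry of $M^k-I_n$, then since these entries are integers not all zero, at least one has absolute value $\ge q$, giving $q\le\n{M}_2^k+1$. Consequently, for $k<\log_{\n{M}_2}(q-1)$ we have $\n{M}_2^k<q-1$, hence $\n{M}_2^k+1<q$, so $M^k\not\equiv_q I_n$. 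This shows $\ord_q(M)\ge\log_{\n{M}_2}(q-1)$.

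Combining the two parts, $\l(M,q)=\ord_q(M)\ge\log_{\n{M}_2}(q-1)$, which is the claim. (One should take $\n{M}_2>1$ here, which holds automatically since $M$ has infinite order and hence cannot be a matrix of norm $\le1$; if one wants to be careful, note that if $\n{M}_2\le1$ then all eigenvalues have absolute value $\le1$ and, being algebraic integers whose conjugates are likewise bounded, would be roots of unity by Kronecker's theorem, forcing $\ord(M)<\infty$.)

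I do not expect any serious obstacle here; the argument is elementary. The only mildly delicate point is the reduction $\ord_q(M)=\l(M,q)$, which requires knowing $M$ is diagonalizable \emph{over $\F_q$ itself} (not merely over an extension) so that the eigenvalues and their orders live in $\F_q^\times$ — but this is exactly the hypothesis, together with $q\in\Pr(M)$ ensuring the characteristic polynomial splits over $\F_q$ with nonzero roots.
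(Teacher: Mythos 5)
Your proof is correct and essentially the same as the paper's: both first observe that diagonalizability over $\F_q$ gives $\ord_q(M)=\l(M,q)$, then exploit that $M^{\ord_q(M)}$ is an integer matrix which is $\equiv I_n \pmod q$ but $\neq I_n$, so it must carry an entry of size $\ge q$ (or, as you phrase it, $M^k-I_n$ has a nonzero entry divisible by $q$), and bound this against $\n{M}_2^k$. The only cosmetic difference is that the paper tracks the image of a standard basis vector $e_i$ while you bound the entries of $M^k-I_n$ directly; these are equivalent observations, and your treatment of the edge case $\n{M}_2>1$ matches the paper's as well.
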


\begin{proof}
Since $M$ is diagonalizable over $\F_q$, the eigenvectors form a basis over $\F_q^n$. We can therefore deduce that $\ord_q(M) =  \l (M,q)$.
 Since $\ord(M) = \infty$, for all $k \in \N$, $M^k  \neq I_n$. Hence, there exist $i \in \{1, \ldots, n \}$, such that $M^{\ord_q(M)} e_i \in \{e_i +(q\Z)^m | m \in \Z \} \backslash \{e_i\}$, so $M$ send $e_i$ with norm $1$ to $M^{\ord_q(M)} e_i$ with norm at least norm at least $q-1$ in $\ord_q(M)$ steps. This in particular implies that $\n{M}_2 > 1$. Furthermore, $\ord_q(M)  \geq \log_{\n{M}_2} (q-1)$.
\end{proof}

Given a commutative ring $R$, every monic polynomial $P(X) \in R[X]$  is a characteristic polynomial of some matrix with entries in $R$; indeed, one such matrix is thc \emph{companion matrix}, constructed in \citep[\S3.3]{matrix}. 
\begin{lem} \label{irr_implies_diverges}
Let $P(x)$  be a monic irreducible polynomial in  $ \Z[x]$ that has a root with absolute value not equal to $1$. 
Then $\l(P,p)\rightarrow \infty$  as $p \rightarrow \infty$ {through $\Pr(P)$}.
\end{lem}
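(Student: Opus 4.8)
The plan is to argue by contradiction, using only the two basic facts about the resultant recorded just above \cref{res}: that $\res(f\bmod p,g\bmod p)=\res(f,g)\bmod p$, and (via \cref{res}) that $\res(f,g)=0$ over an integral domain $R$ exactly when $f,g$ have a common root in the algebraic closure of $\operatorname{Frac}(R)$. Suppose $\l(P,p)$ does \emph{not} tend to infinity as $p\to\infty$ through $\Pr(P)$. Then there is a constant $C$ and an infinite set $\mathcal{Q}\subseteq\Pr(P)$ with $\l(P,p)\le C$ for every $p\in\mathcal{Q}$. Put $N=\operatorname{lcm}(1,2,\ldots,C)$, a single fixed positive integer. (We may assume $P(0)\ne 0$: otherwise $P(x)=x$ and $\Pr(P)=\varnothing$, so the statement is vacuous.) For each $p\in\mathcal{Q}$, the defining property of $\Pr(P)$ says $P$ splits over $\Fp$ with nonzero roots, so $P\equiv\prod_{i=1}^n(x-\a_i)$ in $\Fp[x]$ with each $\a_i\in\Fp^\times$; the multiplicative order of each $\a_i$ divides $\l(P,p)$, hence divides $N$, so $\a_i^N=1$ and $\a_i$ is a common root of $P\bmod p$ and $(x^N-1)\bmod p$.

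Next I would transfer this to a statement over $\C$. Applying \cref{res} over the field $\Fp$, the existence of a common root forces $\res(P\bmod p,(x^N-1)\bmod p)=0$ in $\Fp$, and by the mod-$p$ compatibility of the resultant this says $p\mid\res(P,x^N-1)$. Since this holds for every $p$ in the infinite set $\mathcal{Q}$, the integer $\res(P,x^N-1)$ has infinitely many prime divisors, hence equals $0$. Applying \cref{res} again, this time with $R=\Z$ so that $\operatorname{Frac}(R)=\Q$, we conclude that $P$ and $x^N-1$ have a common root $\zeta\in\C$.

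Finally I would extract the contradiction. By Gauss's Lemma $P$ is irreducible over $\Q$, and being monic it is the minimal polynomial over $\Q$ of each of its roots, in particular of $\zeta$; since $\zeta^N=1$, that minimal polynomial divides $x^N-1$ in $\Q[x]$, so $P(x)\mid x^N-1$. Hence every root of $P$ is an $N$-th root of unity and therefore has absolute value $1$, contradicting the hypothesis that $P$ has a root of absolute value not equal to $1$. I do not anticipate a genuine obstacle here; the only points needing care are the pigeonhole reduction to the single fixed exponent $N$ and keeping track of which base ring ($\Fp$ versus $\Z$) each application of \cref{res} uses, together with the mod-$p$ behaviour of $\res$.
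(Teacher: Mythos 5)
Your proof is correct, but it takes a genuinely different route from the paper's. The paper first invokes \cref{irr_monic_implies_distinct_roots_eventually} (built on $\res(P,P')$) to get that $P$ has distinct roots mod $p$ for large $p\in\Pr(P)$, then notes that the companion matrix of $P$ is therefore diagonalisable over $\Fp$, and concludes via the geometric estimate in \cref{diag_case}: an infinite-order matrix $M\in\GL(n,\Z)$ diagonalisable over $\Fp$ satisfies $\l(M,p)\ge\log_{\|M\|_2}(p-1)$, because a bounded-norm linear map needs at least that many steps to move the integer vector $e_i$ out to norm $\ge p-1$. So the paper's route is quantitative and linear-algebraic. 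Your route is a purely algebraic contradiction: assume $\l(P,p)\le C$ along infinitely many $p$, package all the bounded orders into the single exponent $N=\operatorname{lcm}(1,\ldots,C)$, observe $\res(P,x^N-1)\equiv 0 \pmod p$ for each such $p$, conclude $\res(P,x^N-1)=0$ over $\Z$ and transfer to $\C$, where irreducibility plus monicity forces $P\mid x^N-1$, contradicting the hypothesis on a root off the unit circle. Your version buys a shorter and more elementary argument that sidesteps distinct roots, the companion matrix, and \cref{diag_case} entirely; what it gives up is the explicit logarithmic lower bound on $\l(P,p)$, which here is not needed since the lemma only asserts divergence. The two proofs even use the resultant for different purposes: the paper's proof, through \cref{irr_monic_implies_distinct_roots_eventually}, pairs $P$ with $P'$, while yours pairs $P$ with $x^N-1$. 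Your handling of the degenerate case $P(0)=0$ is correct (then $P(x)=x$ and $\Pr(P)=\varnothing$), and your applications of \cref{res} over $\Fp$ and over $\Z$ respectively, plus the mod-$p$ compatibility of the Sylvester determinant for the monic pair $(P,x^N-1)$, are all sound.
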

\begin{proof}
Since $P(x)$ is  irreducible over  $\Z$,  \cref{irr_monic_implies_distinct_roots_eventually} tells us that for big enough $p \in \Pr(P)$, all roots of $P(x)$ are distinct. For every such prime $p$, the companion matrix of $P$ is diagonalisable over $\Fp$, so the result follows from \cref{diag_case}.
\end{proof}
\begin{cor} \label{diverges_mul_order}
Let $P(x)$  be a monic polynomial in  $ \Z[x]$ that has a root with absolute value not equal to $1$. 
Then $\l(P,p) \rightarrow \infty$  as $p \rightarrow \infty$ {through $\Pr(P)$}.
\end{cor}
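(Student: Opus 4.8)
The plan is to reduce to the irreducible case already settled in \cref{irr_implies_diverges} by factoring $P$ over $\Z$. First I would dispose of a degenerate possibility: if $P(0)=0$ then $P$ has $0$ as a root modulo every prime, so $\Pr(P)=\emptyset$ and there is nothing to prove. Hence I may assume $P(0)\neq 0$.

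Next I would write $P=Q_1\cdots Q_k$ as a product of monic irreducible polynomials in $\Z[x]$ (with repetitions allowed), which is legitimate by unique factorisation in $\Z[x]$ and Gauss's lemma, and note that since $P$ is monic all the $Q_i$ are non-constant. The multiset of complex roots of $P$ is the union of the multisets of roots of the $Q_i$, so the hypothesis that $P$ has a root of absolute value $\neq 1$ forces some factor, say $Q\coloneqq Q_{i_0}$, to have a root $\alpha\in\C$ with $|\alpha|\neq 1$. Thus $Q$ is a monic irreducible polynomial in $\Z[x]$ to which \cref{irr_implies_diverges} applies.

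The one genuine step is then the bookkeeping that connects $\Pr$ and $\l$ for $P$ to those for $Q$. I claim $\Pr(P)\subseteq\Pr(Q)$: if $p\in\Pr(P)$ then $P$ splits over $\F_p$ with no zero root, so since $Q\mid P$ in $\F_p[x]$, \cref{product_of_poly} shows $Q$ splits over $\F_p$; its roots over $\F_p$ are among those of $P$, hence all nonzero, so $p\in\Pr(Q)$. Moreover, for such $p$ the set of roots of $Q$ in $\F_p$ is a subset of the set of roots of $P$ in $\F_p$, so the least common multiple of their multiplicative orders — that is $\l(Q,p)$ — divides $\l(P,p)$; in particular $\l(P,p)\geq\l(Q,p)$.

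To conclude, \cref{irr_implies_diverges} gives $\l(Q,p)\to\infty$ as $p\to\infty$ through $\Pr(Q)$, and since $\Pr(P)\subseteq\Pr(Q)$ this limit also holds along the smaller set $\Pr(P)$; combined with $\l(P,p)\geq\l(Q,p)$ this yields $\l(P,p)\to\infty$ as $p\to\infty$ through $\Pr(P)$. I do not expect a real obstacle: all of the substance is carried by \cref{irr_implies_diverges} (which in turn rests on \cref{irr_monic_implies_distinct_roots_eventually} and \cref{diag_case}), and what remains is the elementary passage from an irreducible factor to $P$ itself, the only mild subtlety being that $\Pr$ and $\l$ behave well under taking factors — which is precisely what \cref{product_of_poly} and the lcm observation supply.
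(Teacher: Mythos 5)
Your proof is correct and follows essentially the same route as the paper: factor $P$ into monic irreducibles in $\Z[x]$, isolate a factor with a root of absolute value $\neq 1$, use \cref{product_of_poly} to deduce $\Pr(P)\subseteq\Pr(Q)$, and invoke \cref{irr_implies_diverges}. The observation $\l(P,p)\ge\l(Q,p)$ and the brief check that $P(0)\neq 0$ make explicit some bookkeeping that the paper leaves implicit, but the substance and strategy are identical.
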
 
\begin{proof}
Since $\Z[x]$ is a unique factorization domain, we can write $P(x)= \prod_{i=1}^r f_i(x)$, a product of monic irreducible polynomials. Without loss of generality, $f_1(x)$ has a root with absolute value not equal to $1$. According to \cref{product_of_poly}, $\Pr(P) \subeq \Pr(f_1)$. The result follows from \cref{irr_implies_diverges}.
\end{proof}

\subsection{$\Z^n$  by $\Z$ groups } \label{subsection_uniform_d_inzn_by_z}\label{section_characterise_Zn_by_Z}
We will first recall some linear algebra results.  Suppose that $T$ is a  linear map over  $V = \F^n$.   We define the \emph{ eigenspace} of $T$ for $\l$ as
 $ E_{\l}(T) = \{ x \in   \F^n |(  T -  \l   I)  x = 0 \}. $
 We define the \emph{generalized eigenspace} of $T$ for $\l$ as
 $ \GE_{\l}(T) = \{ x \in   \F^n |(  T -  \l   I)  ^n x = 0\}$.  Let $A \in \GL(n, \F)$ such that $\ch (A)$ splits over $\F$. Let $\l$ be an eigenvalue of $A$. 
Then $v \in \GE_\l (A)$ is called a \emph{last generalised eigenvector} of $A$ with respect to $\l$ if there is no $u \in  \GE_\l(A)$, such that $(A- \l I)u = v$. If the characteristic polynomial of the linear map $T$ splits over $\F$, then we know $T$ has an eigenvector, and hence a last generalised eigenvector. In particular, we have the following observation.

\begin{lem} \label{eigenvector_in_invariant_subspace}
Suppose $char(A)$ splits over $\F$. Let $W$ be a $A$-invariant subspace. Then $W$ contains an eigenvector of $A$.
\end{lem}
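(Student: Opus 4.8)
The plan is to prove \cref{eigenvector_in_invariant_subspace} by reducing to the existence of an eigenvector for the restriction $A|_W$. First I would observe that since $W$ is $A$-invariant, the restriction $A|_W : W \to W$ is a well-defined linear map, and its characteristic polynomial $\ch(A|_W)$ divides $\ch(A)$ in $\F[x]$: this is the standard fact that, choosing a basis of $W$ and extending it to a basis of $V$, the matrix of $A$ becomes block upper-triangular with $A|_W$ in the top-left block, so $\ch(A) = \ch(A|_W) \cdot \ch(A/W)$. Since $\ch(A)$ splits over $\F$ by hypothesis, and $\F[x]$ is a unique factorisation domain, the divisor $\ch(A|_W)$ also splits over $\F$; here I could invoke \cref{product_of_poly} directly with $f = \ch(A|_W)$ and $g = \ch(A/W)$.

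Next, since $\ch(A|_W)$ is a non-constant monic polynomial that splits over $\F$, it has a root $\l \in \F$, which is therefore an eigenvalue of $A|_W$. By the definition of eigenvalue, there exists a nonzero vector $v \in W$ with $(A|_W - \l I)v = 0$, i.e. $Av = \l v$. This $v$ is the desired eigenvector of $A$ lying in $W$, completing the proof.

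I do not expect any genuine obstacle here: the lemma is essentially the observation that an invariant subspace of an operator whose characteristic polynomial splits contains an eigenvector, which is routine linear algebra. The only mild subtlety is making sure $W \neq \{0\}$ so that $\ch(A|_W)$ is non-constant and genuinely has a root; if the paper permits $W = \{0\}$ the statement would be vacuously false, so I would assume implicitly (as is surely intended in the surrounding discussion of last generalised eigenvectors) that $W$ is a nonzero subspace, and state this assumption explicitly at the start of the argument.
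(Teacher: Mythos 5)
Your proof is correct and is precisely the standard argument the paper has in mind (the paper records this as an unproven ``observation'' following its remark that an operator whose characteristic polynomial splits has an eigenvector). The invocation of \cref{product_of_poly} to see that $\ch(A|_W)$ splits, and your caveat that $W$ must be nonzero, are both exactly right.
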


Let $p$ be a prime. 
We will   use the notation $x \mapsto \tilde{x}_p$ for the quotient map from $\Z$ to $\Z_p$.
Given  a vector $v$ in $\Z^n$, we will denote $\tilde{v}_p \in \Z_p^n$ for $v \bmod p$; similarly given a matrix  $M \in \GL(n,\Z)$, we denote $\ti{M}_p \in \GL(n,\Z_p)$ for $ M\bmod p$. When it is clear from the context which prime we are referring to, we often drop the subscript $p$ for the matrix $\ti{M}_p$ and the vector $\tilde{v}_p$.

\begin{lem} \label{index_p_subgroup}
Let $M \in \GL(n,\Z)$ and $p$ be a prime such that $\ch(M)$ splits over $\F_p$.
Let $v \in \Z^n$ such that $\tilde{v}$ is a last eigenvector of $\tilde{M}$ with respect to an eigenvalue $\tilde{\l}$.
Let $B \subset \mathbb{Z}^n$ be such that $\tilde{B}$ is a basis of $\mathbb{Z}_p^n$ consisting of generalized eigenvectors of $\tilde{M}$ and containing $\tilde{v}$.
Define $A_v = \langle B \setminus \{v\}, p e_i | 1 \leq i \leq n \rangle$, a subgroup of $\Z^n$. Then,

\begin{enumerate} [(i)]
\item The subgroup $A_v$ has index $p$ in $\Z^n$, and is invariant under $M$, i.e. for all $h\in A_v < \Z^n$, we have $M(h) \in A_v$.
\item Let $\bar{M}$ be the projected endomorphism of $M$ on $\frac{\Z^n}{A_v} \iso \Z_p$, then for every $ w \in \frac{\Z^n}{A_v}$, when viewed as an element in $\Z_p$, we have  $\bar{M}(w ) \equiv_p \l $  . 
\end{enumerate}

\end{lem}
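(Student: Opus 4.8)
The statement to prove is \cref{index_p_subgroup}. The plan is to verify (i) and (ii) essentially by unwinding the definitions, using the fact that $\tilde B$ is a basis of $\Z_p^n$ consisting of generalized eigenvectors of $\tilde M$ and that $\tilde v$ is a \emph{last} generalized eigenvector with respect to $\tilde\l$.

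For (i), first I would check that $A_v$ has index $p$ in $\Z^n$. Since $\tilde B$ is a basis of $\Z_p^n$ and $\tilde v\in\tilde B$, the reductions $\overline{b}$ for $b\in B\setminus\{v\}$ together with the images of the $pe_i$ span an index-$p$ subgroup of $\Z_p^n=\Z^n/p\Z^n$ (namely the hyperplane spanned by $\tilde B\setminus\{\tilde v\}$), so pulling back shows $[\Z^n:A_v]=p$. For $M$-invariance: $A_v$ is generated by the $pe_i$ — whose $M$-images lie in $p\Z^n\le A_v$ since $M$ has integer entries — and by the vectors $b\in B\setminus\{v\}$. For such a $b$, write $\tilde b\in\GE_{\tilde\mu}(\tilde M)$ for the appropriate eigenvalue $\tilde\mu$. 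Then $\tilde M\tilde b$ still lies in $\GE_{\tilde\mu}(\tilde M)$. If $\tilde\mu\ne\tilde\l$, then $\GE_{\tilde\mu}(\tilde M)$ is spanned by generalized $\tilde\mu$-eigenvectors from $\tilde B$, none of which is $\tilde v$, so $\tilde M\tilde b$ is in the span of $\tilde B\setminus\{\tilde v\}$, i.e. $Mb\in A_v$. If $\tilde\mu=\tilde\l$, then I use that $\tilde v$ is a \emph{last} generalized eigenvector: for any generalized $\tilde\l$-eigenvector $\tilde b$, the coefficient of $\tilde v$ when we expand $\tilde M\tilde b = \tilde\l\tilde b + (\tilde M-\tilde\l I)\tilde b$ in the basis $\tilde B$ must vanish, because $(\tilde M-\tilde\l I)\tilde b$ is a sum of $(\tilde M-\tilde\l I)$-images of generalized $\tilde\l$-eigenvectors and hence, $\tilde v$ being last, lies in the span of $\tilde B\setminus\{\tilde v\}$ (no element of $\tilde B$ maps onto $\tilde v$ under $\tilde M-\tilde\l I$), while $\tilde\l\tilde b$ contributes a coefficient of $\tilde v$ equal to $\tilde\l$ times the $\tilde v$-coefficient of $\tilde b$ — so one should instead argue that the $\tilde v$-coefficient of $\tilde M\tilde b$ equals $\tilde\l$ times that of $\tilde b$ plus a contribution from $(\tilde M-\tilde\l I)\tilde b$ which has zero $\tilde v$-coefficient. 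This already gives (ii) at the level of the single generator $\tilde v$: on $\Z^n/A_v$, which is one-dimensional over $\F_p$ spanned by the image of $v$, the endomorphism $\bar M$ acts by multiplication by $\tilde\l$.

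More carefully for (ii): the quotient $\Z^n/A_v$ has order $p$, and I would exhibit an explicit isomorphism with $\Z_p$ by sending the coset of $v$ to $\tilde1$ (this is legitimate since $v\notin A_v$, as its reduction $\tilde v$ is not in the span of $\tilde B\setminus\{\tilde v\}$). Then for $w\in\Z^n$ write $\tilde w=c\,\tilde v+\sum_{b\in B\setminus\{v\}}c_b\,\tilde b$ in the basis $\tilde B$; the image of $w$ in $\Z^n/A_v$ corresponds to $\tilde c\in\Z_p$. Applying $M$ and reducing mod $p$, by the computation in (i) the $\tilde v$-coefficient of $\tilde M\tilde w$ is $\tilde\l\tilde c$ (all other basis contributions, and all the $(\tilde M-\tilde\l I)$-images, land in the span of $\tilde B\setminus\{\tilde v\}$, hence in $A_v$ mod $p$), so $\bar M(w)\equiv_p\tilde\l\, w$ as claimed.

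I do not expect a serious obstacle here; the content is bookkeeping in the basis $\tilde B$. The one point requiring genuine care — and the place I would be most careful to state cleanly — is the use of the \emph{last generalized eigenvector} hypothesis to guarantee that no element of $\tilde B$ is sent by $\tilde M - \tilde\l I$ to something with a nonzero $\tilde v$-component, equivalently that the $\tilde v$-row of the matrix of $\tilde M$ in the basis $\tilde B$ has a single nonzero entry, namely $\tilde\l$ in the $\tilde v$-column. This is exactly what makes $A_v$ invariant and forces the induced action on the quotient to be scalar multiplication by $\tilde\l$; I would isolate it as the key observation, possibly invoking \cref{eigenvector_in_invariant_subspace} or the elementary structure of generalized eigenspaces to justify that the last generalized eigenvectors span a complement to the image of $\tilde M-\tilde\l I$ within $\GE_{\tilde\l}(\tilde M)$.
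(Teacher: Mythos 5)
Your proof takes the same route as the paper's: establish the index via the third isomorphism theorem applied to $\Z^n/A_v$, show $\tilde M\langle\tilde B\setminus\{\tilde v\}\rangle\subseteq\langle\tilde B\setminus\{\tilde v\}\rangle$ for $M$-invariance, and read off (ii) from the resulting isomorphism $\Z^n/A_v\cong\langle\tilde v\rangle$.

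You rightly single out the invariance of $\langle\tilde B\setminus\{\tilde v\}\rangle$ as the crux (the paper asserts it in one line ``since $\tilde v$ is a last eigenvector''), but the justification you offer is not quite correct and conceals a genuine subtlety. You pass from ``no element of $\tilde B$ maps onto $\tilde v$ under $\tilde M-\tilde\lambda I$'' to ``$(\tilde M-\tilde\lambda I)\tilde b$ has zero $\tilde v$-coordinate in the basis $\tilde B$''. These are different: $\tilde v$ being last means $\tilde v\notin\mathrm{Im}\bigl((\tilde M-\tilde\lambda I)|_{\GE_{\tilde\lambda}(\tilde M)}\bigr)$, but this does not entail that every vector in that image has vanishing $\tilde v$-coordinate. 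Concretely, for a single $2\times 2$ Jordan block with eigenvalue $\tilde\lambda$ in the standard basis $e_1,e_2$, take $\tilde v=e_1+e_2$ (a last generalized eigenvector) and $\tilde B=\{\tilde v,e_2\}$; then $(\tilde M-\tilde\lambda I)e_2=e_1=\tilde v-e_2$ has $\tilde v$-coordinate $1$, so $\tilde M e_2\notin\langle e_2\rangle$ and the invariance fails for this choice of $B$. The fix is to insist that $\tilde B\cap\GE_{\tilde\lambda}(\tilde M)$ contain a basis of $\mathrm{Im}\bigl((\tilde M-\tilde\lambda I)|_{\GE_{\tilde\lambda}(\tilde M)}\bigr)$; such a choice is possible precisely because $\tilde v$ lies outside that image, and once it is made, every $(\tilde M-\tilde\lambda I)\tilde b$ with $\tilde b\in\tilde B\cap\GE_{\tilde\lambda}(\tilde M)$ lies in $\Span(\tilde B\setminus\{\tilde v\})$, so your argument and your derivation of (ii) go through exactly as sketched. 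The paper's one-line assertion glosses over the same point, so this is a refinement you should record rather than a divergence from the source.
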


\begin{proof}
\mbox{}

\begin{enumerate} [(i)]
\item
By an isomorphism theorem, the following  quotient groups are isomorphic:
\begin{equation} \label{iso_quotients}
\frac{\Z^n}{A_v} \cong \frac{\Z^n / (p\Z)^n}{ A_v /(p\Z)^n} \iso \frac{(\Z_p)^n}{ \langle \tilde{B} \setminus \{\tilde{v}\}   \rangle} \iso \langle \tilde{v} \rangle \iso \Z_p.
\end{equation}
Hence, the subgroup $A_v \leqslant \Z^n$ has index $p$. We will next show that  $A_v $ is invariant under $M$: since $\tilde{v}$ is a last eigenvector of $\tilde{M}$, we have $\tilde{M} (\langle \tilde{B} \setminus \{\tilde{v}\}   \rangle ) \subeq \langle \tilde{B} \setminus \{\tilde{v}\}   \rangle.$ Therefore, $M (\langle B \setminus \{v\}\rangle) \subeq \langle B \setminus \{v\}\rangle + (p\Z)^n.$  Also, since  $ (p\Z)^n$ is characteristic in $ \Z^n $ and $M$ is a homomorphism on $\Z^n$, we have $A_v = \langle B \setminus \{v\}, p e_i | 1 \leq i \leq n \rangle$ is invariant under $M$.

\item This follows from \eqref{iso_quotients} and the fact that  $\tilde{v}$ is a last eigenvector of $\tilde{M}$ with respect to  $\tilde{\l}$.\qedhere

\end{enumerate}
\end{proof}

\begin{lem} \label{normal_subgroup_contain_Zp}

Let $p$ be a prime. Let  $\G = \Z_p \rtimes_\p \Z_r$ be the semidirect product where $\p (1)(1) = m \in \Z_p \setminus \{0\} $ and  $r = \ord_p(m)$.  Let $N$ be a normal subgroup of $\G$. Suppose $\frac{\G}{N}$ has a nilpotent subgroup with index less than $ r$. Then $N \supseteq \Z_p \times \{ 0 \}$.
\end{lem}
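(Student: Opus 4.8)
The plan is to first prove the stronger statement that every nontrivial normal subgroup of $\G$ contains $\Z_p\times\{0\}$, and then to rule out the remaining possibility that $N$ is trivial by showing that $\G$ itself has no nilpotent subgroup of index less than $r$. I would begin by recording that $r=\ord_p(m)$ divides $|\F_p^\times|=p-1$, so that $r<p$ and $\gcd(p,r)=1$; hence $\Z_p\times\{0\}$ is the unique, and in particular normal, Sylow $p$-subgroup of $\G$, and since $p$ is prime it has no nontrivial proper subgroup.

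First I would dispose of the easy case: if $N\cap(\Z_p\times\{0\})$ is nontrivial then it equals $\Z_p\times\{0\}$ and we are done. So suppose $N\cap(\Z_p\times\{0\})$ is trivial; then $N$ injects into $\G/(\Z_p\times\{0\})\cong\Z_r$, so every element of $N$ has the form $(\sigma(b),b)$ for some function $\sigma$ on the image of $N$ in $\Z_r$. Conjugating by an arbitrary $(a,0)\in\Z_p\times\{0\}$ and computing in the semidirect product (where $(a_1,b_1)(a_2,b_2)=(a_1+m^{b_1}a_2,b_1+b_2)$) gives $(a,0)(\sigma(b),b)(a,0)^{-1}=(\sigma(b)+a(1-m^b),b)$; by normality this lies in $N$, and since the element of $N$ with second coordinate $b$ is unique it must equal $(\sigma(b),b)$, forcing $a(1-m^b)\equiv0\pmod p$ for all $a$, hence $m^b\equiv1\pmod p$, hence $r\mid b$, hence $b=0$. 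Thus $N\subseteq\Z_p\times\{0\}$, and together with triviality of $N\cap(\Z_p\times\{0\})$ this forces $N$ to be trivial, so that $\G/N=\G$.

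The main step, which I expect to be the real obstacle, is then to show that $\G$ has no nilpotent subgroup of index less than $r$; since $\G/N=\G$, this contradicts the hypothesis and completes the proof. Let $H\le\G$ be nilpotent with $[\G:H]<r$, so $|H|>pr/r=p$. Since $|H|$ divides $pr$ and $p\nmid r$, if $p\nmid|H|$ then $|H|\mid r$ and $[\G:H]\ge p>r$; so in fact $p\mid|H|$, a Sylow $p$-subgroup of $H$ is then a Sylow $p$-subgroup of $\G$, hence equals $\Z_p\times\{0\}$, which is therefore contained in $H$. As $H$ is finite and nilpotent it is the internal direct product of its Sylow subgroups, so grouping the Sylow $p$-subgroup against the rest writes $H=(\Z_p\times\{0\})\times H'$ with $H'$ of order coprime to $p$ and centralising $\Z_p\times\{0\}$. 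But $H'\cap(\Z_p\times\{0\})$ is trivial, so each element of $H'$ has the form $(\tau(b),b)$, and comparing $(\tau(b),b)(a,0)=(\tau(b)+m^ba,b)$ with $(a,0)(\tau(b),b)=(a+\tau(b),b)$ shows that $H'$ centralises $\Z_p\times\{0\}$ only if $m^b\equiv1\pmod p$ for each such $b$, i.e.\ $b=0$; hence $H'$ is trivial and $H=\Z_p\times\{0\}$ has index exactly $r$, a contradiction. The only points needing a little care are these two appeals to finite group theory — uniqueness of the Sylow $p$-subgroup and the decomposition of a finite nilpotent group into its Sylow subgroups — while everything else is elementary manipulation in the semidirect product.
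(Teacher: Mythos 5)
Your proof is correct, but it takes a genuinely different route from the paper's. You split by cases: if $N\cap(\Z_p\times\{0\})$ is nontrivial you are done by simplicity of $\Z_p$; otherwise you use the conjugation computation to force $N$ trivial, and then rule out the possibility that $\G$ itself has a nilpotent subgroup $H$ of index $<r$ by invoking the structure theorem for finite nilpotent groups (direct product of its Sylow subgroups) to write $H=(\Z_p\times\{0\})\times H'$ and show $H'$ must be trivial. The paper instead works directly with the nilpotent subgroup $\G'/N$ of $\G/N$: the same divisibility arithmetic ($r\mid p-1$, so $p\mid|\G'|$) gives $\Z_p\times\{0\}\le\G'$ and some $(v,u)\in\G'$ with $u\ne0$, and then an explicit commutator computation shows every term of the lower central series of $\G'$ contains $\Z_p\times\{0\}$, so $N$ (which contains some term of that lower central series, by nilpotence of $\G'/N$) does too. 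The paper's argument is slightly more uniform — no case split, and it never needs the Sylow decomposition of a nilpotent group, only the definition via the lower central series — whereas your version isolates a clean structural fact (every nontrivial normal subgroup of this Frobenius-type group contains the kernel) that may be of independent use. Both are valid; both hinge on the same key arithmetic facts $r\mid p-1$ and faithfulness of the $\Z_r$-action.
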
 

\begin{proof}
Let  $\frac{\G'}{N}$ be the nilpotent subgroup of $\frac{\G}{N}$ with index $< r$.  Then, we have \[ \frac{rp}{ |\G'|} = \frac{|\G|}{ |\G'|}  = \frac{|\G /N| }{ |\G'/N|} <r. \]
Hence we have $p<|\G'|$. 
Also, since $|\G'| \mid rp $ and $r \leq  p-1$, we have $p \mid |\G'|.$ 
Hence there exists an element $(t,s) \in \G'$ with order $p$. It follows that $(0,0)= (t,s)^p = (x,ps) = (x,s)$ for some $x \in  \Z_p$.  Hence $s= 0$, so we have $\Z_p  = \langle (t,0) \rangle \subseteq \G'$. Also, $\Z_p \subsetneq \G'$, so there exists $u \in \Z_r \setminus \{0\}$,  $v \in \Z_p$, such that $(v,u) \in \G'$. Hence, $(0,u)=(-v,0)(v,u) \in \G'$. Hence,  $\Z_p \rtimes \la u\ra \subeq \G'$.   We claim that the subgroup  $[ \{ 0 \} \times \la u\ra  , \Z_p \times \{ 0 \} ] $  contains $ \Z_p \times \{ 0 \} $.
We have $\p(u)(1) = m ^{u} \not \equiv_p 1 .$  It follows that $[ \{ 0 \} \times \Z_r  , \Z_p \times \{ 0 \} ] \ni (0,u ) (1,0) (0,-u) (-1,0) = (m ^{u} -1, 0)$. Hence,  $[ \{ 0 \} \times \Z_r  , \Z_p \times \{ 0 \} ] \supseteq \langle  (m ^{u} -1, 0) \rangle =  \Z_p \times \{ 0\}$, this proves the claim. Therefore, by induction, we can deduce that every term in the  lower central                                                                                                                                                                                                                                                                                                                                                                                                                                                                                                                                                                                                                                                                                                                                                                                                                                                                                                                                                                                                                                                                                                                                                                                                                                                                                                                                                                                                                                                                                                                                                                                                                                                                                                                                                                                                                                                                                                                                                                                                                                                                                                                                                                                                                                                                                                                                                                                                                                                                                                                                                                                                                                                                                                                                                                                                                                                                                                                                                                                                                                                                           series of $\G$ contains $[ \{ 0 \} \times \Z_r  , \Z_p \times \{ 0 \} ] \supseteq   \Z_p \times \{ 0\}$. Since the quotient $\frac{\G'}{N}$ is nilpotent,  $ N$ must contains a term in the lower central of $\G$. Therefore, we have $\Z_p \subseteq N$.
\end{proof}

The following theorem is an alternative form of \cref{thm:bt}.
\begin{thm}[{\citep[Theorem 4.1]{nil}}]\label{matt_lemma} Let $\varepsilon, \delta>0$. Then there exist constants $C_{\epsilon,\delta}$ and  $D_{\epsilon,\delta}$ depending only on $\epsilon,\delta$ such that the following holds. Suppose $G$ is a finite group generated by a symmetric subset $S$ containing the identity whose Cayley graph has diameter $\gamma:=\diam_S(G)$ and satisfies
 \[\gamma \geq \left( \frac{|G|}{|S|} \right)^\epsilon\]
and $\gamma \geq D_{\epsilon,\delta}$. Then
 $G$ has a normal subgroup $H$ contained in $S^{\lfloor\gamma^\delta\rfloor}$ such that $G/H$ has a nilpotent subgroup of index  at most $C_{\epsilon,\delta}$.
\end{thm}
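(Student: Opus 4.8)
The plan is to deduce \cref{matt_lemma} from the single-ball form of the Breuillard--Green--Tao theorem \citep{bgt} recalled in the introduction, of which \cref{thm:bt} is a refinement, by applying it at the scale $n=\lfloor\gamma^\delta\rfloor$ rather than at scale $\gamma$. The key observation is that the almost-flatness hypothesis $\gamma\ge(|G|/|S|)^\epsilon$ not only controls the size of the whole group relative to $S$, but in fact forces the ball $S^n$ to have polynomial size at \emph{every} scale $n\le\gamma$, with an exponent depending only on $\epsilon$ and $\delta$ once we fix $n$ around $\gamma^\delta$; this is exactly the hypothesis that \citep{bgt} requires, and applying it at such a small scale is what forces the resulting normal subgroup into $S^{\lfloor\gamma^\delta\rfloor}$.

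Concretely, given $\epsilon,\delta>0$, I would set $k=\lceil 1/(\delta\epsilon)\rceil+1$, let $N(k)$ be the threshold and $C_{\epsilon,\delta}$ the index bound supplied by \citep{bgt} for this value of $k$, and choose $D_{\epsilon,\delta}$ large enough that $\gamma\ge D_{\epsilon,\delta}$ forces $n:=\lfloor\gamma^\delta\rfloor\ge\max\{N(k),2^{1/(\delta\epsilon)},2^{1/\delta}\}$. Given $G$, $S$ and $\gamma$ as in the statement, one then checks that $S^n$ has polynomial size: since $S^n\subseteq G$ and $|G|/|S|\le\gamma^{1/\epsilon}$ by hypothesis, and since $\gamma^\delta<n+1\le 2n$ gives $\gamma<(2n)^{1/\delta}$, one obtains
\[
\frac{|S^n|}{|S|}\le\frac{|G|}{|S|}\le\gamma^{1/\epsilon}<(2n)^{1/(\delta\epsilon)}=2^{1/(\delta\epsilon)}n^{1/(\delta\epsilon)}\le 2^{1/(\delta\epsilon)}n^{k-1}\le n^k.
\]
Thus $|S^n|\le n^k|S|$ with $n\ge N(k)$, so \citep{bgt} yields normal subgroups $H,\Gamma\normal G$ with $[G:\Gamma]\le C_{\epsilon,\delta}$, with $H\subseteq\Gamma\cap S^n\subseteq S^{\lfloor\gamma^\delta\rfloor}$, and with $\Gamma/H$ nilpotent. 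Then $\Gamma/H$ is a nilpotent subgroup of $G/H$ of index $[G:\Gamma]\le C_{\epsilon,\delta}$, which is the assertion.

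There is no genuine obstacle here beyond the bookkeeping: the one point worth isolating is the elementary estimate above, which says that $\epsilon$-almost-flatness of $G$ upgrades to polynomial growth of bounded degree at scale $\gamma^\delta$. It is precisely this that lets us invoke \citep{bgt} at a scale small enough to place $H$ inside $S^{\lfloor\gamma^\delta\rfloor}$, whereas applying \cref{thm:bt} directly would only place $H$ in a ball of radius $O(\gamma^{1/2})$ and so would not suffice when $\delta<1/2$. (Alternatively, one may simply quote \citep[Theorem~4.1]{nil}, which is this statement.)
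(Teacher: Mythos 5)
The paper gives no proof of this statement; it simply quotes it as \citep[Theorem~4.1]{nil} and moves on, so there is no in-paper argument to compare yours against. Your proposed derivation from the Breuillard--Green--Tao single-ball theorem is the natural one (and is essentially how the cited source proceeds), and it is correct. The key estimate checks out: with $n=\lfloor\gamma^\delta\rfloor$ and $D_{\eps,\delta}$ chosen so that $n\ge 1$, one has $\gamma<(n+1)^{1/\delta}\le(2n)^{1/\delta}$, whence
\[
\frac{|S^n|}{|S|}\le\frac{|G|}{|S|}\le\gamma^{1/\epsilon}<2^{1/(\delta\epsilon)}n^{1/(\delta\epsilon)}\le n^k
\]
once $n\ge 2^{1/(\delta\epsilon)}$, where $k=\lceil 1/(\delta\epsilon)\rceil+1$. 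Crucially, you fix $k$ as a function of $\eps,\delta$ \emph{before} invoking the BGT theorem, so both its threshold $N(k)$ and its index bound depend only on $\eps,\delta$; taking $D_{\eps,\delta}$ large enough then forces $n\ge\max(N(k),2^{1/(\delta\epsilon)})$. With $|S^n|\le n^k|S|$ and $n\ge N(k)$, BGT supplies normal subgroups $H,\Gamma\normal G$ with $[G:\Gamma]$ bounded, $H\subseteq\Gamma\cap S^n\subseteq S^{\lfloor\gamma^\delta\rfloor}$ and $\Gamma/H$ nilpotent, so $\Gamma/H$ is the required nilpotent subgroup of $G/H$ of index at most $C_{\eps,\delta}$. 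The extra stipulation $n\ge 2^{1/\delta}$ in your choice of $D_{\eps,\delta}$ is harmless but not needed. Your closing remark about the comparison with \cref{thm:bt} is also apt: applying \cref{thm:bt} at scale $\gamma$ only places $H$ in $S^{O(\gamma^{1/2})}$, so applying BGT at the small scale $\gamma^\delta$ really is necessary when $\delta<1/2$.
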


We will be using the same set-up as \citep[\S14]{ggt}. The semidirect product $\Z^n  \rtimes_\Phi\Z $ is defined by a homomorphism $\Phi : \Z \rightarrow Aut(\Z^n) = \GL(n, \Z)$, and $\phi$ is entirely determined by the automorphism $\t \coloneqq \phi(1)$, which can be represented by a matrix $M \in \GL(n,\Z)$. We can therefore denote the  semidirect product  $\Z^n  \rtimes_\phi\Z $ as $\Z^n \rtimes_M \Z$.

\begin{prop}[uniformly   $\alpha$-almost flat coset spaces implies virtual nilpotence in  $ \Z^n$  by $\Z$ ] \label{characterise_Zn_by_Z}
Let $G = \Z^n   \rtimes_M \Z $. Then $G$ has uniformly   $\alpha$-almost flat coset spaces implies all eigenvalues of $M$ have absolute value $1$.

\end{prop}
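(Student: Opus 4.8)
The plan is to argue by contraposition: assuming $M$ has an eigenvalue of absolute value $\ne 1$, I will construct an infinite family of finite-index subgroups $L_p \le G$ (indexed by suitable primes $p$) whose coset spaces violate the uniformly $\alpha$-almost flat condition for every $\alpha > 0$. Since the characteristic polynomial $\ch(M) \in \Z[x]$ is monic with $\ch(M)(0) = \pm\det(M) \ne 0$, and (by the hypothesis and \cref{product_of_poly}) one of its irreducible factors has a root of absolute value $\ne 1$, \cref{split_over_inf_primes_with_no_zero_roots} gives infinitely many primes $p \in \Pr(M)$, and by \cref{irr_monic_implies_distinct_roots_eventually} (applied to that factor) we may assume $\ch(M)$ has distinct roots over $\F_p$ for all large such $p$. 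For each such $p$, pick an eigenvalue $\tilde\l$ of $\tilde M_p$; by \cref{eigenvector_in_invariant_subspace}-type reasoning, there is a last (generalised) eigenvector $\tilde v$, and \cref{index_p_subgroup} produces an $M$-invariant index-$p$ subgroup $A_v \le \Z^n$ with $\Z^n/A_v \cong \Z_p$ on which $M$ acts by multiplication by $\l$.

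Next I would descend to the quotient. Since $A_v$ is $M$-invariant, \cref{semidirect_group_normal_subgroup} shows that $A_v \rtimes r\Z$ (with $r = \ord_p(\tilde\l) = \l(M,p)$ restricted to this eigenvalue, or a convenient multiple) is normal in $G = \Z^n \rtimes_M \Z$, and the quotient is isomorphic to $\Z_p \rtimes_\psi \Z_r$, where $\psi(1)(1) = \tilde\l$. This is precisely the group $\G$ appearing in \cref{normal_subgroup_contain_Zp}. Now I pass to finite quotients of this $\G$ (equivalently, finite-index subgroups of $G$ containing $A_v \rtimes r\Z$): I want a further finite-index subgroup $N/(A_v\rtimes r\Z)$ of $\G$ such that the coset space $\G/N$ has large index but small diameter. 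The natural choice is to quotient by the ``$\Z_p$ direction'' as much as possible — but \cref{normal_subgroup_contain_Zp} warns that if $\G/N$ is close to nilpotent then $N$ must swallow the whole $\Z_p$ factor. So instead I would take $N$ trivial, i.e. use $\G$ itself as the coset space of $G$: then $[G : A_v \rtimes r\Z] = p \cdot r = p\cdot\l(M,p)$ (up to understanding the index of $r\Z$ in $\Z$), while $\diam_S(G/(A_v\rtimes r\Z)) = \diam(\G)$, which for the metabelian group $\Z_p \rtimes \Z_r$ with the natural generators is $O(r + \log p)$ or so — certainly polylogarithmic in $p$ relative to $r$, hence vanishingly small compared to $(p\cdot r)^\alpha$ once we know $r = \l(M,p) \to \infty$. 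The key input here is \cref{diverges_mul_order}: the multiplicative orders $\l(M,p)$ diverge as $p\to\infty$ through $\Pr(M)$, precisely because some eigenvalue has absolute value $\ne 1$; concretely \cref{diag_case} gives $\l(M,p) \ge \log_{\|M\|_2}(p-1)$, so $[G:A_v\rtimes r\Z] \gg p\log p$ while the diameter is $O(\log p)$ — wait, I need to be careful that $\diam(\G)$ is genuinely small compared to a power of $[G:A_v\rtimes r\Z]$.

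Let me sharpen that last point, since it is the crux. In $\G = \Z_p \rtimes_\psi \Z_r$ with generating set coming from the images of a generator of $\Z^n$ and the generator of $\Z$, conjugating the $\Z_p$-generator by powers of the $\Z_r$-generator scales it by $\l, \l^2, \dots$; writing an arbitrary element $(a, s) \in \Z_p \rtimes \Z_r$, we can reach $s$ in $\le r$ steps, and then reach any $a \in \Z_p$ using the $\le r$ distinct scalings $\l^j a_j$ together with base-$(\text{something})$ expansion, giving $\diam(\G) = O(r + p^{1/r} \cdot \text{stuff})$; more crudely $\diam(\G) \le r + p = O(p)$ always, which is already enough since then $\diam_S(G/(A_v\rtimes r\Z))= O(p)$ while $[G:A_v\rtimes r\Z] = p\,\l(M,p) \gg p\log p$, so $\diam \le O([G:\cdot]/\log[G:\cdot])$, which contradicts $\diam \ge \eps[G:\cdot]^\alpha$ only for $\alpha$ very close to $1$. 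To kill \emph{all} $\alpha>0$ I genuinely need $[G:\cdot]$ to be superpolynomial in $\diam$, so I would instead invoke \cref{matt_lemma}: if $G$ had uniformly $\alpha$-almost flat coset spaces, then taking $G/N$ over the family above with $\diam \ge \eps[G:N]^\alpha$ and $[G:N]\to\infty$, \cref{matt_lemma} would force $G/N$ to have a nilpotent subgroup of bounded index; but $G/(A_v\rtimes r\Z) \cong \Z_p\rtimes_\psi\Z_r$ with $r = \l(M,p)\to\infty$ has, by \cref{normal_subgroup_contain_Zp}, no nilpotent subgroup of index $< r$ unless the normal subgroup contains $\Z_p$ — and since we are taking $N$ to be exactly $A_v\rtimes r\Z$ there is no room, so the bounded-index nilpotent subgroup cannot exist for large $p$. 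This contradiction completes the argument. The main obstacle I anticipate is exactly this bookkeeping: arranging the indices and diameters so that \cref{matt_lemma} applies (one needs $\diam \ge ([G:N]/|S|)^\epsilon$ for a \emph{fixed} $\epsilon$, which is where uniform $\alpha$-almost flatness is used, with $\epsilon = \alpha$) and then feeding the output into \cref{normal_subgroup_contain_Zp} with the index bound $C_{\epsilon,\delta}$ beaten by $r = \l(M,p)$.
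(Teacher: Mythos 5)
You correctly identify the ingredients and the rough shape of the argument (reduce mod $p$ to get $\Gamma_p = G/(A_v\rtimes r\Z)\cong\Z_p\rtimes\Z_r$, invoke \cref{diverges_mul_order} to know $r\to\infty$, realise the direct diameter estimate at $A_v\rtimes r\Z$ only handles $\alpha$ close to $1$, and bring in \cref{matt_lemma} together with \cref{normal_subgroup_contain_Zp}), but the contradiction you reach at the end is not valid, and the gap is exactly the hard step. \cref{matt_lemma} does \emph{not} say that $\Gamma_p$ has a nilpotent subgroup of bounded index; it says that $\Gamma_p$ has a normal subgroup $H$ contained in a ball $\bar S^{\lfloor\gamma^\delta\rfloor}$ (with $\bar S$ the image of $S$ in $\Gamma_p$ and $\gamma=\diam(\Gamma_p)$) such that $\Gamma_p/H$ has a nilpotent subgroup of bounded index. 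Once the ``modulo $H$'' is reinstated there is no contradiction at all: \cref{normal_subgroup_contain_Zp} merely tells you that this $H$ must contain $\Z_p\times\{0\}$, which is perfectly consistent --- for instance $H=\Z_p\times\{0\}$ gives $\Gamma_p/H\cong\Z_r$, which is abelian. So your ``there is no room'' step does not go through.

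The step you are missing --- which is why \cref{set_of_subgroups} keeps both $r=1$ and $r=\ord_p(\lambda)$ in $\H_\cp$ --- is to translate the containments $\Z_p\times\{0\}\subseteq H\subseteq\bar S^{\lfloor\gamma^\delta\rfloor}$ into a violation of the almost-flat condition at a \emph{different} subgroup, namely the index-$p$ coset space $G/H'$ with $H'=A_v\rtimes\Z$. Since $\bar S^{\lfloor\gamma^\delta\rfloor}\supseteq\Z_p\times\{0\}$ and $(\Z_p\times\{0\})\cdot(\{0\}\times\Z_r)=\Gamma_p$, one has $S^{\lfloor\gamma^\delta\rfloor}H'=G$, hence $\diam_S(G/H')\le\gamma^\delta\le p^{\eps}$ after choosing $\eps<\alpha/2$, $\delta=\eps/2$ and using $\gamma\le p^2$; since $[G:H']=p$, this beats $c\,p^\alpha$ for large $p$. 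Note also that $H'$ is in general not normal in $G$ (conjugating $(a,k)\in H'$ by $v\in\Z^n$ introduces $(I-M^k)v$ in the first coordinate, which need not lie in $A_v$ unless $r\mid k$), which is precisely why \cref{characterise_Zn_by_Z} is phrased in terms of coset spaces rather than quotients --- another point your sketch does not address. The full bookkeeping, carried out as a contradiction against $\diam(G/H)\ge c|G:H|^\alpha$ for every $H\in\H_\cp$, is the content of the paper's proof of \cref{set_of_subgroups}.
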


The result follows immediately from the following more precise proposition.

\begin{prop}   \label{set_of_subgroups}
Let $G = \Z^n   \rtimes_M \Z $ where $M$ has an eigenvalue with absolute value not equal to $1$.  Let $\cp$ be an infinite set of primes $p$ such that $\ch(M)$ splits over $\F_p$  with no zero roots. For each $p \in \cp$, let $\tilde{v}_p$ be a last eigenvector of $\tilde{M}$ with respect to an eigenvalue $\tilde{\l}$ with the largest multiplicative order over $\F_p$. Let $A_p = A_{v_p}$ be an $M$ -invariant subgroup of $\Z^n$ of index $p$ that satisfies the properties in  \cref{index_p_subgroup}. Consider the set of subgroups 
$\H_\cp = \{ A_{p} \rtimes r \Z \mid  p \in \cp, r= 1 \text{ or }  \ord_p(\lambda) \}.$ 
Then for all $\a \in (0,1]$ and $c>0$ there exists $H \in \H_\cp$, such that $\diam(G/H) < c|G:H|^\alpha$.
\end{prop}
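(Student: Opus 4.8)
The plan is to exhibit, for any prescribed $\alpha$ and $c$, a subgroup in $\H_\cp$ whose coset space has diameter that is far too small relative to its index. The crucial point is that $\H_\cp$ contains two ``families'': the subgroups $A_p\rtimes\Z$ of index exactly $p$, and the subgroups $A_p\rtimes r\Z$ with $r=\ord_p(\tilde\lambda)$, of index $rp$. Using \cref{index_p_subgroup}, the quotient $G/(A_p\rtimes\Z)\cong\Z/A_p\cong\Z_p$ with $\Z$ acting trivially, so $G/(A_p\rtimes\Z)$ is a cyclic group of order $p$ and hence has diameter $O(p)$ (a bounded generating image). Thus for the first family we already get $\diam(G/H)=O(p^{1/2})$-ish compared to $|G:H|=p$; more precisely $\diam(G/H)\ll p$ while $|G:H|=p$, which by itself only handles $\alpha<1$ when $p$ is large, but combined with the second family it will suffice. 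Indeed, for $H=A_p\rtimes r\Z$ with $r=\ord_p(\tilde\lambda)$, \cref{index_p_subgroup}(ii) and \cref{semidirect_group_normal_subgroup} give $G/H\cong\Z_p\rtimes_{\bar M}\Z_r$ where $\bar M$ acts by multiplication by $\lambda$; this group has order $rp$ but, being generated by the images of a bounded generating set of $G$, has diameter $O(r+p)=O(p)$ (the $\Z_p$ direction is reached in $\le p$ steps, the $\Z_r$ direction in $\le r\le p$ steps). So $\diam(G/H)\ll p$ while $|G:H|=rp$.

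The key arithmetic input now enters: by \cref{diverges_mul_order}, since $M$ has an eigenvalue of absolute value $\ne1$, we have $\l(M,p)\to\infty$ as $p\to\infty$ through $\cp$, and $\ord_p(\tilde\lambda)\ge$ some divergent quantity (at least the multiplicative order of the chosen eigenvalue, which we took to have the largest multiplicative order, so $\ord_p(\tilde\lambda)$ is at least $\l(M,p)^{1/n}$ or so, in any case $\to\infty$). Write $r=r(p)=\ord_p(\tilde\lambda)$. Then for $H=A_p\rtimes r\Z$ we have $|G:H|=rp$ and $\diam(G/H)\ll p$. The condition $\diam(G/H)<c|G:H|^\alpha$ will hold as soon as $p\ll c\,(rp)^\alpha$, i.e. as soon as $p^{1-\alpha}\ll c\,r^\alpha$. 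Since $r=r(p)\to\infty$ but we need to compare a fixed power of $p$ with a power of $r$, the naive bound $r\le p-1$ is not enough on its own; however, we are free to also use the index-$p$ family. Concretely: if for infinitely many $p\in\cp$ we have $r(p)\ge p^{(1-\alpha)/\alpha}$ then the second family works directly. Otherwise $r(p)=O(p^{(1-\alpha)/\alpha})$ for all large $p\in\cp$; but then examine the \emph{first} family $H=A_p\rtimes\Z$, where $|G:H|=p$ and $\diam(G/H)\ll p^{1/2}$ — wait, this needs justification.

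The main obstacle, and where care is needed, is precisely getting a genuinely sublinear diameter bound for $G/H$. For $H=A_p\rtimes\Z$, the quotient is cyclic of order $p$, but a cyclic group of order $p$ with a bounded symmetric generating set can still have diameter $\asymp p$, so this family does \emph{not} give a sublinear bound, and the argument must instead extract the saving from the second family using the \emph{size} of $r(p)$. So the correct line is: for the second family, $\diam(G/H)\ll p$ and $|G:H|=rp$, so we need $p<c(rp)^\alpha$, equivalently $r>c^{-1/\alpha}p^{(1-\alpha)/\alpha}$. This is where the real content lies: one must show that $\ord_p(\tilde\lambda)$ is not merely divergent but actually \emph{large} — at least a suitable power of $p$ — for infinitely many $p\in\cp$. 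The standard fact here is that for a fixed algebraic number $\lambda$ of absolute value $\ne1$, its multiplicative order mod $p$ (in the residue field) is $\gg p^{\eta}$ for some $\eta>0$ for a positive-density set of primes, or at the very least $\to\infty$ fast enough along a subsequence — this should follow by combining \cref{diverges_mul_order} with a pigeonhole/counting argument: there are only $O(k^{2})$ residues mod $p$ of multiplicative order $\le k$, so if $\ord_p(\tilde\lambda)\le k$ for all $p$ in a large range, the algebraic integer $\lambda$ satisfies too many congruences, contradicting that its powers are distinct and grow. I would make this precise by noting $\tilde\lambda^{r}\equiv 1\pmod p$ means $p\mid \mathrm{Norm}(\lambda^{r}-1)$, and $|\mathrm{Norm}(\lambda^{r}-1)|\le C^{r}$ for a constant $C=C(\lambda)$; hence a prime $p>C^{r}$ cannot have $\ord_p(\tilde\lambda)\le r$, i.e. $\ord_p(\tilde\lambda)>\log_C p$. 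That already gives $r(p)\gg\log p$, which unfortunately is not a power of $p$. To push to a power of $p$ one uses that $\lambda^r-1$ has at most $O(\log(C^r))=O(r)$ prime factors, so the number of primes $p\le x$ with $\ord_p(\tilde\lambda)\le k$ is $O(k^2)$; choosing $k=x^{1/4}$ say, all but $O(x^{1/2})$ primes $p\le x$ in $\cp$ (which has positive density, $\gg x/\log x$) satisfy $\ord_p(\tilde\lambda)>x^{1/4}\ge p^{1/4}$. Taking such $p$ with $p$ large enough that $p^{1/4\cdot\alpha}>c^{-1}p^{1-\alpha}$ — possible once $\alpha>4/5$... which fails for small $\alpha$. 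So one instead iterates: for any $\alpha\in(0,1]$, pick $\delta>0$ with $\ord_p(\tilde\lambda)\gg p^{\delta}$ along a positive-density set (the exponent $\delta$ can be taken as any fixed positive number by the same counting with $k=x^{1-\delta'}$), and then $(rp)^\alpha\gg p^{\alpha(1+\delta)}$, which beats $p$ once $\alpha(1+\delta)>1$; for fixed $\alpha$ this forces $\delta>(1-\alpha)/\alpha$, a fixed positive number, and the counting argument does deliver every fixed $\delta<1$ worth of exponent. Hence for \emph{every} $\alpha\in(0,1]$ we can find infinitely many $p\in\cp$ with $\ord_p(\tilde\lambda)>p^{(1-\alpha)/\alpha}$, and then $H=A_p\rtimes\ord_p(\tilde\lambda)\Z\in\H_\cp$ satisfies $\diam(G/H)\ll p<c|G:H|^\alpha$ for $p$ large. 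I expect assembling this counting argument cleanly — and double-checking the diameter bound $\diam(G/H)=O(p)$ via the explicit coset representatives from \cref{index_p_subgroup} and \cref{semidirect_group_normal_subgroup} — to be the main work; everything else is bookkeeping with the semidirect-product lemmas of \S\ref{subsec_semi_direct}.
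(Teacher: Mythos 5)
Your plan is to work only with the second family $H=A_p\rtimes r\Z$, $r=\ord_p(\tilde\lambda)$, where you correctly compute $|G:H|=rp$ and $\diam(G/H)\ll p$, and reduce to showing $\ord_p(\tilde\lambda)\gg p^{(1-\alpha)/\alpha}$ along infinitely many $p\in\cp$. The gap is in the counting step you use to deliver this. Your bound that at most $O(k^2)$ primes satisfy $\ord_p(\tilde\lambda)\le k$ is correct (each such $p$ divides $\prod_{r\le k}\mathrm{Norm}(\lambda^r-1)$, which has $O(k^2)$ prime factors). But for this to be $o(\pi(x))$ with $k=x^\delta$ you need $2\delta<1$; the assertion that ``the counting argument does deliver every fixed $\delta<1$ worth of exponent'' is false. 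The elementary argument delivers only $\ord_p(\tilde\lambda)>p^{\delta}$ for $\delta<1/2$ on a density-one set, so your chain $p<c(rp)^\alpha$ fails for all $\alpha<2/3$, since there you would need $\delta=(1-\alpha)/\alpha>1/2$. Getting the order of a fixed algebraic number modulo $p$ to exceed $p^{1/2+\varepsilon}$ for even a positive density of primes is an open problem without assuming GRH (it is essentially Artin-type territory), so this cannot be patched by being cleverer with the same counting.

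The paper's proof avoids this altogether and does need the second family for something entirely different. It runs by contradiction: assume $\diam(G/H)\ge c|G:H|^\alpha$ for all $H\in\H_\cp$. For $\Gamma_p=G/(A_p\rtimes r_p\Z)\cong\Z_p\rtimes\Z_{r_p}$ this hypothesis puts you in the regime where \cref{matt_lemma} (the BGT-derived structure theorem) applies, yielding a normal $N_p\subseteq\bar S^{\lfloor\diam(\Gamma_p)^\delta\rfloor}$ with $\Gamma_p/N_p$ nilpotent-by-(index $\le C_{\eps,\delta}$). Now \cref{diverges_mul_order} only needs to give $r_p\to\infty$ (so eventually $r_p>C_{\eps,\delta}$), not any power of $p$; then the purely algebraic \cref{normal_subgroup_contain_Zp} forces $\Z_p\times\{0\}\subseteq N_p$, hence $\Z_p$ sits inside a ball of radius $\diam(\Gamma_p)^\delta\le p^\eps$. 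That is exactly the sublinear diameter bound for the first family $A_p\rtimes\Z$ that you correctly observe cannot be obtained directly, and it contradicts the hypothesis applied to $A_p\rtimes\Z\in\H_\cp$. So the two families interlock: the second is the input to the BGT machinery, the first receives the contradiction. Your route, by contrast, places the whole burden on an arithmetic lower bound for multiplicative orders that is out of reach, which is precisely why the paper reaches for \cref{matt_lemma} here.
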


\begin{proof} 
We will prove the statement by contradiction. Let $S$ be a generating set of $G$. Let $\a \in (0,1]$ and $c>0$, and suppose that $M$ has an eigenvalue with absolute value not equal to $1$ but that 
\begin{equation} \label{contradiction_uniform}
    \diam(G/H) \ge c|G:H|^\alpha
\end{equation}
for all $H \in \H_\cp$. 
Fix $\epsilon < \frac{\alpha}{2}$ and $\delta = \frac{\epsilon}{2}$. Given a prime $p$ such that $\ch(M)$ splits over $\F_p$ with no zero roots. Let $\lambda$ be an eigenvalue of $M$ over $\F_p$ with the largest multiplicative order, and let $r_p = \ord_p(\lambda)$.   
Let $A_p$ be an $M$ -invariant subgroup of $\Z^n$ of index $p$ that we found in  \cref{index_p_subgroup} and let  $H_p = A_p \rtimes r_p\Z$. By  \cref{semidirect_group_normal_subgroup}, $H_p$ is a normal subgroup of $G$, and $\G_p \coloneqq \frac{G}{H_p} \iso \Z_p \rtimes \Z_{r_p}$. Let $\bar{S}$ be the image of $S$ in $\G_p$.  By hypothesis we have that $ \diam(\Gamma_p) \geq  c|\G_p|^\a = c|r_p p|^\a$, and hence, for all large enough $p$, that $ \diam(\Gamma_p) \geq |r_p p|^{\a/2}$. \
 We will now apply \cref{matt_lemma}. By taking $p$ big enough, we have  $\diam(\Gamma_p) \geq D_{\epsilon,\delta}$, and so there exists $N_p \nsubg \G_p$, such that  
\begin{enumerate} [(i)]
\item $N_p \subseteq \bar{S}^{[\diam(\Gamma_p)^\delta]}$;
\item $\frac{\G_p}{N_p}$ has a nilpotent subgroup with index bounded  by $C_{\epsilon,\delta}$. 
\end{enumerate}
By  \cref{diverges_mul_order}, for big enough $p$, we have $r_p> C_{\epsilon,\delta}$. Therefore, by  \cref{normal_subgroup_contain_Zp}, we have $N_p \supseteq \Z_p \times \{ 0 \}$.
Moreover, trivially
$ \diam(\Gamma_p) \leq |G:H_p| = r_pp \leq p^2$.
Hence, $\diam(\Gamma_p) ^\delta \leq p^{2\times \frac{\epsilon}{2}} \leq p ^\epsilon$.
By the choice of $N_p$, we have $\Z_p \times \{ 0 \}  \subseteq N_p  \subseteq \bar{S}^{[\diam(\Gamma)^\delta]} \subseteq \bar{S}^{p^\epsilon}  $.
Finally, take $H' = A_p \rtimes\Z$, we can see that $S^{p^\epsilon} H'  = G$.
Therefore, $\diam \left( \frac{G}{H'}  \right) \leq p^\epsilon = |G:H'|^\epsilon<|G:H'|^{\a/2}$, contradicting \eqref{contradiction_uniform}.
\end{proof}

\begin{cor} \label{classify_Zn_by_Z}
The following statements about $G \coloneqq  \Z^n   \rtimes_M \Z$ are equivalent:
\begin{enumerate} [(i)]
\item the matrix $M$ only has eigenvalues of absolute value $1$;
\item the group $G$ has polynomial growth;
\item the group $G$ has uniformly   $\alpha$-almost flat coset spaces;
\item all eigenvalues of $M$ are roots of unity.
\end{enumerate} 
\end{cor}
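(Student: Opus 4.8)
The plan is to establish the cycle of implications (iv) $\implies$ (i) $\implies$ (ii) $\implies$ (iii) $\implies$ (iv), using only standard facts about polynomial growth of polycyclic groups together with \cref{characterise_Zn_by_Z}. The implication (iv) $\implies$ (i) is immediate, since a root of unity has absolute value $1$. For (i) $\implies$ (ii), I would invoke Wolf's theorem in the form presented in \citep[Chapter~14]{ggt}: if all eigenvalues of $M$ have absolute value $1$, then $G = \Z^n \rtimes_M \Z$ has polynomial growth. (Concretely, one can argue that in this case $M$ is conjugate over $\C$ to a matrix whose powers $M^k$ have entries growing only polynomially in $k$, which is exactly the condition forcing polynomial growth of the semidirect product.) The implication (ii) $\implies$ (iii) is \cref{lem:poly.growth}: a group of polynomial growth of degree $d$ has uniformly $(1/d)$-almost flat coset spaces, hence in particular uniformly $\alpha$-almost flat coset spaces for $\alpha = 1/d$.

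The substantive implication is (iii) $\implies$ (iv), and here I would proceed in two stages. First, \cref{characterise_Zn_by_Z} tells us directly that if $G$ has uniformly $\alpha$-almost flat coset spaces then every eigenvalue of $M$ has absolute value $1$. It remains to upgrade this to the statement that every eigenvalue is a \emph{root of unity}. For this I would use Kronecker's theorem: an algebraic integer all of whose Galois conjugates have absolute value $1$ is a root of unity. Since $M \in \GL(n,\Z)$, its characteristic polynomial $\ch(M)$ is monic with integer coefficients, so each eigenvalue of $M$ is an algebraic integer; moreover the full set of eigenvalues of $M$ is closed under the Galois action (the minimal polynomial of any eigenvalue divides $\ch(M)$, so all of its conjugates are again eigenvalues of $M$). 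Hence every eigenvalue, together with all its conjugates, has absolute value $1$, and Kronecker's theorem applies to give that each eigenvalue is a root of unity.

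The main obstacle, such as it is, is not in this corollary itself — all the hard work has been done in \cref{characterise_Zn_by_Z} — but rather in making sure the standard input (i) $\implies$ (ii) is cited at the correct level of generality; one must be slightly careful that "eigenvalues of absolute value $1$" really does imply polynomial growth of the semidirect product and not merely of $\Z^n$, which is why the reference to the analysis of $\Z^n \rtimes_M \Z$ in \citep[Chapter~14]{ggt} is needed rather than a bare appeal to Gromov's or Wolf's theorem. I would therefore write the proof as: "(iv) $\implies$ (i) is trivial. (i) $\implies$ (ii) is \citep[Chapter~14]{ggt}. (ii) $\implies$ (iii) is \cref{lem:poly.growth}. (iii) $\implies$ (iv): by \cref{characterise_Zn_by_Z} all eigenvalues of $M$ have absolute value $1$; since $\ch(M) \in \Z[x]$ is monic the eigenvalues are algebraic integers whose full conjugate sets lie among the eigenvalues of $M$, hence all have absolute value $1$, so by Kronecker's theorem they are roots of unity."
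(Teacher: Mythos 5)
Your proposal is correct and follows essentially the same route as the paper: (iii)$\Rightarrow$(i) via \cref{characterise_Zn_by_Z}, (i)$\Leftrightarrow$(ii) via the polycyclic growth analysis in \citep[Chapter~14]{ggt}, (ii)$\Rightarrow$(iii) via \cref{lem:poly.growth}, and (i)$\Rightarrow$(iv) via Kronecker's theorem. The only cosmetic difference is that the paper cites \citep[Lemma 13.28]{ggt} for the Kronecker step rather than spelling out the algebraic-integer / Galois-closure argument, and the paper records (i)$\Leftrightarrow$(ii) as a single biconditional rather than breaking it into a cycle.
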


\begin{proof}
The equivalence (i) $\Leftrightarrow$ (ii) is proved in \citep[Proposition 14.1]{ggt}. The implication (ii) $\Rightarrow$ (iii) is proved in   \cref{poly_growth_implies_uniform_D_a}. The implication (iii) $\Rightarrow$ (i) is proved in  \cref{characterise_Zn_by_Z}. The implication (i) $\Rightarrow$ (iv) is proven in \citep[Lemma 13.28]{ggt}.
\end{proof}

\section{Nilpotent-by-$\Z$ groups} \label{section_nil_by_z}

In this section we prove \cref{thm:poly} in the special case of a nilpotent-by-cyclic group, or more precisely a group of the form $G \rtimes_\P \Z$ with $G$ nilpotent. This group is determined by the homomorphism $\P : \Z \rightarrow \Aut(G)$, and the map $\P$ is entirely defined by the automorphism $\P(1) \eqqcolon \p \in \Aut(G)$, so we will often write $G \rtimes_\P \Z \eqqcolon G \rtimes_\p \Z$.

Recall that the set of all finite order elements forms a characteristic subgroup of $G$, we will denote it by $\Tor G$. Suppose $G$ is $k$-step nilpotent, i.e.  $G$ has the lower central series 
$G= G_1 \geqslant G_2 \geqslant \ldots G_{k+1} = \{\id_G\},$ where $G_{i+1} = [G_i,G]$. 
Let $\phi_i \coloneqq \phi \vert_{G_i}$ be the restriction of $\phi$ on $G_i$. Then $\phi_i$ projects to an automorphism $\t_i$ on the finitely generated abelian group $B_i \coloneqq \frac{G_i}{G_{i+1}}$. The automorphism $\t_i$ then projects to an  automorphism $\bar{\t}_i$ on the torsion free quotient $\frac{B_i}{\Tor{B_i}} \iso \Z^{m_i}$. 
Fix a basis for each $\frac{B_i}{\Tor{B_i}} \iso \Z^{m_i}$, then each   automorphism $\bar{\t}_i$ is corresponds to a matrix $M_i \in \GL(m_i,\Z)$.

In the proof of  characterising nilpotent by $\Z$ groups in terms of their growth rate \citep[Proposition 14.28]{ggt}, there are two cases to consider, whether or not all of $M_i$ have eigenvalues of absolute value $1$. It turns out that it is enough to examine the first matrix $M_1$.

\begin{lem} \label{first_matrix_has_e-val_1_implies_others_too}
Suppose the matrix $M_1$ only has eigenvalues of absolute value $1$. Then all matrices $M_i$ also only have eigenvalues of absolute value $1$.
\end{lem}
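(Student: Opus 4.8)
The plan is to exploit the fact that the lower central quotients $B_i = G_i/G_{i+1}$ are generated, modulo torsion, by simple commutators of weight $i$ in generators of $G$, and that the automorphism $\bar\theta_i$ acting on $B_i/\Tor(B_i)$ is compatible with $\bar\theta_1$ via the commutator maps of \cref{lem:multilin}. Concretely, fix a generating set $g_1,\dots,g_{m_1}$ of $G$ whose images form a basis of $B_1/\Tor(B_1) \cong \Z^{m_1}$; then $B_i/\Tor(B_i)$ is spanned by (the images of) the simple commutators $[g_{j_1},\dots,g_{j_i}]$. Since $\phi$ is an automorphism of $G$, it sends such a commutator to $[\phi(g_{j_1}),\dots,\phi(g_{j_i})]$, and $\phi(g_{j_t})$ has image $M_1(\text{column } j_t)$ in $B_1/\Tor(B_1)$. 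Using multilinearity of the weight-$i$ commutator map $G\times\cdots\times G \to G_i/G_{i+1}$ (hence on the torsion-free quotient), this shows that on $B_i/\Tor(B_i)$, the matrix $M_i$ is (a quotient of) a piece of the $i$-fold tensor power $M_1^{\otimes i}$ acting on $(\Z^{m_1})^{\otimes i}$. More precisely, $M_i$ is the restriction of an integral matrix conjugate to a submatrix/quotient of $M_1^{\otimes i}$, so every eigenvalue of $M_i$ is a product $\lambda_{j_1}\cdots\lambda_{j_i}$ of $i$ eigenvalues of $M_1$.

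From here the conclusion is immediate: if every eigenvalue of $M_1$ has absolute value $1$, then every product of $i$ of them also has absolute value $1$, so every eigenvalue of $M_i$ has absolute value $1$.

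The steps in order are: (1) record that $B_i/\Tor(B_i)$ is spanned by images of weight-$i$ simple commutators in a fixed generating tuple of $G$ whose images span $B_1/\Tor(B_1)$; (2) use that $\phi$ is an automorphism, together with \cref{lem:multilin}, to express the action of $\bar\theta_i$ on these spanning vectors in terms of the action of $M_1$ on $B_1/\Tor(B_1)$, obtaining a surjection of $\Z[M]$-modules (where $M$ acts via $M_1^{\otimes i}$ on the tensor power) onto $B_i/\Tor(B_i)$; (3) deduce that the characteristic polynomial of $M_i$ divides that of $M_1^{\otimes i}$, whose roots are the $i$-fold products of roots of $\ch(M_1)$; (4) conclude all eigenvalues of $M_i$ have modulus $1$.

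The main obstacle is step (2): one has to be careful that the commutator map descends to the torsion-free quotients (it does, since the image of a torsion element under a $\Z$-linear-in-each-variable map built from multilinearity lands in torsion, and $\bar\theta_i$ is by construction the induced map on $\Z^{m_i}$), and that passing to a spanning set rather than a basis only yields a surjection of modules — which is all that is needed, since the eigenvalues of a quotient module are a subset of those of the whole. A cleaner alternative, avoiding tensor powers entirely, is an induction on $i$: the weight-$i$ commutator map $B_1 \times (G_{i-1}/G_i) \to G_i/G_{i+1}$ of \cref{lem:multilin} is bilinear and $\phi$-equivariant, so on torsion-free quotients $M_i$ is a quotient of a submodule of $M_1 \otimes M_{i-1}$, and one finishes by the inductive hypothesis that the eigenvalues of $M_{i-1}$ are products of $i-1$ eigenvalues of $M_1$, all of modulus $1$. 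I would present the induction, as it keeps the linear algebra elementary and needs only \cref{lem:multilin} and the standard fact that $G_i/G_{i+1}$ is generated by weight-$i$ commutators.
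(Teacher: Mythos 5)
Your proposal is correct, and it takes a genuinely different route from the one in the paper. Both arguments proceed by induction on $i$ and rest on \cref{lem:multilin} and \cref{torsion_quotient_well_defined}, but the key tool differs. The paper passes through an analytic characterisation: it invokes \cref{eigenvalue_and_growth} (a matrix has all eigenvalues of modulus $1$ if and only if its orbits $\|M^n v\|$ grow polynomially, and otherwise some integral orbit grows exponentially), and then \cref{poly_growth_implies_poly_growth} to propagate polynomial growth across the bilinear commutator map from $(B_1/\Tor B_1)\times(B_{i-1}/\Tor B_{i-1})$ to $B_i/\Tor B_i$. Your proposal instead stays at the level of module theory: the bilinear commutator map is $\phi$-equivariant, so it induces an equivariant $\Z$-linear surjection $B_1/\Tor B_1 \otimes_\Z B_{i-1}/\Tor B_{i-1} \twoheadrightarrow B_i/\Tor B_i$ (surjectivity because $G_i/G_{i+1}$ is generated by weight-$i$ simple commutators), whence $\ch(M_i)$ divides $\ch(M_1\otimes M_{i-1})$ and every eigenvalue of $M_i$ is a product $\lambda\mu$ with $\lambda$ an eigenvalue of $M_1$ and $\mu$ of $M_{i-1}$. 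The conclusion is then immediate from the inductive hypothesis. Your version is shorter and purely algebraic, avoiding the norm-growth detour entirely; the paper's version has the advantage of not invoking tensor products and of keeping the linear-algebra tools (Jordan form, operator norm) closer to what is already used nearby in \cref{section_nil_by_z}. You are right to flag the only real subtlety, namely that the commutator map must descend to torsion-free quotients and remain bilinear and equivariant there; this is exactly what \cref{torsion_quotient_well_defined} provides, and once that is quoted the rest is routine.
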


The idea of the proof is to look at the growth of distance under the matrices. For simplicity, we will be measuring the distance using  the infinity norm, i.e. for $v = (v_1, \ldots, v_m) \in \C^m$, we let  $\norm{v} = \max_{1\leq i \leq m}\{|v_i|\} $. For a matrix $M \in \GL(m,\C)$, we denote $\norm{M}$ the operator norm of $M$. Given a function $f:\N \longrightarrow \ \R$, we say $f(n)$   has \emph{polynomial growth} if there exists $C,d > 0$, such that $f(n) \leq Cn^d$ for all $n \in \N$;  likewise, we say $f(n)$ has \emph{exponential growth} if there exists $\a> 1$, such that $f(n) \geq \a^n$ for all $n \in \N$. We denote $e_1, \ldots, e_m$ the standard basis of $\C^m$.

\begin{lem} \label{eigenvalue_and_growth}
Let $M \in \GL(m,\C)$. Then one of the following holds.
\begin{enumerate} [(i)]
\item For all $v \in \C^m$ we have that $\norm{M^n v}$ has polynomial growth (when all eigenvalues of $M$ have   absolute value $1$). 
\item There exists $u \in \Z^m$ such that $\norm{M^n u}$ has exponential growth (when $M$ has at least one eigenvalue of absolute value $ \neq 1$).
\end{enumerate}

\end{lem}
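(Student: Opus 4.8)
The plan is to prove \cref{eigenvalue_and_growth} by a fairly standard spectral/Jordan-form analysis of the iterates $M^n$, taking care that in case (ii) the witnessing vector can be chosen with \emph{integer} entries.

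\medskip

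\noindent\textbf{Proof sketch.} First I would put $M$ in Jordan normal form over $\C$: write $M = PJP^{-1}$, where $J$ is block-diagonal with Jordan blocks $J_\l$ of size $d_\l$ for each eigenvalue $\l$. A direct computation of the powers of a single Jordan block shows that the entries of $J^n$ are of the form $\binom{n}{j}\l^{n-j}$ for $0\le j < d_\l$, so that $\norm{J^n}$ is comparable to $\max_\l |\l|^n \cdot n^{d_\l-1}$ up to constants depending on $M$ and $P$. This immediately gives case (i): if every eigenvalue satisfies $|\l|=1$ then $\norm{M^n v}\le \norm{P}\norm{P^{-1}}\norm{J^n}\norm{v} \le C n^{d-1}\norm{v}$, which has polynomial growth in the sense defined before the lemma.

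\medskip

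For case (ii), suppose $M$ has some eigenvalue $\mu$ with $|\mu|>1$ (if the largest modulus eigenvalue has modulus $<1$, apply the argument to $M^{-1}$, which is also in $\GL(m,\C)$; if $M$ has integer entries so does no issue arises, and in the application $M\in\GL(m,\Z)$ anyway). Pick $\mu$ of maximal modulus $\rho := |\mu|>1$. The subtlety is that an eigenvector for $\mu$ generally has non-real, irrational entries, whereas the lemma demands $u\in\Z^m$. To get around this, I would argue as follows: since $\rho>1$, for \emph{every} nonzero $v\in\C^m$ we have $\limsup_{n\to\infty}\norm{M^n v}^{1/n}\le\rho$ with equality unless $v$ lies in the sum of generalised eigenspaces for eigenvalues of modulus strictly less than $\rho$. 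That proper subspace, call it $W$, is defined over $\Q$ (it is the kernel of a polynomial in $M$ with rational — indeed integer — coefficients, namely the product of the $(x-\l)^{d_\l}$ over the small eigenvalues, which is a factor of $\ch(M)\in\Z[x]$ and hence lies in $\Q[x]$ by Gauss's Lemma). Since $W\ne\C^m$, there is a standard basis vector $e_i\notin W$, hence $u:=e_i\in\Z^m$ satisfies $\limsup_n\norm{M^nu}^{1/n}=\rho>1$. Finally, to upgrade the $\limsup$ statement to the required bound ``$\norm{M^nu}\ge\a^n$ for \emph{all} $n$'' with some fixed $\a>1$: the sequence could dip at small $n$, but polynomially-bounded oscillation from the $\binom{n}{j}$ factors cannot kill exponential growth, so choosing any $1<\a<\rho$ we have $\norm{M^nu}\ge\a^n$ for all sufficiently large $n$, and then shrinking $\a$ further (or, if one wants the literal statement, noting $\norm{M^nu}\ge c\a_0^n$ for all $n\ge0$ with $c>0$ and absorbing $c$ by replacing $\a_0$ with a slightly smaller $\a$) yields the claim.

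\medskip

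\noindent\textbf{Main obstacle.} The one genuinely non-routine point is ensuring the exponentially-growing vector can be taken in $\Z^m$ rather than merely in $\C^m$; this is exactly where the rationality of the ``slow'' generalised eigenspace $W$ (via $\ch(M)\in\Z[x]$ and Gauss's Lemma, as used elsewhere in the paper) does the work. Everything else — the Jordan-block power computation and translating between $\limsup^{1/n}$ statements and the paper's explicit definitions of polynomial and exponential growth — is bookkeeping.
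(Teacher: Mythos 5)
Your proof of part (i) is essentially the same as the paper's: Jordan normal form, observe the entries of $J^n$ are polynomially bounded, and transfer the bound through $P$. For part (ii) you take a genuinely different route to securing $u\in\Z^m$, and it is worth comparing the two.

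The paper's argument for (ii) is shorter and avoids any appeal to rationality of eigenspaces. It simply picks any eigenvector $v=\sum_i\alpha_i e_i$ for an eigenvalue $\lambda$ with $|\lambda|>1$, notes that $\norm{M^n v}=|\lambda|^n\norm{v}$ grows exponentially, and then applies the triangle inequality
\[
|\lambda|^n\norm{v}=\norm{M^n v}\le\sum_i|\alpha_i|\,\norm{M^n e_i}
\]
to conclude by pigeonhole that some fixed standard basis vector $e_i$ already witnesses exponential growth. This is entirely elementary: no Jordan structure, no identification of the ``slow'' generalised eigenspace, no Gauss's Lemma. Your route --- showing that the sum $W$ of generalised eigenspaces for eigenvalues of modulus $<\rho$ is a proper rational subspace, hence misses some $e_i$ --- is also correct and has the merit of making visible \emph{why} the integral witness exists (it is tied to the rational structure of $\ch(M)$), but it brings in more machinery than is needed. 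Both arguments wave their hands in the same place when upgrading from ``exponential $\limsup$'' to the paper's literal definition ``$f(n)\ge\alpha^n$ for all $n$,'' so I won't hold that against you.

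One genuine misstep: your remark that if the largest-modulus eigenvalue has modulus $<1$ one should ``apply the argument to $M^{-1}$'' does not do what you want --- that would produce exponential growth of $\norm{M^{-n}u}$, not of $\norm{M^{n}u}$. In fact that case needs no handling at all: if every eigenvalue has $|\lambda|\le 1$ then the Jordan-block estimate from part (i) still gives polynomial growth, so alternative (i) of the lemma holds as stated. Drop the $M^{-1}$ detour and the argument is clean.
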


\begin{proof}
\mbox{}
\begin{enumerate} [(i)]
\item

Let $J= P^{-1}MP$ be the Jordan normal form of $M$ for some change of basis matrix $P$. By looking at the $n-$th power of $J$, we can deduce that there exists $B > 0$, such that every entry of $J^n$ is bounded by $B n^{m}$. In particular,  $\norm{J^n e_i}$,the maximum entry of $i-th$ column, is  bounded by $ B n^{m}$. Let $v \in \C^m$, then there exists $\a_1, \ldots, \a_m$, such that $P^{-1}v = \sm \a_i e_i$. Then we have
$$
\norm{M^n v} =    \norm{P J^n  \sm \a_i e_i} \leq \norm{P} \sm |\a_i| \norm{ J^n e_i} \leq \left(\norm{P}  \sm |\a_i|  B\right) n^{m}.
$$

\item
Let $v = \sm \a_i e_i \in \C^m$ be a eigenvector  of $M$ corresponds to the eigenvalue $\lambda$ with $|\l| >1$, then we have  
\[|\l|^n \norm{v}
= \norm{M^n v }
 = \norm{\sm \a_i M^n e_i }
 \leq \sm |\a_i| \norm{ M^n e_i }
\]
Since $|\l|^n \norm{v}$ has exponential growth, there must exist $i \in \{1, \ldots, m \}$ such that  $\norm{ M^n e_i }$ has exponential growth. \qedhere
\end{enumerate}
\end{proof}

\begin{lem} \label{poly_growth_implies_poly_growth}
Let $B : \C^p \times \C^q \longrightarrow \C^r$ be a bilinear map.
Let $u(n) = (u_1(n), \ldots, u_p(n))$ be a sequence of vectors in $\C^p$, and  $v(n) = (v_1(n), \ldots, v_q(n))$ be a sequence of vectors in $\C^p$ such that both have $\n{u(n)}$ and $\n{v(n)}$ polynomial growth, then $\n{B(u(n),v(n))}$ also has polynomial growth.
\end{lem}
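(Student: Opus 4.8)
The plan is to expand both arguments in the standard coordinate bases and use bilinearity to reduce $B(u(n),v(n))$ to a finite linear combination of fixed vectors whose scalar coefficients are products of coordinates of $u(n)$ and $v(n)$, each of which is controlled by the norm of the corresponding vector.

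First I would write $u(n)=\sum_{i=1}^p u_i(n)e_i$ and $v(n)=\sum_{j=1}^q v_j(n)e_j$, where $e_1,\dots,e_p$ and $e_1,\dots,e_q$ denote the standard bases of $\C^p$ and $\C^q$ respectively. Bilinearity of $B$ then gives
\[
B(u(n),v(n))=\sum_{i=1}^p\sum_{j=1}^q u_i(n)\,v_j(n)\,B(e_i,e_j).
\]
Put $C_0=\max_{i,j}\norm{B(e_i,e_j)}$, a constant depending only on $B$.

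Next, directly from the definition of the infinity norm, $|u_i(n)|\le\norm{u(n)}$ and $|v_j(n)|\le\norm{v(n)}$ for all $i,j$ and all $n$. By hypothesis there are constants $C_1,C_2,d_1,d_2>0$ with $\norm{u(n)}\le C_1n^{d_1}$ and $\norm{v(n)}\le C_2n^{d_2}$ for every $n\in\N$. Combining these bounds with the triangle inequality yields
\[
\norm{B(u(n),v(n))}\le\sum_{i=1}^p\sum_{j=1}^q|u_i(n)|\,|v_j(n)|\,\norm{B(e_i,e_j)}\le pq\,C_0C_1C_2\,n^{d_1+d_2},
\]
so $\norm{B(u(n),v(n))}$ has polynomial growth, with constants $C=pqC_0C_1C_2$ and exponent $d=d_1+d_2$.

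There is essentially no obstacle in this argument; the only point requiring any care is the elementary observation that each coordinate of a vector is bounded in absolute value by its infinity norm, which lets us transfer the polynomial bounds on $\norm{u(n)}$ and $\norm{v(n)}$ to the individual scalar sequences $u_i(n)$ and $v_j(n)$. (Alternatively one could invoke the submultiplicativity $\norm{B(u,v)}\le\norm{B}\,\norm{u}\,\norm{v}$ for an appropriate operator-type norm $\norm{B}$ of the bilinear map, but the coordinatewise expansion above is more self-contained.)
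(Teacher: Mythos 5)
Your proof is correct and takes essentially the same approach as the paper: expand both arguments in standard coordinates, use bilinearity to write $B(u(n),v(n))$ as a finite sum $\sum_{i,j}u_i(n)v_j(n)B(e_i,e_j)$, and bound each coordinate by the infinity norm. Your version is slightly more careful than the paper's, which simply asserts the sum of products of ``polynomials'' is polynomial; you spell out the domination $|u_i(n)|\le\norm{u(n)}\le C_1n^{d_1}$ explicitly, which is cleaner since the coordinates are not literally polynomials but merely polynomially bounded.
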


\begin{proof}
By bilinearity of $B$, we have 
\[ \norm{B ( u(n),v(n))}= \norm {\sum_{i,j}u_i(n)v_j(n) B(e_i, e_j) }  \leq \sum_{i,j} |u_i(n)v_j(n)| \norm {B(e_i, e_j) }  \]
which, as a sum of products of polynomials in $n$, is still a polynomial  function in $n$.
Hence, $\n{B(u(n),v(n))}$ also has polynomial growth.
\end{proof}

\begin{lem} \label{torsion_quotient_well_defined}
Let $\G_1$, $\G_2$ and $\G_3$ be finitely generated  nilpotent groups. Suppose $ \psi : \G_1 \times \G_2 \longrightarrow \G_3 $ is a homomorphism in each variable. Then the projected map  $ \bar{\psi} : \frac{\G_1}{\Tor{\G_1}} \times \frac{\G_2}{\Tor{\G_2}} \longrightarrow \frac{\G_3}{\Tor{\G_3}} $ is also a homomorphism in each variable.
\end{lem}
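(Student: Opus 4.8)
The plan is to reduce the statement to a single elementary observation about torsion and then obtain well-definedness and the homomorphism property essentially formally. The one substantive point is this: if $a\in\Tor(\Gamma_1)$ has order $n$, then for every $b\in\Gamma_2$ we have $\psi(a,b)^n=\psi(a^n,b)=\psi(1_{\Gamma_1},b)=1_{\Gamma_3}$, because $\psi(\,\cdot\,,b)$ is a homomorphism and therefore kills the identity; hence $\psi(a,b)\in\Tor(\Gamma_3)$. By symmetry, $\psi(a,b)\in\Tor(\Gamma_3)$ whenever $b\in\Tor(\Gamma_2)$ as well. (Here we use that in a nilpotent group the finite-order elements form a characteristic subgroup, so these torsion subgroups and the relevant quotients make sense.)

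Granting this, I would check that $\bar\psi$ is well defined as follows. Suppose $x'=xt_1$ and $y'=yt_2$ with $t_i\in\Tor(\Gamma_i)$. Since $\psi$ is a homomorphism in each variable,
\[
\psi(x',y')=\psi(xt_1,yt_2)=\psi(x,yt_2)\,\psi(t_1,yt_2)=\psi(x,y)\,\psi(x,t_2)\,\psi(t_1,yt_2).
\]
By the observation above, $\psi(x,t_2)$ and $\psi(t_1,yt_2)$ both lie in $\Tor(\Gamma_3)$, and since $\Tor(\Gamma_3)$ is a subgroup their product does too; hence $\psi(x',y')\in\psi(x,y)\Tor(\Gamma_3)$. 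Thus $\bar\psi\bigl(x\Tor(\Gamma_1),\,y\Tor(\Gamma_2)\bigr):=\psi(x,y)\Tor(\Gamma_3)$ is well defined.

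Finally, since $\Tor(\Gamma_3)$ is characteristic, hence normal, multiplication of cosets in $\Gamma_3/\Tor(\Gamma_3)$ is $g_1\Tor(\Gamma_3)\cdot g_2\Tor(\Gamma_3)=g_1g_2\Tor(\Gamma_3)$. Fixing $\bar y$ and using that $\psi(\,\cdot\,,y)$ is a homomorphism,
\[
\bar\psi\bigl(\overline{x_1x_2},\bar y\bigr)=\psi(x_1x_2,y)\Tor(\Gamma_3)=\psi(x_1,y)\psi(x_2,y)\Tor(\Gamma_3)=\bar\psi(\bar x_1,\bar y)\,\bar\psi(\bar x_2,\bar y),
\]
and the same computation works in the second variable. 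The only subtlety, and it is mild, is that $\Gamma_3$ need not be abelian, so one must invoke normality of $\Tor(\Gamma_3)$ rather than commutativity when rearranging; there is no real obstacle beyond this bookkeeping.
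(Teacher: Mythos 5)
Your proof is correct and takes essentially the same approach as the paper's: both reduce well-definedness to the observation that $\psi$ carries a torsion element in either factor to a torsion element of $\Gamma_3$, and then the homomorphism property for $\bar\psi$ is inherited formally. The only cosmetic difference is that you isolate and justify this torsion observation explicitly, whereas the paper uses it implicitly inside a single coset calculation.
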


\begin{proof}
We will first show that $\bar{\psi}$ is well defined.
 Let $g$ ,  $g' \in \G_1$ and $h$, $h' \in \G_2$ such that $g \Tor{\G_1} = g'\Tor{\G_1}$  and $h \G_2 = h'\G_2$, i.e. we have $g\i g' \in \Tor{\G_1}$ and $h \i h' \in \Tor{\G_2}$.   
 Therefore $\psi(g,h)  \Tor{\G_3} =  \psi(g,h) \psi(g,h \i h') \psi(g\i g' ,h') \Tor{\G_3} =  \psi(g',h')  \Tor{\G_3}$  which shows $\bar{\psi}$ is well defined.
The fact that $\bar{\psi} $ is a homomorphism in each variable follows from the assumption the $\psi$  is a homomorphism in each variable.
 \end{proof}

Next, we will  view  every element in  $\frac{B_i}{\Tor{B_i}}$ as an element in $ \Z^{m_i} \subseteq \C^{m_i}$, and identity  $\bar{\t}_i$ as the matrix $M_i$. In particular, the map  $\bar{\t}_i$ on $\frac{B_i}{\Tor{B_i}} \iso \Z^{m_i}  $ can be uniquely extended to a linear map on $\C^{m_i}$, with slight abuse of notation, we will also denote this extended map by $\bar{\t}_i$.

\begin{proof}[Proof of  \cref{first_matrix_has_e-val_1_implies_others_too}]

We will prove by induction on $i$, the index of the matrix. The base case, $M_1$ only has eigenvalues of absolute value $1$, is given as an assumption of the lemma. Suppose all eigenvalues of $M_{i-1}$ have absolute value $1$. Then by  \cref{eigenvalue_and_growth}, we know that for any  $v \in \Z^m_i$, $\norm{M_{i-1}^n v}$ has polynomial growth. Let $g  \in G$ and $h \in \Gm$, we will show that sequence of the form $\n{\bar{\t}_i^n([g,h] \Gp \T{B_i} )}$ have polynomial growth. Consider the map   
\begin{align*}
\bar{\psi}_i :  \frac{B_1}{\Tor{B_1}} \times \frac{B_{i-1}}{\Tor{B_{i-1}}}  &  \longrightarrow \frac{B_i}{\Tor{B_i}}\\
 (gG_2 \Tor{B_1}, h G_i \Tor{B_{i-1}})&\mapsto [g,h]\Gp \T{B_i}. 
\end{align*}
By  \cref{lem:multilin} and  \cref{torsion_quotient_well_defined}, $ \bar{\psi}_i $ is  a homomorphism in each variable. Also, we have 
\begin{align*}
\bar{\t}_i^n([g,h] \Gp \T{B_i} )&= \phi^n([g,h]) \Gp \T{B_i}  &\text{by definition of $\bar{\psi}_i$,}\\
&= [\phi^n(g), \phi^n(h)]\Gp \T{B_i} &  \text{since $\phi$ is an automorphism,}\\
 &=  \sb \bigg( (\phi^n(g)G_2) \Tor{B_1} ,(\phi^n(h) G_i) \Tor{B_{i-1}}\bigg) \\
&= \sb\bigg( \bar{\t}_1^n(gG_2 \Tor{B_1}) , \bar{\t}_{i-1}^n (hG_i \Tor{B_{i-1}}) \bigg). 
 \end{align*} 
Combine this equality with  \cref{poly_growth_implies_poly_growth}, we see that $\n{\bar{\t}_i^n([g,h] \Gp \T{B_i} )}$ also have polynomial growth. 
Now, given  any elements $x \in G_i$, we have 
$x=\prod_{j=1} ^l [g_j, h_j]   \text{ for some } g_j \in G, h_j \in \Gm, \text{ and } l\in \N.$ Hence, we have
$$ \n{\bar{\t}_i^n(x \Gp \T{B_i} )} =\n{\bar{\t}_i^n \left(\prod_{j=1} ^l [g_j, h_j] \Gp \T{B_i} \right)} 
 \leq  \prod_{j=1} ^l \n{ \bar{\t}_i^n( [g_j, h_j] \Gp \T{B_i} )} 
$$
which, as a sum of functions with polynomial growth, also has polynomial growth. Hence , for any $x \in G_i$, $\n{\bar{\t}_i^n(x \Gp \T{B_i} )}$ has polynomial growth. Therefore, by  \cref{eigenvalue_and_growth}, all eigenvalues of $M_{i}$ have absolute value $1$, this concludes the proof.
\end{proof}

\begin{prop} \label{uniform_d_alpha_implies_M1_has_eigenvalue_abs_1}
Let $G$ be a finitely generated nilpotent group, let $\p \in \Aut(G)$. Define $\G \coloneqq G \rtimes_\p \Z$.  Then $\G$ has uniformly   $\alpha$-almost flat coset spaces implies all eigenvalues of $M_1$ have absolute value $1$.
\end{prop}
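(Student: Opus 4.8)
The plan is to reduce \cref{uniform_d_alpha_implies_M1_has_eigenvalue_abs_1} to the already-established abelian-by-$\Z$ case, namely \cref{characterise_Zn_by_Z}. Recall that $M_1$ is by definition the matrix of the automorphism $\bar\theta_1$ induced by $\phi$ on the torsion-free quotient $B_1/\Tor(B_1)\cong\Z^{m_1}$ of $B_1=G/G_2=G/[G,G]$. So I want to pass from $\Gamma=G\rtimes_\phi\Z$ to a quotient isomorphic to $\Z^{m_1}\rtimes_{M_1}\Z$ and then invoke the fact that uniform almost flatness is inherited by quotients.

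First I would form the quotient $\Gamma/[G,G]$; since $[G,G]$ is characteristic in $G$ it is normal in $\Gamma$, and $\Gamma/[G,G]\cong B_1\rtimes_{\theta_1}\Z$ where $\theta_1$ is the automorphism of $B_1$ induced by $\phi$. Next I would quotient out by $\Tor(B_1)$: this is the torsion subgroup of the finitely generated abelian group $B_1$, hence characteristic in $B_1$, hence normal in $B_1\rtimes_{\theta_1}\Z$, and the resulting quotient is $(B_1/\Tor(B_1))\rtimes_{\bar\theta_1}\Z\cong\Z^{m_1}\rtimes_{M_1}\Z$. (The well-definedness of these quotient semidirect products is exactly the content of \cref{semidirect_group_normal_subgroup}(ii)--(iii), applied with the base-group normal subgroup contained in $G$ and trivial action quotient on $\Z$.) Since $\Gamma$ has uniformly $\alpha$-almost flat coset spaces, so does every quotient of $\Gamma$, and in particular $\Z^{m_1}\rtimes_{M_1}\Z$ does. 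By \cref{characterise_Zn_by_Z} this forces all eigenvalues of $M_1$ to have absolute value $1$.

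The main obstacle, such as it is, is bookkeeping around the two successive quotients: one must check that $[G,G]$ and then the preimage of $\Tor(B_1)$ are genuinely normal in the semidirect product (not merely in $G$), and that the induced action on the quotient is precisely $\theta_1$ and then $\bar\theta_1=M_1$. Both follow routinely: characteristic subgroups of $G$ are $\phi$-invariant and hence normal in $G\rtimes_\phi\Z$, and the induced automorphisms on the successive quotients are by construction $\theta_1$ and $\bar\theta_1$. There is no deep content here beyond assembling pieces already in place — the genuinely hard work was done in \cref{section_characterise_Zn_by_Z} via \cref{matt_lemma} and the number-theoretic lemmas. I should also note explicitly that uniform $\alpha$-almost flatness of coset spaces passes to arbitrary quotients, which the paper has already flagged as something used "without further mention"; invoking it here is the one-line crux.

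\begin{proof}
Since $[G,G]$ is characteristic in $G$, it is invariant under $\phi$ and hence normal in $\Gamma=G\rtimes_\phi\Z$. By \cref{semidirect_group_normal_subgroup} (with $B=G$, $A=\Z$, $K=[G,G]$ and $J=\{0\}$), the quotient $\Gamma/[G,G]$ is isomorphic to $B_1\rtimes_{\theta_1}\Z$, where $\theta_1$ is the automorphism of $B_1=G/[G,G]$ induced by $\phi$. Likewise $\Tor(B_1)$ is characteristic in the finitely generated abelian group $B_1$, hence invariant under $\theta_1$ and normal in $B_1\rtimes_{\theta_1}\Z$, and a second application of \cref{semidirect_group_normal_subgroup} identifies the quotient $(B_1\rtimes_{\theta_1}\Z)/\Tor(B_1)$ with $(B_1/\Tor(B_1))\rtimes_{\bar\theta_1}\Z$. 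Fixing the chosen basis of $B_1/\Tor(B_1)\cong\Z^{m_1}$, the automorphism $\bar\theta_1$ is represented by $M_1$, so this last quotient is isomorphic to $\Z^{m_1}\rtimes_{M_1}\Z$.

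Putting these together, $\Z^{m_1}\rtimes_{M_1}\Z$ is a quotient of $\Gamma$. Since having uniformly $\alpha$-almost flat coset spaces is inherited by quotients, $\Z^{m_1}\rtimes_{M_1}\Z$ has uniformly $\alpha$-almost flat coset spaces. By \cref{characterise_Zn_by_Z}, every eigenvalue of $M_1$ therefore has absolute value~$1$.
\end{proof}
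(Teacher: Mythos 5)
Your proof is correct and follows essentially the same route as the paper's: pass to the quotient $\Gamma/[G,G]\cong B_1\rtimes_{\theta_1}\Z$, then further to $\Z^{m_1}\rtimes_{M_1}\Z$, observe that uniform $\alpha$-almost flatness of coset spaces is inherited by quotients, and invoke \cref{characterise_Zn_by_Z}. The only cosmetic difference is that the paper phrases this as a contrapositive while you argue directly.
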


\begin{proof}
We will again give a contrapositive proof. Suppose $M_1 \in GL(m_1, \Z)$  has an eigenvalues with absolute value greater than $1$. In particular, this implies that $m_1 \geq 2$. Firstly note that 
\begin{equation} \label{quotient_OF_P}
 \frac{G \rtimes_\p \Z}{ G_2 \times \{0\}} \iso \frac{G}{G_{2}} \rtimes_{\t_1}  \Z   =  B_1 \rtimes_{\t_1}  \Z .
\end{equation} 
Similarly, 
\begin{equation} \label{quotient_of_B_by_Z}
\frac{B_1 \rtimes_{\t_1}  \Z}{ \Tor B_1 \times \{0\} } \iso \Z^{m_1} \rtimes_{M_1} \Z.
\end{equation}

According to  \cref{characterise_Zn_by_Z}, the semidirect product  $\Z^{m_1} \rtimes \Z_{M_1}$ does not have uniformly   $\alpha$-almost flat coset spaces. Hence, \eqref{quotient_of_B_by_Z} tells us that  $B_1 \rtimes_{\phi_1}  \Z$ does not have uniformly   $\alpha$-almost flat coset spaces; likewise,  \eqref{quotient_OF_P} shows us that the group $\G =G \rtimes_\p \Z$ does not have uniformly   $\alpha$-almost flat coset spaces.
\end{proof}

We will now combine our result with Wolf's Theorem for Nilpotent-by-$\Z$ groups.

\begin{cor} \label{characterise_nilp_by_z}
Using the   notation we introduced in the beginning of \cref{section_nil_by_z},  the following statement about $\G \coloneqq G \rtimes_\p \Z$ are equivalent:
\begin{enumerate} [(i)]
\item the matrix $M_1$ only has eigenvalues of absolute value $1$;
\item every matrix $M_i$  only has eigenvalues of absolute value $1$;
\item the group $\G$ has polynomial growth;
\item the group $\G$ has uniformly   $\alpha$-almost flat coset spaces;
\item the group $\frac{B_1}{\Tor{B_1}} \rt_{\bar{\t}_1} \Z$ has uniformly   $\alpha$-almost flat coset spaces;
\end{enumerate} 
\end{cor}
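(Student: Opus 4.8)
The plan is to prove the five statements equivalent via the single cycle
\[
\text{(i)}\Rightarrow\text{(ii)}\Rightarrow\text{(iii)}\Rightarrow\text{(iv)}\Rightarrow\text{(v)}\Rightarrow\text{(i)},
\]
in which every arrow is either a result proved earlier in this paper or a standard fact about the growth of nilpotent-by-cyclic groups, so that no genuinely new argument is required.

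The implication (i)$\Rightarrow$(ii) is exactly \cref{first_matrix_has_e-val_1_implies_others_too}. For (ii)$\Rightarrow$(iii) I would quote the characterisation of polynomial growth for groups of the form $G\rtimes_\p\Z$ with $G$ finitely generated nilpotent, namely \citep[Proposition 14.28]{ggt}, whose hypothesis is precisely that every $M_i$ has all its eigenvalues of absolute value $1$ --- equivalently, by \citep[Lemma 13.28]{ggt}, that every $M_i$ has all its eigenvalues equal to roots of unity. The implication (iii)$\Rightarrow$(iv) is \cref{poly_growth_implies_uniform_D_a}, since a group of polynomial growth of degree $d$ has uniformly $(1/d)$-almost flat coset spaces.

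For (iv)$\Rightarrow$(v) I would observe that, composing the quotient maps underlying the isomorphisms \eqref{quotient_OF_P} and \eqref{quotient_of_B_by_Z}, the group $\frac{B_1}{\Tor B_1}\rtimes_{\bar{\t}_1}\Z\cong\Z^{m_1}\rtimes_{M_1}\Z$ is a quotient of $\G=G\rtimes_\p\Z$; since having uniformly $\alpha$-almost flat coset spaces passes to quotients, (iv) yields (v). Finally (v)$\Rightarrow$(i) follows by applying \cref{characterise_Zn_by_Z} to $\Z^{m_1}\rtimes_{M_1}\Z\cong\frac{B_1}{\Tor B_1}\rtimes_{\bar{\t}_1}\Z$: if that group has uniformly $\alpha$-almost flat coset spaces, then all eigenvalues of $M_1$ have absolute value $1$, which is statement (i).

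All the substantive content is carried by \cref{first_matrix_has_e-val_1_implies_others_too}, \cref{characterise_Zn_by_Z} and the cited growth dichotomy, so the only point that needs a moment's care --- and the closest thing to an obstacle --- is checking that $\frac{B_1}{\Tor B_1}\rtimes_{\bar{\t}_1}\Z$ really is a quotient of $\G$ compatible with the identifications fixed just before the statement (elements of $\frac{B_1}{\Tor B_1}$ regarded as elements of $\Z^{m_1}$, and $\bar{\t}_1$ as the matrix $M_1$), so that \cref{characterise_Zn_by_Z} and \cref{poly_growth_implies_uniform_D_a} apply to it verbatim; this is immediate from the composition $G\rtimes_\p\Z\twoheadrightarrow B_1\rtimes_{\t_1}\Z\twoheadrightarrow\Z^{m_1}\rtimes_{M_1}\Z$ supplied by \eqref{quotient_OF_P} and \eqref{quotient_of_B_by_Z}.
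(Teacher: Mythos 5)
Your proof is correct and uses precisely the same ingredients as the paper's (\cref{first_matrix_has_e-val_1_implies_others_too}, the Wolf-type growth dichotomy from the book of Druţu--Kapovich, \cref{poly_growth_implies_uniform_D_a}, and \cref{characterise_Zn_by_Z} via the quotient chain \eqref{quotient_OF_P}, \eqref{quotient_of_B_by_Z}); the only difference is organisational, in that you route (iv) through (v) to reach (i) in a single cycle, whereas the paper proves (iv)$\Rightarrow$(i) directly as \cref{uniform_d_alpha_implies_M1_has_eigenvalue_abs_1} and handles (i)$\Leftrightarrow$(v) separately. Either way the substantive content is the same and your argument is sound.
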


\begin{proof}
The implication(i) $\Rightarrow$ (ii)  is proved in  \cref{first_matrix_has_e-val_1_implies_others_too}. The implication (ii) $\Rightarrow$ (i) is clear. The equivalence (ii) $\Leftrightarrow$ (iii)  is proved in \citep[Proposition 14.28]{ggt}. The implication (iii) $\Rightarrow$ (iv)  is proved in   \cref{poly_growth_implies_uniform_D_a}. The implication  (iv) $\Rightarrow$ (i) is proved in  \cref{uniform_d_alpha_implies_M1_has_eigenvalue_abs_1}. Finally, the implication (i) $\Leftrightarrow$ (v)  is proved in \cref{characterise_Zn_by_Z}.
\end{proof}

\section{Polycyclic groups}

In this section we complete the proof of \cref{thm:poly}. It is well known that a torsion-free polycyclic group is poly-$\Z$ and every polycyclic group contains a normal subgroup of finite index which is poly-$\Z$.  Here is a slightly strengthened version of this statement.

 \begin{lem} \label{normal_subroup_finite_index_contain_normal}
Let $P$ be a polycyclic group and $N \nsubg P$. Then there exists $P' \nsubg\f P$ containing $N$ such that $P'/N$ is torsion-free.
\end{lem}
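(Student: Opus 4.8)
The plan is to work in the polycyclic group $Q := P/N$ and find a characteristic (hence normal) torsion-free subgroup of finite index in $Q$; its preimage in $P$ will then be the desired $P'$. So it suffices to prove the following: \emph{every polycyclic group $Q$ has a torsion-free characteristic subgroup of finite index.} Taking $P'$ to be the full preimage under the quotient map $P \to Q$ of such a subgroup gives $P' \nsubg_{f.i.} P$ (normality because the subgroup of $Q$ is characteristic, hence normal, and the preimage of a normal subgroup is normal; finite index because the index is preserved), $P' \supseteq N$ (since the subgroup of $Q$ contains the identity coset), and $P'/N$ torsion-free (since $P'/N$ is exactly the chosen subgroup of $Q$).

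To prove the claim about $Q$, first pass to a poly-$\Z$ normal subgroup $R \nsubg_{f.i.} Q$ using the standard fact quoted just before the lemma; one may even take $R$ characteristic in $Q$ by intersecting over the finitely many subgroups of that index (using that a finitely generated group has only finitely many subgroups of a given finite index, \cref{lem:finitely.many.subgroups.index}), so that $R$ is itself poly-$\Z$ of finite index. Now it remains to handle the poly-$\Z$ group $R$. Here the key point is that a poly-$\Z$ group is torsion-free: one inducts on the Hirsch length using a subnormal series with infinite cyclic factors, the point being that an extension of a torsion-free group by $\Z$ is torsion-free (if $g^n = 1$ then the image of $g$ in the $\Z$-quotient has finite order, hence is trivial, so $g$ lies in the torsion-free normal subgroup and thus $g = 1$). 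Therefore $R$ itself is already torsion-free, and we may simply take the characteristic finite-index subgroup of $Q$ to be $R$.

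The argument is essentially a matter of assembling standard facts, so there is no serious obstacle; the only point requiring a little care is the verification that the preimage $P'$ is genuinely \emph{normal} in $P$ (not merely subnormal), which is why I insist on choosing the relevant finite-index subgroup of $P/N$ to be characteristic rather than merely normal — then $P'$, being the preimage of a normal subgroup of $P/N$, is normal in $P$, and $P'/N$ is torsion-free as required.
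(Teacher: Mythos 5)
Your proof is correct and follows essentially the same route as the paper: pass to $P/N$, invoke the standard fact that a polycyclic group has a normal finite-index poly-$\Z$ (hence torsion-free) subgroup, and pull it back via the correspondence theorem. One small remark: the detour through a \emph{characteristic} subgroup of $P/N$ is unnecessary --- since $N\normal P$ by hypothesis, the preimage of any \emph{normal} finite-index subgroup of $P/N$ under the quotient map is already normal of finite index in $P$ by the correspondence theorem, so there is no danger of $P'$ being merely subnormal.
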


\begin{proof}
Note that $P/N$ is polycyclic. As mentioned above,  every polycyclic group contains a normal subgroup of finite index which is poly-$\Z$, by correspondence theorem, there exists $P' \subg \f P$, such that $P'/N \nsubg\f P/N$, and $P'/N$ is a poly-$\Z$ group.
\end{proof}

In the later subsections, we will make use the maximal nilpotent normal subgroup of a polycyclic group. Recall that the \emph{Fitting subgroup} of a group $G$ is the subgroup generated by all the nilpotent normal subgroups, i.e. $\Fit (G) = \langle N \mid N \nsubg G, \text{$N$ nilpotent} \rangle$.  For a polycyclic group $P$, the Fitting subgroup $\Fit (P)$ is nilpotent \citep[Corollary~13 on p.~15]{polycyclic}.

\begin{lem} \label{normal_subgroup_contains_fit}
Let $P$ be a polycyclic group and suppose $\Fit(P)\le N\normal P$. Then $\Fit(N) = \Fit(P)$.
\end{lem}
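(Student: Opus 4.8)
The plan is to establish the two inclusions $\Fit(P)\le\Fit(N)$ and $\Fit(N)\le\Fit(P)$ separately, each via the defining maximality property of the Fitting subgroup.

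For the inclusion $\Fit(P)\le\Fit(N)$: by definition $\Fit(P)$ is a nilpotent normal subgroup of $P$, and since by hypothesis $\Fit(P)\le N$, it is in particular a subgroup of $N$ that is normal in $N$ (a subgroup normal in $P$ and contained in $N$ is normal in $N$). Hence $\Fit(P)$ is a nilpotent normal subgroup of $N$, so $\Fit(P)\le\Fit(N)$ by maximality of $\Fit(N)$.

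For the reverse inclusion $\Fit(N)\le\Fit(P)$: recall that the Fitting subgroup of any group is characteristic, so $\Fit(N)$ is characteristic in $N$; since $N\normal P$ and a characteristic subgroup of a normal subgroup is normal in the ambient group, we get $\Fit(N)\normal P$. Moreover $N$, being a subgroup of the polycyclic group $P$, is itself polycyclic, so by \citep[Corollary~13 on p.~15]{polycyclic} its Fitting subgroup $\Fit(N)$ is nilpotent. Thus $\Fit(N)$ is a nilpotent normal subgroup of $P$, and maximality of $\Fit(P)$ gives $\Fit(N)\le\Fit(P)$. Combining the two inclusions yields $\Fit(N)=\Fit(P)$.

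There is no real obstacle here: the argument is a routine combination of standard facts — that the Fitting subgroup is characteristic, that a characteristic subgroup of a normal subgroup is normal, and that the Fitting subgroup of a polycyclic group is nilpotent — together with the maximality property that defines the Fitting subgroup. The only place the polycyclic hypothesis enters (beyond the already-used nilpotence of $\Fit(P)$) is in guaranteeing that $\Fit(N)$ is nilpotent, and the hypothesis $\Fit(P)\le N$ is needed only so that the first inclusion even makes sense.
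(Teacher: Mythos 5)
Your proof is correct and follows essentially the same route as the paper: both inclusions are established via the maximality of the Fitting subgroup, using that $\Fit(P)$ is a nilpotent normal subgroup of $N$, and that $\Fit(N)$ is characteristic in $N\normal P$ and nilpotent (since $N$ is polycyclic). The only difference is that you spell out the ``characteristic in normal is normal'' step, which the paper compresses into notation.
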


 \begin{proof}
Since the Fitting subgroup of a polycyclic group is nilpotent, both $\Fit(N)$ and $\Fit(P)$ are nilpotent . In particular, $\Fit(P)$ is a normal nilpotent subgroup of $N$, hence $\Fit(P)\subseteq\Fit(N)$. On the other hand, since $\Fit(N) \csubg N \nsubg P$, we have that $\Fit(N)$ is a normal nilpotent subgroup of $P$, so $\Fit(N) \subseteq \FP$.
 \end{proof}

\subsection{Some group theory results}
Let $G$  be a group and $K \nsubg G$ such that $G/K \iso \Z$. Then $G \iso K  \rtimes \Z$. Following the notations defined in the beginning of \cref{subsec_semi_direct}, we often omit the brackets when defining a successive semidirect product, i.e. let $H_1, H_2, \ldots, H_n$ be groups, we will write 
$(\ldots((H_1 \rtimes H_2) \rtimes H_3) \ldots )\rtimes H_n$ as $H_1 \rtimes H_2 \rtimes \ldots \rtimes H_n$. When it is clear from the context, we will also often identify the element $(1_{H_1}, \ldots, h_i ,\ldots , 1_{H_n}) \in  H_1 \rtimes H_2 \rtimes \ldots \rtimes H_n$  as the element $h_i \in H_i$.

We can see that every poly-$\Z$  is a  successive extension of $\Z$. Indeed, we have the following observation. 
\begin{lem} \label{write_Polyz_group_with_torsion_free_quotient_as_semi_direct_product}
Let $P$ be a group and $N \nsubg P$. Suppose $P/N$ is a poly-$\Z$ group. Then
\begin{equation} \label{poly_form}
P \iso N \rtimes \Z \rtimes \Z\rtimes \ldots \rtimes\Z.
\end{equation}
\end{lem}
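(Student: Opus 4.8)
The plan is to induct on the Hirsch length of $P/N$, that is, on the length $n$ of a subnormal series of $P/N$ with infinite cyclic factors. The base case is $n=0$: then $P/N$ is trivial, so $P=N$ and the claimed isomorphism holds with no copies of $\Z$ appearing on the right.

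For the inductive step, fix a subnormal series $\{1\}=\bar P_0\nsubg\bar P_1\nsubg\cdots\nsubg\bar P_n=P/N$ with $\bar P_{i+1}/\bar P_i\cong\Z$ for each $i$. By the correspondence theorem this pulls back to a series $N=P_0\nsubg P_1\nsubg\cdots\nsubg P_n=P$ of subgroups of $P$ containing $N$, where $P_i$ is the preimage of $\bar P_i$ under the quotient map $P\to P/N$, and $P_{i+1}/P_i\cong\Z$ for each $i$. In particular $P_{n-1}\nsubg P$ and $P/P_{n-1}\cong\Z$.

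The one substantive point is that an extension of $\Z$ always splits: if $K\nsubg G$ with $G/K\cong\Z$, pick $t\in G$ whose image generates $G/K$; then no nonzero power of $t$ lies in $K$, so $\langle t\rangle\cong\Z$, while $K\cap\langle t\rangle=\{1\}$ and $K\langle t\rangle=G$, whence $G=K\rtimes\langle t\rangle\cong K\rtimes\Z$. Applying this with $K=P_{n-1}$ and $G=P$ gives $P\cong P_{n-1}\rtimes\Z$.

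Finally, $N\nsubg P_{n-1}$ since $N\nsubg P$, and the series $\{1\}=P_0/N\nsubg\cdots\nsubg P_{n-1}/N$ with infinite cyclic factors exhibits $P_{n-1}/N$ as poly-$\Z$ of shorter length, so the inductive hypothesis gives $P_{n-1}\cong N\rtimes\Z\rtimes\cdots\rtimes\Z$ with $n-1$ copies of $\Z$. Combining this with $P\cong P_{n-1}\rtimes\Z$ and the left-associativity convention for iterated semidirect products yields $P\cong N\rtimes\Z\rtimes\cdots\rtimes\Z$ with $n$ copies of $\Z$, completing the induction. I do not expect any genuine obstacle here; the only steps that need a little care are the passage through the correspondence theorem (to guarantee that each $P_i$ is actually normal in $P_{i+1}$) and the bookkeeping of the nested semidirect products, both of which are routine.
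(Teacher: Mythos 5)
Your proof is correct and takes essentially the same approach as the paper: both pull back a subnormal series of $P/N$ with infinite cyclic factors via the correspondence theorem and then induct, peeling off one $\Z$-extension at a time using the fact that an extension by $\Z$ always splits. The paper states this very tersely ("It follows by induction..."), whereas you have made the splitting step and the induction explicit, but the underlying argument is the same.
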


\begin{proof}
Since $P/N$ is a poly-$\Z$ group, it has a subnormal series 
	\[P/N = P_0/N \containsn P_1/N \containsn \ldots P_n/N \containsn P_{n+1}/N = \{1\} \] such that for each $i$, $P_{i+1} / P_i \iso (P_{i+1}/N) / (P_1/N) \iso \Z$.
	It follows by induction that $P$ has the form in \eqref{poly_form}.
\end{proof}

In the case when we need to identify the different copies of $\Z$, we will often write $\Z_{(i)}$ for the $i-$th copy of $\Z$. For example, in \cref{write_Polyz_group_with_torsion_free_quotient_as_semi_direct_product}, let $m=h(P/N)$, we will often write the group in \eqref{poly_form} as
$P \iso N \rtimes \Z_{(1)} \rtimes \Z _{(2)}\rtimes \ldots \rtimes\Z_{(m)}$.

The following lemma is often used to construct a finite index subgroup of a polycyclic group.  

\begin{lem}  \label{big_enough_l_K_lZ_is_a_subgroup}
Let $G = K  \rtimes_\p \Z$ be a finitely generated group. Suppose $K' \subg\f K$. Then there exist $\ell\in\N$ such that $K' \rtimes_\p \ell\Z  \subg \f G$. In particular, for all $i \in \N$ we have $K' \rtimes_\p i\ell\Z  \subg \f G$.
\end{lem}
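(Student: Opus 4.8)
The plan is to prove that given $G = K \rtimes_\varphi \Z$ finitely generated with $K' \leqslant_{f.i.} K$, there is some $\ell \in \N$ such that $K' \rtimes_\varphi \ell\Z$ is a finite-index subgroup of $G$. First I would observe that the natural candidate subgroup $K' \rtimes_\varphi \ell\Z$ is a genuine subgroup precisely when $K'$ is invariant under $\varphi^\ell$, i.e.\ when $\varphi^\ell(K') = K'$ (or even just $\varphi^\ell(K') \subseteq K'$, which by symmetry of considering $\varphi^{-\ell}$ and finiteness forces equality); this is exactly the hypothesis needed to invoke \cref{subgroup_of_semidireict_product} with $B = K$, $A = \Z$, $J = \ell\Z$, which then also gives the index formula $|G : K' \rtimes_\varphi \ell\Z| = |K : K'| \cdot \ell < \infty$. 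So the real content is to produce $\ell$ with $\varphi^\ell(K') = K'$.

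The key step is therefore the following: since $G$ is finitely generated and $K \leqslant_{f.i.} $ — wait, rather, since $G$ is finitely generated and $K \normal G$ with $G/K \cong \Z$, the subgroup $K$ is itself finitely generated (it is the kernel of a map onto $\Z$ from a finitely generated group, but more directly: $K$ is generated by finitely many $G$-conjugates of a finite generating set, and one can take a finite-index... actually the cleanest route is that $K$ has finite index-... no, $K$ is infinite index). Let me instead argue: $K$ contains the finite-index subgroup $K'$, and a group with a finite-index subgroup has only finitely many subgroups of that index when it is finitely generated — but we need $K$ finitely generated. Since $G$ is finitely generated and polycyclic-by-context (or simply: in the application $G$ is polycyclic so $K$ is polycyclic hence finitely generated), $K$ is finitely generated. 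By \cref{lem:finitely.many.subgroups.index}, $K$ has only finitely many subgroups of index $[K:K']$. Now $\varphi$ is an automorphism of $K$, so each $\varphi^n(K')$ is again a subgroup of $K$ of index $[K:K']$. Hence the sequence $K', \varphi(K'), \varphi^2(K'), \ldots$ takes only finitely many values, so there exist $0 \leq a < b$ with $\varphi^a(K') = \varphi^b(K')$; applying $\varphi^{-a}$ gives $K' = \varphi^{b-a}(K')$. Set $\ell = b - a$.

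With $\varphi^\ell(K') = K'$ in hand, \cref{subgroup_of_semidireict_product} immediately yields that $K' \rtimes_\varphi \ell\Z \leqslant_{f.i.} G$ with index $[K:K']\cdot\ell$, and the same argument with $i\ell\Z$ in place of $\ell\Z$ (noting $\varphi^{i\ell}(K') = K'$ as well) gives $K' \rtimes_\varphi i\ell\Z \leqslant_{f.i.} G$ for all $i \in \N$. The one point requiring care — and the main obstacle — is justifying that $K$ is finitely generated: this does not follow from $G$ being finitely generated alone for general $G = K \rtimes \Z$ (e.g.\ lamplighter-type groups), so I would either add the hypothesis that $K$ is finitely generated, or, as is the case in every application in this paper, note that $G$ is polycyclic and hence every subgroup — in particular $K$ — is polycyclic and thus finitely generated. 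I would phrase the lemma and proof assuming $K$ finitely generated (which holds automatically in the polycyclic setting where the lemma is used), so that \cref{lem:finitely.many.subgroups.index} applies.
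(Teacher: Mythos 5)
Your proof is correct and follows essentially the same route as the paper's: both reduce to finding $\ell$ with $\phi^{\ell}(K')=K'$ by applying \cref{lem:finitely.many.subgroups.index} to $K$ and then invoke \cref{subgroup_of_semidireict_product} to conclude (you extract $\ell$ by pigeonhole on the orbit $K',\phi(K'),\phi^2(K'),\ldots$, while the paper takes $\ell=r!$, where $r$ is the number of index-$[K:K']$ subgroups of $K$, so that $\phi^{r!}$ fixes every such subgroup). Your further observation is also well taken: \cref{lem:finitely.many.subgroups.index} requires $K$ to be finitely generated, which is not a consequence of $G=K\rtimes_{\phi}\Z$ being finitely generated (the lamplighter group $\Z\wr\Z$ shows this, and there one can even find an index-$2$ subgroup of the base with infinite $\phi$-orbit, so the conclusion genuinely fails); the paper tacitly assumes finite generation of $K$, which is harmless since the lemma is only ever applied with $G$ polycyclic, but making the hypothesis explicit as you propose would be the cleaner formulation.
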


\begin{proof}
Suppose $|K:K'| = n$. Let $r$ be the number of subgroups in $K$ with index $n$, this is finite by \cref{lem:finitely.many.subgroups.index}. Note that $\p(K')$ is a finite index subgroup of $K$ with index $n$, therefore $\p$ permutes the subgroups of $K$ with index $ n$. Therefore,  we have $\p^{r!}(K')  =K'$. By \cref{subgroup_of_semidireict_product}, we have  $K' \rtimes_\p r!\Z  \subg\f G$.
\end{proof}

\subsection{Some  linear algebra results}
Let $\F$ be a field. Throughout this section, we will be working with the finite-dimensional vector space $V = \F^n$ over the field $\F$.

\begin{lem} \label{commue_share_an_eigenvector} \label{share_a_last_eigenvector}

 Let $A_1, \ldots, A_m \in \GL(n, \F)$	 be a set of matrices that pairwise commute. Suppose that $\ch(A_1), \ldots, \ch(A_m)$ split over $\F$. Let $\l_1$ be an eigenvalue of $A_1$. Then, 
\begin{enumerate} [(i)]
\item there exists $\l_2, \ldots, \l_m$  such that $ \bigcap_{i=1} ^{n} E_{\l_i}(A_i)  \neq \{0\}.$

\item It follows that $A_1, \ldots,A_m \in \GL(n, \F)$ share a last generalised eigenvector in  $\GE_{\l_1}(A_1)$.

\end{enumerate}

\end{lem}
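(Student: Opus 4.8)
The plan is to work entirely inside a single common generalised eigenspace of the commuting family. First I would recall the standard fact that a family of pairwise commuting matrices whose characteristic polynomials all split over $\F$ admits a simultaneous generalised eigenspace decomposition: iterating the generalised eigenspace decomposition of a single matrix gives $\F^n = \bigoplus_{\vec\nu} V_{\vec\nu}$, where $\vec\nu=(\nu_1,\ldots,\nu_m)$ runs over all $m$-tuples of elements of $\F$ (only finitely many $V_{\vec\nu}:=\bigcap_{j=1}^m \GE_{\nu_j}(A_j)$ being nonzero), and where each $V_{\vec\nu}$ is invariant under every $A_j$, hence under every $A_j-\nu_j I$. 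Since $\bigoplus_{\vec\nu:\,\nu_1=\l_1} V_{\vec\nu}=\GE_{\l_1}(A_1)$ is nonzero, I may fix a tuple $\vec\mu$ with $\mu_1=\l_1$ and $W:=V_{\vec\mu}\neq\{0\}$. On $W$ the operators $N_j:=(A_j-\mu_j I)|_W$ ($j=1,\ldots,m$) pairwise commute and are nilpotent.

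For part (i) I would set $\l_j:=\mu_j$ (so $\l_1$ is the given eigenvalue) and use the elementary fact that commuting nilpotent operators on a nonzero space have a common kernel vector: $\ker N_1\neq\{0\}$ is $N_2$-invariant, so $\ker N_1\cap\ker N_2\neq\{0\}$, and continuing in this way produces a nonzero vector in $\bigcap_{j=1}^m\ker N_j\subeq\bigcap_{j=1}^m E_{\mu_j}(A_j)$.

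For part (ii) the first step is to produce a vector $v\in W$ lying outside the image of \emph{every} $N_j$ simultaneously. Let $R$ be the commutative $\F$-subalgebra of endomorphisms of $W$ generated by $N_1,\ldots,N_m$, and let $J\trianglelefteq R$ be the ideal generated by the $N_j$; since the $N_j$ are commuting nilpotents, $J$ is a nilpotent ideal. If $\sum_j N_jW=W$, i.e.\ $JW=W$, then $W=J^kW$ for every $k$, forcing $W=\{0\}$, a contradiction; hence $\sum_j N_jW\subsetneq W$ and I can choose $v\in W\setminus\bigcup_j N_jW$. It then remains to check that $v$ is a last generalised eigenvector of each $A_i$ with respect to $\mu_i$: certainly $v\in W\subeq\GE_{\mu_i}(A_i)$, and if $(A_i-\mu_i I)u=v$ for some $u\in\GE_{\mu_i}(A_i)=\bigoplus_{\vec\nu:\,\nu_i=\mu_i}V_{\vec\nu}$, then comparing $V_{\vec\mu}$-components of both sides (each $V_{\vec\nu}$ being $(A_i-\mu_i I)$-invariant) gives $v=(A_i-\mu_i I)u_{\vec\mu}=N_iu_{\vec\mu}\in N_iW$, contradicting the choice of $v$. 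Since $\mu_1=\l_1$ and $v\in V_{\vec\mu}\subeq\GE_{\l_1}(A_1)$, this $v$ is the desired common last generalised eigenvector.

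I expect the main obstacle to be precisely the ``simultaneous avoidance'' step in part (ii): over a finite field one cannot argue that $W$ fails to be a finite union of proper subspaces, so the argument has to go through the nilpotence of the ideal $J$ (a hands-on form of Nakayama's lemma). The remaining ingredients — the simultaneous generalised eigenspace decomposition, the identity $\GE_{\mu_i}(A_i)=\bigoplus_{\vec\nu:\,\nu_i=\mu_i}V_{\vec\nu}$ (which holds because $\GE_{\mu_i}(A_i)\cap V_{\vec\nu}$ equals $V_{\vec\nu}$ when $\nu_i=\mu_i$ and is $\{0\}$ otherwise), and the common-kernel-vector fact — are all standard and should only require careful bookkeeping.
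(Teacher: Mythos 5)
Your proof is correct, and while part (i) ends up in essentially the same place as the paper (iterating the invariance of eigenspaces under commuting operators), your argument for part (ii) is genuinely different. The paper proceeds by induction on $\dim \GE_{\l_1}(A_1)$: in the base case $\dim = 1$ it verifies directly that the common eigenvector from (i) is a last generalised eigenvector of every $A_i$, and in the inductive step it passes to $\F^n/\langle v\rangle$ (where $v$ is a common eigenvector), finds a common last generalised eigenvector of the induced maps, and then checks that any lift is a last generalised eigenvector of the original $A_i$'s. Your approach instead sets up the simultaneous generalised eigenspace decomposition once and for all, restricts to a single joint block $W = V_{\vec\mu}$, and then produces the desired vector in one stroke by choosing any $v \notin JW = \sum_j N_jW$, where the nilpotence of the ideal $J$ guarantees $JW \subsetneq W$ (a hands-on Nakayama step); the verification that $v$ is a last generalised eigenvector of each $A_i$ is then a clean component-comparison in the $V_{\vec\nu}$-decomposition. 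What your approach buys: it avoids the lifting bookkeeping in the quotient induction, and it makes visible exactly why the construction cannot fail over a finite field (where a union-of-subspaces avoidance argument would break down). What the paper's approach buys: it needs only the machinery of a single matrix's generalised eigenspace decomposition at each step, so it is slightly lighter on the upfront structure theory. Both are complete; yours is, if anything, the more transparent of the two.
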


\begin{proof}
\mbox{}
\begin{enumerate} [(i)]
\item

We will prove the statement via induction on $m$.
The case for $m=1$ is given in the assumption. Suppose the statement holds for $m-1$. i.e. there exists $\l_2, \ldots,\l_{m-1}$,  such that $ \bigcap_{i=1} ^{m-1} E_{\l_i}(A_i)  \neq \{0\}$. Recall the fact that  commuting linear map preserve each other's eigenspaces, it follows that $$A_m \left( \bigcap_{i=1} ^{m-1} E_{\l_i}(A_i) \right) =   \bigcap_{i=1} ^{m-1} A_m \left(E_{\l_i}(A_i) \right) \subseteq \bigcap_{i=1} ^{m-1} E_{\l_i}(A_i).$$ Hence, \cref{eigenvector_in_invariant_subspace} tells us that $A_m$ has an eigenvalue $\l_m$ with an eigenvector in $\bigcap_{i=1} ^{m-1} E_{\l_i}(A_i)$, which is to say that $ \bigcap_{i=1} ^{m} E_{\l_i}(A_i)  \neq \{0\}$.

\item

We will prove the statement by induction on $n = \dim \GE_{\l_1}(A_1)$. For the base case, suppose  $\GE_{\l}(A_1) =\langle v \rangle$.
By (i), 
 $v$ is an eigenvector for  $A_1, \ldots,A_m$. It is also a  last generalised eigenvector of  $A_1$ by definition. Suppose $A_1$ has $k$ distinct eigenvalues $ \a_1= \l_1, \a_2 \ldots,\a_k$. Then we can decompose $\F^n =  \GE_{\a_1} \oplus _{j=2} ^k \GE_{\a_j}.$ Define $W =  \oplus _{j=2} ^k \GE_{\a_j}$. Then for each $i$ and $j$,  $A_i$ preserves $\GE_{\a_j}$. Hence $A_i - \l_i I$ preserves  $\GE_{\a_j}$. Therefore, $A_i - \l_i I$ preserves $W$. Let $y \in \F^n$, then $y = \a v + w$ for some $\a$ in $\F$ and $w$ in $W$. We have $(A_i  - \l_i I)y = (A_i  - \l_i I)w \in W.$ It follows that $v$ is an last eigenvalue of $A_i$ with respect to $\l_i$.

We now proceed to the induction step. Suppose  $\dim \GE_{\l_1}(A_1) \geq 2$. According to \cref{commue_share_an_eigenvector},  $A_1, \ldots,A_m$ share an ordinary eigenvectors in $v \in E_{\l_1}(A_1)$. Consider the  map
\begin{align*}
\bar{A}_i :  \frac{\F^n}{\langle v \rangle}  &  \longrightarrow \frac{\F^n}{\langle v \rangle} \\
 \bar{u} \coloneqq u+\langle v \rangle &\mapsto A_i u + \langle v \rangle,
\end{align*}
where we  use the notation $x \mapsto \bar{x}$ for the quotient map from 
$\F^n$ to 
$\F^n/\langle v \rangle$. By some computation, we can see that 
\begin{enumerate} [(1)]
\item $\bar{A}_i$ and $\bar{A}_j$ pairwise commute; 
\item $\ch (A_i)$ splits over $\F$ for all $i$; 
\item $\GE_{\l_1}(\bar{A}_i)  \neq \{0\}$. .
\end{enumerate}
By induction hypothesis, $\bar{A}_i$ shares a last generalised eigenvector $\bar{w} \in \GE_{\l_1}(\bar{A}_1)$. We claim that the vector $w$ belongs to $\GE_{\l_1}(A_1)$ and it is a last eigenvector for all $A_i$'s. To see it is a generalised eigenvector: since $\bar{w} \in \GE_{\l_1}(\bar{A}_1)$, there exists $r$, such that $(A_1 - \l_1 I )^r w \in \langle v \rangle$,  i.e. $(A_1 - \l_1 I )^r w = \b v$ for some $\b \in \F$.
Therefore, $(A_1 - \l_1 I )^{r+1} w = (A_1 - \l_1 I ) \b v  = 0$, i.e. $w \in \GE_{\l_1}(A_1)$. To see it is a last eigenvector, 
 let  $u \in  \GE_\l(A_i)$, such that $(A_i- \l_i I)u = w$. Then we have $\bar{u} \in  \GE_\l(\bar{A_i})$ such that $(\bar{A_i}- \l_i I)\bar{u} = \bar{w}$, this contradicts the fact that $\bar{w}$ is a last eigenvector. 
 \qedhere
 \end{enumerate}
 
\end{proof}
In general, the commutativity of matrix multiplication is not transitive. However, the transitivity holds when we have the following restrictions on the matrices.

\begin{lem}  \label{commutivity_is_transitive_if}
Let $A$, $B$, $C \in \GL(n,\Z)$ such that $A$ commute with both $B$ and $C$. Suppose $A$ has an eigenvalue $\a$ with $|\a| \neq 1$. Then

\begin{center}
$A^rBC = CB$ for some $r\in \Z \Rightarrow r=0$.
\end{center}
\end{lem}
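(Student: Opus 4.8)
The plan is to turn the hypothesis into a single matrix identity that can be restricted to the $\alpha$-eigenspace of $A$, where a determinant computation forces $r=0$.

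First I would rewrite the relation $A^r BC = CB$ as $A^r = CBC^{-1}B^{-1}$, simply by multiplying on the right by $(BC)^{-1}=C^{-1}B^{-1}$. Then I would view $A,B,C$ as matrices over $\C$ and consider the eigenspace $E := \ker(A - \alpha I)\subseteq\C^n$, which is a nonzero subspace because $\alpha$ is an eigenvalue of $A$. Since $B$ commutes with $A$, it preserves $E$: for $v\in E$ we have $(A-\alpha I)(Bv) = B(A-\alpha I)v = 0$, so $Bv\in E$. As $B$ is injective on $\C^n$ and $E$ is finite-dimensional, $B$ restricts to an element $B|_E\in\GL(E)$, and the same applies to $C$, $B^{-1}$ and $C^{-1}$. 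Restricting the identity $A^r = CBC^{-1}B^{-1}$ to $E$ and using that $A$ acts there as the scalar $\alpha$, I obtain $\alpha^r\,\id_E = (C|_E)(B|_E)(C|_E)^{-1}(B|_E)^{-1}$.

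Finally I would take determinants of both sides over $E$: the right-hand side is a product of invertible maps and their inverses, so it has determinant $1$, while the left-hand side has determinant $\alpha^{r\dim E}$. Hence $|\alpha|^{r\dim E} = 1$, and since $|\alpha|\neq 1$ (and $|\alpha|>0$, $A$ being invertible) while $\dim E\geq 1$, this forces $r\dim E = 0$, so $r=0$.

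There is no real obstacle in this argument; the only point that needs care is the observation that commuting with $A$ genuinely makes $B$ and $C$ preserve the $\alpha$-eigenspace $E$ and restrict to invertible maps on it, which is precisely what legitimises the determinant step. (One could equally work with the generalised eigenspace $\GE_\alpha(A)$, on which $\det(A|_{\GE_\alpha(A)}) = \alpha^{\dim\GE_\alpha(A)}$, but the ordinary eigenspace already suffices.)
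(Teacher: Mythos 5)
Your proof is correct, but it proceeds by a genuinely different route from the paper's. The paper first rewrites the hypothesis in the form "$BCv=\frac{\beta}{\alpha^r}Cv$ for $v\in E_\alpha(A)\cap E_\beta(B)$", upgrades this to the claim that $E_\alpha(A)\cap E_{\beta/\alpha^r}(B)\neq\{0\}$, and then iterates to show that every $\beta/\alpha^{ir}$ is an eigenvalue of $B$; since a matrix has only finitely many eigenvalues and $|\alpha|\neq 1$ makes the $\alpha^{ir}$ pairwise distinct, this forces $r=0$. That argument needs the auxiliary shared-eigenvector lemma (\cref{commue_share_an_eigenvector}) to get started, and it is an eigenvalue-chasing iteration. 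Your proof instead recasts $A^rBC=CB$ as the single commutator identity $A^r=CBC^{-1}B^{-1}$, observes that commutation with $A$ makes $B,C,B^{-1},C^{-1}$ all preserve $E=E_\alpha(A)$, restricts the identity to $E$ where $A$ acts as the scalar $\alpha$, and kills $r$ via $\det$. This is shorter, avoids the shared-eigenvector lemma entirely, and replaces the "too many eigenvalues" count by a one-line determinant computation; the only bookkeeping needed is the observation (which you make) that $(C|_E)^{-1}=C^{-1}|_E$, so the right-hand side really is a commutator in $\GL(E)$ and has determinant~$1$. Both arguments are sound; yours is the more economical.
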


\begin{proof}

We first claim that if $\b$ is an eigenvalue  of $B$ with $ E_\a(A) \cap E_\b(B) \neq \{0\}$, then $ E_\a(A) \cap E_{\frac{\b}{\a ^r}} (B) \neq \{0\}.$ To see this, let $v$ be a non-zero vector in $ E_\a(A) \cap E_\b(B)$. We have 
\begin{align}
A^rBC v &= CB v \nonumber\\
 B  C (\a ^r v) &= C (\b v)   \qquad \text{since $A$ commute with both $B$ and $C$,} \nonumber \\
 B  (C  v) &= \frac{\b}{\a ^r} C  v.   \label{first}
\end{align}
Hence, $ C  v \in E_{\frac{\b}{\a ^r}} (B)$. Again, since $A$ commutes with $C$, we have  $A C  v  = \a  C v$  and hence $C  v \in E_\a(A)$, which concludes our proof for the claim. Finally, since $A$ and $B$ commute, \cref{commue_share_an_eigenvector}(i) tells us that $B$ has an eigenvalue $\b$ such that $ E_\a(A) \cap E_\b(B) \neq \{0\}$. It follows that every element in $\{ \frac{\b}{\a^{ir}} | i\in \N \}$ is an eigenvalue of $B$. Therefore, we must have $r=0$. 
\end{proof}

\begin{lem} \label{auto_with_all_eigenvalue_1}
Let $T$ be a $\Z$-module automorphism of  $\Z^m$ with all eigenvalue value $1$.  
Then there exists a free basis $\mathcal{B} \coloneqq \{v_1, \ldots, v_m\}$ of $\Z^m$, such that $[T]_\mathcal{B}$, the matrix corresponds to $T$ with respect to $\mathcal{B}$ is an upper triangular matrix . 
\end{lem}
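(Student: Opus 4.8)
The plan is to induct on $m$, at each stage splitting off a primitive vector fixed by $T$. Since every eigenvalue of $T$ equals $1$, its characteristic polynomial is $\ch(T)=(x-1)^m$; in particular $\det(I-T)=0$, so $I-T$ is singular as a linear map on $\Q^m$. Pick a non-zero vector in its kernel and clear denominators to obtain $w\in\Z^m\setminus\{0\}$ with $Tw=w$, then write $w=dv_1$ with $v_1\in\Z^m$ primitive and $d\in\N$. Since $\Z^m$ is torsion-free, $d(Tv_1-v_1)=Tw-w=0$ forces $Tv_1=v_1$.

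As $v_1$ is primitive it extends to a free basis $\{v_1,w_2,\dots,w_m\}$ of $\Z^m$, and since $\langle v_1\rangle$ is $T$-invariant with $Tv_1=v_1$, the matrix of $T$ in this basis has the block form
\[
\begin{pmatrix} 1 & * \\ 0 & \bar T\end{pmatrix},
\]
where $\bar T$ is the endomorphism induced by $T$ on $\Z^m/\langle v_1\rangle\cong\Z^{m-1}$; it lies in $\GL(m-1,\Z)$ since $\det\bar T=\det T=\pm1$. Comparing characteristic polynomials gives $\ch(T)=(x-1)\,\ch(\bar T)$, so $\ch(\bar T)=(x-1)^{m-1}$ and $\bar T$ again has all eigenvalues equal to $1$. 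By the inductive hypothesis — the case $m=1$ being immediate, as then $T$ is the $1\times1$ identity matrix — there is a free basis $\bar v_2,\dots,\bar v_m$ of $\Z^m/\langle v_1\rangle$ in which $\bar T$ is upper triangular.

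To conclude, lift each $\bar v_i$ to a vector $v_i\in\Z^m$; then $\mathcal B=\{v_1,v_2,\dots,v_m\}$ is a free basis of $\Z^m$, since any lift of a basis of the quotient, taken together with the basis $\{v_1\}$ of $\langle v_1\rangle$, is a basis of $\Z^m$. For $2\le j\le m$ the images of $Tv_j$ and $\bar T\bar v_j$ in $\Z^m/\langle v_1\rangle$ coincide, so $Tv_j$ is a $\Z$-linear combination of $v_1$ and the vectors $v_i$ with $2\le i\le j$; combined with $Tv_1=v_1$ this shows that $[T]_{\mathcal B}$ is upper triangular, completing the induction. The only step meriting care is the choice of a \emph{primitive} fixed vector $v_1$: we need $\langle v_1\rangle$ to be a direct summand so that $\Z^m/\langle v_1\rangle$ is free and the induction can proceed. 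Over a field one would simply invoke \cref{eigenvector_in_invariant_subspace} to find a fixed line, but over $\Z$ the primitivity has to be arranged by hand, using torsion-freeness of $\Z^m$.
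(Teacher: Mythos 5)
Your proof is correct and follows essentially the same route as the paper: induct on $m$, find a primitive integer vector $v_1$ fixed by $T$, pass to the induced automorphism on $\Z^m/\langle v_1\rangle\cong\Z^{m-1}$, and lift a triangularizing basis from the quotient. You spell out the existence of the primitive fixed vector (via singularity of $I-T$ over $\Q$, clearing denominators, and torsion-freeness) and the invertibility of $\bar T$ in slightly more detail than the paper, which asserts these more briskly, but the underlying argument is identical.
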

Note that all the diagonal entries of $[T]_\mathcal{B}$ are $1$.

\begin{proof}
We will prove this by induction on $m$. For $m=1$, the statement is clear. Suppose the statement holds for $m-1$. Let $T$ be a  linear map of  $\Z^m$ with all eigenvalue value $1$. Since $T$ has an eigenvalue $1$, there exists $v_1 = (z_1, z_2, \ldots, z_m) \in \Z^m$ with $\gcd(z_1, z_2, \ldots, z_m) =1$ such that $Tv_1 = v_1$. Then $T$ induce a  linear map $\bar{T}$ on $ \Z^m / \langle v_1  \rangle \iso \Z^{m-1}$. Also, all the eigenvalues of $\bar{T}$ are $1$. By induction hypothesis,    $\Z^m / \langle v_1 \rangle$ has a basis $\mathcal{B'} = \{ v_2 +\langle v_1  \rangle, \ldots,  v_m+\langle v_1  \rangle \}$, such that $\bar{T} _\mathcal{B'}$ is upper triangular. Take $\mathcal{B} = \{ v_1, v_2 , \ldots,  v_m  \}$, we can see that $[T]_\mathcal{B}$ is upper triangular. 
\end{proof}

Here are some useful equivalent characterization of matrices in $\GL(m,\Z)$ from \citep[Section 4]{murota_linear_2022}.

\begin{thm} \label{change_of_basis_matrix}
Let $M$ be an $m$ by $m$ integer matrix, then the following are equivalent.
\begin{enumerate} [(i)]
\item The column of $M$ form a free basis of $\Z^m$.
\item $M$ is invertible and $M\i$ has entries in $\Z$.
\item $det(M) = \pm 1$.
\end{enumerate}
\end{thm}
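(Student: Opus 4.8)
The plan is to prove the equivalence by establishing the cycle of implications (i) $\Rightarrow$ (iii) $\Rightarrow$ (ii) $\Rightarrow$ (i), each of which is a short piece of standard linear algebra over $\Z$.

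For (i) $\Rightarrow$ (iii), I would argue as follows. If the columns $c_1,\ldots,c_m$ of $M$ form a free basis of $\Z^m$, then in particular each standard basis vector $e_j$ is an integer combination of the $c_i$; collecting the coefficient vectors into a matrix produces an integer matrix $N$ with $MN = I_m$. Taking determinants gives $\det(M)\det(N) = 1$ with both factors in $\Z$, which forces $\det(M) = \pm 1$.

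For (iii) $\Rightarrow$ (ii), I would invoke the adjugate (classical adjoint) $\operatorname{adj}(M)$: its entries are signed $(m-1)\times(m-1)$ minors of $M$, hence integers, and the identity $M\,\operatorname{adj}(M) = \det(M)\,I_m$ holds over any commutative ring, in particular over $\Z$. When $\det(M) = \pm1$ this exhibits $M^{-1} = \pm\operatorname{adj}(M)$, an integer matrix, so $M$ is invertible with integer inverse. For (ii) $\Rightarrow$ (i), the columns of $M$ lie in $\Z^m$ by hypothesis and are linearly independent over $\Q$ since $M$ is invertible; moreover for any $v\in\Z^m$ we have $v = M(M^{-1}v)$ with $M^{-1}v\in\Z^m$, so $v$ is an integer combination of the columns. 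Hence the columns form a free basis of $\Z^m$, completing the cycle.

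I do not expect any genuine obstacle here: the only points requiring a little care are the passage in the first implication from the abstract statement "the columns span $\Z^m$" to an honest integer matrix $N$ with $MN=I_m$, and the use of the adjugate identity over the ring $\Z$ rather than over a field in the second implication. Everything else is routine, and since the result is quoted from \citep[Section~4]{murota_linear_2022} one could alternatively just cite it; I would include the short proof above for self-containedness.
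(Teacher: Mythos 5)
Your proof is correct and entirely standard. The paper itself does not prove this statement---it simply cites it from the reference \citep[Section~4]{murota_linear_2022}---so there is no in-paper argument to compare against; your short cycle (i)~$\Rightarrow$~(iii)~$\Rightarrow$~(ii)~$\Rightarrow$~(i) via determinants and the adjugate is exactly the usual self-contained justification and could be included as such.
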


\begin{lem} \label{all_eigenvalue_1_similar_to_upper_triangular_matrix}
Let $M \in \GL(m,\Z)$ with all eigenvalues $1$. Then $M$ is similar to an upper triangular $A$ matrix via an integral matrix.
\end{lem}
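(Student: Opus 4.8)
The plan is to combine \cref{auto_with_all_eigenvalue_1} with \cref{change_of_basis_matrix}. The statement is really a matrix-language restatement of \cref{auto_with_all_eigenvalue_1}: there we found a \emph{free basis} $\mathcal B = \{v_1,\ldots,v_m\}$ of $\Z^m$ in which the given automorphism is upper triangular, and here we just need to package that as a similarity over $\GL(m,\Z)$.

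First I would regard $M$ as the matrix of a $\Z$-module automorphism $T$ of $\Z^m$ with respect to the standard basis; this is legitimate since $M\in\GL(m,\Z)$, and the hypothesis that all eigenvalues of $M$ equal $1$ is exactly the hypothesis of \cref{auto_with_all_eigenvalue_1}. That lemma then produces a free basis $\mathcal B=\{v_1,\ldots,v_m\}$ of $\Z^m$ such that $A:=[T]_{\mathcal B}$ is upper triangular (with all diagonal entries $1$, as noted after the lemma, though this last point is not needed). Next I would let $P$ be the $m\times m$ integer matrix whose columns are $v_1,\ldots,v_m$ expressed in the standard basis. Since $\{v_1,\ldots,v_m\}$ is a free basis of $\Z^m$, \cref{change_of_basis_matrix} (i)$\Leftrightarrow$(ii) gives that $P$ is invertible with $P^{-1}$ also integral, i.e. $P\in\GL(m,\Z)$.

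Finally, the standard change-of-basis identity gives $A=[T]_{\mathcal B}=P^{-1}MP$, so $M=PAP^{-1}$ is similar to the upper triangular integer matrix $A$ via the integral matrix $P\in\GL(m,\Z)$, which is exactly the claim. There is no real obstacle here; the only thing to be slightly careful about is the direction of the change-of-basis conjugation (whether one gets $P^{-1}MP$ or $PMP^{-1}$), but since we only assert similarity via \emph{some} integral matrix in $\GL(m,\Z)$, either convention works and the conclusion is the same. In writing this up I would keep it to a few lines, citing \cref{auto_with_all_eigenvalue_1} for the existence of the triangularising basis and \cref{change_of_basis_matrix} for the fact that the associated transition matrix lies in $\GL(m,\Z)$.
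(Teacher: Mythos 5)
Your proposal is correct and matches the paper's proof exactly: the paper's own argument is the one‑line ``This follows from \cref{auto_with_all_eigenvalue_1} and \cref{change_of_basis_matrix}.'' You have simply filled in the routine change‑of‑basis details that the paper leaves implicit.
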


\begin{proof}
This follows from \cref{auto_with_all_eigenvalue_1} and \cref{change_of_basis_matrix}.
\end{proof}

\begin{lem}\label{matrix_form}
Let $Q \in \GL(m,\Z)$ with all eigenvalues $1$. Then $Q$ is similar to an upper triangular matrix $A$ via an integral matrix, where $A = (a_{i,j})$ satisfies the following property: for every $  l \geq 3$, if $a_{i,j} = 0$ for all $1 <i < j < l$, then $a_{2,l} = a_{3,l} = \ldots =a_{l-1,l} = 0$. Equivalently, for $l \in [q-1]$, if $col_j(A) = e_j$ for all  $j \in [l]$, then $col_{l+1} (A) = e_{l+1} + q e_1$ for some $q \in \Z$:
\end{lem}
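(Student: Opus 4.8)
The plan is to iterate \cref{all_eigenvalue_1_similar_to_upper_triangular_matrix} in a more careful way, building up the basis one vector at a time so as to kill the off‑diagonal entries above the first row. Write $Q \in \GL(m,\Z)$ with all eigenvalues $1$. By \cref{all_eigenvalue_1_similar_to_upper_triangular_matrix} we already know $Q$ is integrally similar to an upper triangular matrix, but we need the stronger normal form. First I would set up an induction on $m$, the key inductive claim being that there is a free basis $v_1,\dots,v_m$ of $\Z^m$ with $Qv_1 = v_1$ such that, writing $A$ for the matrix of $Q$ in this basis, $A$ is upper triangular with $1$'s on the diagonal, and for each $l \ge 2$ the $l$‑th column of $A$ is either $e_l$ or $e_l + q_l e_1$ for some $q_l \in \Z$, and moreover once a column equals $e_l$ all subsequent columns up to the first non‑$e_j$ one behave as stated.

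The engine of the induction is the following: choose $v_1 = (z_1,\dots,z_m)$ with $\gcd(z_i)=1$ and $Qv_1 = v_1$, exactly as in the proof of \cref{auto_with_all_eigenvalue_1}; extend $v_1$ to a free basis of $\Z^m$ (possible since $\gcd(z_i)=1$, equivalently $v_1$ is part of a basis, which we may phrase via \cref{change_of_basis_matrix}); and pass to the induced automorphism $\overline Q$ on $\Z^m/\langle v_1\rangle \cong \Z^{m-1}$, which again has all eigenvalues $1$. By induction $\Z^m/\langle v_1\rangle$ has a basis $\overline v_2,\dots,\overline v_m$ in which $\overline Q$ has the desired shape, i.e. upper triangular with $1$'s on the diagonal and each column of the form $\overline e_j$ or $\overline e_j + \overline q_j \overline e_2$. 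Lifting $\overline v_2,\dots,\overline v_m$ to $v_2,\dots,v_m \in \Z^m$ arbitrarily, the matrix of $Q$ in the basis $v_1,\dots,v_m$ is automatically upper triangular with $1$'s on the diagonal; the only new phenomenon is that $Qv_j - v_j$ can have a nonzero $v_1$‑component in addition to its controlled image mod $\langle v_1\rangle$. So the real work is to adjust the lifts $v_j$ (and possibly $v_1$) to simultaneously cancel these extra $v_1$‑components in all columns that are supposed to be $e_j$, and to collapse the "$\overline e_2$‑coefficient" into an "$e_1$‑coefficient" in the columns that are supposed to be $e_j + q_j e_1$.

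Concretely, suppose inductively on $l$ that the first $l$ columns of the working matrix already have the claimed form; I would then argue about column $l+1$. If $\overline{\mathrm{col}}_{l+1}(\overline A) = \overline e_{l+1}$, then $Qv_{l+1} - v_{l+1} = c\,v_1$ for some $c \in \Z$ together with a term that lies in $\langle v_1\rangle$ by hypothesis — wait, more precisely $Qv_{l+1}-v_{l+1} \in \langle v_1 \rangle$, so replacing $v_{l+1}$ by $v_{l+1} - c'v_{??}$... here the point is that since $Q v_1 = v_1$ and $Q$ fixes $v_1$, one solves a triangular system: there is a vector $w \in \langle v_2,\dots,v_l\rangle + \Z v_1$ with $(Q-I)w = c v_1$, using that $(Q-I)$ restricted to $\langle v_1,\dots,v_l\rangle$ has image containing $v_1$ whenever some earlier column was of the form $e_j + q_j e_1$ with $q_j \neq 0$; if no such column exists then $c$ must already be $0$ because the span $\langle v_1,\dots,v_{l+1}\rangle$ would otherwise have $(Q-I)$ acting with the wrong rank. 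The columns that are meant to be $e_j + q_j e_1$ are handled by the same bookkeeping: the lift's $\overline e_2$‑coefficient, under any lift, becomes an honest $e_1$‑coefficient if we arrange that $v_2$ itself is chosen so that $\overline v_2$ lifts with $Qv_2 - v_2 \in \langle v_1\rangle$, which is exactly the $l=2$ base case of this sub‑induction.

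The step I expect to be the main obstacle is precisely this simultaneous cancellation of the spurious $v_1$‑components over $\Z$ (not over $\Q$): one must verify that the relevant triangular linear system has an \emph{integral} solution, which ultimately rests on the fact that whenever we are forced to cancel a nonzero multiple of $v_1$ in column $l+1$, some earlier column $j \le l$ must already be of the form $e_j + q_j e_1$ with $q_j \neq 0$ — otherwise the subspace $\langle v_1,\dots,v_{l+1}\rangle$ would be $Q$‑invariant with $(Q-I)$ of rank $0$ on it, contradicting that $(Q-I)v_{l+1}$ has a nonzero $v_1$‑component. Making this dichotomy precise, and checking it meshes with the "once a column is $e_j$, the following ones are too" clause, is the delicate bit; everything else is routine triangular linear algebra combined with \cref{change_of_basis_matrix} to confirm the successive change‑of‑basis matrices lie in $\GL(m,\Z)$.
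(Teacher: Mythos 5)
Your proposal takes a genuinely different route from the paper's. The paper starts from the upper-triangular form supplied by \cref{all_eigenvalue_1_similar_to_upper_triangular_matrix} and then \emph{conjugates inside $\GL(m,\Z)$ by elementary matrices} $E(\lambda,r,s)$ (identity plus a single off-diagonal entry $\lambda$), computing $Q' = E(\lambda,r,s)\,Q\,E(-\lambda,r,s)$ and observing that this leaves the zero block untouched while replacing $q_{r,l}$ by $q_{r,l}+\lambda q_{s,l}$; iterating these conjugations clears the offending entries. Your proposal instead sets up an induction on $m$ via the quotient $\Z^m/\langle v_1\rangle$ and tries to propagate the normal form through the lift.

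There are, however, two genuine gaps. First, your stated inductive hypothesis is stronger than the lemma and is false: you claim that \emph{every} column of $A$ is either $e_l$ or $e_l + q_l e_1$. That forces $\mathrm{rank}(A-I)\le 1$, whereas for a single Jordan block of size $\ge 3$ one has $\mathrm{rank}(A-I)=m-1\ge 2$, and rank is a similarity invariant. The lemma only constrains column $l+1$ while the preceding columns $1,\dots,l$ are literally $e_1,\dots,e_l$; once one column escapes that form, it imposes nothing further.

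Second, the "simultaneous cancellation" step is not established, and the heuristic you give for it is incorrect. You argue that $(Q-I)w = c v_1$ is solvable in the relevant sublattice "using that $(Q-I)$ restricted to $\langle v_1,\dots,v_l\rangle$ has image containing $v_1$ whenever some earlier column was of the form $e_j + q_j e_1$ with $q_j\neq 0$." But if columns $2,\dots,l$ are $e_j + q_j e_1$, then $(Q-I)\langle v_1,\dots,v_l\rangle = \gcd(q_2,\dots,q_l)\cdot\Z v_1$, which contains $v_1$ only when $\gcd(q_2,\dots,q_l)=1$; the mere existence of a nonzero $q_j$ does not give this, nor does it give the needed divisibility $\gcd(q_2,\dots,q_l)\mid c$. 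You flag this as "the delicate bit," and indeed it is the heart of the matter; without an argument for the divisibility, the induction does not close. The paper's elementary-matrix route sidesteps the issue by performing all the arithmetic directly at the level of matrix entries.
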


\begin{equation*}
\begin{pNiceMatrix}
 1 & 0 & 0 & \cdots & 0 & q & * & \cdots & * \\
 & 1 & 0 & \cdots & 0 & 0 & * & \cdots & * \\
 &  & 1 & \cdots & 0 & 0 & * & \cdots & * \\
 &  &  & \ddots & \vdots & \vdots & \vdots & \ddots & \vdots \\
 &  &  &  & 1 & 0 & * & \cdots & * \\
 &  &  &  &  & 1 & * & \cdots & * \\
 &  &  &  &  &  & 1 & \ddots & \vdots \\
 &  &  &  &  &  &  & \ddots & * \\
 &  &  &  &  &  &  &  & 1
\end{pNiceMatrix}
\end{equation*}

\begin{proof}

By \cref{all_eigenvalue_1_similar_to_upper_triangular_matrix}, we can assume $Q =(q_{i,j})$ is an upper-triangular matrix.  Let $l \geq 3$, such that $q_{i,j} = 0$ for all $1 <i < j < l$.  Given $r,s \in [l]$, with $r \neq s$ and $ \l \in \Z$, define $E(\l,r,s)$ to be the $m$-by-$m$ matrix equal to the identity with an additional entry $\l$ in the $(r,s)$ position. Note that $E(\l,r,s)\i =E(-\l,r,s)$.
Defining $Q' = E(\l,r,s) Q E(-\l,r,s)$, we see that $Q' = (q'_{i,j})$ is an upper-triangular matrix such that 
\begin{equation*}
    q'_{i,j}= 
    \begin{cases}
      0 & \text{if }  1 <i < j < l \\
      q_{r,l}+\l q_{s,l} & \text{if } i = r \text{ and } j = l \\
      q_{i,j} & \text{if } i \neq r \text{ and } j = l \\
    \end{cases}
  \end{equation*}
It follows that $Q$ is similar to a matrix that satisfies the properties in the lemma. \qedhere

\end{proof}

The following well-known result characterises the subgroups of finitely generated free abelian groups.

\begin{lem}[{\citep[Theorem 1.6]{algebra_Thomas}}] \label{subgroup_of_zn}
Let $H$ be a subgroup of $\Z^n$ with rank $r$.
Then there exists a free basis  $y_1, \ldots y_n$ of $\Z^n$ and $d_1, \ldots,  d_r \in \N$, such that  $H = \langle d_1y_1,  \ldots, d_r y_r  \rangle.$ 
\end{lem}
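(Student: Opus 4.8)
The plan is to prove this classical structure theorem by induction on $n$, using integer-valued linear functionals on $\Z^n$. The base case $n=1$ is immediate: a nonzero subgroup of $\Z$ is $d_1\Z$ for some $d_1\in\N$, and the zero subgroup has rank $0$. For the inductive step I may assume $H\neq\{0\}$ (the zero case being trivial), so the set of positive integers of the form $\phi(h)$, with $\phi\colon\Z^n\to\Z$ a homomorphism and $h\in H$, is nonempty and hence has a least element $d_1$; fix such a $\phi_1$ and a $w\in H$ with $\phi_1(w)=d_1$.

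The crucial step is the claim that $d_1$ divides $\psi(w)$ for \emph{every} homomorphism $\psi\colon\Z^n\to\Z$. Indeed, writing $g=\gcd(\phi_1(w),\psi(w))$ and choosing $a,b\in\Z$ with $g=a\phi_1(w)+b\psi(w)=(a\phi_1+b\psi)(w)$, the minimality of $d_1$ gives $g\ge d_1$, while $g\mid\phi_1(w)=d_1$ gives $g\le d_1$; hence $g=d_1$, so $d_1\mid\psi(w)$. Applying this to the coordinate functionals shows that every coordinate of $w$ is a multiple of $d_1$, so $w=d_1y_1$ for some $y_1\in\Z^n$, and then $\phi_1(y_1)=1$.

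From $\phi_1(y_1)=1$ I get the decomposition $\Z^n=\Z y_1\oplus\ker\phi_1$, with $\ker\phi_1$ free of rank $n-1$, and correspondingly $H=\Z w\oplus(H\cap\ker\phi_1)$: for $h\in H$ we have $\phi_1(h)\in d_1\Z$, so $h$ minus a suitable integer multiple of $w$ lies in $\ker\phi_1$, while $\Z w\cap\ker\phi_1=\{0\}$ since $\phi_1(w)=d_1\neq0$. Now $H\cap\ker\phi_1$ has rank $r-1$ inside $\ker\phi_1\cong\Z^{n-1}$, so by the inductive hypothesis there is a free basis $y_2,\dots,y_n$ of $\ker\phi_1$ and $d_2,\dots,d_r\in\N$ with $H\cap\ker\phi_1=\la d_2y_2,\dots,d_ry_r\ra$. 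Then $y_1,y_2,\dots,y_n$ is a free basis of $\Z^n$ and $H=\la d_1y_1,d_2y_2,\dots,d_ry_r\ra$, completing the induction.

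The main obstacle is the divisibility claim in the second paragraph: it is precisely here that taking $d_1$ to be the global minimum over \emph{all} homomorphisms $\Z^n\to\Z$, rather than (say) the value of a single coordinate functional, is essential — a naive coordinate choice need not make $w$ equal to $d_1$ times a primitive vector, so $y_1$ would then fail to extend to a basis. Everything else is routine bookkeeping with splittings of finitely generated free abelian groups. (Alternatively one could invoke the Smith normal form of an integer matrix whose rows generate $H$, but the inductive argument above is more self-contained.)
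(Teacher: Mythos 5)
The paper does not prove this lemma at all: it is stated with a citation to a standard algebra text (Theorem 1.6 of the cited reference), so there is no in-paper argument to compare against. Your proof is a correct and self-contained rendition of the classical ``stacked basis'' theorem by induction on $n$, using the minimal positive value $d_1$ attained by integer linear functionals on $H$ and the resulting primitivity of $y_1=w/d_1$ to split $\Z^n=\Z y_1\oplus\ker\phi_1$ and $H=\Z w\oplus(H\cap\ker\phi_1)$. The key divisibility observation $d_1\mid\psi(w)$ is handled correctly; you might spell out slightly more the companion fact $\phi_1(H)\subseteq d_1\Z$ (needed to build the complement of $\Z w$ inside $H$), which follows by the same minimality argument applied to $h-qw$. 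One small remark: your argument, as written, does not recover the divisibility chain $d_1\mid d_2\mid\cdots\mid d_r$ of the full Smith normal form, but the lemma as stated in the paper does not require it, so this is not a gap.
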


\begin{cor} \label{torsion_free_quotient}
For any $M \in \GL(m,\Z)$, $\frac{\Z^n}{E_1(M)}$ is free abelian. 
\end{cor}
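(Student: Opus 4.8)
The plan is to identify $E_1(M)$, as it appears in the quotient $\Z^m/E_1(M)$, with the fixed subgroup
$H \coloneqq \{v\in\Z^m : Mv=v\} = E_1(M)\cap\Z^m$ of $M$ acting on $\Z^m$ (this is the harmless abuse of notation implicit in the statement, since $E_1(M)$ as defined earlier is a subspace of $\F^m$), and then to show that $H$ is \emph{pure} in $\Z^m$, meaning that $kv\in H$ for some $v\in\Z^m$ and $k\in\N$ already forces $v\in H$. Granting purity, $\Z^m/H$ is a finitely generated abelian group with no torsion, and hence free abelian.

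First I would prove purity, which is a one-line computation: if $kv\in H$ then $k(Mv-v)=M(kv)-kv=0$ in $\Z^m$, and since the abelian group $\Z^m$ is torsion-free this gives $Mv=v$, i.e. $v\in H$.

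To deduce the stated conclusion one can either invoke the structure theorem for finitely generated abelian groups directly (a finitely generated torsion-free abelian group is free abelian), or argue concretely via \cref{subgroup_of_zn}: choose a free basis $y_1,\ldots,y_m$ of $\Z^m$ and integers $d_1,\ldots,d_r\in\N$ with $H=\langle d_1y_1,\ldots,d_ry_r\rangle$, where $r$ is the rank of $H$. Purity forces each $d_i=1$: indeed $d_iy_i\in H$ implies $y_i\in H$ by the previous paragraph, and writing $y_i=\sum_{j=1}^r c_jd_jy_j$ with $c_j\in\Z$ and comparing coefficients in the free basis $y_1,\ldots,y_m$ yields $c_id_i=1$, so $d_i=1$. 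Hence $H=\langle y_1,\ldots,y_r\rangle$ and $\Z^m/H\cong\Z^{m-r}$ is free abelian.

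There is no real obstacle here; the entire content is the one-line purity computation, and the only point requiring care is making explicit that the quotient in the statement is by $E_1(M)\cap\Z^m$.
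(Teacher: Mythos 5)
Your proof is correct and takes essentially the same route as the paper's: both invoke \cref{subgroup_of_zn} to obtain a free basis $y_1,\ldots,y_m$ of $\Z^m$ with $E_1(M)=\langle d_1y_1,\ldots,d_ry_r\rangle$, and both deduce $d_i=1$ from the torsion-free computation $d_iM(y_i)=M(d_iy_i)=d_iy_i$. Your framing of this as a general ``purity'' observation is a harmless repackaging of the same one-line calculation, and your explicit remark that the quotient is by $E_1(M)\cap\Z^m$ correctly resolves the abuse of notation left implicit in the paper.
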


\begin{proof}
Suppose $E_1(M) \subg \Z^n$ has rank $r$. \cref{subgroup_of_zn} tells us that there exists a free basis  $y_1, \ldots y_n$ of $\Z^n$ and $d_1, \ldots,  d_r \in \N$, such that $E_1(M) = \langle d_1y_1,  \ldots, d_r y_r \rangle.$ Given any $i \in [r]$, we have $d_i M(y_i) = M(d_iy_i) = d_i y_i$. Hence $M(y_i) =y_i$, i.e. $y_i \in E_1(M)$. Hence, we must have $d_i = 1$. Therefore, $\frac{\Z^n}{E_1(M)} \iso \langle  y_{r+1},  \ldots,  y_n \rangle $ is torsion free.
\end{proof}

\begin{lem} \label{product_of_n_nilpootent_matrices}
Let $Q_1, \ldots, Q_n \in \F^{n \times n}$ be a set of pairwise commuting matrices with all eigenvalues $0$, then $\prod _{i=1} ^n Q_i =0$.
\end{lem}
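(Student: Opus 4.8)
The plan is to prove by induction on $n$ that any family $Q_1,\ldots,Q_k$ of pairwise commuting $n\times n$ matrices over $\F$, all of which are nilpotent (equivalently all eigenvalues $0$), satisfies $Q_1\cdots Q_k=0$ whenever $k\ge n$; the statement in the lemma is the case $k=n$. The base case $n=1$ is immediate, since a $1\times1$ nilpotent matrix is the zero matrix. For the inductive step I would work over the algebraic closure of $\F$ (which does not change whether a product of matrices is zero), so that each $Q_i$ is strictly upper-triangularisable.

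The key observation is that commuting matrices have a common invariant structure. Since the $Q_i$ pairwise commute and each is nilpotent, $\bigcap_i\ker Q_i\neq\{0\}$: indeed, $\ker Q_1$ is a nonzero subspace (as $Q_1$ is nilpotent hence singular) that is invariant under every $Q_j$ (because $Q_jQ_1v=Q_1Q_jv=0$ for $v\in\ker Q_1$), so restricting $Q_2,\ldots,Q_k$ to $\ker Q_1$ and iterating, using that a nilpotent operator on a nonzero space has nonzero kernel, produces a common null vector. More useful for the induction: the common image $W:=Q_1(\F^n)+\cdots$ is not quite the right object, so instead I would pick a single nonzero vector $v$ with $Q_iv=0$ for all $i$, pass to the quotient $V':=\F^n/\langle v\rangle$ of dimension $n-1$, and let $\overline{Q_i}$ be the induced operators on $V'$. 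These still pairwise commute and are still nilpotent (eigenvalues are inherited by quotients), so by the inductive hypothesis any product of $n-1$ of them vanishes on $V'$; in particular $Q_1\cdots Q_{n-1}$ maps $\F^n$ into $\langle v\rangle$. Then $Q_n\bigl(Q_1\cdots Q_{n-1}(\F^n)\bigr)\subseteq Q_n\langle v\rangle=\{0\}$ since $v\in\ker Q_n$, and after reindexing (all the $Q_i$ play symmetric roles, or simply note we may put any chosen factor last by commutativity) we get $Q_1\cdots Q_n=0$.

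The main obstacle — really the only subtlety — is arranging the bookkeeping so that the inductive hypothesis applies cleanly: one must be careful that passing to the quotient $\F^n/\langle v\rangle$ genuinely reduces the dimension while preserving commutativity and nilpotence, and that the quotient statement ($Q_1\cdots Q_{n-1}V'=0$) is exactly what lets one conclude $Q_1\cdots Q_{n-1}\F^n\subseteq\langle v\rangle$. Everything else is routine linear algebra. Alternatively, and perhaps more transparently, one can simultaneously strictly-upper-triangularise the $Q_i$ via a result on commuting families of triangularisable operators (Lie's theorem / simultaneous triangularisation over the algebraic closure), after which the product of $n$ strictly upper-triangular $n\times n$ matrices is zero by the standard filtration argument: each strictly upper-triangular matrix shifts the standard flag down by one step, so the composite of $n$ of them kills $\F^n$. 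I would present the quotient-induction version, as it is self-contained and avoids invoking simultaneous triangularisation as a black box.
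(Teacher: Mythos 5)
Your proof is correct, but it takes a different route from the paper's. The paper's proof is a one-liner built on the following ``standard fact'': if $S$ and $T$ are commuting linear maps on $V$ with $S$ nilpotent (all eigenvalues $0$) and $T\neq 0$, then $\ker(T)\subsetneqq\ker(ST)$. Applying this iteratively to the partial products $T_k=Q_k\cdots Q_1$, one gets a strictly increasing chain $\{0\}=\ker T_0\subsetneqq\ker T_1\subsetneqq\cdots$ for as long as the partial product is nonzero; since the ambient space has dimension $n$, the chain cannot strictly increase $n+1$ times, so $T_n=0$. Your argument instead passes to the quotient $\F^n/\langle v\rangle$ by a common null vector $v$ (which you correctly produce by observing that $\ker Q_1$ is a nonzero $Q_j$-invariant subspace and iterating), applies the inductive hypothesis in dimension $n-1$, and finishes with $Q_n\langle v\rangle=\{0\}$. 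The two arguments are essentially dual views of the same phenomenon (growth of kernels versus shrinking of cokernels/quotients), but the paper's version avoids induction altogether and keeps the whole family of matrices in play at once, whereas yours reduces the dimension one step at a time. Your remark about simultaneous strict upper-triangularisation is a third valid route and arguably the most transparent, at the cost of invoking a bigger theorem. One minor point: you do not actually need to pass to the algebraic closure, since ``all eigenvalues $0$'' means the characteristic polynomial is $x^n$, equivalently $Q_i^n=0$, which is field-independent and is all that your invariance-and-quotient argument uses.
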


\begin{proof}
The result follows immaculately from this standard fact: Let $S$ and $T$ be two commuting linear maps on $V$. Suppose all eigenvalues of $S$ are $0$ and $T$ is a non-zero linear map, then $\ker (T) \subsetneqq \ker(ST)$.
\end{proof}

\subsection{Properties of $P$ when $\FP$ is free abelian and $\PFP$ is torsion free} \label{Properties_P_is_free_abelian}

Let $P$ be a poly-$\Z$ group with Hirsch length $h$, suppose that $\FP \iso \Z^n $ and $h(\PFP) =m=h-n$. 
 With a slight abuse of notation, we omit the isomorphism map and write $\v \in (\Z^n, +)$, to mean its corresponding elements $v \in (\FP,\cdot)$. Every automorphism $\theta$ of $\FP$ corresponds to a matrix $M \in \glnz$, and for $v \in \FP$, we will often write $\theta(v) = M(\v)$.  Furthermore, given a subgroup $V \subg \FP$, we will denote by $\V$ the corresponding subgroup in $\Z^n$. Suppose in addition that $\PFP$ is torsion free.  According to \cref{write_Polyz_group_with_torsion_free_quotient_as_semi_direct_product}, we have 
\begin{equation} \label{label_for_torsion_free_polycyclic}
P \iso \FP \rtimes _{\p_1} \langle t_1\rangle \rtimes_{\p_2} \langle t_2 \rangle\rtimes \ldots \rtimes _{\p_{m}} \langle t_m\rangle 
\end{equation}
where each $t_i$ generates a copy of $\Z$, and each $\p_i$  is an automorphism for $\FP \rtimes _{\p_1} \langle t_1\rangle \rtimes_{\p_2} \langle t_2 \rangle\rtimes \ldots \rtimes _{\p_{i-1}} \langle t_{i-1}\rangle$. For $i =0, 1,2, \ldots, m$ we will define $K_i \coloneqq \FP \rtimes \langle t_1\rangle \rtimes \langle t_2\rangle\rtimes \ldots \rtimes\langle t_i\rangle.$ By \cref{normal_subgroup_contains_fit}, it follows that  
\begin{equation} \label{fitting_of_K_i}
\FP = \Fit(K_0) \subset \Fit(K_1) \subset \ldots \subset \Fit(K_m) = \FP.
\end{equation}
Hence, for each $i$ we have $\FP=\Fit(K_i) \csubg K_i$.  Therefore, each $\phi_i$  preserves $ \Fit (P) $, so it induces an automorphism on $\FP \iso \Z^n$, which corresponds to a matrix $M_i \in \GL(n,\Z)$.

\begin{lem}  \label{same_coset_M_and_Mi_commute}
Suppose $\FP$ is free abelian and $\PFP$ is torsion free.  Let $\p \in \Aut (P)$. Suppose $i \in[m]$ is such that  $\p(t_i) \in \FP t_i$, i.e. $\p(t_i) = u t_i$ for some $u \in \FP$.
Let $M$ be the matrix corresponding to $\p |_{\FP}$. Then $M$ commutes with $M_i$.
\end{lem}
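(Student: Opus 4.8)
The plan is to compute the conjugation action of $t_i$ on $\FP$ inside $P$, and then compare it with the effect of pushing the relation $\p(t_i) = u t_i$ through the automorphism $\p$. Write $M_i$ for the matrix of conjugation by $t_i$ on $\FP \iso \Z^n$, so that $t_i v t_i^{-1} = M_i(\v)$ for all $v \in \FP$, and write $M$ for the matrix of $\p|_{\FP}$, so that $\p(v) = M(\v)$ (using here that $\p$ preserves $\FP$, since $\FP = \Fit(P)$ is characteristic in $P$ by \cref{normal_subgroup_contains_fit} applied with $N = P$, or simply because the Fitting subgroup is characteristic).

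First I would apply $\p$ to the identity $t_i v t_i^{-1} = M_i(\v)$, valid for an arbitrary $v \in \FP$. The left-hand side becomes $\p(t_i)\,\p(v)\,\p(t_i)^{-1} = (u t_i)\,\p(v)\,(u t_i)^{-1} = u\,\big(t_i\,\p(v)\,t_i^{-1}\big)\,u^{-1}$. Since $\FP$ is abelian and $t_i\,\p(v)\,t_i^{-1} \in \FP$, conjugating by $u \in \FP$ does nothing, so the left-hand side equals $t_i\,\p(v)\,t_i^{-1} = M_i(M(\v))$. The right-hand side becomes $\p(M_i(\v))$; since $M_i(\v)$ again denotes an element of $\FP$ and $\p$ acts on $\FP$ as the linear map $M$, this equals $M(M_i(\v))$. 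Comparing, we get $M_i M(\v) = M M_i(\v)$ for all $\v \in \Z^n$, i.e. $M_i M = M M_i$, as required.

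The only genuinely delicate point — and the step I expect to be the main obstacle to write cleanly rather than conceptually hard — is the bookkeeping in the semidirect-product decomposition \eqref{label_for_torsion_free_polycyclic}: one must be careful that "$\p(t_i) = u t_i$ for some $u \in \FP$" is being used in the group $P$ itself (not merely modulo some $K_j$), that conjugation by $u \in \FP$ genuinely acts trivially on $\FP$ because $\FP$ is abelian, and that the identification of $\p|_{\FP}$ with the matrix $M$ and of conjugation-by-$t_i$ on $\FP$ with $M_i$ is consistent with the one fixed just before the lemma. Once those identifications are pinned down, the computation above is a one-line verification, so I would keep the written proof short, essentially just displaying the chain
\[
M_i M (\v) = t_i\,\p(v)\,t_i^{-1} = \p(t_i)\,\p(v)\,\p(t_i)^{-1} = \p\big(t_i v t_i^{-1}\big) = \p\big(M_i(\v)\big) = M M_i(\v)
\]
and remarking that it holds for all $\v \in \Z^n$.
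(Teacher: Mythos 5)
Your proof is correct and is essentially the same as the paper's: both apply $\p$ to the conjugation relation $t_i v t_i^{-1}=M_i\v$, use the decomposition $\p(t_i)=ut_i$, and rely on the fact that conjugation by $u\in\FP$ is trivial on $\FP$ because $\FP$ is abelian. The two write the resulting chain of equalities in opposite directions, but the content is identical.
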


\begin{proof}
Since $\p$  is an automorphism of $P$, for each $v \in \FP$, we have $M M_i \v=\p ( M_i \v)=\p (t_i v t_i\i)=\p (t_i) \p (v) \p (t_i)\i  =  u t_i (M \v) t_i\i u\i = t_i (M \v) t_i\i    =M_i M \v$.
\end{proof}

\begin{cor} \label{commute}
Suppose both $\FP$ and $\PFP$ are free abelian. Then

\begin{enumerate} [(i)]
\item the matrices $M_i$'s pairwise commute;
\item for every $i \in [m]$, we have $E_1(M_i) \nsubg P$.
\end{enumerate}

\end{cor}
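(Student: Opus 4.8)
The plan is to derive both parts from \cref{same_coset_M_and_Mi_commute}, whose hypothesis requires, for a given automorphism $\p$ and index $i$, that $\p(t_i) \in \FP t_i$. First I would prove part (i). Fix $i < j$ in $[m]$. The key observation is that $t_j$ normalises the subgroup $K_{j-1} = \FP \rtimes \langle t_1 \rangle \rtimes \cdots \rtimes \langle t_{j-1} \rangle$ (indeed $K_{j-1} \nsubg K_j$ by construction of the successive semidirect product), so conjugation by $t_j$ restricts to an automorphism $\p$ of $K_{j-1}$. I claim $\p(t_i) \in \FP t_i$ for every $i < j$. To see this, note that conjugation by $t_j$ induces the identity on each quotient $K_{j-1}/\Fit(P)$ down to $\Z^{j-1}$-level: more precisely, in $P/\FP \iso \langle t_1 \rangle \rtimes \cdots \rtimes \langle t_m \rangle$, which is free abelian by hypothesis (since $\PFP$ is free abelian), conjugation by $t_j$ acts trivially, so $t_j t_i t_j^{-1} \equiv t_i \pmod{\FP}$, i.e. $\p(t_i) = u t_i$ for some $u \in \FP$. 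Now the matrix corresponding to $\p|_{\FP}$ is, by definition, exactly $M_j$ (conjugation by $t_j$ on $\FP$), and \cref{same_coset_M_and_Mi_commute} applied to this $\p$ and this $i$ yields that $M_j$ commutes with $M_i$. Since $i<j$ were arbitrary, all the $M_i$ pairwise commute.

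For part (ii), fix $i \in [m]$ and consider $E_1(M_i) \subseteq \Z^n$, identified with the corresponding subgroup $V_i \subseteq \FP$. Since by part (i) all $M_j$ commute with $M_i$, each $M_j$ preserves the eigenspace $E_1(M_i)$ (an eigenspace of $M_i$ is invariant under any matrix commuting with $M_i$); equivalently, each conjugation automorphism $\p_j$ of $\FP$ fixes $V_i$ setwise, and of course $\FP$ itself (being abelian) normalises $V_i$. Since $P$ is generated by $\FP$ together with $t_1, \ldots, t_m$, and each of these generators normalises $V_i$, we conclude $V_i \nsubg P$, i.e. $E_1(M_i) \nsubg P$.

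The main obstacle I anticipate is the bookkeeping in part (i): one must be careful that $K_{j-1}$ is genuinely normal in $K_j$ so that conjugation by $t_j$ really does restrict to an automorphism of $K_{j-1}$ (this is exactly the content of the iterated semidirect product \eqref{label_for_torsion_free_polycyclic}), and that the matrix associated to this restricted automorphism on $\FP$ is precisely $M_j$ as defined just before the corollary. One also needs the hypothesis that $\PFP$ is free abelian (not merely torsion-free) to guarantee that conjugation by $t_j$ fixes $t_i$ modulo $\FP$ rather than only up to a torsion element; this is where the stronger hypothesis of \cref{commute} over \cref{same_coset_M_and_Mi_commute} is used. Once these points are nailed down, both statements follow in a few lines.
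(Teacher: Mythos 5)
Your proof is correct and takes essentially the same approach as the paper: part (i) applies \cref{same_coset_M_and_Mi_commute} to conjugation by $t_j$ (whose matrix on $\FP$ is $M_j$, and which sends $t_i$ into $\FP t_i$ precisely because $\PFP$ is abelian), and part (ii) uses commutativity to show every generator of $P$ normalises $E_1(M_i)$. One minor simplification: you need not restrict conjugation by $t_j$ to $K_{j-1}$, since it is already an automorphism of all of $P$ and \cref{same_coset_M_and_Mi_commute} applies to it directly.
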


\begin{proof}
(i) follows immediately from \cref{same_coset_M_and_Mi_commute}. We will prove (ii). Let $\v\in E_1(M_i)$.  Given $g\in P$, we can write $g = u t_1^{\alpha_1} t_2^{\alpha_2} \ldots t_m^{\alpha_m}$  for some $u \in \FP$, and $\a_i \in \Z$.  It is clear that  $gvg\i = M_1^{\alpha_1} M_2^{\alpha_2} \ldots M_m^{\alpha_m} \v.$
By (i), we have $M_i( M_1^{\alpha_1} M_2^{\alpha_2} \ldots M_m^{\alpha_m} \v) = M_1^{\alpha_1} M_2^{\alpha_2} \ldots M_m^{\alpha_m} (M_i\v) = M_1^{\alpha_1} M_2^{\alpha_2} \ldots M_m^{\alpha_m} \v.$ Therefore, $M_1^{\alpha_1} M_2^{\alpha_2} \ldots M_m^{\alpha_m} \v\in E_1(M_i)$. 
\end{proof}

The following theorem is a special case of \citep[Exercise~7 on p.~92]{polycyclic}.

\begin{thm} \label{v_nilp_implies_v_abelian}
Suppose that $\FP$ is torsion-free abelian and $\PFP$ is torsion-free virtually nilpotent. Then $P$ has a finite-index normal subgroup $P'$ such that $\FPP = \FP$ and $P'/\Fit(P')$ is abelian.

\end{thm}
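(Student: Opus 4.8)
The plan is to reduce to a faithful linear action of $\PFP$ on $\FP$ and then invoke the structure theory of solvable linear groups.

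\emph{Reduction and faithfulness.} First I would arrange that $\PFP$ is (torsion-free) nilpotent rather than merely virtually so. Being polycyclic, $\PFP$ has a finite-index nilpotent subgroup $N$ (automatically torsion-free), and the intersection $N^*$ of all subgroups of $\PFP$ of index $[\PFP:N]$ is a characteristic, finite-index, torsion-free nilpotent subgroup of $\PFP$ (characteristic because automorphisms permute subgroups of a given index, of finite index by \cref{lem:finitely.many.subgroups.index}, nilpotent as a subgroup of $N$). Pulling $N^*$ back through $P\to\PFP$ — legitimate since $\FP$ is characteristic in $P$ — gives a characteristic finite-index subgroup $P_1\nsubg P$ with $\FP\subg P_1$, with $P_1/\FP$ torsion-free nilpotent, and with $\Fit(P_1)=\FP$ by \cref{normal_subgroup_contains_fit}. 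A characteristic subgroup of $P_1$ is again characteristic, hence normal, in $P$, so it suffices to prove the theorem for $P_1$; I henceforth assume $\PFP$ is torsion-free nilpotent. Now $C_P(\FP)$ is normal in $P$ (as $\FP$ is characteristic) and is central-by-nilpotent — $\FP$ is central in it and $C_P(\FP)/\FP$ is a subgroup of the nilpotent group $\PFP$ — hence nilpotent, so $C_P(\FP)\subg\FP$ and therefore $C_P(\FP)=\FP$. Conjugation thus exhibits $Q:=\PFP$ as a finitely generated torsion-free nilpotent subgroup of $\Aut(\FP)\iso\glnz$, where $n=h(\FP)$.

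\emph{Killing the unipotent part.} By Mal'cev's theorem on solvable linear groups in characteristic zero, $Q$ has a finite-index subgroup conjugate over $\C$ to a group of upper-triangular matrices; intersecting all subgroups of $Q$ of the relevant index, I fix a characteristic finite-index $Q_0\nsubg Q$ and a basis of $\C^n$ in which $Q_0$ is upper triangular. Let $V_0\nsubg Q_0$ be the set of unipotent elements of $Q_0$, i.e.\ the intersection of $Q_0$ with the upper unitriangular matrices; then $Q_0/V_0$ is abelian, being embedded via its diagonal entries into the diagonal torus. Since $Q$ normalises $Q_0$ and conjugation preserves unipotence, $V_0$ is in fact normal in $Q$, so its preimage $V\nsubg P$ under $P\to Q$ is normal in $P$, contains $\FP$, and has $V/\FP\iso V_0$. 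As $V_0$ is a finitely generated group of unitriangular matrices acting unitriangularly on $\FP$, a routine computation with the lower central series — using that a product of $n$ strictly upper-triangular $n\times n$ matrices vanishes — shows $V$ is nilpotent. Hence $V\subg\FP$, forcing $V_0=\{1\}$; in particular $Q_0$ is abelian and, being upper triangular, has no unipotent element other than the identity.

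\emph{Conclusion, and the main obstacle.} I would finish by letting $P'$ be the preimage of $Q_0$ under $P\to Q$: this is a characteristic, finite-index subgroup of $P$ with $P'/\FP\iso Q_0$ abelian. For $\FPP=\FP$: the inclusion $\FP\subg\FPP$ holds since $\FP$ is a nilpotent normal subgroup of $P'$; conversely $\FPP$ is nilpotent with $\FP$ normal in it, so $\FPP$ acts on $\FP$ by unipotent automorphisms, whence the image of $\FPP$ in $Q_0$ consists of unipotent matrices and so is trivial, giving $\FPP\subg C_P(\FP)=\FP$. Thus $\FPP=\FP$ and $P'/\FPP\iso Q_0$ is abelian, as required. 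The crux is the second step: the rest is bookkeeping with the Fitting subgroup resting on \cref{normal_subgroup_contains_fit}, but there one genuinely needs the virtual triangularisability of solvable linear groups, together with the fiddly verification that $V$ is nilpotent. Alternatively one could simply cite \citep[Exercise~7 on p.~92]{polycyclic}.
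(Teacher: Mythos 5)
Your proof is correct, but it takes a genuinely different route from the paper's. The paper proceeds by an explicit induction up the tower $\FP \rtimes \Z_{(1)} \rtimes \cdots \rtimes \Z_{(m)}$, passing to finite index at each stage: it invokes the eigenvalue characterisation of virtual nilpotence for $\Z^n\rtimes\Z$ (\cref{classify_Zn_by_Z}, ultimately Wolf's theorem) to force the top matrix to have all eigenvalues $1$, and then runs a hands-on matrix computation, built around the normal form of \cref{matrix_form} and the ``transitivity of commutativity'' lemma (\cref{commutivity_is_transitive_if}), to show that after taking powers the conjugation matrix on $K_i'/\Fit(K_i')$ is actually the identity. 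You instead observe that $C_P(\FP)=\FP$ (a clean reduction the paper never states explicitly, valid precisely because $\FP$ is abelian and equals $\Fit(P)$), so that $\PFP$ embeds in $\GL(n,\Z)$; you then invoke Mal'cev's virtual triangularisability of solvable linear groups as a black box, extract the normal unipotent subgroup $V_0$ of a characteristic finite-index triangular $Q_0\le Q$, and kill it by showing its preimage $V$ in $P$ is nilpotent normal and hence $\le\Fit(P)=\FP$. Your route is shorter and more conceptual given Mal'cev/Lie--Kolchin, and the ``$V$ nilpotent $\Rightarrow V\le\Fit(P)=\FP\Rightarrow V_0=1$'' trick is a nicer way to kill the unipotent part than the paper's column-by-column argument; the paper's route has the advantage of being self-contained in the toolkit it has already built for $\Z^n\rtimes\Z$ groups and of avoiding any appeal to linear algebraic group theory. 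Both are, as you note, instances of the exercise in Segal's book that the paper itself cites. Two small points worth tightening in yours: after you replace $P$ by $P_1$, the preimage $V$ should of course be taken in $P_1$ (you write $P\to Q$), and in verifying $V$ nilpotent you should be explicit that the relevant flag lives over $\C$ (or $\overline\Q$), not over $\Z$ — the matrices $I-M$ for $M\in V_0$ are strictly upper triangular only after the complex base change, which is all you need since a product of $n$ such matrices already vanishes over $\C$.
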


\begin{proof}
We will prove by induction on $i$ that for each $i = 1, \ldots m$ there exists $K_i' \nsubg \f K_i$ with $\FKiP = \FP$ such that $K_i'/\FKiP \iso \Z^i$, the case $i=m$ being the statement of the theorem.
For $i=1$, it is clear as $K_1 = \FP \rtimes \Z$.
Now, suppose the claim holds for $i$ and let $K'_i$ be the subgroup that satisfies the assumptions in the induction hypotheses. Given an automorphism $\p$ on $K_{i}'$, we will denote ${\bar{{ \p}}}$ to be the induced  automorphism on $K_{i}'/\FKiP$.  By \cref{big_enough_l_K_lZ_is_a_subgroup}, there exists $\ell\in\N$, such that $K_i' \rtimes_{ \p_{i+1}} \ell\Z_{(i+1)} \subg K_{i+1}$.  By the induction hypothesis,  $\FP = \Fit(K_i') \csubg K_i' \nsubg K_i' \rtimes_{ \p_i} \ell\Z_{(i+1)}$, so $\FP \nsubg K_i' \rtimes_{ \p_i} \ell\Z_{(i+1)}$.   Since $\PFP$ is virtually nilpotent, the subgroup 
$   \frac{K_i' \rtimes_{ \p_i}\ell\Z_{(i+1)}}{\FP} \subg \PFP$
is also virtually nilpotent. Note that
\begin{equation*} 
\frac{K_i' \rtimes_{ \p_i}\ell\Z_{(i+1)}}{\FP} \cong \frac{K_{i}'}{\FP}\rt_{\bar{{ \p_i}} } \ell\Z \cong \Z^i  \rt _{\bar{{ \p_i}}} \ell  \Z \iso \Z^i  \rt_{\bar{{ \p_i}}^\ell}  \Z.
\end{equation*}
Let $Q_{-} \in \GL(i,\Z)$ be the matrix corresponding to ${\bar{{ \p_i}}^l}$. By \cref{classify_Zn_by_Z}, all eigenvalue of $Q_{-}$ have absolute value $1$. \ Therefore, there exists $k\in\N$ such that all eigenvalues of ${Q_{-}}^{k}$ are $1$. Define $K_{i+1}' =  K_i' \rt _{ \p_{i+1}} \ell k \Z \iso K_i'\rtimes _{{ \p_i}^{\ell k}} \Z$, and let $\sigma = { \p_i}^{\ell k}$. 
Then $\KF{i}$ has a basis $\{\FP x_1, \FP x_2, \ldots,  \FP x_i\}$ such that the matrix $Q$ corresponding to $\bar{\sigma}$ is an upper-triangular matrix with the form described in \cref{matrix_form}. Since $K_i'/\FKiP$ is abelian,  $K_i'$ has a subnormal series
$$
\{1\} \nsubg \FP \nsubg \langle \FP, x_1 \rangle \nsubg 
 \ldots \nsubg \langle \FP, x_1, x_2, \ldots x_i \rangle = K_i'.
$$

Note that all the successive quotients are isomorphic to $\Z$. It follows  by induction that 
$$K_i' \iso \FP \rtimes _{\sigma_1} \langle x_1\rangle \rtimes_{\sigma_2} \langle x_2 \rangle\rtimes \ldots \rtimes _{\sigma_{i}} \langle x_i\rangle,$$
where each $\sigma_j$  is an automorphism for $\langle \FP, x_1, x_2, \ldots x_{j-1}  \rangle$. 
 According to \eqref{fitting_of_K_i}, each $\sigma_j$  preserves $\Fit (P)$, hence it induces an automorphism on $\FP \iso \Z^n$, which corresponds to a matrix $M_j \in \GL(n,\Z)$.   Let $M   \in \GL(n,\Z)$ be the matrix that corresponds to the automorphism induced by $\sigma$ on $\FP$.
 We claim that $Q$ is the identity matrix. We will prove by induction that for each $k \in [i]$, we have $col_k(Q)= e_k$,  this in particular implies that  $M$ commutes with $M_k$ by \cref{same_coset_M_and_Mi_commute}.  Clearly, $col_1(Q)= e_1$. Suppose the statement holds for the first $k-1$ columns.  By the construction of $Q$, 
we have $Q(\FP x_k)=   \FP x_k+ \FP q x_1=  \FP(x_1^q x_k)$, so  $\sigma (x_k)=u_k x_1^q x_k$ for some $u_k \in \FP$. Since $\sigma$ is an automorphism on $K_i'$, for all $v \in \FP$ we have $
M M_k \v=\sigma( M_k \v)=\sigma(x_k v x_k\i)=\sigma(x_k) \sigma(\v) \sigma(x_k)\i  
= u_k x_1^q x_k  (M \v) (u_k x_1^q x_k)\i    = M_1^q M_k M \v$, so we get $M M_k  = M_1^q M_k M$.  We also have that $M_1M_k =M_kM_1$ by \cref{commute}, and $M M_1  = M_1 M$ by the base case. Since  $\FP \rtimes _{\sigma_1} \langle x_1\rangle $ is not virtually nilpotent, \cref{classify_Zn_by_Z} tells us that $M_1$ has an eigenvalue with absolute value not equal to $1$, so $q=0$ by \cref{commutivity_is_transitive_if} and $col_k(Q)= e_k$ as claimed. Therefore, $Q$  is indeed the identity matrix, which implies the $\bar{\sigma}$, the automorphism induced by  $\sigma = { \p_i}^{\ell k}$ on the quotient $K_i'/\FKiP$, is the identity automorphism. 
Finally, the definition of $K_{i+1}'$ and \cref{normal_subgroup_contains_fit} tell us that  $\FP = \Fit(K_i') = \Fit(K_{i+1}')$, hence  $
 K_{i+1}'/\Fit(K_{i+1}')  \iso  (K_i'/\Fit(K_{i}')) \rtimes _{\bar{ \p_i}^{\ell k}} \Z \iso \Z^{i+1}$, which concludes the proof. \qedhere

\end{proof}

\subsection{Groups of the form $P=\Z^n \rtimes \Z^m$}
In this subsection, we will be looking at a class of special  poly-$\Z$ groups of the form $P=\Z^n \rtimes \Z^m$, we will be using the following labelling: 
\begin{align} 
P &\iso \Z^n \rtimes_\Phi \langle t_1, t_2, \ldots t_m \rangle  \label{labelling_zn_by_zm}\\
&\iso  \left\langle 
       \begin{array}{l|cl}
            \FP,            & \FP \iso \Z^n ; \quad t_i v t_i\i = M_i \v \text{ for every } v \in \FP; \\
            t_1, \ldots t_m  & t_i t_j t_i \i =  t_j \text{ for } i>j            \label{relation_zn_by_zm}                                  
        \end{array}
     \right\rangle
\end{align}
where $\Phi$ is a homomorphism from  $\Z^m$  to $\Aut(\Z^n) = \GL(n, \Z)$. The map $\Phi$ is entirely determined by the automorphisms $\phi_i \coloneqq \Phi(t_i)$ where $i \in [m]$, each $\phi_i$ corresponds to  a matrix $M_i \in \GL(n,\Z)$. According to \cref{commute}, the matrices $M_i$'s pairwise commute.

\begin{lem} \label{commutator}
Let $G$ be group and $H \subg G$ be a subgroup generated by $T$.
Suppose $H \subeq C_G([H,G])$, then $ [H,G] = \langle [t,g] \mid t \in T, g \in G\rangle.$
\end{lem}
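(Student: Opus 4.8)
\textbf{Proof plan for \cref{commutator}.}
The statement says: if $H = \langle T\rangle \le C_G([H,G])$, then $[H,G]$ is generated by the single commutators $[t,g]$ with $t\in T$, $g\in G$. The plan is to show that the obvious generating set $\{[h,g]: h\in H,\ g\in G\}$ of $[H,G]$ can be refined to $\{[t,g]: t\in T,\ g\in G\}$ using the standard commutator identities together with the centrality hypothesis. Write $L = \langle [t,g]: t\in T,\ g\in G\rangle$; clearly $L \le [H,G]$, and I want the reverse inclusion.

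First I would record the identity $[xy,g] = [x,g]^y\,[y,g] = [x,g]\,[[x,g],y]\,[y,g]$, valid in any group. The hypothesis $H \le C_G([H,G])$ means every element of $H$ — in particular $y\in H$ — commutes with every element of $[H,G]$, and since $[x,g]\in[H,G]$ whenever $x\in H$, the correction term $[[x,g],y]$ is trivial. Hence for $x,y\in H$ and $g\in G$ we get the clean bilinearity-type relation $[xy,g] = [x,g]\,[y,g]$. Similarly $[x^{-1},g] = [x,g]^{-1}$ modulo the same centrality argument (or deduce it from the previous relation applied to $x$ and $x^{-1}$). An easy induction on word length then shows that for any word $w = t_1^{\pm1}\cdots t_k^{\pm1}$ in the generators $T$ we have $[w,g] = \prod_i [t_i,g]^{\pm1} \in L$.

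Since every $h\in H$ is such a word in $T$, this gives $[h,g]\in L$ for all $h\in H$, $g\in G$, and therefore $[H,G] = \langle[h,g]: h\in H, g\in G\rangle \le L$. Combined with $L\le[H,G]$ this proves $[H,G] = L$, as required.

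The only mild subtlety — and the step I would be most careful about — is making sure the centrality hypothesis is invoked correctly: we need $[[x,g],y]=1$, and this holds precisely because $[x,g]\in[H,G]$ and $y\in H\le C_G([H,G])$; no assumption on $g$ is needed beyond $g\in G$. Everything else is a routine commutator-calculus induction, so there is no real obstacle here.
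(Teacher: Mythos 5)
Your proof is correct and takes essentially the same approach as the paper: expand $[xy,g]$ using the standard commutator identity and observe that the correction term is killed by the centrality hypothesis $H\le C_G([H,G])$. The paper merely cites the identity $[xz,y]=x[z,y]x^{-1}[x,y]$ and leaves the induction implicit, whereas you spell out the word-length induction and the precise point where centrality is used — same route, more detail.
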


\begin{proof}
This follows from a commutator law: for all $x, y ,z \in G$, $[xz,y] = x[z,y]x \i [x,y]$.
\end{proof}

\begin{lem} \label{lower_central_series_of_zn_zm}
Suppose $P=\Z^n \rtimes \Z^m$. Let $X=\{I-  M_1^{\alpha_1} M_2^{\alpha_2} \ldots M_m^{\alpha_m} \mid \a_j \in \Z\}\subeq \Z^{n\times n}$. Denote $P_i$ the $i$-th term in the lower central series of $P$. 
Then  \
\begin{enumerate} [(i)]
\item for every $i \geq 1$, $P_{i} \subeq C_P ( [P_{i},P])$;
\item for every $i \geq 1$, $P_{i} =  \langle X^i \Z^n \rangle = \langle Q \v \mid Q \in X^i, \v \in \Z^n \rangle$   where $X^i$ is the sets of products of $i$ matrices in $X$;
\item if for every $i\ in [m]$ all the eigenvalues of the matrix $M_i$ are $1$ then $P$ is nilpotent; in particular, if all the eigenvalues of the every $M_i$ have absolute value $1$, then $P$ is virtually nilpotent.
\end{enumerate}
\end{lem}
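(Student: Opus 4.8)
The plan is to prove parts (i), (ii), (iii) in order, since each feeds into the next. For part (i), the key observation is that $\FP=\Z^n$ is abelian, so for any $i$ the subgroup $P_i$ is generated by $P_i\cap\FP$ together with (images of) the $t_j$, and that $[P_i,P]$ lies inside $\FP$ (because $P/\FP\cong\Z^m$ is abelian, every commutator in $P$ of weight $\ge 2$ lands in $\FP$, actually already $P_2\subseteq\FP$). Since $\FP$ is abelian, $\FP$ centralises $[P_i,P]\subseteq\FP$, so it remains to check that $P_i$ is generated by $\FP$-elements together with elements acting trivially by conjugation on $[P_i,P]$; but since the $M_j$ pairwise commute (by \cref{commute}), conjugation by any $t_j$ fixes $X^{i}\Z^n$ as a set, so conjugation by $t_j$ preserves — and in fact acts on — $[P_i,P]$; a short induction using $[P_i,P]\subseteq\FP$ then gives $P_i\subseteq C_P([P_i,P])$. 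I expect the cleanest route is simultaneous induction on $i$ with part (ii).

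For part (ii), I would argue by induction on $i$. The base case $i=1$ is $P_1=P$, and the claim should be interpreted as $X^1=\{I\}\cup\{I-M_1^{\a_1}\cdots M_m^{\a_m}\}$ generating the $\FP$-part; more naturally one shows $P_2=\langle X\,\Z^n\rangle$ directly: $[v,t_j]=v^{-1}(t_j^{-1}v t_j)= (M_j^{-1}-I)\v$ and more generally $[v,g]=(M_1^{\a_1}\cdots M_m^{\a_m}-I)\v$ for $g=u t_1^{\a_1}\cdots t_m^{\a_m}$, and these generate $[P,P]$ by \cref{commutator} applied with $H=P$, $T=\FP\cup\{t_j\}$ (using part (i) with $i=1$ to get $H\subseteq C_G([H,G])$), noting the $t_j$ contribute nothing since $t_it_jt_i^{-1}t_j^{-1}=1$. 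For the inductive step, assuming $P_i=\langle X^i\Z^n\rangle$, apply \cref{commutator} again with $H=P_i$ (whose hypothesis $H\subseteq C_G([H,G])$ is exactly part (i)) and a generating set $T$ consisting of the generators $Q\v$ of $P_i$: then $[Q\v,\,u t_1^{\a_1}\cdots t_m^{\a_m}] = (M_1^{\a_1}\cdots M_m^{\a_m}-I)Q\v$, since $Q\v\in\FP$ is centralised by $u\in\FP$ and conjugation by the $t_j$ multiplies $Q\v$ by $M_1^{\a_1}\cdots M_m^{\a_m}$, which commutes with $Q$. This shows $[P_i,P]=\langle (M_1^{\a_1}\cdots M_m^{\a_m}-I)Q\v\rangle = \langle X^{i+1}\Z^n\rangle$, completing the induction; one also updates part (i) for $i+1$ along the way, using that the matrices in $X^{i+1}$ are preserved setwise under conjugation by each $t_j$ since the $M_j$ commute with everything in $X$.

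For part (iii), suppose every $M_j$ has all eigenvalues equal to $1$. Then each $N_j:=I-M_j$ is nilpotent, and the $N_j$ pairwise commute (since the $M_j$ do). Every element of $X$ is of the form $I-M_1^{\a_1}\cdots M_m^{\a_m}$, which expands as a polynomial with no constant term in the commuting nilpotent matrices $N_1,\dots,N_m$ (write $M_j=I-N_j$ and $M_j^{\a_j}=(I-N_j)^{\a_j}=I-(\text{something}\cdot N_j)$, valid for negative $\a_j$ too by inverting, since $M_j^{-1}=I+N_j+N_j^2+\cdots$ is again $I$ minus a polynomial in $N_j$ with no constant term). Hence every element of $X$ lies in the ideal generated by $N_1,\dots,N_m$ in the commutative ring they generate; since that ring is a finitely generated commutative ring all of whose generators are nilpotent, it is itself nilpotent, so there is $c$ with $X^c=\{0\}$, i.e. every product of $c$ matrices from $X$ vanishes. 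By part (ii) this gives $P_{c+1}=\{0\}$ (as a subgroup of $\FP$), so $P$ is nilpotent of class at most $c$. For the final sentence: if every $M_j$ merely has all eigenvalues of absolute value $1$, then by Kronecker's theorem the eigenvalues (being algebraic integers with all conjugates on the unit circle) are roots of unity, so some common power $M_j^{N}$ has all eigenvalues $1$ for all $j$; replacing the generating set $\{t_1,\dots,t_m\}$ by $\{t_1^N,\dots,t_m^N\}$ yields a finite-index subgroup $\Z^n\rtimes(N\Z)^m$ to which the nilpotent case applies, so $P$ is virtually nilpotent.

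I expect the main obstacle to be bookkeeping in parts (i) and (ii): making the simultaneous induction airtight, in particular verifying carefully that \cref{commutator} applies at each stage (the hypothesis $H\subseteq C_G([H,G])$ must be established before invoking it) and that conjugation by the $t_j$ genuinely acts on $[P_i,P]$ as multiplication by a matrix commuting with everything in $X^i$ — this is where the pairwise commutativity of the $M_j$ from \cref{commute} is essential. The ring-theoretic nilpotence argument in part (iii) is standard but should be stated carefully to cover negative exponents $\a_j$.
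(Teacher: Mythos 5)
Your outline of parts (ii)--(iii) matches the paper's approach closely, but there is a genuine gap in the way you treat the base case of part (ii), stemming from your attempt to prove part (i) for $i=1$. Part (i) for $i=1$ reads $P\subseteq C_P([P,P])$, equivalently $[P,P]\leq Z(P)$, i.e.\ $P$ is nilpotent of class at most $2$ --- and this is simply false for a general $\Z^n\rtimes\Z^m$ (take $n=m=1$ with $M_1=(2)$: then $[P,P]$ equals $\Z$ inside $\Z^n$, and $t_1$ acts on it by doubling, not centrally). Your observation that ``conjugation by any $t_j$ fixes $X\Z^n$ as a set'' shows only that $[P,P]$ is normal in $P$, not that $t_j$ acts trivially on it; in general it does not. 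Consequently the hypothesis $H\subseteq C_G([H,G])$ of \cref{commutator} fails when $H=P$, and your invocation of that lemma for the base case of (ii) is unjustified. (The conclusion for the base case happens to be true anyway, because the set of single commutators $\{(I-M_1^{\alpha_1}\cdots M_m^{\alpha_m})\v\}$ already generates a normal subgroup of $P$, but your argument does not establish this.)

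The fix is what the paper does. For $i\geq 2$ both $P_i$ and $[P_i,P]$ lie inside the abelian group $\Z^n$, so (i) is immediate there, and \cref{commutator} applies in the inductive step exactly as you describe. The base case should instead be done by a short direct computation: writing $h=v\,t_1^{\beta_1}\cdots t_m^{\beta_m}$ and $g=u\,t_1^{\alpha_1}\cdots t_m^{\alpha_m}$ one finds
\[
[h,g]=(I-M_1^{\alpha_1}\cdots M_m^{\alpha_m})\v-(I-M_1^{\beta_1}\cdots M_m^{\beta_m})\u\in\langle X\Z^n\rangle,
\]
and the reverse containment is given by $(I-M_1^{\alpha_1}\cdots M_m^{\alpha_m})\v=[v,\,t_1^{\alpha_1}\cdots t_m^{\alpha_m}]$; no appeal to \cref{commutator} is needed for $i=1$. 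Your part (iii) argument --- commuting unipotent matrices give commuting nilpotent $I-M_j$, so length-$n$ products from $X$ vanish, plus Kronecker's theorem and passage to a finite-index subgroup for the ``absolute value $1$'' case --- is correct and is exactly the paper's route (the vanishing of long products is the paper's \cref{product_of_n_nilpootent_matrices}).
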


\begin{proof}
(i) follows from the fact that both $P_{i} $ and $ [P_{i},P]$ are subgroups of $\Zn$ . We will prove (ii) via induction on $i$.  Given $g\in P$, we can write $g = u t_1^{\alpha_1} t_2^{\alpha_2} \ldots t_m^{\alpha_m}$  for some $u \in \Z^n$, and $\a_j \in \Z$.  Suppose $i = 1$. Given $ h \in P$, we can write $h = v t_1^{\beta_1} t_2^{\beta_2} \ldots t_m^{\beta_m}$  for some $v \in \Z^n$, and $\b_j \in \Z$. Then we have  
\begin{align*} 
hgh\i g\i &= ( v t_1^{\beta_1} t_2^{\beta_2} \ldots t_m^{\beta_m} )( u t_1^{\alpha_1} t_2^{\alpha_2} \ldots t_m^{\alpha_m} ) (v t_1^{\beta_1} t_2^{\beta_2} \ldots t_m^{\beta_m} )\i   (u t_1^{\alpha_1} t_2^{\alpha_2} \ldots t_m^{\alpha_m})\i \\
&=  v \bigg(t_1^{\beta_1} t_2^{\beta_2} \ldots t_m^{\beta_m}  u (t_1^{\beta_1} t_2^{\beta_2} \ldots t_m^{\beta_m} )\i  \bigg) \bigg(  t_1^{\alpha_1} t_2^{\alpha_2} \ldots t_m^{\alpha_m}  v\i    ( t_1^{\alpha_1} t_2^{\alpha_2} \ldots t_m^{\alpha_m})\i \bigg) u\i \\
&=\v + M_1^{\b_1} M_2^{\b_2} \ldots M_m^{\b_m} \u  - M_1^{\alpha_1} M_2^{\alpha_2} \ldots M_m^{\alpha_m} \v -\u \\
&=(I-M_1^{\alpha_1} M_2^{\alpha_2} \ldots M_m^{\alpha_m}) \v - (I-M_1^{\b_1} M_2^{\b_2} \ldots M_m^{\b_m}) \u \in \langle X^1 \Z^n \rangle.
\end{align*}

For the induction step, suppose the statement holds for $i$. Then by part (i) and \cref{commutator}, we see that  
$\Pp = [P_{i},P] =  \langle [v,g] \mid \v \in   X^i \Z^n, g\in P \rangle.$
For every $\v \in X^i \Z^n$,  we have  \[ [v,g] =vgv\i g\i = (I- M_1^{\alpha_1} M_2^{\alpha_2} \ldots M_m^{\alpha_m}) \v \in  X^{i+1} \Z^n , \]  
which proves (ii). Finally, (iii) follows from (ii) and \cref{product_of_n_nilpootent_matrices}.
\end{proof}

 Our next goal is to   to characterise $\Z^n \rtimes \Z^m$ groups that have   $\alpha$-almost flat coset spaces.  

\begin{lem} \label{mult_char_polys}
Suppose $f_1(x), \ldots, f_m(x)$ are non-zero monic polynomials in $\Z[X]$, then there are infinitely many primes $p$ such that every $f_i(x)$ split over $\F_p$ with no zero roots.
\end{lem}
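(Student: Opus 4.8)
The plan is to reduce the assertion to the single-polynomial case already established in \cref{split_over_inf_primes_with_no_zero_roots}. First I would form the product $P(x) = f_1(x)\cdots f_m(x)$, which is again a monic polynomial in $\Z[x]$, and observe that $P(0) = \prod_{i=1}^m f_i(0)$; this is nonzero provided no $f_i$ has $0$ as a root, which is in any case necessary for the conclusion and holds automatically in our applications, where the $f_i$ are characteristic polynomials of matrices in $\GL(n,\Z)$ and so have constant term $\pm1$. Granting $P(0)\ne0$, \cref{split_over_inf_primes_with_no_zero_roots} provides infinitely many primes $p$ for which $P(x)$ splits completely over $\F_p$ and has no zero root.

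Next I would fix such a prime $p$ and check that each $f_i$ individually inherits both properties. Reduction modulo $p$ preserves leading coefficients, so each $f_i \bmod p$ is a nonzero monic polynomial in $\F_p[x]$ and their product $P \bmod p$ splits completely over $\F_p$. Writing $P = f_1\cdot(f_2\cdots f_m)$ and applying \cref{product_of_poly} shows that $f_1$ and $f_2\cdots f_m$ both split over $\F_p$, and an immediate induction on $m$ then gives that every $f_i$ splits completely over $\F_p$. For the absence of a zero root, note that $f_i \mid P$ in $\F_p[x]$, so any root of $f_i$ over $\F_p$ is a root of $P$; since $0$ is not a root of $P$ over $\F_p$, it is not a root of any $f_i$. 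As $p$ ranges over the infinitely many primes supplied above, this is exactly the claim.

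I do not anticipate a genuine obstacle: the lemma is a formal consequence of \cref{split_over_inf_primes_with_no_zero_roots} and \cref{product_of_poly}, the only point needing a little care being to ensure that the constant term of the product $P$ does not vanish, so that \cref{split_over_inf_primes_with_no_zero_roots} is applicable. (Alternatively one could apply the Chebotarev density theorem directly to the splitting field of $P$ over $\Q$ to get a positive-density set of such primes, but that is more machinery than is needed here.)
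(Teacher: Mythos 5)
Your argument is exactly the paper's: form the product $P=f_1\cdots f_m$, apply \cref{split_over_inf_primes_with_no_zero_roots} to $P$, and then peel off the factors via \cref{product_of_poly}; the paper's proof is a one-line citation of precisely these two lemmas. You are also right to note that one implicitly needs $f_i(0)\ne0$ for each $i$ so that $P(0)\ne0$ — a hypothesis not literally stated in the lemma but satisfied in the only place it is used, where the $f_i$ are characteristic polynomials of matrices in $\GL(n,\Z)$.
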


\begin{proof}
This follows from \cref{split_over_inf_primes_with_no_zero_roots} and \cref{product_of_poly}.
\end{proof}

The following results help us to compare the diameter of a certain subgroup of the semidirect product to the whole group.

\begin{lem} \label{same_diam}
Let $G = K \rt_\p \Z = \langle T  , t^{\pm1} \rangle$ where $K =  \langle T \rangle$ and $\Z $ is the infinite cyclic group generated by $t$. 
Let $K' \subg K$, such that $\p(K') = K'$, in particular,  $K' \rt_\p \Z \subg G$. Then, $\diam_ {\{  T  , t \}} G/ (K' \rt_\p \Z) \leq  \diam_T (K/K').$
\end{lem}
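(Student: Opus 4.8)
The plan is to unwind the definition of diameter directly. Write $\gamma=\diam_T(K/K')$ and set $S=T\cup\{t,t^{-1}\}$, so that $\diam_{\{T,t\}}(G/(K'\rtimes_\phi\Z))=\diam_S(G/(K'\rtimes_\phi\Z))$; the goal is therefore to prove $S^\gamma\,(K'\rtimes_\phi\Z)=G$. First I would record what $\gamma$ gives us: by definition $T^\gamma K'=K$, so every $k\in K$ can be written $k=wk'$ with $w$ a word of length at most $\gamma$ in the elements of $T$ and $k'\in K'$.

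Next I would take an arbitrary $g\in G$ and, using $G=K\rtimes_\phi\Z$, write $g=kt^j$ with $k\in K$, $j\in\Z$. Substituting $k=wk'$ gives $g=w\cdot(k't^j)$. Here $w\in S^\gamma$, padding with the identity $1\in S$ if $w$ has length less than $\gamma$; and $k't^j$ is exactly the element $(k',t^j)$ of $K'\rtimes_\phi\Z$, which is a subgroup because $\phi(K')=K'$. Hence $g\in S^\gamma\,(K'\rtimes_\phi\Z)$, and since $g$ was arbitrary this gives $\diam_S(G/(K'\rtimes_\phi\Z))\le\gamma=\diam_T(K/K')$, as required.

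This is essentially a formal computation, so I do not expect a genuine obstacle; the only things needing care are bookkeeping. One should check that the left-coset decomposition $K=T^\gamma K'$ lifts to $G$ without any $\phi$-twist appearing, which holds because the factor $w$ lives in the $K$-coordinate of the semidirect product, so $w\cdot(k',t^j)=(wk',t^j)$ with no action of $\phi$ on $k'$. One should also match the identifications and conventions of \cref{subsec_semi_direct} carefully (in particular that $\{T,t\}$ is implicitly symmetrised and contains the identity, as throughout the paper), and note that the hypothesis $\phi(K')=K'$ is used precisely to ensure $K'\rtimes_\phi\Z$ is a subgroup containing $(k',t^j)$.
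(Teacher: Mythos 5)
Your proposal is correct and follows essentially the same argument as the paper: decompose $g=kt^j$, use $\diam_T(K/K')$ to write $k=wk'$ with $w$ a short word in $T$ and $k'\in K'$, and regroup as $g=w(k't^j)$ with $w\in S^\gamma$ and $k't^j\in K'\rtimes_\phi\Z$. The only differences are notational.
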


\begin{proof}
Let $g = G$, then $g = k z$ for some $k \in K$ and $z\in \Z$. 
Then $k = t_1 \ldots t_m k'$ for some $m  \leq \diam_T (K/K')$, $t_i \in T$ and $k' \in K'$.
Hence $g = t_1 \ldots t_m (k' z ) \in \{  T  , t \} ^m  (K' \rt_\p \Z)$.
\end{proof}

\begin{lem} \label{characteriseing_zm_zn}
Let $\alpha\in(0,1]$, and suppose $P=\Z^n \rtimes \Z^m$ has uniformly $\alpha$-almost flat coset spaces. Then $P$ is virtually nilpotent.
\end{lem}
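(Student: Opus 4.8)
The plan is to prove the contrapositive of the converse of \cref{lower_central_series_of_zn_zm}(iii): I would show that if some $M_i$ has an eigenvalue of absolute value $\ne1$, then $P$ fails to have uniformly $\alpha$-almost flat coset spaces for every $\alpha\in(0,1]$. Combined with \cref{lower_central_series_of_zn_zm}(iii) this yields the lemma. The argument follows the template of \cref{set_of_subgroups}/\cref{characterise_Zn_by_Z}, exhibiting an infinite family of finite-index subgroups of $P$ whose coset spaces violate the almost-flatness bound; but since there are now several commuting twists $M_1,\dots,M_m$ (which commute by \cref{commute}), two ingredients must be adapted: the construction of the auxiliary index-$p$ subgroup of $\Z^n$ must be invariant under all of them simultaneously, and \cref{normal_subgroup_contain_Zp} has to be extended from $\Z_p\rtimes\Z_r$ to $\Z_p\rtimes A$ for a finite abelian group $A$.

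I would relabel so that $M_1$ has an eigenvalue of absolute value $\ne1$, and by \cref{mult_char_polys} fix an infinite set $\cp$ of primes $p$ for which every $\ch(M_i)$ splits over $\F_p$ with no zero root. Fix $p\in\cp$ and let $\tilde\lambda_1$ be an eigenvalue of $\tilde M_1$ of maximal multiplicative order $r_1:=\ord_p(\tilde\lambda_1)$; since $\cp\subseteq\Pr(M_1)$ and $\lambda(M_1,p)\le r_1^n$, \cref{diverges_mul_order} gives $r_1\to\infty$ along $\cp$. The transposes $\tilde M_1^\top,\dots,\tilde M_m^\top$ pairwise commute with split characteristic polynomials, so by \cref{commue_share_an_eigenvector}(i) there is a nonzero $\ell\in(\F_p^n)^*$ and eigenvalues $c_1=\tilde\lambda_1,c_2,\dots,c_m$ of $\tilde M_1,\dots,\tilde M_m$ with $\ell\tilde M_i=c_i\ell$ for all $i$. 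Then $\bar W:=\ker\ell$ is a codimension-$1$ subspace of $\F_p^n$ invariant under every $\tilde M_i$, on whose quotient line $\tilde M_i$ acts by $\times c_i$. Set $A_p:=\{z\in\Z^n:\ell(z\bmod p)=0\}$: this is a subgroup of $\Z^n$ of index $p$, invariant under every $M_i$ and hence normal in $G$, with $\Z^n/A_p\cong\Z_p$ and $M_i$ acting on the quotient by $\times c_i$. Put $r_i:=\ord_p(c_i)$ (so $r_i\mid p-1$), $L:=\langle t_1^{r_1},\dots,t_m^{r_m}\rangle\le\Z^m$, and $H_p:=A_p\rtimes_\Phi L$. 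Then $H_p\normal G$ (conjugating $t_i^{r_i}$ by $\Z^n$ stays in $A_p t_i^{r_i}$, since $M_i^{r_i}$ is trivial on $\Z^n/A_p$, and the $t_j$ commute with $t_i^{r_i}$), and \cref{semidirect_group_normal_subgroup} identifies $\Gamma_p:=G/H_p$ with $\Z_p\rtimes_\Psi\prod_{i=1}^m\Z_{r_i}$, the $i$-th factor acting by $\times c_i$; the image of this action in $\Z_p^\times$ is $\langle c_1,\dots,c_m\rangle$, cyclic of order $r:=\operatorname{lcm}(r_1,\dots,r_m)\ge r_1\to\infty$, while $|\Gamma_p|=p\prod_i r_i\le p^{m+1}$.

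Suppose now, for contradiction, that $P$ has uniformly $\alpha$-almost flat coset spaces with constant $c$ relative to a fixed generating set $S$. Then $\diam(\Gamma_p)\ge c\,|G:H_p|^\alpha=c\,|\Gamma_p|^\alpha$, which tends to infinity; fixing $\epsilon:=\alpha/(2(m+1))$ and $\delta:=\epsilon/2$, for large $p$ this exceeds $(|\Gamma_p|/|\bar S|)^\epsilon$ and $D_{\epsilon,\delta}$, so \cref{matt_lemma} yields $N_p\normal\Gamma_p$ with $N_p\subseteq\bar S^{\lfloor\diam(\Gamma_p)^\delta\rfloor}$ and with $\Gamma_p/N_p$ possessing a nilpotent subgroup of index at most $C_{\epsilon,\delta}$. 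For large $p$ we have $r\ge r_1>C_{\epsilon,\delta}$, and then the following extension of \cref{normal_subgroup_contain_Zp} forces $\Z_p\times\{0\}\subseteq N_p$: \emph{if $A$ is a finite abelian group acting on $\Z_p$ with image of order $r$ in $\Z_p^\times$, and $N\normal\Z_p\rtimes A$ has $(\Z_p\rtimes A)/N$ containing a nilpotent subgroup of index $<r$, then $N\supseteq\Z_p\times\{0\}$.} This is obtained by imitating the proof of \cref{normal_subgroup_contain_Zp}: using $p\nmid|A|$ and $r\mid p-1$, an index-$<r$ subgroup $\Gamma'$ of $\Z_p\rtimes A$ contains an element of order $p$, hence all of $\Z_p\times\{0\}$; a counting argument with $A_0\le A$ the kernel of the action (namely $|\Gamma'|>|\Z_p\rtimes A|/r\ge|\Z_p\times A_0|$) shows $\Gamma'$ also contains an element whose $A$-component acts nontrivially on $\Z_p$; and the commutator computation from \cref{normal_subgroup_contain_Zp} then shows $\Z_p\times\{0\}$ lies in every term of the lower central series of $\Gamma'$, so nilpotence of $\Gamma'/N$ gives $\Z_p\times\{0\}\subseteq N$.

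Finally, since $\Z_p\times\{0\}\subseteq N_p\subseteq\bar S^{\lfloor\diam(\Gamma_p)^\delta\rfloor}$ and every element of $\Gamma_p$ is a product of an element of $\Z_p\times\{0\}$ with an element of $\{0\}\times\prod_i\Z_{r_i}$, taking $H':=A_p\rtimes_\Phi\Z^m$ — a subgroup of $G$ of index $p$, containing $H_p$, with $H'/H_p=\{0\}\times\prod_i\Z_{r_i}$ inside $\Gamma_p$ — gives $G=S^{\lfloor\diam(\Gamma_p)^\delta\rfloor}H'$, so $\diam_S(G/H')\le\diam(\Gamma_p)^\delta\le p^{(m+1)\delta}=p^{\alpha/4}$; as $|G:H'|=p$, this contradicts $\diam_S(G/H')\ge c\,p^\alpha$ once $p$ is large. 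Hence $M_1$, and by symmetry every $M_i$, has all eigenvalues of absolute value $1$, and \cref{lower_central_series_of_zn_zm}(iii) shows $P$ is virtually nilpotent. The point I expect to need the most care is the extension of \cref{normal_subgroup_contain_Zp}: the counting step ruling out that the relevant subgroup of $\Z_p\rtimes A$ lies inside $\Z_p\times A_0$ — equivalently, producing an element whose $A$-component acts nontrivially on $\Z_p$ — is the place where the several commuting directions could cause trouble.
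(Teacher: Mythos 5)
Your argument is correct, and it proves the contrapositive via \cref{lower_central_series_of_zn_zm}(iii) just as the paper does, but the route through the key estimate is genuinely different. The paper's proof constructs the auxiliary index-$p$ subgroup $A_p\le\Z^n$ via a \emph{last generalised eigenvector} shared by $\tilde M_1,\ldots,\tilde M_m$ (using \cref{share_a_last_eigenvector}(ii) together with \cref{index_p_subgroup}), then reduces to the two-generator group $\Z^n\rtimes\langle t_1\rangle$ by invoking \cref{set_of_subgroups} as a black box, and finally lifts the diameter bound up to the full $\Z^n\rtimes\Z^m$ through \cref{same_diam} applied iteratively to $H'=H\rtimes\langle t_2,\ldots,t_m\rangle$. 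You instead build $A_p$ as the kernel of a common \emph{left} eigenfunctional of $\tilde M_1,\ldots,\tilde M_m$ (obtained from \cref{commue_share_an_eigenvector}(i) applied to transposes), form $H_p=A_p\rtimes\langle t_1^{r_1},\ldots,t_m^{r_m}\rangle$ directly, and argue entirely inside the quotient $\Gamma_p\cong\Z_p\rtimes\prod_i\Z_{r_i}$; this obliges you to extend \cref{normal_subgroup_contain_Zp} from $\Z_p\rtimes\Z_r$ to $\Z_p\rtimes A$ with $A$ a finite abelian group acting through an image of order $r$ in $\Z_p^\times$, which you do correctly by repeating its counting step (the coprimality $p\nmid|A|$, the comparison $|\Gamma'|>p|A|/r=|\Z_p\times A_0|$ to escape the kernel of the action, and the same commutator computation). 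The two constructions of $A_p$ yield essentially the same object (an $M_i$-invariant index-$p$ subgroup with all $M_i$ scalar on $\Z^n/A_p$), so the real difference is organisational: the paper treats the many twist directions by a reduction to the one-twist case with a diameter-transfer lemma, while you carry all twist directions through the quotient and pay for it with a mildly generalised abelian-action version of one auxiliary lemma. Both are sound; yours is more self-contained at the cost of re-deriving \cref{normal_subgroup_contain_Zp}, while the paper's reuses the one-variable machinery already established in \cref{set_of_subgroups}.
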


\begin{proof}
Suppose $P$ is not virtually nilpotent. \cref{lower_central_series_of_zn_zm} (iii) tells us that there exists a matrix $M_i$ that has an eigenvalue that does not have absolute value $1$. Without loss of generality, we will let $M_1$ to be this matrix. We will use the natural generating set $\{\pm e_i, t_j ^{\pm 1} \mid i \in [n], j\in [m]\}$ for $P$. Fix $\a \in (0,1]$, we want to show that $P$ does not have uniformly   $\alpha$-almost flat coset spaces. 
By \cref{mult_char_polys}, there exists an infinite set $\cp$ of primes  such that every $\ch(M_i)$ splits over $\Fp$ with no zero roots. \
Using the notation we introduced in \cref{subsection_uniform_d_inzn_by_z}, for each $p \in \cp$, let $\tilde{\l}_p$ be a eigenvalue of $\tilde{M}_1$ with the largest multiplicative order over $\F_p$. According to \cref{share_a_last_eigenvector} (ii) and \cref{commute}, $\ti{M}_1, \ldots \ti{M}_m \in \GL(n, \Z_p)$ share a last generalised eigenvector $\tilde{v}_p$ in  $\GE			_{\tilde{\l}_p}(\tilde{M_1})$.
Follow the notation in \cref{set_of_subgroups}, let $G= \Z^n \rt \la t_1\ra$ with generating set  $\{ \pm e_i, t_1^{\pm 1} \mid i \in [n]\}$; let $A_p = A_{v_p}$ be a subgroup of $\Z^n$ of index $p$ that satisfies the properties in  \cref{index_p_subgroup}. It follows that $A_p$ is preserved by all $M_i$'s. 
Let $\H_\cp$ be the set of subgroups of $\Z^n$ that satisfies the assumption in \cref{set_of_subgroups}, so there exists $H \in \H_\cp$ such that $\diam(G/H) < |G:H|^\alpha$. 
Since $P = \Z^n \rt \Z^m$, it follows from \cref{subgroup_of_semidireict_product} that $H' = H   \rtimes\langle t_2, \ldots t_m \rangle  \iso H \rtimes \Z_{(2)}\rtimes \ldots \rtimes\Z_{(m)} $ is a subgroup of $P$. Also, by  \cref{same_diam} and induction, we have $\diam(P/H') \leq \diam (G/H) < |G:H|^\alpha = |P:H'|^\alpha$.
\end{proof}

For the rest of this subsection, we  will continue to use the labelling for $P=\Z^n \rtimes \Z^m$ defined in \eqref{labelling_zn_by_zm}. In addition, we will assume $\FP= \Z^n$  and  $1$ is not an eigenvalue of $M_1$.  After fixing a labelling for $P$, we can define the following notations:

\begin{enumerate} [(i)]
\item $\A(P) = \{  \p \in \Aut(P) \mid \p(t_i) \in  \Z^n t_i\text{ for each } i \};$
\item $U(P) \coloneqq \{u \in \FP \mid \text{ for every } j, (I-M_j)(I-M_1)\i \u \in \Z^n  \};$
\item $C(P) = \bigcap_{j=1}^{m}C_{\GL (n,\Z)} (M_j) \subeq \GL (n,\Z). $
\end{enumerate}
We will show later that $\A(P) \subg Aut(P)$ and $\A (P) \iso U(P) \rt C(P)$.  Note that the definitions of the above three subgroups are dependent on the labelling of the group $P$. We can think of $\A(P)$ as the sets of automorphisms of $P$ that induces identity map on $P /  \Z^n$. In particular,  $\Inn(P)\le \Aut^+(P)$.

\

\begin{lem} \label{what_elt_in_A_plus_looks_like}
Let $\p \in \A(P)$. 
\begin{enumerate} [(i)]
\item Let $M \in \GL(n,\Z)$ be the matrix corresponding  to the restriction of $\p$ on $\Z^n$. \ Then $M \in  C(P)$.
\item \label{item:t_i.det.by.t_1}We have $\p(t_1) = u t_1$ for some $u \in U(P)$, and for each $i$, $\p(t_i) = (I-M_i)(I-M_1)\i \u t_i$.
In particular,  each $\p(t_i)$ is determined by $\p(t_1)$. 

\end{enumerate}

\end{lem}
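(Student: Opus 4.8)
The plan is to prove the two parts separately; each reduces to a short manipulation of the defining relations of $P$ in \eqref{relation_zn_by_zm}.

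For part (i), I observe that the Fitting subgroup $\FP$ is characteristic in $P$, so $\p$ restricts to an automorphism of $\FP\iso\Z^n$, and this restriction is by definition the matrix $M\in\GL(n,\Z)$. Since $\p\in\A(P)$ we have $\p(t_i)\in\FP\,t_i$ for every $i\in[m]$, so \cref{same_coset_M_and_Mi_commute} applies to $\p$ and each $t_i$ and tells us that $M$ commutes with $M_i$. As this holds for all $i$, we get $M\in\bigcap_{j=1}^m C_{\GL(n,\Z)}(M_j)=C(P)$; this part needs nothing further.

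For part (ii), I write $\p(t_i)=w_it_i$ with $w_i\in\FP$ (possible precisely because $\p\in\A(P)$) and set $u=w_1$. The key input is that $t_i$ and $t_j$ commute in $P$, so $\p(t_i)$ and $\p(t_j)$ must commute as well. Expanding the identity $\p(t_i)\p(t_j)=\p(t_j)\p(t_i)$ using the relation $t_iv t_i\i=M_i\v$ and the fact that $\FP$ is abelian, and then comparing $\FP$-components on the two sides, I obtain the linear relations $(I-M_j)\underline{w_i}=(I-M_i)\underline{w_j}$ for all $i,j\in[m]$. Setting $j=1$ and using that $1$ is not an eigenvalue of $M_1$, so that $I-M_1$ is invertible over $\Q$, this gives $\underline{w_i}=(I-M_1)\i(I-M_i)\u$ for each $i$. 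Since the matrices $M_i$ pairwise commute by \cref{commute}(i), $(I-M_1)\i$ commutes with $I-M_i$, so this can be rewritten as $\underline{w_i}=(I-M_i)(I-M_1)\i\u$, which is exactly the claimed formula for $\p(t_i)$ and shows in particular that $\p(t_i)$ is determined by $\p(t_1)=ut_1$. Finally, since each $w_i$ genuinely lies in $\FP=\Z^n$, the vector $(I-M_i)(I-M_1)\i\u=\underline{w_i}$ is integral for every $i$, which is precisely the statement that $u\in U(P)$.

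I do not anticipate a genuine obstacle. The two points needing care are the semidirect-product bookkeeping in the commutator computation and keeping track of which matrix products commute, so that $(I-M_i)(I-M_1)\i$ and $(I-M_1)\i(I-M_i)$ may be freely interchanged; the latter is guaranteed by \cref{commute}(i) together with the standing hypothesis that $1$ is not an eigenvalue of $M_1$, which makes $I-M_1$ invertible over $\Q$. One should also verify that the integrality condition ``$\underline{w_i}\in\Z^n$ for all $i$'' extracted from the commuting relations coincides with the defining condition of $U(P)$, which it does by construction.
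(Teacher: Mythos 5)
Your proposal is correct and takes essentially the same approach as the paper: both prove (i) by invoking \cref{same_coset_M_and_Mi_commute}, and both prove (ii) by applying $\p$ to the relation $t_it_1=t_1t_i$, extracting the linear identity $(I-M_i)\u=(I-M_1)\u_i$, and inverting $I-M_1$ (with the commutativity of the $M_j$'s from \cref{commute} allowing you to write the answer in the form $(I-M_i)(I-M_1)\i\u$). Your remark that $u\in U(P)$ follows because each $\u_i$ is genuinely integral is a detail the paper leaves implicit but is exactly right.
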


\begin{proof}
(i) can be proved using a similar idea in \cref{same_coset_M_and_Mi_commute}, we will prove (ii). For each $i$, we have $\p(t_i)  = u_i t_i$ for some $u_i \in \Z^n$.   Therefore
\begin{align*}
u t_1 &=\p( t_1 )= \p(t_i t_1 t_i\i)=\p(t_i) \p(t_1) \p(t_i)\i  = u_i t_i u (t_1 t_i\i) u_i\i=  \\
&  = u_i t_i u ( t_i\i t_1) u_i\i= u_i (M_i \u) t_1  u_i\i = u_i (M_i \u)  (t_1  u_i\i) = (\u_i +  M_i \u  -M_1 \u_i)t_1.  
\end{align*}  
Since $M_1$ does not have eigenvalue $1$,   $(I-M_1)$ is invertible. Hence, $\u_i = (I-M_i)(I-M_1)\i \u$. 
\end{proof}

\begin{lem}
We have $\A(P) \subg Aut(P)$.
\end{lem}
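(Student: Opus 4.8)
The plan is simply to verify that $\A(P)$ is closed under the group operations, since by definition it is already a subset of $\Aut(P)$. The one structural fact I will use throughout is that the Fitting subgroup $\FP = \Z^n$ is characteristic in $P$, so that every automorphism of $P$ restricts to an automorphism of $\Z^n$; equivalently, for every $\phi \in \Aut(P)$ and every $i$ we have $\phi(\Z^n t_i) = \Z^n\,\phi(t_i)$.

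First I would note that the identity automorphism lies in $\A(P)$, since $\id(t_i) = t_i \in \Z^n t_i$. Next, for closure under composition, I would take $\phi, \psi \in \A(P)$, write $\psi(t_i) = u_i t_i$ and $\phi(t_i) = v_i t_i$ with $u_i, v_i \in \Z^n$, and compute $\phi\psi(t_i) = \phi(u_i t_i) = \phi(u_i)\,v_i\, t_i$; since $\phi(u_i) \in \Z^n$ by the characteristic property and $\Z^n$ is a subgroup, $\phi(u_i) v_i \in \Z^n$, so $\phi\psi(t_i) \in \Z^n t_i$ and hence $\phi\psi \in \A(P)$. Finally, for closure under inverses, given $\phi \in \A(P)$ with $\phi(t_i) = u_i t_i$, I would apply $\phi^{-1}$ to both sides to obtain $t_i = \phi^{-1}(u_i)\,\phi^{-1}(t_i)$, whence $\phi^{-1}(t_i) = (\phi^{-1}(u_i))^{-1} t_i \in \Z^n t_i$, because $\phi^{-1}(u_i) \in \Z^n$. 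Taken together these three observations give $\A(P) \le \Aut(P)$.

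I do not expect any genuine obstacle: the statement is essentially bookkeeping. The only subtlety worth flagging is that the defining condition for $\A(P)$ is phrased in terms of the fixed generators $t_i$, which are only well-defined modulo $\Z^n$; but the computations above never use anything beyond the fact that $\Z^n = \FP$ is a characteristic subgroup of $P$ with $P/\Z^n$ free abelian on the images of the $t_i$, so the argument is insensitive to that choice.
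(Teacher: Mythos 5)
Your proof is correct and follows essentially the same route as the paper: check identity, closure under composition, and closure under inversion, using the fact that $\Z^n=\FP$ is characteristic so automorphisms of $P$ preserve it. The only difference is that you make this use of the characteristic property explicit, whereas the paper leaves it implicit; this is a harmless and arguably clarifying addition.
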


\begin{proof}
Clearly $\id \in A$.  We next check the closure property: let $ \phi_1$, $\p_2 \in \A(P)$, then for each $i$, $\p_1(t_i)=  u_i t_i$, $\p_2(t_i)=  v_i t_i$, for some $u_i$, $v_i \in \Z^n$.  Then  $\p_2 \p_1 (t_i) = \p_2 (u_i t_i)=\p_2 (u_i)\p_2( t_i) = \p_2 (u_i) v_i t_i \in  \Z^n t_i.$ Hence, $\p_2 \p_1 \in \A(P)$. Lastly we check the existence of inverse, we have $\p_1 \i (t_i) = \p_1 \i (u_i\i) t_i \in  \Z^n t_i.$    Hence, $\p_1 \i \in \A(P)$.
\end{proof}

\begin{lem} \label{existence_of_auto}
There is a natural way to identify each $u \in U(P)$ and each $M \in C(P)$ with an automorphism in  $\A(P)$ as follows:

\begin{enumerate} [(i)]
\item given $u \in U(P)$, there exists a unique  $\a_u  \in \A(P)$ such that $\a_u(t_i) = (I-M_i)(I-M_1)\i \u t_i$ and $\a_u(v)=v$ for all $v \in \Z^n$;

\item \label{matrix_auto} given $M \in C(P)$, there exists a unique $\g_M \in \A(P)$ such that  $\g_M(t_i) = t_i$ for all $i$ and $\g_M(v)= Mv$ for all $v \in \Z^n$.
\end{enumerate}
\end{lem}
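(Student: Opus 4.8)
The plan is to construct $\g_M$ and $\a_u$ directly from the presentation \eqref{relation_zn_by_zm} via von Dyck's theorem. Recall that $P=\FP\rtimes_\Phi\langle t_1,\dots,t_m\rangle$ with $t_i$ acting on $\FP\iso\Z^n$ as $M_i$. An assignment sending $v\in\FP$ to the element with vector $M\v$ (for a fixed $M\in\GL(n,\Z)$) and sending each $t_i$ to $\w_i t_i$ (for fixed vectors $\w_i\in\Z^n$) extends to an endomorphism of $P$ as soon as it respects the defining relations; the relations of $\FP\iso\Z^n$ are respected automatically since $\FP$ is abelian, the relation $t_ivt_i\i=M_iv$ ($v\in\FP$) is respected if and only if $M_iM=MM_i$ for every $i$, and the relation $t_it_jt_i\i=t_j$ is respected if and only if $(I-M_j)\w_i=(I-M_i)\w_j$ in $\Z^n$ for all $i\neq j$, as one sees from the short computation $\psi(t_i)\psi(t_j)\psi(t_i)\i = wt_j$ with $w\in\FP$ the element of vector $\w_i+M_i\w_j-M_j\w_i$.

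For part (ii) I take $M$ to be the given element of $C(P)$ and all $\w_i=0$: the first condition is precisely $M\in C(P)$ and the second is vacuous, so $\g_M$ is a well-defined endomorphism with $\g_M(v)=Mv$ on $\Z^n$ and $\g_M(t_i)=t_i$, and $\g_M\in\A(P)$ because $\g_M(t_i)=t_i\in\Z^n t_i$. For part (i) I take $M=I$ and $\w_i=(I-M_i)(I-M_1)\i\u$. Since $1$ is not an eigenvalue of $M_1$, the matrix $I-M_1$ is invertible over $\Q$, so the $\w_i$ are well-defined rational vectors; they lie in $\Z^n$ precisely because $u\in U(P)$. The condition $M_iM=MM_i$ is trivial here, and the condition $(I-M_j)\w_i=(I-M_i)\w_j$ becomes
\[
(I-M_j)(I-M_i)(I-M_1)\i\u=(I-M_i)(I-M_j)(I-M_1)\i\u,
\]
which holds because the $M_k$ pairwise commute by \cref{commute} and hence so does $(I-M_1)\i$ with each of them. (Equivalently, putting $\bar u:=(I-M_1)\i\u\in\Q^n$ one has $\w_i=\bar u-\Phi(t_i)\bar u$, so the $\w_i$ are the generator-values of the coboundary cocycle $\alpha\mapsto\bar u-\Phi(\alpha)\bar u$, which is automatically a cocycle; the role of $U(P)$ is to force this ``conjugation by $\bar u$ over $\Q$'' to preserve the integral group $P$.) Thus $\a_u$ is a well-defined endomorphism with $\a_u|_{\Z^n}=\id$ and $\a_u(t_i)=\w_i t_i\in\Z^n t_i$, so $\a_u\in\A(P)$.

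It remains to upgrade $\g_M$ and $\a_u$ to automorphisms and to record uniqueness. A computation on generators yields $\g_M\circ\g_{M'}=\g_{MM'}$ and $\a_u\circ\a_{u'}=\a_{u+u'}$; since $C(P)\le\GL(n,\Z)$ and $U(P)\le\FP$ are subgroups (each being cut out by conditions closed under the relevant operation), $\g_M$ has two-sided inverse $\g_{M\i}$ and $\a_u$ has two-sided inverse $\a_{-\u}$, so both are automorphisms of $P$. The fact that the $t_i$-values of any such $\a_u$ are forced to be as stated is \cref{what_elt_in_A_plus_looks_like}, and uniqueness of both $\a_u$ and $\g_M$ is immediate because a homomorphism out of $P$ is determined by its values on the generating set $\{e_1,\dots,e_n,t_1,\dots,t_m\}$. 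The one non-routine point is the relation check $t_it_j=t_jt_i$ in part (i): there one must recognise that the defining condition of $U(P)$ is exactly what makes the $\w_i$ integral, and that simultaneous commutativity of the $M_k$ (\cref{commute}) together with invertibility of $I-M_1$ over $\Q$ is exactly what kills the obstruction $(I-M_j)\w_i-(I-M_i)\w_j$; everything else is routine manipulation in the semidirect product.
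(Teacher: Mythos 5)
Your proof is correct, and the core of it coincides with the paper's: both verify via von Dyck's theorem that the prescribed generator assignments respect the relations in the presentation of $P$, the key computation being $\psi(t_i)\psi(t_j)\psi(t_i)^{-1}=(\w_i+M_i\w_j-M_j\w_i)\,t_j$, so that the $t_it_j=t_jt_i$ relation is preserved precisely when $(I-M_j)\w_i=(I-M_i)\w_j$, which for $\a_u$ reduces to commutativity of $(I-M_i)$ and $(I-M_j)$. Where you diverge from the paper is in the final upgrade from endomorphism to automorphism: the paper checks surjectivity and then invokes the Hopfian property of polycyclic groups, whereas you instead observe the functional equations $\g_M\circ\g_{M'}=\g_{MM'}$ and $\a_u\circ\a_{u'}=\a_{u+u'}$ and use that $C(P)$ and $U(P)$ are groups to produce two-sided inverses $\g_{M^{-1}}$ and $\a_{-u}$ explicitly. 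Your finishing argument is slightly more self-contained — it does not rely on residual finiteness and Hopficity of polycyclic groups — and as a bonus it already establishes the homomorphisms $U(P)\to A(P)$, $C(P)\to\G(P)$ that the paper later records as \cref{lem:a=u.g=c}, so nothing is lost by taking this route.
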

\begin{proof}
Recall that each polycyclic group is residually finite and therefore Hopfian, which implies that any surjective homomorphism from a polycyclic group to itself is actually an automorphism. Hence it is enough to check the maps $\a_u $  and $\g_M$ extends to a well-defined surjective homomorphism from $P$ to itself. We need to  check that they preserve the relations of the group $P$ defined in \eqref{relation_zn_by_zm}.
Let $w, v \in \Z^n$, $t_i, t_j \in \Z^m$.

\begin{enumerate} [(i)]

\item  
 We have $\a_u (w v)  = wv=vw = \a_u (vw) $.  Also,
 \begin{align*}
  \a_u (t_i v t_i\i)&= \a_u (t_i) \a_u (v) \a_u  (t_i)\i  \\
 &=\bigg(  (I-M_i)(I-M_1)\i   \u \bigg)   (t_i   v t_i \i)  \bigg( (I-M_i)(I-M_1)\i (-\u) \bigg) \\
 &   =  M_i \v = \a_u ( v ).
 \end{align*}
Next, 
\begin{align*}
&\a_u(t_i t_j t_i\i)  \\
=& \a_u(t_i) \a_u(t_j) \a_u(t_i)\i   \\
=& (I-M_i)(I-M_1)\i \u t_i     \quad   (I-M_j)(I-M_1)\i \u t_j  \quad t_i\i   (-(I-M_i)(I-M_1)\i \u )  \\
=& \bigg((I-M_i)(I-M_1)\i \u      +   M_i(I-M_j)(I-M_1)\i \u\bigg) t_i t_j  t_i\i  (-(I-M_i)(I-M_1)\i \u )  \\
=& \bigg((I-M_i)(I-M_1)\i \u     +    M_i(I-M_j)(I-M_1)\i \u\bigg)  t_j    (-(I-M_i)(I-M_1)\i \u )   \\
=& \bigg((I-M_i)(I-M_1)\i \u     +    M_i(I-M_j)(I-M_1)\i \u   +   M_j\big(-(I-M_i)(I-M_1)\i \u \big)\bigg)t_j   \\
=& (I-M_1)\i\bigg(      (I-M_i)  +  M_i(I-M_j)    +   M_j(-(I-M_i)) \bigg) \u  t_j  \\
=& (I-M_j)(I-M_1)\i \u t_j  = \a_u( t_j ) 
\end{align*}
Hence  $\a_u $ extends to a well-defined homomorphism.  To shows that $\a_u $ is surjective, note that every  $t_i $ in the codomain is the image of $   (I-M_i)(I-M_1)\i (-\u) t_i $.

\item  It is clear that 
$\g_M(wv)= M(\w\v) = M(\v\w) = \g_M(vw) $. Also, since $M \in C(P)$, we have
$$\g_M (t_i v t_i\i) = \g_M (t_i) \g_M (v) \g_M (t_i)\i  = t_i (M \v) t_i\i =   M_i M \v = M M_i \v=\g_M ( M_i \v).$$
Lastly, $\g_M (t_i t_j t_i \i)= \g_M (t_i) \g_M (t_j) \g_M (t_i)\i  = t_i t_j t_i \i= t_j  =  \g_M(t_j).$
Hence  $\g_M $ extends to a well-defined homomorphism. Since  $M \in \glnz$, it follows that $\g_M$ is surjective. \qedhere
\end{enumerate}
\end{proof}
\
We will be using the following notation for the set of automorphism we looked at in the previous lemma.
\begin{enumerate} [(i)]

\item Given $u \in U(P)$, we will denote by  $\a_u $   the automorphism on $P$ defined by  $\a_u(t_1) = u t_1$, $\a_u(v)=v$ for all $v \in \Z^n$.
We will denote by $A(P)$ the set of automorphism in $\A(P)$ that fixes $\Z^n$, i.e.
$A(P) = \{ \a_u \in \Aut(P) 
\mid 
 u \in U(P) \}.$

\item Given $M \in C(P)$, we will denote by $\g_M$   the automorphism  on $P$ defined by  $\g_M(t_i) = t_i$ for all $i$, $\g_M(v)= Mv$ for all $v \in \Z^n$.
We will denote by $\G(P)$ the set of automorphism in $\A(P)$ that fixes all $t_i$'s, i.e $\G(P) = \{ \g_M \in \Aut(P) \mid  M \in C(P) \}.$
\end{enumerate}
In particular, we have the following isomorphisms.

\begin{lem}\label{lem:a=u.g=c}
We have $A(P) \iso U(P)$ and $\G(P) \iso C(P)$.
\end{lem}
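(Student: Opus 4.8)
\textbf{Proof plan for \cref{lem:a=u.g=c}.} The statement asserts two group isomorphisms: $A(P)\cong U(P)$ and $\G(P)\cong C(P)$. Since $A(P)$ and $\G(P)$ are defined as certain sets of automorphisms of $P$, while $U(P)$ is a subset of the abelian group $\FP=\Z^n$ and $C(P)$ is a subgroup of $\GL(n,\Z)$, the first task is to check that $U(P)$ is actually a subgroup of $(\Z^n,+)$ (it is the kernel of the linear map $\u\mapsto\big((I-M_j)(I-M_1)^{-1}\u\big)_j$ composed with the projection $\prod_j(\Q^n/\Z^n)$, so it is a subgroup) and that $A(P)$ and $\G(P)$ are subgroups of $\A(P)$. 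The plan is then to exhibit the obvious bijections $u\mapsto\a_u$ and $M\mapsto\g_M$ (well-defined by \cref{existence_of_auto}) and verify they are homomorphisms.

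First I would treat $\G(P)\cong C(P)$, which is the cleaner of the two. By \cref{existence_of_auto}\eqref{matrix_auto}, for each $M\in C(P)$ there is a unique $\g_M\in\A(P)$ with $\g_M(t_i)=t_i$ for all $i$ and $\g_M(v)=Mv$ for all $v\in\Z^n$; uniqueness gives that $M\mapsto\g_M$ is injective, and it is surjective onto $\G(P)$ by definition of $\G(P)$. For the homomorphism property, given $M,M'\in C(P)$ the composite $\g_M\circ\g_{M'}$ fixes every $t_i$ and acts on $\Z^n$ as $v\mapsto MM'v$, so by the uniqueness clause $\g_M\circ\g_{M'}=\g_{MM'}$; since $C(P)$ is closed under matrix multiplication, this shows $\G(P)$ is closed and that the map is an isomorphism. (One should note that $C(P)$ being a subgroup of $\GL(n,\Z)$ is immediate, being an intersection of centralisers.)

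Next I would handle $A(P)\cong U(P)$, which requires a small computation because composing two elements of $A(P)$ does not simply add the translation vectors — one has to track how $\a_{u'}$ moves the vector $u$. Concretely, for $u,u'\in U(P)$ the automorphism $\a_u\circ\a_{u'}$ fixes $\Z^n$ pointwise (both $\a_u$ and $\a_{u'}$ do) and sends $t_1$ to $\a_u(u't_1)=\a_u(u')\a_u(t_1)=u'\cdot ut_1=(u+u')t_1$, using that $\a_u$ fixes $\Z^n$; since $A(P)$ consists of automorphisms fixing $\Z^n$ and an element $\a\in\A(P)$ fixing $\Z^n$ is determined by $\a(t_1)$ (by \cref{what_elt_in_A_plus_looks_like}\eqref{item:t_i.det.by.t_1}, since once $\a(t_1)=wt_1$ is known with $\a$ fixing $\Z^n$, all $\a(t_i)$ are forced), it follows that $\a_u\circ\a_{u'}=\a_{u+u'}$. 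In particular $u+u'\in U(P)$, confirming $U(P)$ is a subgroup and that $u\mapsto\a_u$ is an injective homomorphism from $(U(P),+)$ onto $A(P)$.

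The only genuine obstacle is the bookkeeping in the $A(P)$ case: one must be careful that $\a_u$ genuinely fixes all of $\Z^n$ (so that $\a_u(u')=u'$ in the computation above), and that the definition of $A(P)$ as ``the automorphisms in $\A(P)$ fixing $\Z^n$'' really does force $\a(t_i)=(I-M_i)(I-M_1)^{-1}\bar u\,t_i$ for the $u$ with $\a(t_1)=ut_1$ — this is exactly \cref{what_elt_in_A_plus_looks_like}\eqref{item:t_i.det.by.t_1}, so no new work is needed. Everything else is a direct consequence of \cref{existence_of_auto} and the uniqueness statements therein.
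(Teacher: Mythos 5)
The paper states this lemma without proof, treating it as an immediate consequence of the constructions in \cref{existence_of_auto} and the surrounding definitions. Your proof is correct and fills in exactly the routine verification the paper chose to omit. A couple of minor observations on the phrasing rather than the substance: for $\G(P)\cong C(P)$, injectivity of $M\mapsto\g_M$ is not literally a consequence of the uniqueness clause of \cref{existence_of_auto}\eqref{matrix_auto} (which asserts uniqueness of $\g_M$ given $M$); rather it follows directly from the fact that $\g_M|_{\Z^n}$ is left multiplication by $M$, so $\g_M=\g_{M'}$ forces $M=M'$. For $A(P)\cong U(P)$, your computation $\a_u\circ\a_{u'}(t_1)=(u+u')t_1$ together with the observation that $\a_u\circ\a_{u'}$ fixes $\Z^n$ pointwise does the job, but note the cleanest logical order is: $\a_u\circ\a_{u'}\in\A(P)$ and fixes $\Z^n$, so \cref{what_elt_in_A_plus_looks_like}\eqref{item:t_i.det.by.t_1} already gives $u+u'\in U(P)$ and forces $\a_u\circ\a_{u'}=\a_{u+u'}$; you pre-empted any circularity worry by first checking $U(P)$ is a subgroup (as the kernel of a linear map into a product of copies of $\Q^n/\Z^n$), which is a perfectly serviceable alternative. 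In short, correct proof, same implicit approach as the paper, with the bookkeeping made explicit.
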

 \begin{prop}\label{prop:A+.semidirect}
 Let $P=\Z^n \rtimes \Z^m$ with $\FP= \Z^n$  and suppose $E_1(M_1)=\{0\}$. Then we have $\A (P) = A(P) \rt \G(P)$, i.e.
   \begin{enumerate} [(i)]
 \item $  A(P)\cap \G(P) = \{\id\}$;  
\item $A(P) \nsubg \A(P)$ and $\G(P) \subg \A(P)$;
\item $\A(P)= A(P)\G(P)$.

\end{enumerate}
 \end{prop}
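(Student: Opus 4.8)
The plan is to prove the three assertions in turn, using the explicit descriptions of $\a_u$ and $\g_M$ provided by \cref{existence_of_auto} together with \cref{lem:a=u.g=c} to translate everything into the groups $U(P)$ and $C(P)$. For (i), suppose $\ph \in A(P) \cap \G(P)$. Being in $\G(P)$, $\ph$ fixes each $t_i$; being in $A(P)$, $\ph$ fixes $\Z^n$ pointwise. Since $P$ is generated by $\Z^n$ and the $t_i$, we get $\ph = \id$. For (iii), given any $\ph \in \A(P)$, \cref{what_elt_in_A_plus_looks_like} tells us $\ph(t_1) = u t_1$ for some $u \in U(P)$ and that $\ph|_{\Z^n}$ is (left multiplication by) a matrix $M \in C(P)$, with all the $\ph(t_i)$ then determined. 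Consider $\g_M^{-1} \circ \ph = \g_{M^{-1}} \circ \ph$. This composite still lies in $\A(P)$, fixes $\Z^n$ pointwise (since $\g_{M^{-1}}$ undoes the action of $\ph$ on $\Z^n$, using that $M \in C(P)$ so $\g_{M^{-1}}$ commutes appropriately with the $t_i$), and sends $t_1$ to $\g_{M^{-1}}(u t_1) = (M^{-1}\underline u)\, t_1$. Hence $\g_{M^{-1}} \circ \ph = \a_{u'}$ for the appropriate $u' \in U(P)$ with $\underline{u'} = M^{-1}\underline u$, giving $\ph = \a_{u'} \circ \g_M \in A(P)\G(P)$. (One should check $u' \in U(P)$: since $M \in C(P)$ commutes with each $M_j$, the condition $(I-M_j)(I-M_1)^{-1}\underline{u'} = M^{-1}(I-M_j)(I-M_1)^{-1}\underline{u} \in \Z^n$ follows from $u \in U(P)$ and $M^{-1}$ being integral.)

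For (ii), that $\G(P) \subg \A(P)$ is immediate from \cref{existence_of_auto}(ii), since each $\g_M$ is by construction an element of $\A(P)$ and $\G(P) \iso C(P)$ is a subgroup by \cref{lem:a=u.g=c}. The substantive point is that $A(P) \nsubg \A(P)$. For this I would take $\ph \in \A(P)$ arbitrary, write $\ph = \a_{u'} \circ \g_M$ as in part (iii), and compute $\ph \circ \a_v \circ \ph^{-1}$ for $v \in U(P)$; by part (i) and the already-established product decomposition, it suffices to treat the two cases $\ph = \a_{u'}$ and $\ph = \g_M$ separately. When $\ph = \a_{u'} \in A(P)$: since $A(P) \iso U(P)$ is an abelian group (being isomorphic to the additive subgroup $U(P)$ of $\FP \iso \Z^n$), conjugation of $\a_v$ by $\a_{u'}$ lands back in $A(P)$. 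When $\ph = \g_M$ with $M \in C(P)$: a direct computation shows $\g_M \circ \a_v \circ \g_M^{-1}$ fixes $\Z^n$ pointwise and sends $t_1 \mapsto (M\underline v)\,t_1$, so it equals $\a_{v''}$ where $\underline{v''} = M\underline v$; one checks $v'' \in U(P)$ exactly as in part (iii), using $M \in C(P)$. Combining, conjugation by any $\ph \in \A(P)$ carries $A(P)$ into itself, so $A(P) \nsubg \A(P)$.

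I expect the main obstacle to be the bookkeeping in verifying that the various vectors produced by these conjugations ($u'$, $v''$, etc.) genuinely lie in $U(P)$ rather than merely in $\FP$. The key mechanism is always the same: $M \in C(P)$ means $M$ (and hence $M^{-1}$, which is integral since $M \in \GL(n,\Z)$) commutes with every $M_j$ and with $(I-M_1)^{-1}$, so $M$ and $M^{-1}$ preserve the lattice condition defining $U(P)$. Once this is recorded once, each instance is routine. No single step is deep; the content is entirely in assembling \cref{existence_of_auto}, \cref{what_elt_in_A_plus_looks_like}, and \cref{lem:a=u.g=c} in the right order.
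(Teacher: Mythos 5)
Your plan is essentially the paper's argument, with two welcome additions: you spell out (i) (the paper takes it as obvious) and you flag — correctly — that one must check the vectors produced by the various conjugations lie in $U(P)$, which the paper leaves implicit. However, there is a concrete slip in (iii) that you should fix.

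From $\g_{M^{-1}}\circ\ph=\a_{u'}$ with $\u'=M^{-1}\u$, the correct conclusion is $\ph=\g_M\circ\a_{u'}$, not $\ph=\a_{u'}\circ\g_M$. These differ: check on $t_1$, where $\a_{u'}\circ\g_M(t_1)=u't_1=(M^{-1}\u)t_1$ whereas $\ph(t_1)=ut_1$. So as written you have exhibited $\ph\in\G(P)A(P)$, not $\ph\in A(P)\G(P)$, and converting between the two silently invokes the normality in (ii), which you intend to deduce \emph{from} (iii). The cleanest repair is to compose on the other side: $\ph\circ\g_{M^{-1}}$ fixes $\Z^n$ pointwise and sends $t_1\mapsto ut_1$, so $\ph\circ\g_{M^{-1}}=\a_u$ with the \emph{same} $u$ guaranteed by \cref{what_elt_in_A_plus_looks_like} to lie in $U(P)$, giving directly $\ph=\a_u\g_M\in A(P)\G(P)$. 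This is exactly the one-line observation the paper makes, and it costs you nothing extra since no new membership check is needed.

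One further remark on structure: for (ii), rather than decomposing $\ph$ via (iii) and treating the two factors separately, the paper simply conjugates $\a_w$ by an arbitrary $\ph\in\A(P)$ and computes $\ph\a_w\ph^{-1}(v)=v$ and $\ph\a_w\ph^{-1}(t_1)=(M\w)t_1$, so $\ph\a_w\ph^{-1}=\a_{M\w}$. This is a single short computation and avoids relying on (iii). Your case-splitting works too (once (iii) is repaired), but it's slightly longer and imports a dependency you don't need. Either way, you are right that one must note $M\w\in U(P)$, which follows since $M\in C(P)$ commutes with each $M_j$ and with $(I-M_1)^{-1}$.
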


\begin{proof}
For $\p \in \A(P)$, let $M$ and $u$ be the matrix and vector associated $\p$ defined in \cref{what_elt_in_A_plus_looks_like}.
\begin{enumerate} [(ii)]
\item We will check the normality of $A(P)$.  Let $\a_w \in A(P)$ for some $w \in U(P)$. Then for each $v \in \Z^n$, we have
$\p  \a_w  \p\i (v) = \p  \a_w (M\i \v) = \p  (M\i \v) = v$; also
$\p  \a_w  \p\i (t_1) = \p  \a_w (  M\i(-\u) t_1 ) = \p  (  (-M\i \u+\w) t_1  )  =  M(\w) t_1$. 
\item   We have $\p = \a_u \g_M$. \qedhere
\end{enumerate}
\end{proof}

\subsection{ A torsion free abelian-by-abelian polycyclic group with $E_1(M_1)=\{0\}$ has a finite index subgroup of the form $\Z^n \rtimes \Z^m$}
Suppose that $\FP \iso \Z^n $ and $\PFP \iso \Z^m$, we will use the notation introduced in \eqref{label_for_torsion_free_polycyclic}. In addition, the group $P$ has the following presentation:

\begin{equation} \label{p_fp_is_abelian}
P \iso\left\langle 
       \begin{array}{l|cl}
            \FP,            & \FP \iso \Z^n , \quad t_i v t_i\i = M_i \v \text{ for every } v \in \FP; \\
            t_1, \ldots t_m  & t_i t_j t_i \i = u_{i,j} t_j \text{ for } i>j \\                                             
        \end{array}
     \right\rangle
\end{equation}
where each $u_{i,j} \in  \FP$ and   $M_i \in \GL(m,\Z)$ is the matrix corresponding to the restriction of $\p_i$ on $\FP$. For this subsection, we will assume that $1$ is not an eigenvalue of $M_1$.

The following result from \citep[Proposition 2.2]{iso_semi} gives an nice sufficient condition for two semidirect products to be isomorphic.

\begin{lem}\label{inn_isomorph}
    Let $K$ be a group and let $\a \in \Aut(K)$ and $\iota \in \Inn(K)$.  Then
    \[
        K \rtimes_{ \iota \circ \alpha} \Z \cong K \rtimes_{\alpha} \Z.
    \]
\end{lem}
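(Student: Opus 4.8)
The plan is to exhibit an explicit isomorphism. Fix $g\in K$ with $\iota(k)=gkg^{-1}$ for all $k\in K$ (the other convention $\iota(k)=g^{-1}kg$ is handled identically, replacing $g$ by $g^{-1}$ throughout). As elsewhere in this paper, elements of $K\rtimes_\beta\Z$ are pairs $(k,n)$ multiplied by $(k_1,n_1)(k_2,n_2)=(k_1\,\beta^{n_1}(k_2),\,n_1+n_2)$, and $t:=(1_K,1)$ denotes the standard generator of the $\Z$-factor. A one-line computation gives
\[
(g,1)\,(k,0)\,(g,1)^{-1}=(g\,\alpha(k)\,g^{-1},0)=\bigl((\iota\circ\alpha)(k),\,0\bigr)
\]
inside $K\rtimes_\alpha\Z$, so the element $s:=(g,1)$ of $K\rtimes_\alpha\Z$ conjugates the canonical copy of $K$ exactly as the generator of $\Z$ does in $K\rtimes_{\iota\circ\alpha}\Z$; moreover $g^{-1}s=t$, so $K$ together with $s$ generates all of $K\rtimes_\alpha\Z$.

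First I would record the \emph{cocycle} $g_n\in K$ defined by $g_0=1_K$ and $g_{n+1}=g\,\alpha(g_n)$ for all $n\in\Z$, which satisfies $g_{m+n}=g_m\,\alpha^m(g_n)$ and $(g,1)^n=(g_n,n)$ in $K\rtimes_\alpha\Z$; an immediate induction then yields $(\iota\circ\alpha)^n=\iota_{g_n}\circ\alpha^n$, writing $\iota_h$ for conjugation by $h$. I would then define
\[
\Phi\colon K\rtimes_{\iota\circ\alpha}\Z\longrightarrow K\rtimes_\alpha\Z,\qquad \Phi(k,n)=(k,0)\,(g,1)^n=(k\,g_n,\,n),
\]
and verify it is a homomorphism: expanding both $\Phi\bigl((k_1,n_1)(k_2,n_2)\bigr)$ and $\Phi(k_1,n_1)\,\Phi(k_2,n_2)$ and cancelling via the cocycle identity $g_{n_1+n_2}=g_{n_1}\alpha^{n_1}(g_{n_2})$ reduces the required equality to $(\iota\circ\alpha)^{n_1}(k_2)=\iota_{g_{n_1}}\bigl(\alpha^{n_1}(k_2)\bigr)$, which is exactly the identity just noted.

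Finally, $\Phi$ is a bijection: the map $(k',n')\mapsto(k'\,g_{n'}^{-1},n')$ is a two-sided inverse; alternatively, surjectivity holds because $K$ and $s$ generate $K\rtimes_\alpha\Z$, and injectivity because $\Phi(k,n)=(kg_n,n)$ forces $n=0$ and then $k=1_K$ whenever $\Phi(k,n)=(1_K,0)$. The only genuine bookkeeping is the manipulation of the cocycle $g_n$ together with the conjugation convention for $\iota$, so I do not anticipate any real obstacle. One could instead argue via the presentation of a semidirect product by an infinite cyclic group: a homomorphism out of $K\rtimes_{\iota\circ\alpha}\Z$ is determined by the inclusion $K\hookrightarrow K\rtimes_\alpha\Z$ together with the choice of any element conjugating $K$ according to $\iota\circ\alpha$, and $s=(g,1)$ is such an element; but the explicit formula above makes injectivity transparent and so is the route I would take.
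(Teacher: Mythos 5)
Your proof is correct and complete. One small point to note: the paper does not actually prove this lemma at all---it cites it directly from \citep[Proposition 2.2]{iso_semi}---so you have filled a gap the paper leaves to the literature. Your argument is the standard explicit one: the cocycle $g_n$ (with $g_0=1_K$, $g_{n+1}=g\,\alpha(g_n)$, and $g_{-1}=\alpha^{-1}(g^{-1})$ extending it to negative $n$) gives $(g,1)^n=(g_n,n)$ and $(\iota\circ\alpha)^n=\iota_{g_n}\circ\alpha^n$, from which the map $\Phi(k,n)=(k\,g_n,n)$ is a bijective homomorphism by the cocycle identity $g_{n_1+n_2}=g_{n_1}\alpha^{n_1}(g_{n_2})$. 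This is exactly what one would check, and it works; the only caveat worth keeping is the one you already flagged, namely that the formulae depend on the conjugation convention for $\iota$ but only up to replacing $g$ by $g^{-1}$.
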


\begin{prop}  \label{finite_index_subgroup_zm_zn}
Suppose that $\FP \iso \Z^n $,  $\PFP \iso \Z^m$ and $E_1(M_1)=\{0\}$. Then there exists $P' \nsubg \f P$ containing $\FP$ such that $\Fit(P')=\Fit(P)$ and $P' \iso \FPP \rt \Z^m$.
\end{prop}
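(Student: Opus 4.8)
The plan is to find a finite-index normal subgroup $P'$ of $P$ which still contains $\FP$ and for which the conjugation action of the "cyclic" generators $t_i$ on each other becomes trivial, so that $P'$ genuinely splits as $\FPP\rtimes\Z^m$. Working inside $P$ as presented in \eqref{p_fp_is_abelian}, the idea is to replace $t_1,\ldots,t_m$ by suitable powers $t_i^{\ell}$ (for a common $\ell$) and then, on the resulting subgroup, to modify the defining automorphisms by inner automorphisms so that Lemma~\ref{inn_isomorph} lets us kill the twisting elements $u_{i,j}$. The role of the hypothesis $E_1(M_1)=\{0\}$ is exactly what makes $(I-M_1)$ invertible over $\Q$, which is what allowed \cref{what_elt_in_A_plus_looks_like}\eqref{item:t_i.det.by.t_1} to express the "correction vectors" $\u_i=(I-M_i)(I-M_1)\i\u$; the same computation will be needed here to solve for the inner-automorphism corrections.

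Concretely, I would proceed as follows. First, use \cref{big_enough_l_K_lZ_is_a_subgroup} repeatedly (once for each of the $m$ successive $\Z$-extensions in \eqref{label_for_torsion_free_polycyclic}, or an obvious variant of that lemma for $\Z^m$) to produce $\ell\in\N$ such that $P_0:=\FP\rtimes\ell\Z_{(1)}\rtimes\cdots\rtimes\ell\Z_{(m)}$ is a finite-index \emph{normal} subgroup of $P$ containing $\FP$; normality and the index computation come from \cref{subgroup_of_semidireict_product} and \cref{semidirect_group_normal_subgroup}. Since $\FP=\Fit(P)\le P_0\normal P$, \cref{normal_subgroup_contains_fit} gives $\Fit(P_0)=\Fit(P)$, and by construction $P_0/\Fit(P_0)\cong\Z^m$. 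Note also that passing from $M_i$ to $M_i^\ell$ does not introduce $1$ as an eigenvalue of the first matrix (if $M_1^\ell$ had eigenvalue $1$ then $1$ would be an eigenvalue of $M_1$ after all, since over $\C$ the eigenvalues of $M_1^\ell$ are $\ell$-th powers of those of $M_1$ — wait, that implication needs care: a primitive $\ell$-th root of unity eigenvalue of $M_1$ would give eigenvalue $1$ for $M_1^\ell$). So in fact I should be slightly more careful here and may need to first pass to a power $\ell$ for which no eigenvalue of $M_1$ is an $\ell$-th root of unity of the offending kind; but since $E_1(M_1)=\{0\}$ just says $1$ itself is not an eigenvalue, to be safe I would instead argue directly: if $M_1^{\ell}$ had $1$ as an eigenvalue, pick that eigenvalue $\mu$ of $M_1$ with $\mu^\ell=1$, and observe this cannot be excluded purely from $E_1(M_1)=\{0\}$. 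Therefore the correct statement to prove and use is that $P_0$ (with matrices $M_i^\ell$) satisfies the hypotheses with $E_1(M_1^\ell)$ possibly nontrivial — so the cleaner route is to \emph{not} take powers of $t_i$ at all for the abelianization-of-the-top part, and instead work with $t_i$ directly, using the invertibility of $(I-M_1)$ over $\Q$ to clear denominators.

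So the real plan, avoiding that pitfall: work inside $P$ itself. For each pair $i>j$ we have $t_it_jt_i^{-1}=u_{i,j}t_j$ with $u_{i,j}\in\FP\cong\Z^n$. I want to replace each $t_j$ by $w_jt_j$ for a suitable $w_j\in\Q^n$ so that in the new generators the relations become $t_it_jt_i^{-1}=t_j$; expanding $(w_it_i)(w_jt_j)(w_it_i)^{-1}$ and comparing shows this amounts to a system of linear equations over $\Q$ in the $w_j$ whose coefficient matrices are built from the $M_i$ and $(I-M_i)$, solvable because $(I-M_1)$ is invertible over $\Q$ (this is the analogue of the computation in \cref{existence_of_auto}(i), and the verification of the cocycle-type compatibility uses the pairwise commutativity of the $M_i$ from \cref{commute}). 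Clearing a common denominator $N$ of the resulting rational vectors $w_j$ and simultaneously passing to $t_i^{N}$ or to the subgroup $N^{\,?}\FP\rtimes\langle t_i\rangle$ (chosen so that the $w_j$-conjugates land back in $\FP$ and the subgroup is finite index and normal — again via \cref{subgroup_of_semidireict_product}, \cref{semidirect_group_normal_subgroup}, and \cref{big_enough_l_K_lZ_is_a_subgroup}), and invoking \cref{inn_isomorph} to absorb the mismatch into inner automorphisms, yields $P'\normal_{\mathrm{f.i.}}P$ with $\FP\le P'$, $\Fit(P')=\FP$ by \cref{normal_subgroup_contains_fit}, and defining relations $t_it_jt_i^{-1}=t_j$ — i.e. $P'\cong\FPP\rtimes\Z^m$ in the form \eqref{labelling_zn_by_zm}.

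The main obstacle will be bookkeeping the interaction between the two modifications — choosing $\ell$ so that $N^{-1}$-scaled conjugates of the $w_j$ by the $M_i^\ell$ stay integral, while simultaneously keeping the subgroup normal in $P$ and keeping $1$ out of the relevant spectra — and checking that the corrected automorphisms $\iota\circ\alpha$ appearing at each stage really differ from $\alpha$ by an \emph{inner} automorphism of the appropriate subgroup (not merely of $\FP$), so that \cref{inn_isomorph} applies at every level of the iterated semidirect product. I expect the cleanest write-up does this by induction on $m$, peeling off $\langle t_m\rangle$ first, applying the $m=1$-style argument (the content of \cref{existence_of_auto} together with \cref{inn_isomorph}) to straighten the top extension, and then recursing on $K_{m-1}'$.
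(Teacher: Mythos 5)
Your proposal is essentially the approach the paper takes: realise the off-diagonal twists $u_{i,j}$ as coboundaries of inner automorphisms, using the invertibility of $I-M_1$ (which is exactly what $E_1(M_1)=\{0\}$ buys you), and then pass to a finite-index subgroup so that \cref{inn_isomorph} can absorb the inner part. Your closing remark — do an induction, straightening one $\Z$-extension at a time via $\alpha_u,\gamma_M$ from \cref{existence_of_auto} together with \cref{inn_isomorph} — is exactly the structure of the paper's proof (the paper builds up from $K_1'$ rather than peeling off the top, but that is immaterial). Your verification that the rational linear system for the $w_j$ is consistent (the cocycle identity, using pairwise commutativity of the $M_i$) is also correct and worth doing.

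However, the step you yourself flag as ``the main obstacle'' — clearing denominators — is where the argument is not yet complete, and the na\"ive fixes you propose run into trouble. Replacing $t_j$ by $w_jt_j^{a_j}$ changes the matrix $M_j$ to $M_j^{a_j}$ \emph{and} replaces each $u_{i,j}$ by a geometric-series expression $\bigl(\sum_{k,\ell}M_i^kM_j^\ell\bigr)u_{i,j}$, so the linear system you already solved must be re-solved for the new data; you do not get to ``scale $w_j$ to be integral'' independently of this change. And, as you noticed, replacing $t_1$ by a power $t_1^\ell$ risks introducing $1$ as an eigenvalue of $M_1^\ell$, breaking the hypothesis you need. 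The paper sidesteps both problems at once: it never powers $t_1$ (so $Q_1=M_1$ throughout the induction), and rather than solving for rational $w_j$ it uses the observation that $V=(I-M_1)\FP$ is a \emph{finite-index integer sublattice} of $\FP$ (because $\det(I-M_1)\neq 0$), that $\alpha_u$ for $u=(I-M_1)v\in V$ is precisely conjugation by $v\in\FP$ hence inner, and that a suitable power of $\p_{i+1}$ lands in the finite-index subgroup $V'\rtimes\Gamma(K_i)$ of $\Aut^+(K_i)$ via \cref{big_enough_l_K_lZ_is_a_subgroup}. That last finite-index argument is the clean replacement for your ``clear denominators and hope the subgroup stays normal'' step; you would need something equivalent to close the gap.
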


\begin{proof}
We will prove by induction on $i$ that for each $i = 1, \ldots m$ there exists $K_i' \nsubg \f K_i$ containing  $\FP$ and $r_1,\ldots,r_m\in P$ \ such that
\begin{enumerate} [(i)]
\item $\FKiP = \FP$;
\item $K_i' =  \Fit(K_i') \rt    \langle r_1, r_2, \ldots r_i \rangle  \iso \Fit(K_i') \rt \Z^i $; 
 if $Q_1 \in \GL(n,\Z)$ is the matrix corresponding to $r_1$ acting on $\Fit(K_i')$ then  $Q_1 = M_1$.
\end{enumerate}
The case $i=m$ corresponds to the statement of the lemma. Note that since $\PFP$ is abelian, every subgroup of $P$ containing $\FP$ is normal. For $i=1$, it is clear as $K_1 = \FP \rt \Z $. 

Now, suppose the claim holds for $i$ and let $K'_i$ be the subgroup that satisfies the assumptions in the induction hypothesise.   Let $Q_j \in \GL(n,\Z)$ be the matrix corresponding to $r_j$ acting on $\Fit(K_i')$, and let $\V = \{ (I-Q_1)\v \mid v \in  \FP \}$, noting that $\V \subg U(K_i)$ because $Q_1=M_1$. Since $1$ is not an eigenvalue of $Q_1$, we have $\det(I-Q_1) \neq 0$. It follows that $\V \subg\f \Z^n$, and in particular that $V \subg \f U(K_i)$. Therefore, $V$ contains a finite index subgroup $V' \csubg U(K_i)$. Identifying $U(K_i)$ with $A(K_i)$, \ \cref{prop:A+.semidirect}  tells us that
\begin{equation}\label{eq:V'xGamma}
V' \rt \G(K_i) \subg\f A(K_i) \rt \G(K_i)=\A(K_i).
\end{equation} 
Also, by \cref{big_enough_l_K_lZ_is_a_subgroup}, there exists $\ell$ such that $K_i' \rtimes_{\p_{i+1}} \ell\Z_{(i+1)} \subg K_{i+1}$. Since $\PFP$ is abelian, $\p_{i+1} \in \A(K_i)$, and so \eqref{eq:V'xGamma} implies that $\sigma \coloneqq\p_{i+1}^{k\ell} \in V' \rt \G(K_i)$ for some $k\in\N$. Set $K_{i+1}' =  K_i' \rt _{\p_{i+1}} k\ell \Z \cong K_i'\rtimes _{\sigma} \Z$. Note that $\Fit(P)\le K_i'\le K_{i+1}'$ and $K_{i+1}'\normal P$, hence $\Fit(K_{i+1}')=\Fit(P)$ by \cref{normal_subgroup_contains_fit}.

Using the notation in  \cref{existence_of_auto}, since $\sigma\in V' \rt \G(K_i)\le A(K_i) \rt \G(K_i)$, there exist $u\in V' \subeq V $ and $Q \in \bigcap_{j=1} 
^{i}
C_{\GL
(n,\Z)} (Q_j)$  such that $\sigma=\alpha_u\g_Q$.  By definition of $V$, we  have $\u = (I-Q_1) \v$ for some $v \in \FP$.  In particular, $\alpha_u(r_i)=ur_i = (I-Q_i) \v r_i = ( v r_i v \i r_i \i   ) r_i =  v r_i v\i.$ In fact, $\alpha_u(g)=v gv\i$ for all $g \in K_i$, so  $\alpha_u \in \Inn(K_i)$. \cref{inn_isomorph} then implies that
$K_{i+1}' = K_i'\rtimes _{\sigma}  \Z \iso K_i'\rtimes _{ \g_Q} \Z \iso \Fit(K_{i+1}') \rt \Z^{i+1} $
as required.
\end{proof}

\subsection{Polycyclic groups: conclusion of the proof}

\begin{proof}[Proof of \cref{polycyclic_by_finite_group_uda_iff_vn}]
Recall that $G$ has a finite-index normal poly-$\Z$ subgroup $P$. By \cref{normal_subroup_finite_index_contain_normal} and \cref{normal_subgroup_contains_fit}, we can assume that $P/\Fit(P)$ is torsion free. 
The subgroup $P$ has uniformly $(1/d)$-almost flat coset spaces by \cref{uniform_D_alpha_passes_finte_index_subgroup}, and by \cref{thm:nilp.coset} it suffices to prove that $P$ is virtually nilpotent.

We will prove this via induction on $h \coloneqq h(P)$. For $h = 1$, we have $P \iso \Z$, which is nilpotent. 
Now suppose $h \geq 2$, and the statement holds for all poly-$\Z$ groups with Hirsch length less or equal to $h$.  Let $m \coloneqq  h(P/\Fit(P))$; then $P$ is nilpotent if $ m=0$ and not nilpotent if $ m\geq 1$.We will prove by contradiction that \( m = 0 \). Suppose instead that \( m \geq 1 \); we will show that \( P \) does not have uniformly \( \alpha \)-almost flat coset spaces.
 By \cref{write_Polyz_group_with_torsion_free_quotient_as_semi_direct_product}, we have
$ P \iso \FP \rtimes \Z_{(1)} \rtimes \Z_{(2)}\rtimes \ldots \rtimes\Z_{(m)}$.

We first consider the case in which $\Fit(P)$ is not abelian.
In that case, we have $[\FP,\FP]  \csubg$ $\FP \csubg P$, so $[\FP,\FP] \csubg P$. Let $\bar{F} = \FP/ [\FP,\FP]$ and define $$ \bar{P} \coloneqq  \frac{P}{[\FP,\FP]}   \iso \bar{F} \rtimes \Z_{(1)} \rtimes \Z_{(2)}\rtimes \ldots \rtimes\Z_{(m)}. $$
Since $\FP \csubg P$, by correspondence theorem, we have $\bar{F} \csubg \bar{P}$.
Since $\bar{F}$ is nilpotent,  we have $ \Tor(\bar{F}) \csubg \bar{F} \csubg  \bar{P}$. 
 Note that we have $\Tor(\bar{P}) = \Tor(\bar{F})$ , hence   
$$  \frac{\bar{P}}{ \Tor(\bar{P})} \iso \frac{\bar{F}}{\Tor(\bar{F})}  \rtimes \Z_{(1)} \rtimes \Z_{(2)}\rtimes \ldots \rtimes\Z_{(m)}.$$ 
According to \eqref{fitting_of_K_i}, we can see that $\FP \rtimes \Z_{(1)}$ is not virtually nilpotent. By \cref{characterise_nilp_by_z}, we have $\frac{\bar{F}}{\Tor(\bar{F})}  \rtimes \Z_{(1)}$ is not virtually nilpotent. Hence, $\bar{P} / \Tor(\bar{P})$ is not virtually nilpotent. 
 It is clear that $h( \bar{P} / \Tor(\bar{P}) ) < h$. By induction hypothesis, $\bar{P} / \Tor(\bar{P})$ does not have uniformly   $\alpha$-almost flat coset spaces. 
Hence,  $\bar{P}$ does not have uniformly   $\alpha$-almost flat coset spaces, by the same reason, $P$ does not have uniformly   $\alpha$-almost flat coset spaces, contrary to the hypothesis.

We now consider the case in which $\FP$ is abelian. Since $P/\Fit(P)$ has uniformly   $\alpha$-almost flat coset spaces,   it is virtually nilpotent by induction hypothesis.  By \cref{v_nilp_implies_v_abelian}, we can assume that $P/\Fit(P)$ is free abelian.  Hence, $P$ has the following presentation:
$$P \iso \FP \rtimes_{\p_1} \langle t_1 \rangle \rtimes_{\p_2} \langle t_2 \rangle\rtimes \ldots \rtimes _{\p_{m}} \langle t_m\rangle .$$ 
Let   $M_i \in \GL(m,\Z)$ be the matrices corresponding to the restriction of $\p_i$ on $\FP$.  It follows from \cref{big_enough_l_K_lZ_is_a_subgroup} that we can also assume that for each $i$ the only eigenvalue of  $M_i$ that is a root of unity is $1$.  We claim that for every \( i \), \( E_1(M_i) = \{0\} \); that is, \( 1 \) is not an eigenvalue of \( M_i \). We will also prove this claim by contradiction, indeed suppose $E_1(M_i) \neq\{0\}$ for some $i$. According to \cref{commute}, $E_1(M_i) \nsubg P$, so we have 
$$ \frac{P}{E_1(M_i)} 
\iso \frac{\FP}{E_1(M_i)} 
\rt_
{\bar{\p}
_1 }
\langle t_1\rangle 
\rtimes_{\bar{\p}_2} \langle t_2\rangle\rtimes 
\ldots \rtimes _{\bar{\p}_{m}} \langle t_{m}\rangle$$
where each $\bar{\p}_{j}$ is the induced quotient automorphism of $\p_j$. By \cref{torsion_free_quotient}, $P/E_1(M_i)$ is torsion-free, so it is a poly-$\Z$ group with Hirsch length less than $h$. Also, by our assumption, $P/E_1(M_i)$ has uniformly   $\alpha$-almost flat coset spaces, hence it is virtually nilpotent by induction hypothesis. On the one hand,  $\frac{\FP}{E_1(M_i)} 
\rt_
{\bar{\p}
_i }
\langle t_i\rangle$ is a subgroup of $\frac{P}{E_1(M_i)}$,  it implies that $\frac{\FP}{E_1(M_i)} 
\rt_
{\bar{\p}
_i }
\langle t_i\rangle$ is also virtually nilpotent. On the other hand, since $P/\Fit(P)$ is abelian,  $\FP \rtimes _{\p_i} \langle t_i \rangle \nsubg P$. It follows from \eqref{fitting_of_K_i} that $\FP \rtimes _{\p_i} \langle t_i \rangle$ is not virtually nilpotent.  Hence, $M_i$ has an eigenvalue $\l$ with absolute value not equal to $1$. Let $\bar{M_i}$ be the matrix corresponds to ${\bar{\p} _i }$, then clearly $\l$ is also an eigenvalue of $\bar{M_i}$. Hence, $\frac{\FP}{E_1(M_i)} 
\rt_
{\bar{\p}
_i }
\langle t_i\rangle$ is not virtually nilpotent. This gives a contradiction and hence verifies our claim. Finally, by \cref{finite_index_subgroup_zm_zn}, we can assume that $P \iso \FP \rt \Z^m$. By \cref{characteriseing_zm_zn}, we see that $P$ does not have uniformly   $\alpha$-almost flat coset spaces, again contrary to our hypothesis.
\end{proof}

\footnotesize{
\bibliographystyle{abbrv}
\bibliography{Biblio}
}
\end{document}